\documentclass[12pt]{amsart}
\usepackage{fullpage}
\usepackage{amsmath}
\usepackage{amssymb}
\usepackage{verbatim}
\usepackage[all]{xy}
\usepackage{color}
\usepackage[dvips]{epsfig}
\usepackage{graphicx}
\addtolength{\oddsidemargin}{0.0in}
	\addtolength{\evensidemargin}{0.0in}
	
        \addtolength{\topmargin}{0.00in}
	\addtolength{\textheight}{0.0in}
\def\p{\partial}
\newtheorem{Theorem}{Theorem}[section]
\newtheorem{Definition}{Definition}[section]

\newtheorem{Lemma}[Theorem]{Lemma}
\newtheorem{Proposition}[Theorem]{Proposition}
\newtheorem{Example}[Theorem]{Example}

\newtheorem{Corollary}[Theorem]{Corollary}

\newtheorem{Remark}{Remark}[section]
\numberwithin{equation}{section}
\newcommand{\oD}{ \!{\buildrel \circ 
\over D}^{_{_{_{\mbox{{\small $_{q+1}$}}}}}}}

\newcommand{\oDnp}{ \!{\buildrel \circ 
\over D}^{_{_{_{\mbox{{\small $_{n-p}$}}}}}}}

\newcommand{\oDqm}{ \!{\buildrel \circ 
\over D}_{-}^{_{_{_{\mbox{{\small $_{q}$}}}}}}}

\def\N{{\mathcal N}}
\def\C{{\mathcal C}}
\def\F{{\mathcal F}}
\def\W{{\mathcal W}} 
\def\U{{\mathcal U}}
\def\V{{\mathcal V}}
\def\H{{\mathcal H}}

\def\M{{\mathcal M}} 

\def\G{{\mathcal G}}
\def\Riem{{\mathcal R}{\mathrm i}{\mathrm e}{\mathrm m}}

\begin{document}
\author{Mark Walsh} 
\title{Metrics of positive scalar curvature and
  generalised Morse functions, part 1}

\maketitle
\centerline{Department of Mathematics}
\centerline{University of Oregon} 
\centerline{Eugene, OR 97403}
\centerline{USA}

\vspace{1.0cm} 

\begin{abstract}
  It is well known that isotopic metrics of positive scalar curvature
  are concordant. Whether or not the converse holds is an open
  question, at least in dimensions greater than four. We show that for
  a particular type of concordance, constructed using the surgery
  techniques of Gromov and Lawson, this converse holds in the case of
  closed simply connected manifolds of dimension at least five.
\end{abstract}

\tableofcontents
\newpage

\section{Introduction}\label{intro}
\vspace{1cm}

\subsection{Background} Let $X$ be a smooth closed manifold. We denote
by $\Riem (X)$, the space of Riemannian metrics on $X$. Contained
inside $\Riem(X)$, as an open subspace, is the space $\Riem^{+}(X)$
which consists of metrics on $X$ which have positive scalar curvature
({\em psc-metrics}). The problem of whether or not this space is
non-empty (i.e. $X$ admits a psc-metric) has been extensively studied,
see \cite{GL1}, \cite{GL2}, \cite{RS} and \cite{S}. In particular,
when $X$ is simply connected and $\dim X\geq 5$, it is known that $X$
always admits a psc-metric when $X$ is not a spin manifold and, in the
case when $X$ is spin, it admits such a metric if and only if the
index $\alpha(X)\in KO_n$ of the Dirac operator vanishes, see
\cite{RS} and \cite{S}. Considerably less is known about the topology
of the space $\Riem^{+}(X)$, even for such manifolds as the sphere
$S^{n}$. For example, it is known that $\Riem^{+}(S^{2})$ is
contractible (as is $\Riem^{+}(\mathbb{R}P^{2})$), see \cite{RS}, but
little is known about the topology of $\Riem^{+}(S^{n})$ when $n\geq
3$. In this paper, we will focus on questions relating to the
path-connectivity of $\Riem^{+}(X)$.

Metrics which lie in the same path component of $\Riem^{+}(X)$ are
said to be {\em isotopic}. Two psc-metrics $g_0$ and $g_1$ on $X$ are
said to be {\em concordant} if there is a psc-metric $\bar{g}$ on the
cylinder $X\times I$ ($I=[0,1]$) which near $X\times \{0\}$ is the
product $g_0+dt^{2}$, and which near $X\times \{1\}$ is the product
$g_1+dt^{2}$. It is well known that isotopic metrics are concordant,
see Lemma \ref{isoconc} below. It is also known that concordant
metrics need not be isotopic when $\dim X =4$, where the difference
between isotopy and concordance is detected by the Seiberg-Witten
invariant, see \cite{Ru}. However, in the case when $\dim X\geq 5$,
the question of whether or not concordance implies isotopy is an open
problem and one we will attempt to shed some light on.

Before discussing this further, it is worth mentioning that the only
known method for showing that two psc-metrics on $X$ lie in distinct
path components of $\Riem^{+}(X)$, is to show that these metrics are
not concordant. For example, index obstruction methods may be used to
exhibit a countable inifinite collection of distinct concordance
classes for $X=S^{4k-1}$ with $k>1$, implying that the space
$\Riem^{+}(S^{4k-1})$ has at least as many path components, see
\cite{Carr} (or Example \ref{bottex} below for the case when
$k=2$). In \cite{BG}, the authors show that if $X$ is a connected spin
manifold with $\dim X=2k+1\geq 5$ and $\pi_1(X)$ is non-trivial and
finite, then $\Riem^{+}(X)$ has inifinitely many path components
provided $\Riem^{+}(X)$ is non-empty. Again, this is done by
exhibiting inifinitely many distinct concordance classes. For a
general smooth manifold $X$, understanding $\pi_0(\Riem^{+}(X))$ is
contingent on answering the following open questions?

(i) Are there more concordance classes undetected by the index theory?

(ii) When are concordant metrics isotopic?

\noindent 
For more on the first of these problems, the reader is referred to
\cite{S0} and \cite{RS}. We will focus our attention on the second
problem.

A fundamental difficulty when approaching this question is that an
arbitrary concordance may be extraordinarily complicated.  For
example, let $g_s, s\in I$ denote an isotopy in the space
$\Riem^{+}(S^{n})$. After an appropriate rescaling, see Lemma
\ref{isoconc}, we may assume that the warped product metric
$\bar{h}=g_t+dt^{2}$, on the cylinder $S^{n}\times I$, has positive
scalar curvature and a product structure near the boundary, i.e. is a
concordance of $g_0$ and $g_1$. Now let $g$ be any psc-metric on the
sphere $S^{n+1}$ (this metric may be very complicated indeed).  It is
possible to construct a psc-metric $\bar{g}$ on $S^{n}\times I$ by
taking a connected sum
$$
\bar{g}=\bar{h}\# g,
$$
see \cite{GL1}.  As this construction only alters the metric $\bar{h}$
on the interior of the cylinder, the resulting metric, $\bar{g}$, is
still a concordance of $g_0$ and $g_1$, see
Fig. \ref{difficultconc}. Unlike the concordance $\bar{h}$ however,
$\bar{g}$ could be arbitrarily complicated. In some sense, this makes
$\bar{g}$ ``unrecognisable" as an isotopy.

\begin{figure}[htbp]

\begin{picture}(0,0)%
\includegraphics{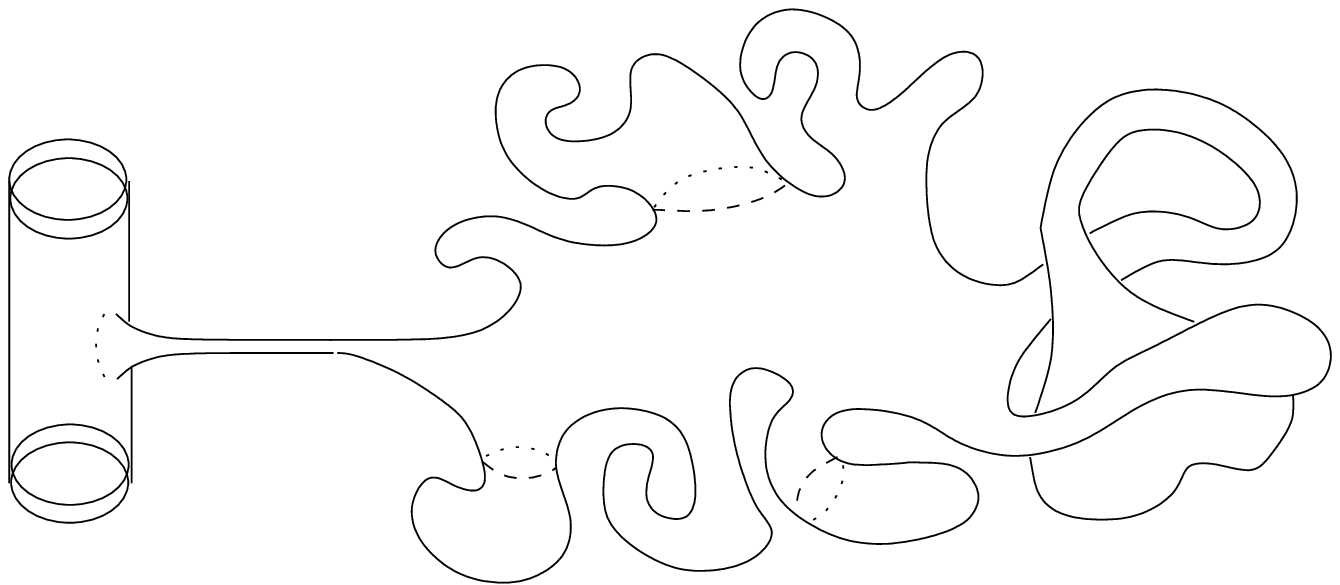}
\end{picture}%
\setlength{\unitlength}{3947sp}%
\begingroup\makeatletter\ifx\SetFigFont\undefined%
\gdef\SetFigFont#1#2#3#4#5{%
  \reset@font\fontsize{#1}{#2pt}%
  \fontfamily{#3}\fontseries{#4}\fontshape{#5}%
  \selectfont}%
\fi\endgroup%
\begin{picture}(6996,2775)(1249,-3013)
  \put(2076,-1861){\makebox(0,0)[lb]{\smash{{\SetFigFont{10}{8}{\rmdefault}{\mddefault}{\updefault}{\color[rgb]{0,0,0}$\bar{h}$}%
        }}}}
  \put(1264,-824){\makebox(0,0)[lb]{\smash{{\SetFigFont{10}{8}{\rmdefault}{\mddefault}{\updefault}{\color[rgb]{0,0,0}$g_1+dt^{2}$}%
        }}}}
  \put(1264,-2874){\makebox(0,0)[lb]{\smash{{\SetFigFont{10}{8}{\rmdefault}{\mddefault}{\updefault}{\color[rgb]{0,0,0}$g_0+dt^{2}$}%
        }}}}
  \put(5514,-1649){\makebox(0,0)[lb]{\smash{{\SetFigFont{10}{8}{\rmdefault}{\mddefault}{\updefault}{\color[rgb]{0,0,0}$g$}%
        }}}}
\end{picture}%
\caption{The concordance $\bar{g}$ on $S^{n}\times I$, formed by
  taking a conncected sum of metrics $\bar{h}$ and $g$.}
\label{difficultconc}
\end{figure}

\noindent Consequently, we will not approach this problem at the level of arbitrary concordance. Instead, we will restrict our attention to concordances which are constructed by a particular application of the surgery technique of Gromov and Lawson. Such concordances will be called {\em Gromov-Lawson concordances}. 

Before discussing the relationship between surgery and concordance, it is worth recalling how the surgery technique alters a psc-metric. The Surgery Theorem of Gromov-Lawson and Schoen-Yau states that any manifold 
$X'$ which is obtained from $X$ via a codimension$\geq 3$ surgery admits a psc-metric, see \cite{GL1}
and \cite{SY}. In their proof, Gromov and Lawson replace the metric $g$ with a psc-metric which is standard in a tubular neighbourhood of the embedded surgery sphere. More precisely, let $ds_{n}^{2}$ denote the standard round metric on the sphere $S^{n}$. We denote by $g_{tor}^{n}(\delta)$, the metric on the disk $D^{n}$ which, near $\p D^{n}$, is the Riemannian cylinder $\delta^{2}ds_{n-1}^{2}+dr^{2}$ and which near the centre of $D^{n}$ is the round metric $\delta^{2}ds_{n}^{2}$. The metric $g_{tor}^{n}(\delta)$ is known as a {\em torpedo metric}, see section \ref{prelim} for a detailed construction. For sufficiently small $\delta>0$ and provided $n\geq 3$, the scalar curvature of this metric can be bounded below by an arbitrarily large positive constant. Now, let $(X,g)$ be a smooth $n$-dimensional Riemannian 
manifold of positive scalar curvature and let $S^{p}$ denote an embedded $p$-sphere in $X$ with trivial normal bundle and with $p+q+1=n$ and $q\geq 2$. The metric $g$ can be replaced by a psc-metric on $X$ which, on a tubular neighbourhood of $S^{p}$, is the standard product $ds_{p}^{2}+g_{tor}^{q+1}(\delta)$ for some appropriately small $\delta$. In turn, surgery may be performed on this standard piece to obtain a psc-metric $g'$ on $X'$, which on the handle $D^{p+1}\times S^{q}$ is the standard product $g_{tor}^{p+1}+\delta^{2}ds_{q}^{2}$, see Fig. \ref{introsurgerypicture}.

\begin{figure}[htbp]
\begin{picture}(0,0)%
\includegraphics{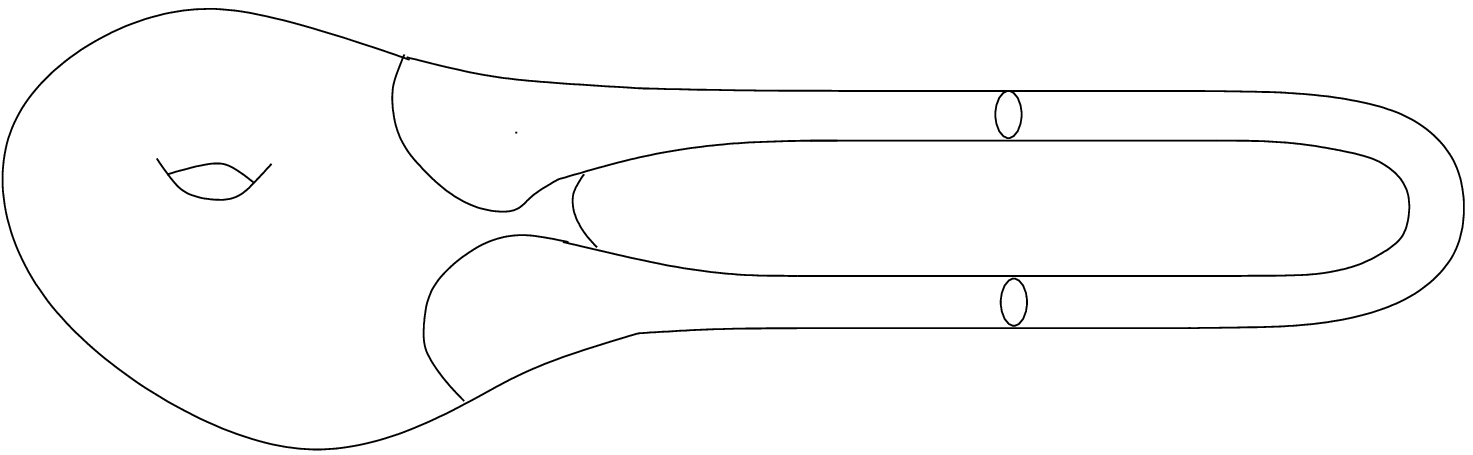}
\end{picture}%
\setlength{\unitlength}{3947sp}%
\begingroup\makeatletter\ifx\SetFigFont\undefined%
\gdef\SetFigFont#1#2#3#4#5{%
  \reset@font\fontsize{#1}{#2pt}%
  \fontfamily{#3}\fontseries{#4}\fontshape{#5}%
  \selectfont}%
\fi\endgroup%
\begin{picture}(7038,2154)(330,-2515)
\put(726,-1548){\makebox(0,0)[lb]{\smash{{\SetFigFont{10}{8}{\rmdefault}{\mddefault}{\updefault}{\color[rgb]{0,0,0}Original metric $g$}%
}}}}
\put(3451,-2248){\makebox(0,0)[lb]{\smash{{\SetFigFont{10}{8}{\rmdefault}{\mddefault}{\updefault}{\color[rgb]{0,0,0}Transition metric}%
}}}}
\put(5414,-2174){\makebox(0,0)[lb]{\smash{{\SetFigFont{10}{8}{\rmdefault}{\mddefault}{\updefault}{\color[rgb]{0,0,0}Standard metric}%
}}}}
\put(5426,-2449){\makebox(0,0)[lb]{\smash{{\SetFigFont{10}{8}{\rmdefault}{\mddefault}{\updefault}{\color[rgb]{0,0,0}$g_{tor}^{p+1}(\epsilon)+\delta^{2}ds_{q}^{2}$}%
}}}}
\end{picture}%
\caption{The psc-metric $g'$ obtained on $X'$ by the Gromov-Lawson construction}
\label{introsurgerypicture}
\end{figure}

There is an important strengthenning of this technique whereby the metric $g$ is extended over the trace of the surgery to obtain a psc-metric $\bar{g}$ which is a product metric near the boundary. This is sometimes referred to as the Improved Surgery Theorem, see \cite{Gajer}. Suppose $\{W; X_0, X_1\}$ is a smooth compact cobordism of closed $n$-manifolds $X_0$ and $X_1$, i.e. $\p W=X_0 \sqcup X_1$, and $f:W\rightarrow I$ is a Morse function with $f^{-1}(0)=X_0$ and $f^{-1}(1)=X_1$ and whose critical points lie in the interior of $W$. The Morse function $f$ gives rise to a decomposition of $W$ into elementary cobordisms. Let us assume that each elementary cobordism is the trace of a codimension$\geq 3$ surgery. This means that each critical point of $f$ has index $\leq n-2$. Roughly speaking, such Morse functions will be called ``admissible". It is now possible to extend a psc-metric $g_0$ on $X_0$ to a psc-metric $\bar{g}$ on $W$ which is a product near the boundary $\p W$, see Theorem \ref{GLcob} below. In particular, the restriction $g_1=\bar{g}|_{X_1}$ is a psc-metric on $X_1$. This method is a powerful tool for constructing new psc-metrics as the following example demonstrates.   

\begin{figure}[htbp]
\begin{picture}(0,0)%
\includegraphics{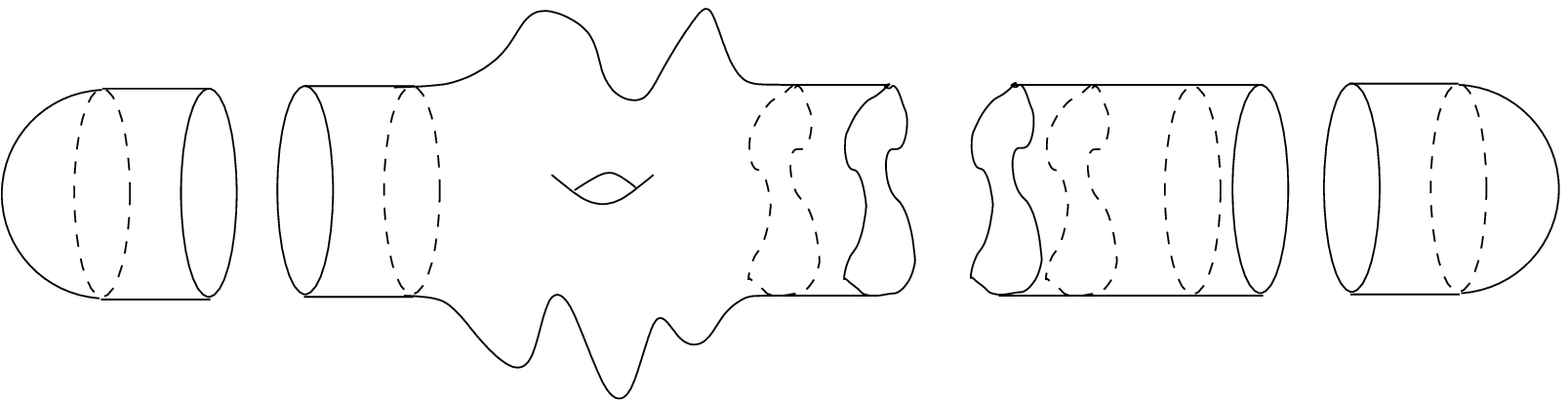}%
\end{picture}%
\setlength{\unitlength}{3947sp}%
\begingroup\makeatletter\ifx\SetFigFont\undefined%
\gdef\SetFigFont#1#2#3#4#5{%
  \reset@font\fontsize{#1}{#2pt}%
  \fontfamily{#3}\fontseries{#4}\fontshape{#5}%
  \selectfont}%
\fi\endgroup%
\begin{picture}(7678,1941)(862,-3979)
\put(1076,-3761){\makebox(0,0)[lb]{\smash{{\SetFigFont{10}{8}{\rmdefault}{\mddefault}{\updefault}{\color[rgb]{0,0,0}$D_0, g_{tor}^{8}$}%
}}}}
\put(3401,-3336){\makebox(0,0)[lb]{\smash{{\SetFigFont{10}{8}{\rmdefault}{\mddefault}{\updefault}{\color[rgb]{0,0,0}$W,\bar{g}$}%
}}}}
\put(5976,-3761){\makebox(0,0)[lb]{\smash{{\SetFigFont{10}{8}{\rmdefault}{\mddefault}{\updefault}{\color[rgb]{0,0,0}$S^{7}\times I, \bar{h}$}%
}}}}
\put(7951,-3761){\makebox(0,0)[lb]{\smash{{\SetFigFont{10}{8}{\rmdefault}{\mddefault}{\updefault}{\color[rgb]{0,0,0}$D_1, g_{tor}^{8}$}%
}}}}
\put(4851,-3761){\makebox(0,0)[lb]{\smash{{\SetFigFont{10}{8}{\rmdefault}{\mddefault}{\updefault}{\color[rgb]{0,0,0}$g_1+dt^{2}$}%
}}}}
\put(2389,-3761){\makebox(0,0)[lb]{\smash{{\SetFigFont{10}{8}{\rmdefault}{\mddefault}{\updefault}{\color[rgb]{0,0,0}$g_0+dt^{2}$}%
}}}}
\end{picture}%

\caption{The existence of a concordance $(S^{7}\times I, \bar{h})$ between $g_1$ and $g_0=ds_{7}^{2}$ would imply the existence of a psc-metric on $B$, which is impossible.}
\label{bott}
\end{figure} 

\begin{Example}\label{bottex} {\rm Let $B=B^8$ be a Bott manifold, i.e. an 8-dimensional
closed simply connected spin manifold with $\alpha(B)=1$, see
\cite{Jo} for a geometric construction of such a manifold. The fact that 
$\alpha(B)\neq 0$ means that $B$ does not admit a psc-metric. Let $W=B\setminus (D_0\sqcup D_1)$ denote the smooth manifold obtained by removing a disjoint pair of $8$-dimensional disks $D_0$ and $D_1$ from $B$. The boundary of $W$ is a pair of disjoint $7$-dimensional smooth spheres, which we denote $S_0^{7}$ and $S_1^{7}$ respectively. It is possible, although we do not include the details here, to equip $W$ with an admissible Morse function. This decomposes $W$ into a union
of elementary cobordisms, each the trace of a codimension$\geq 3$ surgery.
Thus, we can extend the standard round metric $g_0=ds_{7}^{2}$ 
from the boundary component $S_{0}^{7}$ to a psc metric $\bar{g}$ on $W$, which is a
product metric near both boundary components. In particular, the metric $\bar{g}$
restricts to a psc-metric $g_1$ on $S^7_1$. This metric however, is {\bf not} concordant (and hence not isotopic) to $g_0$. This is because the existence of a concordance $\bar{h}$ of $g_1$ and $g_0=ds_{7}^{2}$, would give rise to a psc-metric $g_B$ on $B$ (see Fig. \ref{bott}), defined by taking the union
\begin{equation*}
(B,g_{B})=(D_{0},g_{tor}^{8})\cup(W, \bar{g})\cup(S^{7}\times I, \bar{h})\cup (D_1, g_{tor}^{8}),
\end{equation*}
something we know to be impossible.}
\end{Example}

\subsection{Main Results}  
In Theorem \ref{GLcob}, we generalise the so-called Improved Surgery Theorem, as well as correcting an error from the proof in \cite{Gajer}, see Remark \ref{Gajercomment} in section \ref{surgerysection}.
 
\begin{Theorem} \label{GLcob}
Let $\{W^{n+1};X_0,X_1\}$ be a smooth compact cobordism. Suppose $g_0$ is a metric of positive scalar curvature on $X_0$ and $f:W\rightarrow I$ is an admissible Morse function. Then there is a psc-metric $\bar{g}=\bar{g}(g_0,f)$ on $W$ which extends $g_0$ and has a product structure near the boundary.
\end{Theorem}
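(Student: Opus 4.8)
\emph{Proof strategy.} The plan is to reduce to a single elementary cobordism and then to build the metric there explicitly from torpedo metrics. After a small perturbation of $f$ we may assume its critical values are distinct, so that the Morse function decomposes $W$ as a union $W=W_1\cup\dots\cup W_k$ of elementary cobordisms, glued along copies of the intermediate level sets, in which each $W_j$ contains a single critical point and is therefore the trace of one surgery; admissibility means each of these is a surgery on an embedded sphere $S^{p_j}$ of codimension $q_j+1\geq 3$, so $q_j\geq 2$. Since the Gromov--Lawson surgery construction accepts an arbitrary psc-metric as input, it suffices to prove the following local statement: given any psc-metric $g$ on the incoming boundary of a single elementary cobordism which is the trace of a codimension $\geq 3$ surgery, there is a psc-metric on that cobordism equal to $g+dt^2$ near the incoming end and of product form near the outgoing end. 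One can then concatenate these metrics along the product collars over the intermediate level sets; smoothness and positivity of the scalar curvature at the joins follow automatically from the product structure.

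For a fixed elementary cobordism, the trace of surgery on $S^p\subset X$ with $q=n-p-1\geq 2$, I would first invoke the first half of the Gromov--Lawson construction to isotope $g$, through psc-metrics, to a psc-metric $g'$ on $X$ which on a tubular neighbourhood $S^p\times D^{q+1}$ of the surgery sphere is the standard product $ds_p^2+g_{tor}^{q+1}(\delta)$, for a sufficiently small $\delta>0$. (This is where $q\geq 2$ enters: a round $S^q$ of small radius $\delta$ has large positive scalar curvature, which dominates the unwanted curvature terms produced by the bending; for $q\leq 1$ there is no such reservoir.) By Lemma \ref{isoconc}, this isotopy yields a psc-metric on $X\times I$ which is a product near both ends, equal to $g+dt^2$ near $0$ and to $g'+dt^2$ near $1$; stacking this collar below the elementary cobordism, we may assume henceforth that $g$ already has the standard form on the tube.

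It then remains to treat the handle. The trace of the surgery is obtained from $X\times I$ by attaching $D^{p+1}\times D^{q+1}$ along $S^p\times D^{q+1}\times\{1\}$, and the union of this handle with (tube)$\times I$ is again a copy of $D^{p+1}\times D^{q+1}$; over the complement of the tube the cobordism already carries the product $g+dt^2$, which is a product near both boundary components. So the task is to equip this $D^{p+1}\times D^{q+1}$ with a psc-metric which (i) agrees, together with a $t$-collar, with $ds_p^2+g_{tor}^{q+1}(\delta)$ along the face $S^p\times D^{q+1}$ where it abuts $g+dt^2$; (ii) is the product $(g_{tor}^{p+1}(\epsilon)+\delta^2 ds_q^2)+dt^2$ near the face $D^{p+1}\times S^q$, which becomes part of the outgoing boundary; and (iii) has positive scalar curvature everywhere. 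I would obtain such a ``Gromov--Lawson handle metric'' by prescribing the $t$-level sets as a family of metrics on $D^{p+1}\times S^q$ interpolating between the two standard boundary metrics, built by bending the two torpedo factors against one another --- equivalently, by realising the handle as the region lying below a Gromov--Lawson--type hypersurface swept out by pushing the corner sphere $S^p\times S^q$ along a carefully chosen curve in an auxiliary half-plane --- and then verifying positivity of the scalar curvature by the usual second fundamental form (Gauss equation) estimate, just as in the proof of the Surgery Theorem.

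The reduction and the appeal to Gromov--Lawson's first step are routine; the genuine difficulty, and the place where one must be most careful, is step (iii): keeping the scalar curvature of the handle metric \emph{uniformly} positive throughout the bend, and especially in a neighbourhood of the corner $S^p\times S^q$ where both disk factors are being deformed at once, while at the same time forcing the exact product normal forms (i) and (ii) needed for smooth gluing. This pins down a definite order in which the parameters --- the tube radius, the torpedo necks $\delta$ and $\epsilon$, and the speed of the bending curve --- must be chosen, and it is precisely at this point that the argument of \cite{Gajer} is incomplete; see Remark \ref{Gajercomment}.
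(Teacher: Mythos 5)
Your proposal follows essentially the same route as the paper: decompose $W$ along the admissible Morse function into elementary cobordisms, each the trace of a codimension $\geq 3$ surgery, isotopy the incoming psc-metric to the standard form $g_p+g_{tor}^{q+1}(\delta)$ near the surgery sphere (Theorem \ref{IsotopyTheorem}), turn that isotopy into a concordance via Lemma \ref{isoconc}, then attach the torpedo-product handle metric, smooth the corner, and adjust to a product structure near the boundary --- which is precisely the paper's Theorem \ref{ImprovedsurgeryTheorem} applied critical level by critical level. The only quibble is attributional rather than mathematical: the step you describe as routine (the isotopy to standard form) is where the paper expends most of its effort, and the error in \cite{Gajer} noted in Remark \ref{Gajercomment} lives there, in the bending argument as $\theta\to 0$, not in the handle/corner-smoothing step, which the paper handles by an explicit computation in the standard ambient metric.
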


We call the metric $\bar{g}=\bar{g}(g_0,f)$, a {\em Gromov-Lawson cobordism (GL-cobordism) with respect to $g_0$ and $f$}.
Essentially, the metric $\bar{g}$ restricts on a regular level set of $f$ to the metric obtained by repeated application of the surgery technique with respect to each of the critical points below that level set. In the case when $W$ is the cylinder $X\times I$, the metric $\bar{g}$ is a concordance of the metrics $g_0$ and $g_1=\bar{g}|_{X\times\{1\}}$. It will be referred to as a {\em Gromov-Lawson concordance (GL-concordance) with respect to $g_0$ and $f$}, see Fig. \ref{glconcf}.

\begin{figure}[htbp]
\begin{picture}(0,0)%
\includegraphics{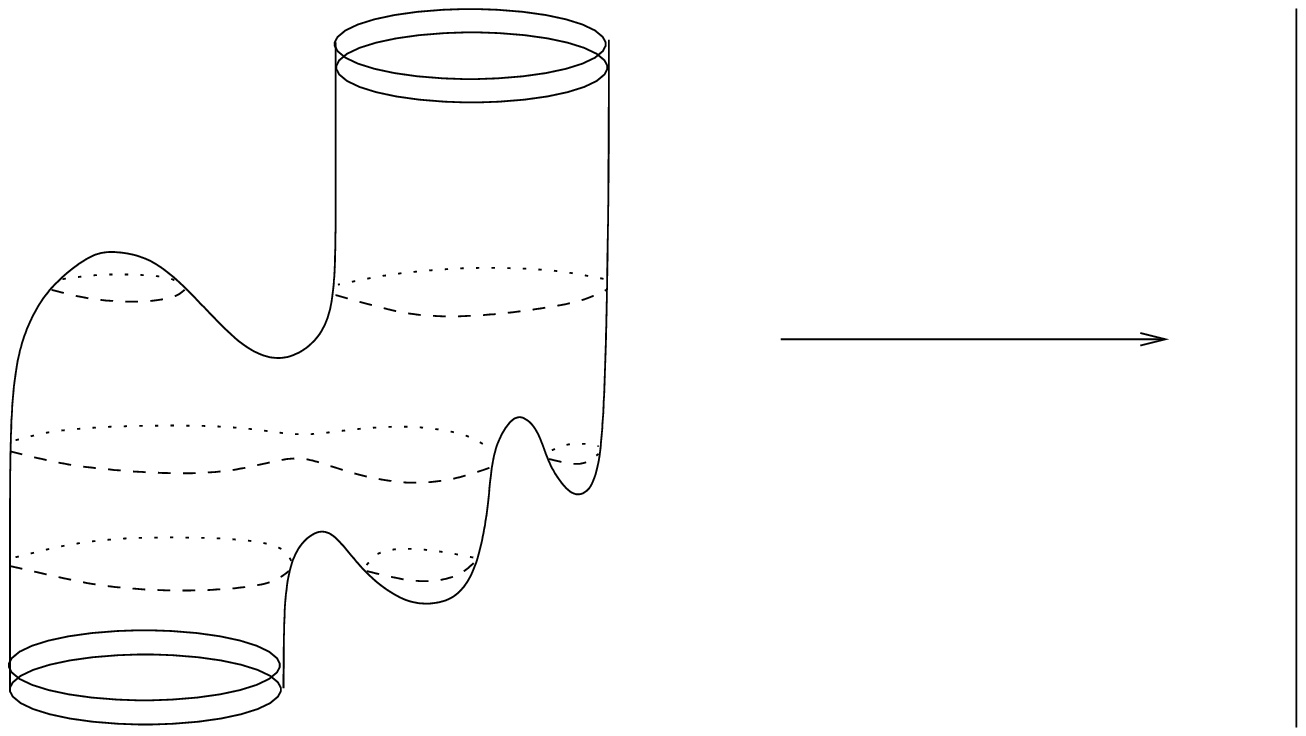}%
\end{picture}%
\setlength{\unitlength}{3947sp}%
\begingroup\makeatletter\ifx\SetFigFont\undefined%
\gdef\SetFigFont#1#2#3#4#5{%
  \reset@font\fontsize{#1}{#2pt}%
  \fontfamily{#3}\fontseries{#4}\fontshape{#5}%
  \selectfont}%
\fi\endgroup%
\begin{picture}(7230,3682)(1624,-3852)
\put(6914,-1599){\makebox(0,0)[lb]{\smash{{\SetFigFont{10}{8}{\rmdefault}{\mddefault}{\updefault}{\color[rgb]{0,0,0}$f$}%
}}}}
\put(8839,-3786){\makebox(0,0)[lb]{\smash{{\SetFigFont{10}{8}{\rmdefault}{\mddefault}{\updefault}{\color[rgb]{0,0,0}$0$}%
}}}}
\put(8776,-324){\makebox(0,0)[lb]{\smash{{\SetFigFont{10}{8}{\rmdefault}{\mddefault}{\updefault}{\color[rgb]{0,0,0}$1$}%
}}}}
\put(3096,-611){\makebox(0,0)[lb]{\smash{{\SetFigFont{10}{8}{\rmdefault}{\mddefault}{\updefault}{\color[rgb]{0,0,0}$g_1+dt^{2}$}%
}}}}
\put(3714,-3661){\makebox(0,0)[lb]{\smash{{\SetFigFont{10}{8}{\rmdefault}{\mddefault}{\updefault}{\color[rgb]{0,0,0}$g_0+dt^{2}$}%
}}}}
\put(1839,-1849){\makebox(0,0)[lb]{\smash{{\SetFigFont{10}{8}{\rmdefault}{\mddefault}{\updefault}{\color[rgb]{0,0,0}$\bar{g}$}%
}}}}
\end{picture}%

\caption{Obtaining a Gromov-Lawson concordance on the cylinder $X\times I$ with respect to a Morse function $f$ and a psc-metric $g_0$}
\label{glconcf}
\end{figure}
  
Any admissible Morse function $f:W\rightarrow I$ can be replaced by a Morse function denoted $1-f$, which has the gradient flow of $f$, but running in reverse. This function has the same critical points as $f$, however, each critical point of index $\lambda$ has been replaced with one of index $n+1-\lambda$. The following theorem can be obtained by ``reversing" the construction from Theorem \ref{GLcob}.

\begin{Theorem}\label{Reversemorse}
Let $\{W^{n+1};X_0,X_1\}$ be a smooth compact cobordism, $g_0$ a psc-metric on $X_0$ and $f:W\rightarrow I$, an admissible Morse function. Suppose that $1-f$ is also an admissible Morse function. Let $g_1=\bar{g}(g_0,f)|_{X_1}$ denote the restriction of the Gromov-Lawson cobordism $\bar{g}(g_0,f)$ to $X_1$. Let $\bar{g}(g_1,1-f)$ be a Gromov-Lawson cobordism with respect to $g_1$ and $1-f$ and let $g_0'=\bar{g}(g_1, 1-f)|_{X_0}$ denote the restriction of this metric to $X_0$. Then $g_0$ and $g_0'$ are canonically isotopic metrics in $\Riem^{+}(X_0)$.
\end{Theorem}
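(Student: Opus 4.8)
The plan is to peel $W$ apart into elementary cobordisms, reduce the claim to a single handle attachment, and there resolve it by a local interpolation of torpedo parameters.

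Write $f$ as inducing a decomposition $W=W_1\cup_{X^{(1)}}W_2\cup\cdots\cup_{X^{(k-1)}}W_k$, with $W_i$ containing a single critical point of index $\lambda_i=p_i+1$, $X^{(0)}=X_0$, $X^{(k)}=X_1$; by the description following Theorem \ref{GLcob} the metric $\bar g(g_0,f)$ may be taken to be the concatenation of the GL-cobordisms $\bar g(g^{(i-1)},f|_{W_i})$, where $g^{(0)}=g_0$ and $g^{(i)}=\bar g(g^{(i-1)},f|_{W_i})|_{X^{(i)}}$, so $g_1=g^{(k)}$. Since $1-f$ reverses the gradient flow, the same submanifolds reappear in the order $W_k,\dots,W_1$, each now carrying a single critical point of $1-f$ of index $n+1-\lambda_i=q_i+1$ with $q_i=n-p_i-1$; admissibility of $f$ and of $1-f$ says exactly that $q_i\geq 2$ and $p_i\geq 2$ for all $i$, so every forward surgery (on a $p_i$-sphere, codimension $q_i+1\geq 3$) and every reverse surgery (on the belt $q_i$-sphere, codimension $p_i+1\geq 3$) is of codimension $\geq 3$. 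Thus $g_0'=h^{(0)}$, where $h^{(k)}=g_1$ and $h^{(i-1)}=\bar g(h^{(i)},(1-f)|_{W_i})|_{X^{(i-1)}}$. I will show by downward induction on $i$ that $h^{(i)}$ is canonically isotopic to $g^{(i)}$; the case $i=0$ is the theorem and $i=k$ is an equality. For the step I use two ingredients: (a) the single-elementary-cobordism version of the theorem applied to $W_i$, giving a canonical isotopy $\bar g(g^{(i)},(1-f)|_{W_i})|_{X^{(i-1)}}\sim g^{(i-1)}$; and (b) that the construction of Theorem \ref{GLcob} depends continuously on its input metric, so that the isotopy $h^{(i)}\sim g^{(i)}$ is carried to one $\bar g(h^{(i)},(1-f)|_{W_i})|_{X^{(i-1)}}\sim\bar g(g^{(i)},(1-f)|_{W_i})|_{X^{(i-1)}}$. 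Concatenating (b) with (a) gives $h^{(i-1)}\sim g^{(i-1)}$. Ingredient (b) holds because over a compact family of input metrics all choices in the construction (standardising isotopies, torpedo radii, bending profiles) can be made to vary continuously.

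It remains to treat a single elementary cobordism $W$, the trace of a codimension $\geq 3$ surgery on $S^{p}\subset X_0$ with trivial normal bundle, $p+q+1=n$, $p,q\geq 2$, and tubular neighbourhood $N=S^p\times D^{q+1}\subset X_0$. The forward construction first deforms $g_0$ through psc-metrics, by an isotopy supported in $N$, to a metric $\tilde g_0$ equal on (a subtube of) $N$ to the standard product $ds_p^{2}+g_{tor}^{q+1}(\delta)$, then attaches the Gromov--Lawson handle; the resulting $g_1=\bar g(g_0,f)|_{X_1}$ agrees with $\tilde g_0$ outside $N$ and, near the belt sphere $S^q\subset X_1$ (whose tube is the handle $D^{p+1}\times S^q$), has the standard form $g_{tor}^{p+1}(\epsilon)+\delta^{2}ds_q^{2}$. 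Running the construction for $1-f$ from $g_1$: the surgery sphere is $S^q$, and since $g_1$ is already standard near it the standardising isotopy is the canonical one adjusting only the torpedo radius; attaching the dual Gromov--Lawson handle and restricting to $X_0$ gives $g_0'$. Because this whole forward--then--reverse passage modifies the metric only inside $N$ (the handle attachments do not alter $X_0$ away from the attaching tube, and the two standardising isotopies are supported near $S^p$, resp. near $S^q$), $g_0'$ agrees with $g_0$, hence with $\tilde g_0$, outside $N$; and on $N$, which is precisely the handle $D^{q+1}\times S^p$ of the reverse surgery, $g_0'$ has near $S^p$ the standard form $g_{tor}^{q+1}(\epsilon')+(\delta')^{2}ds_p^{2}$.

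Thus $\tilde g_0$ and $g_0'$ are psc-metrics on $X_0$ coinciding with $g_0$ outside $N$ and, inside $N$, both of the rotationally symmetric, $S^p$-product, torpedo type, differing only in the radius of the torpedo factor $D^{q+1}$ and in the constant radius $a$ of the round factor $S^p$. They are canonically isotopic: interpolating these parameters within this ansatz (with a fixed collar transition to $g_0$), the scalar curvature is $p(p-1)/a^{2}+\mathrm{scal}(g_{tor}^{q+1}(\cdot))>0$ throughout, since $p\geq 2$ makes the first term nonnegative while $q\geq 2$ with small torpedo radius keeps the torpedo term positive, and the cylindrical collar is a product of round spheres of dimensions $p,q\geq 2$ with a line, hence has positive scalar curvature; the connectivity of the transition region follows from the standard analysis of warped-product profiles satisfying the psc differential inequality. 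Concatenating the standardising isotopy $g_0\rightsquigarrow\tilde g_0$ with this parameter interpolation $\tilde g_0\rightsquigarrow g_0'$ yields the desired isotopy, and it is canonical because the decomposition, the standardisations, and the straight-line parameter homotopies are all determined by $g_0$ and $f$.

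The main obstacle is the verification underlying the previous two paragraphs: that the handle attachments of Theorem \ref{GLcob} genuinely leave the ambient metric unaltered outside the attaching tube and return it to recognisable torpedo form inside it, so that no uncontrolled deformation is introduced by the forward--then--reverse passage, and the naturality statement (b), which requires revisiting the proof of Theorem \ref{GLcob} to see that every construction step is continuous in the input metric. The parameter interpolation needs only the routine bookkeeping that keeps all radii in the range where the torpedo metric and its cylindrical end have positive scalar curvature, which is immediate from $p,q\geq 2$.
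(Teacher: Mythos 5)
Your proposal is correct and follows essentially the same route as the paper: reduce to a single critical point (the paper asserts this reduction, which your induction using continuity of the construction in the input metric, i.e.\ Theorem \ref{GLcompact} and Corollary \ref{GLisotopy}, justifies), observe that $g_1$ is already standard near the belt sphere so the reverse construction is just the dual handle attachment on the standard region, and conclude that $g_0'$ differs from $(g_0)_{std}$ only by an adjustment within the standard torpedo family (the paper phrases this as a rescaling of the torpedo tube length) before isotopying back to $g_0$ via Theorem \ref{IsotopyTheorem}. No substantive gap.
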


A careful analysis of the Gromov-Lawson construction shows that it can be applied continuously over a compact family of metrics as well as a compact family of embedded surgery spheres, see Theorem \ref{GLcompact} in section \ref{surgerysection}. It then follows that the construction of Theorem \ref{GLcob} can be applied continuously over a compact family of admissible Morse functions and so we obtain the following theorem. 

\begin{Theorem}\label{GLcobordismcompact}
Let $\{W, X_0, X_1\}$ be a smooth compact cobordim, $\mathcal{B}$, a compact continuous family of psc-metrics on $X_0$ and $\mathcal{C}$, a compact continuous family of admissible Morse functions on $W$. Then there is a continuous map 

\begin{equation*}
\begin{split}
\mathcal{B}\times \mathcal{C}&\longrightarrow \Riem^{+}(W)\\
(g_b,f_c)&\longmapsto \bar{g}_{b,c}=\bar{g}(g_b, f_c)
\end{split}
\end{equation*}

\noindent so that for each pair $(b,c)$, the metric $\bar{g}_{b,c}$ is the metric constructed in Theorem \ref{GLcob}.

\end{Theorem}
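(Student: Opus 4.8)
The plan is to reduce the family statement to the single-datum construction of Theorem \ref{GLcob}, carried out in ``slabs'', and then to feed each slab into the compact-family surgery result of Theorem \ref{GLcompact}. First I would record the elementary facts about a family of Morse functions: since each $f_c$ is Morse its critical points are nondegenerate, hence isolated, and by the implicit function theorem they vary continuously with $c$; the Hessians vary continuously, so the Morse index of each critical point, and the total number of critical points, are locally constant on $\mathcal{C}$. Using compactness, I would choose a finite open cover $\{\mathcal{C}_\alpha\}$ of $\mathcal{C}$ by sets small enough that over each $\mathcal{C}_\alpha$: (i) the critical points of $f_c$ are traceable, i.e.\ given by continuous maps $p_1,\dots,p_k\colon \mathcal{C}_\alpha\to \mathrm{int}\,W$ with fixed indices $\lambda_1,\dots,\lambda_k\le n-2$; (ii) there are continuous families of gradient-like vector fields for $f_c$ and of Morse coordinate charts around each $p_j(c)$ (via a fixed background metric together with the Morse lemma with parameters); and (iii) the critical values $v_j(c)=f_c(p_j(c))$ are separated by a continuous sequence of regular values $0=a_0(c)<a_1(c)<\cdots<a_m(c)=1$ inducing a partition $\{1,\dots,k\}=B_1\sqcup\cdots\sqcup B_m$ with $v_j(c)\in(a_{i-1}(c),a_i(c))$ for all $j\in B_i$, $c\in\mathcal{C}_\alpha$. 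The one point needing care in (iii) is that critical values may cross as $c$ varies; this is handled by throwing any two critical points whose values ever coincide over $\mathcal{C}_\alpha$ into the same block, so that a block may contain several critical points.

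Next I would run the iterated Gromov--Lawson construction of Theorem \ref{GLcob} slab by slab over $\mathcal{B}\times\mathcal{C}_\alpha$. Identify each level set $f_c^{-1}(a_i(c))$ with the fixed manifold $X_0$ by flowing along the chosen gradient-like vector field from $X_0$; under this identification the descending spheres of the critical points in block $B_i$ form a continuous family of \emph{pairwise disjoint} framed embedded spheres in $X_0$ --- disjointness being automatic, since two such spheres lie in the common sublevel set $f_c^{-1}(a_{i-1}(c))$ and consist of points flowing up to distinct critical points. Suppose inductively that over $\mathcal{B}\times\mathcal{C}_\alpha$ we have a continuous family of psc-metrics on $f_c^{-1}[0,a_{i-1}(c)]\cong X_0\times[0,a_{i-1}(c)]$ extending $g_b$ and product near the top. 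Applying Theorem \ref{GLcompact} simultaneously on disjoint tubular neighbourhoods of the descending spheres of $B_i$ --- with input data the family of ambient psc-metrics just built, the above family of embedded spheres, and the family of Morse charts --- and then extending over the elementary cobordism $f_c^{-1}[a_{i-1}(c),a_i(c)]$ exactly as in the proof of Theorem \ref{GLcob}, produces a continuous extension over $f_c^{-1}[0,a_i(c)]$, again product near the top. After $m$ steps this yields a continuous map $\mathcal{B}\times\mathcal{C}_\alpha\to\Riem^{+}(W)$, $(g_b,f_c)\mapsto\bar g_{b,c}$, which by construction equals $\bar g(g_b,f_c)$.

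Finally I would patch the local constructions into one global continuous map on $\mathcal{B}\times\mathcal{C}$. For this it suffices that the metric produced over $\mathcal{C}_\alpha$ is independent of the auxiliary choices --- the slab-separating regular values, the gradient-like vector fields (within their contractible range), and the coordinate charts --- beyond the dependence already intrinsic to $\bar g(g_b,f_c)$; this follows from the locality of the Gromov--Lawson modification near each surgery sphere, which is part of the content of Theorems \ref{GLcob} and \ref{GLcompact}. Granting this, the maps defined over the various $\mathcal{C}_\alpha$ agree on overlaps and glue to the desired continuous map $\mathcal{B}\times\mathcal{C}\to\Riem^{+}(W)$.

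I expect the main obstacle to be item (iii) together with the slab induction: keeping the decomposition of $W$ into elementary pieces continuous when critical values collide, and checking that performing several disjoint Gromov--Lawson surgeries at once on a block of coincident critical levels yields a metric depending continuously on $(g_b,f_c)$ and agreeing with the one-at-a-time construction --- i.e.\ that the order in which the disjoint surgeries within a block are carried out is immaterial. The rest is a parametrised repackaging of the proofs of Theorems \ref{GLcob} and \ref{GLcompact}.
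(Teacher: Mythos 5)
Your overall strategy, namely parametrising the slab-by-slab construction of Theorem \ref{GLcob} and feeding each critical level into the compact-family surgery result, Theorem \ref{GLcompact}, is the same as the paper's, which disposes of the theorem in a few lines by viewing $\{f_c\}$ as a continuous rearrangement of the critical points, hence of the embedded surgery spheres, and invoking Theorem \ref{GLcompact}. The gap is in your final patching step. You assert that the metric produced over $\mathcal{C}_\alpha$ is independent of the auxiliary choices (separating regular values, gradient-like fields, charts) ``by locality of the Gromov--Lawson modification'', so that the locally defined maps agree on overlaps. This is not so: the output of Theorems \ref{IsotopyTheorem}, \ref{ImprovedsurgeryTheorem} and \ref{GLcob} depends genuinely on non-local parameter choices --- the tubular radius $\bar{r}$, the bending curve $\gamma$ (built against the constant $R_0$ coming from the infimum of the scalar curvature, as in inequality~(\ref{cureqn})), the torpedo radius $\delta$, the tube lengths and transition regions, and, in your scheme, the slab-separating values $a_i(c)$ --- and there is no canonical ``the'' Gromov--Lawson cobordism. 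The paper makes exactly this point in the Remark following the proof of Theorem \ref{GLcompact}: only the existence of a continuous map is claimed, and writing down a well-defined function would require incorporating all of these parameter choices. Consequently the constructions over $\mathcal{C}_\alpha$ and $\mathcal{C}_\beta$, made from different auxiliary data, will in general yield different (albeit isotopic) metrics on the overlap, and the gluing fails as stated.

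The remedy is to avoid local-to-global patching altogether, which is what the paper does: by compactness of $\mathcal{B}\times\mathcal{C}$ all auxiliary data can be chosen once, uniformly over the whole family --- a single $\bar{r}$ below the infimum of the relevant injectivity radii, a single curve $\gamma$ satisfying inequality~(\ref{cureqn}) with $R_0$ taken over all pairs $(b,c)$, a single $\delta$ --- so that the construction of Theorem \ref{GLcob} is run once over the entire family, with Theorem \ref{GLcompact} supplying continuity at each critical level. Alternatively you could keep your cover but choose the separating regular values, gradient-like vector fields and all radii continuously over the whole of $\mathcal{C}$ rather than over each $\mathcal{C}_\alpha$ separately; either way the appeal to independence of choices must be replaced by one coherent global choice. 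Your other worry, about colliding critical values, is comparatively harmless: the proof of Theorem \ref{GLcob} already treats non-elementary critical levels by performing the disjoint surgeries simultaneously, so blocks of coincident levels need nothing beyond disjointness of the tubular neighbourhoods, which your trajectory argument provides.
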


\begin{Remark}
In \cite{Che}, Chernysh also describes a parameterised version of the orginal Gromov-Lawson construction for compact families of psc-metrics.
\end{Remark}

In section \ref{GLconcordsection}
we construct an example of a GL-concordance on the cylinder $S^{n}\times I$. Here $g_0=ds_n^{2}$, the standard round metric and $f$ is an admissible Morse function with two critical points which have Morse indices $p+1$ and $p+2$ where $p+q+1=n$ and $q\geq 3$. The critical point of index $p+1$ corresponds to a $p$-surgery on $S^{n}$ resulting in a manifold diffeomorphic to $S^{p+1}\times S^{q}$. This is then followed by a $(p+1)$-surgery which restores the orginal manifold $S^{n}$. The restriction of the metric $\bar{g}(ds_{n}^{2}, f)$ to level sets of $f$ below, between and above these critical points is denoted $g_0$, $g_0'$ and $g_1$ respectively, see Fig. \ref{introconc}. The metric $g_1$ is also a psc-metric on $S^{n}$, but as Fig. \ref{introconc} suggests, looks radically different from the original metric $g_0$. Understanding why these metrics are in fact isotopic is crucial in proving our main result, stated below.

\begin{figure}[htbp]
\begin{picture}(0,0)%
\includegraphics{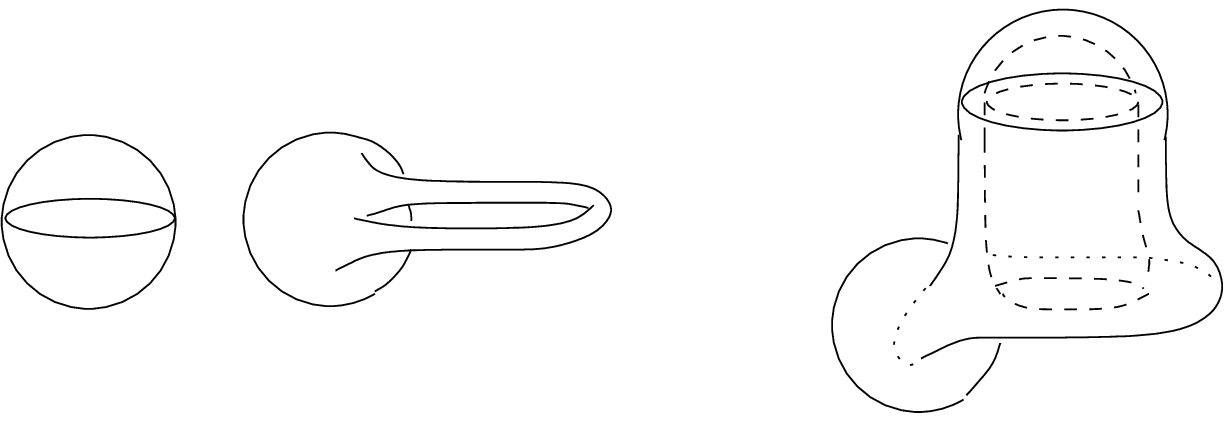}%
\end{picture}%
\setlength{\unitlength}{3947sp}%
\begingroup\makeatletter\ifx\SetFigFont\undefined%
\gdef\SetFigFont#1#2#3#4#5{%
  \reset@font\fontsize{#1}{#2pt}%
  \fontfamily{#3}\fontseries{#4}\fontshape{#5}%
  \selectfont}%
\fi\endgroup%
\begin{picture}(5878,2206)(978,-2540)
\put(1089,-2149){\makebox(0,0)[lb]{\smash{{\SetFigFont{10}{8}{\rmdefault}{\mddefault}{\updefault}{\color[rgb]{0,0,0}$g_0$}%
}}}}
\put(2376,-2161){\makebox(0,0)[lb]{\smash{{\SetFigFont{10}{8}{\rmdefault}{\mddefault}{\updefault}{\color[rgb]{0,0,0}$g_0'$}%
}}}}
\put(3039,-1061){\makebox(0,0)[lb]{\smash{{\SetFigFont{10}{8}{\rmdefault}{\mddefault}{\updefault}{\color[rgb]{0,0,0}$g_{tor}^{p+1}+\delta^{2}ds_{q}^{2}$}%
}}}}
\put(4421,-624){\makebox(0,0)[lb]{\smash{{\SetFigFont{10}{8}{\rmdefault}{\mddefault}{\updefault}{\color[rgb]{0,0,0}$g_{tor}^{p+2}+\delta^{2}ds_{q-1}^{2}$}%
}}}}
\put(5801,-2474){\makebox(0,0)[lb]{\smash{{\SetFigFont{10}{8}{\rmdefault}{\mddefault}{\updefault}{\color[rgb]{0,0,0}$g_1$}%
}}}}
\end{picture}%

\caption{Applying the Gromov-Lawson construction over a pair of cancelling surgeries of consecutive dimension}
\label{introconc}
\end{figure}

\begin{Theorem}\label{conciso} Let $X$ be a closed simply connected manifold with $dimX=n\geq5$ and let $g_0$ be a positive scalar curvature metric on $X$. Suppose $\bar{g}=\bar{g}(g_0,f)$ is a Gromov-Lawson concordance with respect to $g_0$ and an admissible Morse function $f:X\times I\rightarrow I$. Then the metrics $g_0$ and $g_1=\bar{g}|_{X\times\{1\}}$ are isotopic.
\end{Theorem}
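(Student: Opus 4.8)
The plan is to reduce the global statement to a purely local one about a single elementary cobordism and then, by induction on the number of critical points of $f$, to the case of a pair of cancelling surgeries of consecutive index, which is exactly the configuration analysed in Figure \ref{introconc}. First I would use the decomposition of $W=X\times I$ induced by $f$ into elementary cobordisms $C_1,\dots,C_k$, with regular level sets $X=X^{(0)},X^{(1)},\dots,X^{(k)}=X$ and restricted psc-metrics $g^{(0)}=g_0,\dots,g^{(k)}=g_1$. Since $X$ is simply connected with $n\ge 5$ and the total cobordism is a product, handle-cancellation theory (the cancellation lemma from Morse theory, in the form used in the $h$-cobordism theorem) lets me modify $f$ — after first arranging the indices to be non-decreasing along the flow and the critical points to occur in cancelling pairs — so that $W\times I$ carries a one-parameter family interpolating between $f$ and a function with strictly fewer critical points. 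The point of Theorem \ref{GLcobordismcompact} is that the GL-cobordism construction can be run over such a family, so isotopy of the endpoint metrics is preserved under these moves; thus it suffices to treat the move that cancels a single adjacent pair.

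Next I would isolate the pair: by Theorem \ref{GLcob} applied below the pair and Theorem \ref{Reversemorse} applied above it, the question becomes whether, for a psc-metric $h$ on a level set $Y$ and an admissible Morse function $\phi$ on $Y\times I$ with exactly two critical points of consecutive indices $p+1$ and $p+2$ which cancel (so $Y\times I$ with this handle structure is again a product), the metric $\bar g(h,\phi)|_{Y\times\{1\}}$ is isotopic to $h$. The embedded surgery sphere for the first handle has trivial normal bundle, and by the codimension hypothesis $q\ge 3$; after the first surgery the manifold is $S^{p+1}\times S^q$ (locally, inside a tube), carrying the standard double-torpedo metric $g_{tor}^{p+1}+\delta^2 ds_q^2$, and the second surgery is performed along the core sphere $S^{p+1}\times\{pt\}$, which is precisely the belt sphere situation. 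Here I would invoke the explicit model computation of section \ref{GLconcordsection}: the composite of the two standard surgeries on the standard piece returns a metric that, by an explicit isotopy built from rescaling torpedoes and rotating the handle, is isotopic to the standard round metric on that piece, rel the complement where nothing was changed. Gluing this local isotopy to the identity isotopy on $X\setminus(\text{tube})$, and using Theorem \ref{GLcompact}/\ref{GLcobordismcompact} to keep everything continuous, yields $g^{(i+2)}\simeq g^{(i)}$ and closes the induction.

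The main obstacle I expect is the local model step: showing that running the Gromov-Lawson construction over the two cancelling surgeries produces a metric on the standard handle that is \emph{canonically} isotopic back to the round/torpedo metric, with the isotopy fixed near the boundary of the affected region so that it extends by the identity. This requires understanding precisely how the transition (''bending'') region of the first construction sits inside the standard region on which the second surgery is performed — one has to verify that the surgery sphere for the second handle can be isotoped into the standard $S^{p+1}\times S^q$-part, with a tubular neighbourhood on which the metric is an honest product $ds_{p+1}^2+g_{tor}^{q+1}$, so that the second Gromov-Lawson construction is applied to a genuinely standard piece and its output is again standard. Controlling the scalar curvature throughout the interpolating isotopy (keeping it positive as torpedo radii and neck parameters are varied) is the technical heart, and it is where the dimension and codimension hypotheses ($n\ge5$, $q\ge3$) and simple connectivity (needed to get the handles into cancelling position with matching indices) are genuinely used.
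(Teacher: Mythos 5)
Your overall strategy (rearrange the critical points of $f$ into cancelling pairs of consecutive index and reduce everything to the two-surgery local model) is the same as the paper's, but two steps in your reduction do not work as stated. First, you propose to interpolate between $f$ and a Morse function with strictly fewer critical points and to run the Gromov--Lawson construction over that one-parameter family using Theorem \ref{GLcobordismcompact}. That theorem only applies to compact families of \emph{admissible Morse} functions; any path that actually cancels a pair of critical points must pass through a birth--death singularity, and extending the construction continuously through such a degeneration is precisely the hard problem deferred to Part II (Theorem B), whose proof in turn rests on Theorem \ref{conciso}. So this move is circular. The paper never cancels the critical points of $f$: it keeps $f$ (rearranged within its path component of Morse triples, using Theorem \ref{rearrangement} and Corollary \ref{GLmetricisotopy}) and proves directly, in Theorem \ref{concisodoublesurgery}, that the restriction of $\bar g(g_0,f)$ to the level set above a cancelling pair is isotopic to its restriction below the pair. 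Relatedly, the local statement you ``invoke'' is not available to be invoked: section \ref{GLconcordsection} only constructs the example and describes the resulting metric $g''$; the isotopy $g''\simeq g$ is the content of section \ref{doublesurgsection}, where both $g$ and $g''$ are deformed (via the embedding into $(\mathbb{R}^{n},h)$, the foliation argument, mixed torpedo metrics and Lemma \ref{toriso}, together with the family version Theorem \ref{GLcompact}), and it is the geometric heart of the whole theorem rather than a computation one can quote.

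Second, your induction does not close for critical points of index $0$ and $1$, which an admissible Morse function on $X\times I$ is allowed to have. The algebraic pairing argument (exactness of the Morse complex of $(X\times I,X)$, realising bases by trajectory disks, and the strong cancellation theorem \ref{cancelthm2}) requires the intermediate level sets to be simply connected, and this fails as soon as index $1$ critical points are present; moreover the basis-realisation theorem itself needs the indices to lie in the range $2\le k\le n-2$. The paper spends a substantial part of section \ref{glconcisosection} on exactly this: index $0$ points are paired with index $1$ points via theorem 8.1 of \cite{Smale}, and each \emph{excess} index $1$ point is neutralised by inserting an auxiliary index $2$/index $3$ pair in a collar where $\bar g$ is a product, then using Whitney's theorem (Theorem \ref{Whitney}, Lemma \ref{idealcircle}) to slide the new index $2$ trajectory sphere onto a circle meeting the index $1$ outgoing sphere once, so that Theorem \ref{concisodoublesurgery} and then Theorem \ref{concisoeasy} apply. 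Without an argument of this kind, ``simple connectivity gets the handles into cancelling position'' is not enough, and the general case is not reduced to your local model.
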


\noindent The proof of Theorem \ref{conciso} takes place over sections \ref{doublesurgsection} and \ref{glconcisosection}. In section \ref{doublesurgsection} we prove the theorem in the case when $f$ has exactly two ``cancelling" critical points. This is the key geometric step and draws heavily from some important technical observations made in section \ref{prelim}. The general case then follows from Morse-Smale theory and the fact that the function $f$ can be deformed to one whose critical points are arranged in cancelling pairs.

\subsection{The connection with generalised Morse functions and Part II}\label{introthree}

This paper is the first part of a larger project to be completed in a subsequent paper. We will present here only a brief summary of the main results. The goal of this work is to better understand the space of Gromov-Lawson cobordisms on a smooth compact cobordism and in particular, the space of Gromov-Lawson concordances. More specifically, let $\{W;X_0,X_1\}$ be a smooth compact cobordism of closed $n$-manifolds $X_0$ and $X_1$. For the rest of this section we will assume that $W, X_0$ and $X_1$ are simply connected manifolds and $n\geq 5$. Let $g_0$ be a fixed psc-metric on $X_0$. We will consider the subspace of $\Riem^{+}(W)$ consisting of all Gromov-Lawson cobordisms which extend the metric $g_0$ from $X\times\{0\}$. This space is denoted $\G(g_0)$. Roughly speaking, Theorems \ref{GLcob} and \ref{GLcobordismcompact} allow us parameterise this space by admissible Morse functions. 

The space of admissible Morse functions on $W$ is denoted $\M^{adm}(W)$ and can be thought of as a subspace of the space of Morse functions $W\rightarrow I$, denoted $\M(W)$. A good deal is understood about the topology of the space $\M(W)$, in particular, see \cite{I3}. It is clear that this space is not path connected, as functions in the same path component must have the same number of critical points of the same index. The space $\G(g_0)$ therefore consists of separate components which correspond to the various path components of $\M^{adm}(W)$. This however, gives a rather misleading picture, as it is possible for appropriate pairs of Morse critical points to cancel, giving rise to a simpler handle decomposition of $W$. In the proof of Theorem \ref{conciso}, we discuss a corresponding ``geometric cancellation" which simplifies a psc-metric associated to this Morse function. In order to obtain a more complete picture of the space of GL-cobordisms, we need to incorporate this cancellation property into our description.

There is a natural setting in which to consider the cancellation of Morse critical points. Recall that near a critical point $w$, a Morse function $f\in\M(W)$ is locally equivalent to the map

\begin{equation*}
(x_1, \cdots, x_{n+1})\longmapsto -x_1^{2}\cdots-x_{p+1}^{2}+x_{p+2}^{2}+\cdots+x_{n+1}^{2}.
\end{equation*}

\noindent A critical point $w$ of a smooth function $f:W\rightarrow I$ is said to be of {\em birth-death} or {\em embryo} type if near $w$, $f$ is equivalent to the map

\begin{equation*}
(x_0, \cdots, x_{n})\longmapsto x_0^{3}-x_{1}^{2}\cdots-x_{p+1}^{2}+x_{p+2}^{2}+\cdots+x_{n}^{2}.
\end{equation*}

\noindent A {\em generalised Morse function} $f:W\rightarrow I$ is a smooth function with $f^{-1}(0)=X_0$, $f^{-1}(1)=X_1$ and whose singular set is contained in the interior of $W$ and consists of only Morse and birth-death critical points. There is a natural embedding of $\M(W)$ into the space of generalised Morse functions $\H(W)$. This is a path-connected space since birth-death singularities allow for the cancellation of Morse critical points of consecutive index, see \cite{Cerf}. Before going any further it is worth considering a couple of examples of this sort of cancellation.

\begin{Example}
{\rm The function $F(x,t)=x^{3}+tx$ can be thought of as a smooth family of functions $x\longmapsto F(x,t)$ parameterised by $t$. When $t<0$, the map $x\longmapsto F(x,t)$ is a Morse function with 2 critical points which cancel as a degenerate singularity of the function $x\longmapsto F(x,0)$. The function $x\longmapsto F(x,0)$ is an example of a generalised Morse function with a birth-death singularity at $x=0$.}
\end{Example}
 
\begin{Example}
{\rm More generally, any two Morse functions $f_0, f_1\in \M(W)$ may be connected by a path $f_t, t\in I$ in the space $\H(W)$ so that $f_t$ is Morse for all but finitely many $t\in I$. In Fig. \ref{genmorsetwo} we sketch using selected level sets, a path $f_t, t\in[-1,1]$, in the space $\H(S^{n}\times I)$ which connects a Morse function $f_{-1}$ with two critical points of consecutive Morse index to a Morse function $f_1$ which has no critical points. We will assume that the critical points of $f_{-1}$ lie on the level sets $f_{-1}=\frac{1}{4}$ and $f_{-1}=\frac{3}{4}$ and that $f_{0}$ has only a birth-death singularity on the level set $f_{0}=\frac{1}{2}$.

\begin{figure}[htbp]
\begin{picture}(0,0)%
\includegraphics{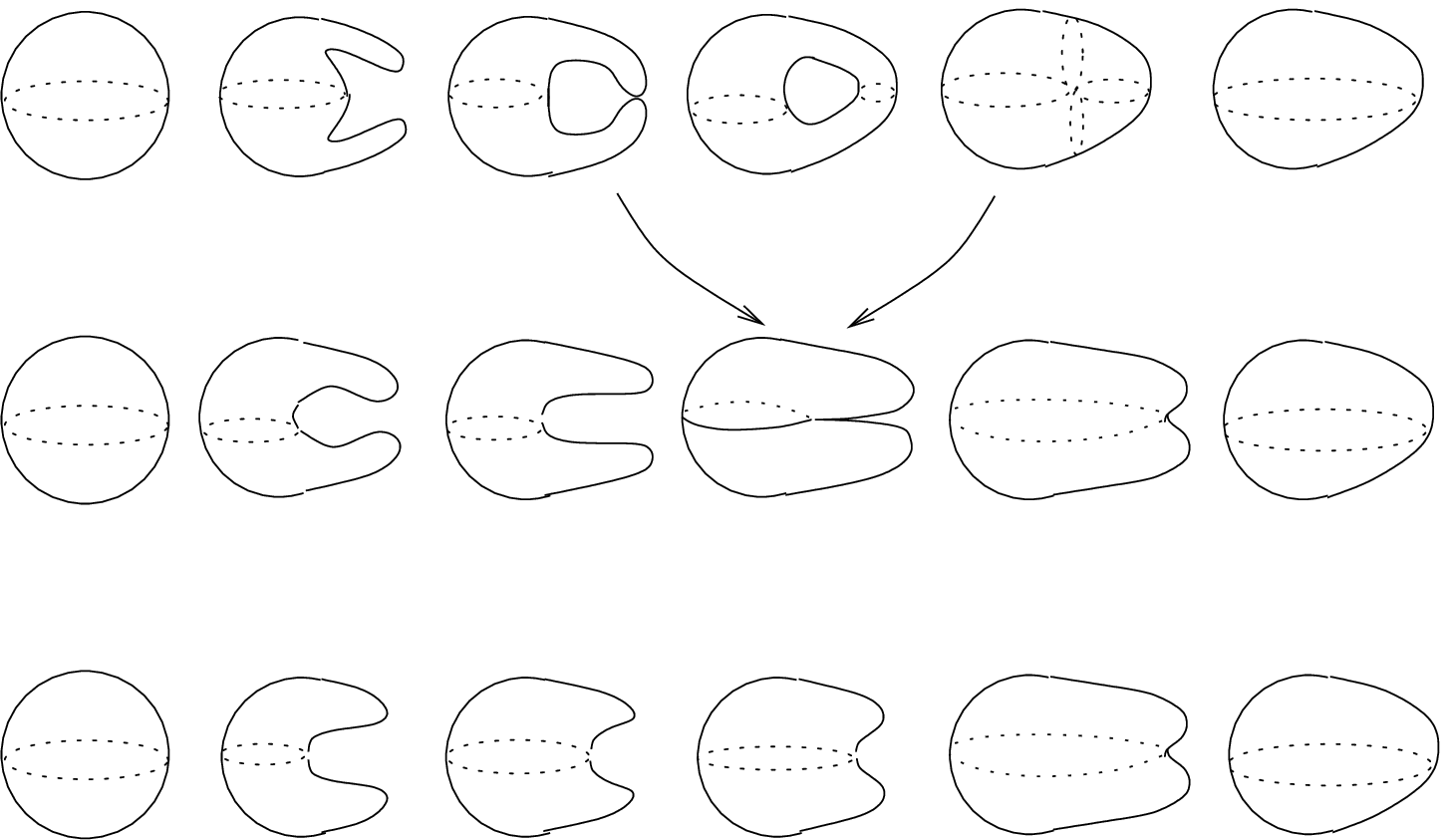}%
\end{picture}%
\setlength{\unitlength}{3947sp}%
\begingroup\makeatletter\ifx\SetFigFont\undefined%
\gdef\SetFigFont#1#2#3#4#5{%
  \reset@font\fontsize{#1}{#2pt}%
  \fontfamily{#3}\fontseries{#4}\fontshape{#5}%
  \selectfont}%
\fi\endgroup%
\begin{picture}(7044,4231)(1397,-5151)
\put(1514,-1074){\makebox(0,0)[lb]{\smash{{\SetFigFont{10}{8}{\rmdefault}{\mddefault}{\updefault}{\color[rgb]{0,0,0}$f_t=0$}%
}}}}
\put(2576,-1086){\makebox(0,0)[lb]{\smash{{\SetFigFont{10}{8}{\rmdefault}{\mddefault}{\updefault}{\color[rgb]{0,0,0}$f_t=\frac{1}{8}$}%
}}}}
\put(3764,-1124){\makebox(0,0)[lb]{\smash{{\SetFigFont{10}{8}{\rmdefault}{\mddefault}{\updefault}{\color[rgb]{0,0,0}$f_t=\frac{1}{4}$}%
}}}}
\put(4976,-1099){\makebox(0,0)[lb]{\smash{{\SetFigFont{10}{8}{\rmdefault}{\mddefault}{\updefault}{\color[rgb]{0,0,0}$f_t=\frac{1}{2}$}%
}}}}
\put(6151,-1099){\makebox(0,0)[lb]{\smash{{\SetFigFont{10}{8}{\rmdefault}{\mddefault}{\updefault}{\color[rgb]{0,0,0}$f_t=\frac{3}{4}$}%
}}}}
\put(7526,-1099){\makebox(0,0)[lb]{\smash{{\SetFigFont{10}{8}{\rmdefault}{\mddefault}{\updefault}{\color[rgb]{0,0,0}$f_{t}=\frac{7}{8}$}%
}}}}
\put(8376,-1549){\makebox(0,0)[lb]{\smash{{\SetFigFont{10}{8}{\rmdefault}{\mddefault}{\updefault}{\color[rgb]{0,0,0}$t=-1$}%
}}}}
\put(8414,-3099){\makebox(0,0)[lb]{\smash{{\SetFigFont{10}{8}{\rmdefault}{\mddefault}{\updefault}{\color[rgb]{0,0,0}$t=0$}%
}}}}
\put(8426,-4724){\makebox(0,0)[lb]{\smash{{\SetFigFont{10}{8}{\rmdefault}{\mddefault}{\updefault}{\color[rgb]{0,0,0}$t=1$}%
}}}}
\end{picture}%

\caption{Two Morse critical points cancelling at a birth-death singularity, from the point of view of selected level sets}
\label{genmorsetwo}

\end{figure}
}
\end{Example}

In order to capture this cancellation property in our description of the space of GL-cobordisms we specify a space of {\em admissible generalised Morse functions}, $\H^{adm}(W)$ on which a construction analogous to that of Theorem \ref{GLcob} can be applied, see Theorem A below. 
\\

\noindent {\bf Theorem A.} {\em Let $\{W^{n+1};X_0,X_1\}$ be a smooth compact cobordism. Let $g_0$ be a psc-metric on $X_0$ and $f:W\rightarrow I$, an admissible generalised Morse function. Then there is a psc-metric $\bar{g}_{gen}=\bar{g}_{gen}(g_0,f)$ on $W$, which extends $g_0$ and has a product structure near the boundary.}

The metric $\bar{g}(g_0,f)$ is called a {\em generalised Gromov-Lawson cobordism with respect to $g_0$ and $f$} and in the case when $W=X\times I$, this metric is called a {\em generalised Gromov-Lawson concordance}. In the case of admissible Morse functions, it was easy to extend the construction of Theorem \ref{GLcob}, as a continuous map on compact families, see Theorem \ref{GLcobordismcompact}. A far more difficult problem is proving an analogous theorem for admissible generalised Morse functions, in other words extending the construction of Theorem A as a continuous map on a compact family of generalised Morse functions. With such a theorem in hand however, it is possible to parameterise a space of {\em generalised Gromov-Lawson cobordisms}, which connects up the various path components of $\G(g_0)$ and presents a more realistic topological picture of what is going on. We denote this space by $\G^{gen}(g_0)$.

The main difficulty in proving this theorem is the geometric problem of continuously varying the construction of a GL-cobordism over a pair of cancelling critical points. The key ingredient which allows us to do this is Theorem \ref{conciso}. A convenient setting in which to prove an analogue of Theorem \ref{GLcobordismcompact} is described by Eliashberg and Mishachev in their work on ``wrinklings" of smooth maps, see \cite{El1} and \cite{El2}. Let $K$ be a smooth compact manifold of dimension $k$. Roughly speaking, a smooth map $f:W\times K\rightarrow K\times I$ is {\em wrinkled} if the restriction $f_{x}$ of $f$ to $W\times\{x\}$, for each $x\in K$, is a generalised Morse function and if the singular set of $f$ in $W\times K$ is a disjoint union of $k$ dimensional spheres, called wrinkles. The equator of each sphere consists of birth-death singularities, while the interiors of the northern and southern hemi-spheres consist of Morse critical points of consecutive index which cancel at the equator. After defining an appropriate notion of admissibility for a wrinkled map we can prove the following theorem. 
\\

\noindent{\bf Theorem B.} 
{\em Let $\{W;X_0,X_1\}$ be a smooth compact cobordism with $g_0$ a psc-metric on $X_0$. Let $K$ be a smooth compact manifold and $f:W\times K\rightarrow K\times I$ an admissible wrinkled map. Then there is a continuous map
\begin{equation*}
\begin{split}
K\longrightarrow&\Riem^{+}(W)\\
x\longmapsto &\bar{g}_{gen}^{x}=\bar{g}_{gen}(g_0, f_x)
\end{split}
\end{equation*}
where each $\bar{g}_{gen}^{x}$ is a generalised Gromov-Lawson cobordism.}
\\

\noindent {\bf Corollary C.} 
{\em Suppose $f_a, f_b\in\M^{adm}(W)$. Let ${g}_a=\bar{g}(g_0, f_a)|_{X_0}$ and ${g}_b=\bar{g}(g_0, f_b)|_{X_0}$ denote the restriction to $X_0$ of GL-cobordisms with respect to $f_a$ and $f_b$. Then $g_a$ and $g_b$ are isotopic metrics.}

\begin{Remark}
Corollary C is a generalisation of Theorem \ref{conciso}. However, the key geometric details which make Theorem B and Corollary C possible are contained in the proof of Theorem \ref{conciso}. 
\end{Remark}

\noindent Our main result is Theorem D below.
\\

\noindent {\bf Theorem D.} 
{\em The space of generalised GL-cobordisms $\G^{gen}(g_0)$ is weakly homotopy equivalent to the space of admissible generalised Morse functions $\H^{adm}(W)$.}
\\
 
In the case when $W$ is the cylinder $X\times I$, the space $\G^{gen}(g_0)$ consists of generalised GL-concordances. A considerable amount is already known about the space $\H(X\times I)$, of generalised Morse functions on the cylinder from the work of Cerf and Igusa, see \cite{Cerf}, \cite{I0}, \cite{I1}, \cite{I2} and \cite{I3}. Furthermore, from the work of Cohen in \cite{CO}, on the stable space of generalised Morse functions, it is possible to gain explicit information about some higher homotopy groups of $\H^{adm}(X\times I)$ and consequently about higher homotopy groups of $\G^{gen}(g_0)$. 
  
\subsection{Acknowledgements} This work forms part of the author's doctoral dissertation. I am deeply grateful to my advisor Boris Botvinnik for suggesting this problem and for his guidance over the years. Some of this work took place at the Royal Institute of Technology (KTH) in Stockholm, Sweden, as well as at SFB 478 - Geometrische Strukturen in der Mathematik in M{\"u}nster, Germany. My thanks to both institutions and in particular to M. Dahl, M. Joachim and W. L{\"u}ck for their hospitality. I am also grateful to W.M. Kantor at the University of Oregon for generous financial support from his NSF grant. Finally, it is a pleasure to thank David Wraith at NUI Maynooth, Ireland for many helpful conversations.  
  
\section{Definitions and preliminary results}\label{prelim}
\subsection{Isotopy and concordance in the space of 
metrics of positive scalar curvature}
\indent Throughout this paper, $X$ will denote a smooth closed
compact manifold of dimension $n$. In later sections we will also require
that $X$ be simply connected and that $n\geq 5$. We will denote by
$\Riem(X)$, the space of all Riemannian metrics on $X$. The topology
on this space is induced by the standard $C^k$-norm on Riemannian
metrics and defined $|g|_k=\max_{i\leq k}\sup_X|\nabla^{i}g|$. Here $\nabla$ 
is the Levi-Civita connection for some fixed reference metric and $|\nabla^{i} g|$ 
is the Euclidean tensor norm on $\nabla^{i}g$, see page 54 of \cite{P} for a definition.
Note that the topology on $\Riem(X)$ does not depend on the choice of reference metric. 
For our purposes it is sufficient (and convenient) to assume that $k=2$.

Contained inside $\Riem(X)$, as an open subspace, is the space
\begin{equation*}
\Riem^{+}(X)=\{g\in\Riem(X):R_g>0\}. 
\end{equation*}
Here $R_g:X\rightarrow \mathbb{R}$ denotes the scalar curvature of the
metric $g$, although context permitting we will sometimes denote the
scalar curvature of a metric as simply $R$. The space $\Riem^{+}(X)$
is the space of metrics on $X$ whose scalar curvature function is
strictly positive. Henceforth, such metrics will be referred to as
{\em positive scalar curvature metrics} and the term {\em positive
scalar curvature} will frequently be abbreviated by the initials {\em
psc}. As mentioned in the introduction, the problem of whether or not
$X$ admits any {\em psc}-metrics has been extensively studied and so
unless otherwise stated, we will assume we are working with $X$ so that
$\Riem^{+}(X)\neq \emptyset$.

It is a straightforward exercise in linear algebra to show that
$\Riem(X)$ is a convex space, i.e. for any pair $g_0,g_1\in\Riem(X)$,
the path $sg_0+(1-s)g_1$, where $s\in I$, lies entirely in
$\Riem(X)$. The topology of $\Riem^{+}(X)$ on the other hand is far
less understood, even at the level of $0$-connectedness. Before
discussing this any further it is necessary to define the following
equivalence relations on $\Riem^{+}(X)$.
\begin{Definition}
{\rm 
The metrics $g_0$ and $g_1$ are said to be {\em {isotopic}} if
they lie in the same path component of $\Riem^{+}(X)$. A path $g_s, s\in I$ in
$\Riem^{+}(X)$ connecting $g_0$ and $g_1$ is known as an {\em
{isotopy}}.}
\end{Definition}
\begin{Definition}
{\rm If there is a metric of positive scalar curvature $\bar{g}$ on the
cylinder $X\times I$ so that for some $\delta>0$,
$\bar{g}|_{X\times[0,\delta]}=g_0+ds^2$ and
$\bar{g}|_{X\times[1-\delta,1]}=g_1+ds^2$, then $g_0$ and $g_1$ are
said to be {\em {concordant}}. The metric $\bar{g}$ is known as a
{\em {concordance}}.}
\end{Definition}

The following lemma is well known and proofs of various versions of it
are found in \cite{GL1}, \cite{Gajer} and \cite{RS}.
\begin{Lemma}\label{isotopyimpliesconc}
Let $g_{r}, r\in I$ be a smooth path in $\Riem^{+}(X)$. Then there exists a constant $0<\Lambda\leq 1$ so that
for every smooth function $f:\mathbb{R}\rightarrow[0,1]$ with
$|\dot{f}|,|\ddot{f}|\leq\Lambda$, the metric $g_{f(s)}+ds^{2}$ on
$X\times\mathbb{R}$ has positive scalar curvature.
\end{Lemma}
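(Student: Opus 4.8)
The plan is to compute the scalar curvature of the metric $\bar g = g_{f(s)} + ds^2$ on $X \times \mathbb{R}$ directly, and to show that it converges uniformly to $R_{g_r} > 0$ as $|\dot f|, |\ddot f| \to 0$. First I would fix the smooth path $r \mapsto g_r$ and observe that, since $I$ is compact and $r \mapsto R_{g_r}$ is continuous into $C^0(X)$, there is a constant $c > 0$ with $R_{g_r} \geq c$ for all $r \in I$ and all points of $X$. Next I would write down, in local coordinates on $X$ (with $s$ the extra coordinate), the Christoffel symbols and curvature of $\bar g$. Because $\bar g$ splits as a sum with $ds^2$ and the $X$-part depends on $s$ only through $f(s)$, all the ``extra'' terms in the curvature — those involving differentiation in the $s$-direction — come with factors of $\dot f(s)$ or $\ddot f(s)$. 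Schematically, one gets
\begin{equation*}
R_{\bar g}(x,s) = R_{g_{f(s)}}(x) + \dot f(s)^2\, A(x, f(s)) + \ddot f(s)\, B(x, f(s)),
\end{equation*}
where $A$ and $B$ are expressions built from $g_r$, its inverse, and the first and second $r$-derivatives $\partial_r g_r$, $\partial_r^2 g_r$, together with up to two spatial derivatives of these.

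The key point is then a uniform bound: since $r \mapsto g_r$ is a smooth path on the compact interval $I$ and $X$ is compact, the tensors $g_r$, $g_r^{-1}$, $\partial_r g_r$, $\partial_r^2 g_r$ and their spatial derivatives up to the needed order are all uniformly bounded in $r$ and over $X$. Hence there is a constant $M > 0$, depending only on the path $g_r$, with $|A(x,r)| \leq M$ and $|B(x,r)| \leq M$ for all $x \in X$, $r \in I$. Therefore, whenever $|\dot f|, |\ddot f| \leq \Lambda$,
\begin{equation*}
R_{\bar g}(x,s) \geq c - M\Lambda^2 - M\Lambda \geq c - 2M\Lambda
\end{equation*}
(using $\Lambda \leq 1$ so $\Lambda^2 \leq \Lambda$). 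Choosing $\Lambda = \min\{1,\, c/(4M)\}$ gives $R_{\bar g} \geq c/2 > 0$ everywhere, which is exactly the assertion. Note $\Lambda$ depends only on $g_r$ (through $c$ and $M$) and not on $f$, as required.

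The main obstacle is the bookkeeping in the curvature computation: one must verify carefully that \emph{every} term of $R_{\bar g}$ which is not $R_{g_{f(s)}}$ genuinely carries a factor of $\dot f$ or $\ddot f$ — i.e. that there are no $s$-independent ``cross'' contributions surviving at $\dot f = \ddot f = 0$ — and that no term contains $\dot f$ or $\ddot f$ to a negative power or divided by a quantity that could degenerate. This follows from the warped/product structure: at a point where $\dot f(s_0) = 0$ the metric $\bar g$ agrees to second order in $s$ with the genuine product $g_{f(s_0)} + ds^2$, whose scalar curvature is just $R_{g_{f(s_0)}}$; so the difference is $O(\dot f) + O(\ddot f)$ with coefficients controlled by finitely many derivatives of the path. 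Once this structural fact is in place, the uniform estimate and the choice of $\Lambda$ are routine. (One could alternatively invoke the standard formula for the scalar curvature of a metric of the form $g_t + dt^2$ on a mapping cylinder, which already exhibits the second fundamental form $\tfrac12 \dot f\, \partial_r g_r$ and its $s$-derivative explicitly, and read the claim off directly.)
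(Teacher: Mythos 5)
Your proposal is correct and is essentially the standard argument: the paper itself does not prove Lemma \ref{isotopyimpliesconc} but cites it to \cite{GL1}, \cite{Gajer} and \cite{RS}, where the proof proceeds exactly as you outline, namely writing the scalar curvature of $g_{f(s)}+ds^{2}$ as $R_{g_{f(s)}}$ plus terms carrying factors of $\dot{f}^{2}$ and $\ddot{f}$ (via the second fundamental form $\tfrac12\dot{f}\,\partial_r g_r$ of the slices), bounding the coefficients uniformly by compactness of $X\times I$, and choosing $\Lambda$ small against a uniform lower bound $c>0$ for $R_{g_r}$. The only quibble is your heuristic remark that at a point with $\dot{f}(s_0)=0$ the metric agrees with the product ``to second order'' (that would also require $\ddot{f}(s_0)=0$), but this aside is not load-bearing since your displayed formula for $R_{\bar g}$ already gives the needed estimate.
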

\begin{Corollary}\label{isoconc} 
Metrics which are isotopic are also concordant.
\end{Corollary}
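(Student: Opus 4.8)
The plan is to deduce this immediately from Lemma \ref{isotopyimpliesconc} by exhibiting, for a given isotopy, a concrete reparametrisation function $f:\mathbb{R}\to[0,1]$ that is sufficiently slowly varying and that is constant ($0$ near the left end, $1$ near the right end) so that the metric $g_{f(s)}+ds^{2}$ both has positive scalar curvature and has the required product structure near $X\times\{0\}$ and $X\times\{1\}$. Suppose $g_0$ and $g_1$ are isotopic, so there is a smooth path $g_r$, $r\in I$, in $\Riem^{+}(X)$ with endpoints $g_0$ and $g_1$. Apply Lemma \ref{isotopyimpliesconc} to obtain the constant $0<\Lambda\le 1$.

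First I would construct a smooth, monotone non-decreasing function $f:\mathbb{R}\to[0,1]$ with $f\equiv 0$ on $(-\infty,\delta]$ and $f\equiv 1$ on $[1-\delta,\infty)$ for some small $\delta>0$, and with $|\dot f|,|\ddot f|\le\Lambda$ everywhere. Such a function exists: take any fixed smooth monotone ``ramp'' from $0$ to $1$ supported in a compact interval, then stretch its domain by a large factor $N$; stretching by $N$ scales $\dot f$ by $1/N$ and $\ddot f$ by $1/N^{2}$, so for $N$ large enough both derivative bounds hold, and one can then affinely rescale the $s$-coordinate so that the transition happens inside $[\delta,1-\delta]\subset[0,1]$ (shrinking $\delta$ as needed and enlarging $N$ accordingly; note enlarging $N$ only helps the derivative bounds). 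By Lemma \ref{isotopyimpliesconc}, the metric $\bar g := g_{f(s)}+ds^{2}$ on $X\times\mathbb{R}$ has positive scalar curvature; restricting to $X\times I$ gives a psc-metric on the cylinder. On $X\times[0,\delta]$ we have $f(s)=0$, so $\bar g=g_0+ds^{2}$, and on $X\times[1-\delta,1]$ we have $f(s)=1$, so $\bar g=g_1+ds^{2}$. Thus $\bar g$ is a concordance from $g_0$ to $g_1$.

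There is essentially no obstacle here: the entire analytic content — that a sufficiently slow warping of a path of psc-metrics stays psc — is packaged in Lemma \ref{isotopyimpliesconc}, and the only remaining task is the elementary construction of the cutoff/reparametrisation function with controlled first and second derivatives, which is a standard bump-function argument. The one point to be slightly careful about is the interplay between shrinking $\delta$ (to make room for the transition region strictly inside $(0,1)$) and the derivative bounds: since the ramp is being spread over an interval of fixed positive length $1-2\delta$, once $\delta$ is fixed we simply choose the stretch factor $N$ large enough that $|\dot f|\le \Lambda$ and $|\ddot f|\le\Lambda$, which is always achievable. Finally, if one wants the concordance to be smooth as a metric on $X\times I$ (not merely continuous at the seams), this is automatic because $f$ is genuinely smooth and locally constant near $s=0$ and $s=1$, so all $s$-derivatives of $\bar g$ match the product metrics there.
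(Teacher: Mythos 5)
There is a genuine gap, and it sits exactly at the step you flagged as ``slightly careful'': the function you want does not exist. You ask for a smooth $f:\mathbb{R}\to[0,1]$ with $f\equiv 0$ on $(-\infty,\delta]$, $f\equiv 1$ on $[1-\delta,\infty)$ and $|\dot f|\le\Lambda$, where Lemma \ref{isotopyimpliesconc} only supplies some $0<\Lambda\le 1$ (and in practice $\Lambda$ may be very small, depending on the path $g_r$). Since $\int_{\delta}^{1-\delta}\dot f\,ds=1$ while the transition interval has length $1-2\delta<1$, the mean value theorem forces $\max\dot f\ge \tfrac{1}{1-2\delta}>1\ge\Lambda$. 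Your ``stretch by $N$, then affinely rescale back into $[\delta,1-\delta]$'' argument is circular: the final affine compression multiplies $\dot f$ by the compression factor and exactly undoes the gain from stretching, so no choice of $N$ helps. In short, you cannot make the slow transition happen inside an interval of unit length; the slowness of $f$ and the shortness of the cylinder are in direct conflict.

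The repair is the route the paper takes: let the transition occur over a long interval. Choose $f$ with $f\equiv 0$ for $s\le k_1$, $f\equiv 1$ for $s\ge k_2$, and $|\dot f|,|\ddot f|\le\Lambda$, which is possible once $k_2-k_1$ is large; then Lemma \ref{isotopyimpliesconc} gives a psc-metric $g_{f(s)}+ds^{2}$ on $X\times[A_1,A_2]$ with $A_1<k_1<k_2<A_2$, which is the product $g_0+ds^2$ near $s=A_1$ and $g_1+ds^2$ near $s=A_2$. Finally transport this metric to $X\times I$ by pulling back along $\mathrm{id}_X\times\psi$ for a diffeomorphism $\psi:I\to[A_1,A_2]$ with $\dot\psi\equiv 1$ near the endpoints: positivity of scalar curvature is preserved because the pulled-back metric is isometric to the original, and the unit slope near the ends keeps the boundary collars exactly of the form $g_0+dt^2$ and $g_1+dt^2$. (Also note the path furnished by an isotopy is a priori only continuous; as in the paper, first approximate it by a smooth path before invoking the lemma.) With that replacement your argument matches the paper's proof.
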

\begin{proof}
Let $g_0$ and $g_1$ be two psc-metrics, connected by the path $g_r$ in $\Riem^{+}(X)$, where $r\in I$. 
Any continuous path in $\Riem^{+}(X)$ may be approximated by a
smooth one and so we will assume that $g_r$ is a smooth
isotopy. Let $f$ be a smooth increasing function which is of the
form
\begin{equation*}
\begin{array}{cl}
f(s) =
\begin{cases}
1 & \text{if $s\geq k_2$}\\
0 & \text{if $s\leq k_1$}
\end{cases}
\end{array}
\end{equation*}
where $k_1<k_2$. The function $f$ can be chosen with $|\dot{f}|$ and
$|\ddot{f}|$ bounded by some arbitrarily small constant provided
$k_{2}-k_{1}$ is large enough. Now choose $A_{1},A_{2}$ so that
$A_{1}<k_{1}<k_{2}<A_{2}$. By the lemma above, the metric
$g_{f(s)}+ds^{2}$ on $X\times[A_{1},A_{2}]$ has positive scalar
curvature. This metric can easily be pulled back to obtain the desired
concordance on $X\times I$.
\end{proof}

\noindent Whether or not the converse of this corollary holds, i.e. concordant 
metrics are isotopic, is a much more complicated question and one we 
discussed in the introduction. In particular, when $\dim X\geq 5$, the 
problem of whether or not concordance implies isotopy is completely open.
Recall that a general concordance may be arbitrarily complicated. We will 
approach this problem restricting our attention to a particular 
type of concordance, which we construct using the surgery technique of Gromov and 
Lawson, and which we will call a Gromov-Lawson concordance. An important part of 
the surgery technique concerns modification of a
psc-metric on or near an embedded sphere. For the remainder of this
section we will consider a variety of psc-metrics both on the sphere
and the disk. These metrics will play an important technical role in
later sections.

\subsection{Warped product metrics on the sphere}
We denote by $S^{n}$, the standard $n$-dimensional sphere and assume that $n\geq 3$. 
We will study metrics on $S^{n}$ which take the form of {\it warped} and {\it doubly warped}
product metrics, see description below. All of the metrics we consider will have non-negative 
sectional and Ricci curvatures, positive scalar curvature and will be {\it isotopic} to the standard 
round metric on $S^{n}$. The latter fact will be important in the proof of the main theorem, Theorem \ref{conciso}.
     
The standard round metric of radius 1, can be induced 
on $S^{n}$ via the usual embedding into $\mathbb{R}^{n+1}$. 
We denote this metric $ds_{n}^{2}$. There are of course many 
different choices of coordinates with which to realise this 
metric. For example, the embedding
\begin{equation*}
\begin{split}
(0,\pi)\times{S^{n-1}}&\longrightarrow\mathbb{R}\times\mathbb{R}^{n}\\
(t,\theta)&\longmapsto(\cos{t},\sin{t}.\theta)
\end{split}
\end{equation*}
gives rise to the metric $dt^{2}+\sin^{2}(t)ds_{n-1}^{2}$ 
on $(0,\pi)\times S^{n-1}$. This extends uniquely to the 
round metric of radius $1$ on $S^{n}$. Similarly, 
the round metric of radius $\epsilon$ has the form 
$dt^{2}+\epsilon^{2}\sin^{2}(\frac{t}{\epsilon})ds_{n-1}^{2}$ on $(0,\epsilon\pi)\times S^{n-1}$. 
More generally, by replacing $\sin t$ with a suitable smooth 
function $f:(0,b)\rightarrow(0,\infty)$, we can construct other metrics on $S^{n}$. 
The following proposition specifies necessary and sufficent 
conditions on $f$ which guarantee smoothness of the metric 
$dt^{2}+f(t)^{2}ds_{n-1}^{2}$ on $S^{n}$.

\begin{Proposition} {\rm{({Chapter 1, section 3.4, \cite{P}})}} \label{smoothwarp}
Let $f:(0,b)\rightarrow(0,\infty)$ be a smooth function with $f(0)=0=f(b)$. Then the metric $g=dt^{2}+f(t)^{2}ds_{n-1}^{2}$ is a smooth metric on the sphere $S^{n}$ if and only if
$f^{(even)}(0)=0$, $\dot{f}(0)=1$, $f^{(even)}(b)=0$ and $\dot{f}(b)=-1$.
\end{Proposition}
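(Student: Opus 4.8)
The plan is to work in the coordinate chart around one of the two poles, say $t=0$, and reduce the smoothness question to a statement about the Euclidean metric. Near $t = 0$, the map $(t,\theta) \mapsto (f(t)\cdot\theta, h(t))$ — where $h$ is an auxiliary "height" function chosen so that $\dot h(t)^2 + \dot f(t)^2 = 1$ — realizes a neighbourhood of the pole as a graph-like hypersurface, but it is cleaner simply to observe that $g = dt^2 + f(t)^2 ds_{n-1}^2$ agrees with a smooth metric on $D^n$ if and only if, in normal polar coordinates centred at the pole, the metric extends smoothly across the origin. The model to compare against is the flat metric $dr^2 + r^2 ds_{n-1}^2$ on $\mathbb{R}^n$, which is the unique smooth rotationally symmetric metric with $f(t) = t$; smoothness of a general $dt^2 + f(t)^2 ds_{n-1}^2$ at the pole is then a condition on how $f$ compares to $t$ to all orders.

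First I would set up the precise criterion: a smooth function $\phi(r)$ of the radial variable, extended to $\mathbb{R}^n$ by $x \mapsto \phi(|x|)$, is smooth at $0$ if and only if $\phi$ is \emph{even}, i.e. $\phi^{(\mathrm{odd})}(0) = 0$. Applying this fact requires identifying which combinations of $f$ and its derivatives appear as genuine functions on the disk. The metric components in Cartesian-type coordinates $x = t\theta$ involve $f(t)^2/t^2$ and $\big(1 - f(t)^2/t^2\big)/t^2$ (the radial–radial correction), so one is led to demand that $f(t)^2/t^2$ be a smooth even function of $t$ with value $1$ at $t=0$. Unwinding "$f(t)^2/t^2$ smooth even with value $1$" gives exactly $\dot f(0) = 1$ together with $f^{(\mathrm{even})}(0) = 0$: the condition $\dot f(0)=1$ forces the leading behaviour $f(t) = t + O(t^3)$, and then an induction on the Taylor coefficients shows $f^{(2k)}(0)=0$ for all $k$ is equivalent to $f(t)^2/t^2$ having only even-order terms. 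The same analysis at $t = b$, after the reflection $t \mapsto b - t$ (which flips the sign of the first derivative, explaining why the condition there is $\dot f(b) = -1$ rather than $+1$), handles the other pole. Finally, smoothness on the open cylinder $(0,b)\times S^{n-1}$ is automatic since $f > 0$ there, so the two pole conditions together are necessary and sufficient.

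The main obstacle — really the only non-formal point — is the passage from "the metric tensor components are smooth even functions of $t$" to "the metric is smooth on $S^n$ as a manifold," i.e. verifying that the candidate differentiable structure near each pole is the standard one and that evenness in $t$ is precisely what smoothness in the Cartesian coordinates $x$ demands. This is the classical lemma on rotationally invariant functions and tensors on $\mathbb{R}^n$; I would either cite it (it is in \cite{P}, as the proposition itself acknowledges) or prove it by the standard argument that an even smooth function of $t$ is a smooth function of $t^2 = |x|^2$ (Whitney's theorem, or a direct Hadamard-lemma induction), and that the relevant tensor coefficients are polynomial in $x$ over such functions. Everything else is bookkeeping with Taylor expansions. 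Since the statement is quoted verbatim from Petersen's book, I would keep the write-up brief: state the rotationally-symmetric smoothness lemma, reduce to it, and record the boundary-condition translation at both endpoints.
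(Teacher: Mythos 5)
Your proposal is correct, and it is essentially the standard argument: the paper itself gives no proof of this proposition (it is quoted directly from Petersen, Chapter 1, Section 3.4 of \cite{P}), and your reduction to the classical lemma that a rotationally symmetric function $x\mapsto\phi(|x|)$ is smooth at the origin precisely when $\phi$ is even to infinite order, applied to the coefficients $f(t)^{2}/t^{2}$ and $\bigl(1-f(t)^{2}/t^{2}\bigr)/t^{2}$ in Cartesian coordinates near each pole, is exactly how the cited source proceeds. The only cosmetic remark is that the auxiliary height function $h$ with $\dot{h}^{2}+\dot{f}^{2}=1$ mentioned at the start would need $|\dot{f}|\leq 1$, which is not assumed; since you discard that device immediately in favour of the polar-coordinate comparison with $dr^{2}+r^{2}ds_{n-1}^{2}$, the write-up as planned is fine.
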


Given the uniqueness of the extension, we will regard metrics of the form $dt^{2}+f(t)^{2}ds_{n-1}^2$ on $(0,b)\times S^{n-1}$ as simply metrics on $S^{n}$, provided $f$ satisfies the conditions above. For a general smooth function $f:(0,b)\rightarrow(0,\infty)$, a metric of the form $dt^{2}+f(t)^{2}ds_{n-1}^{2}$ on $(0,b)\times S^{n-1}$ is known as a {\it warped product metric}. From page 69 of \cite{P}, we obtain the following formulae for the Ricci and scalar curvatures of such a metric. Let $\p_t,e_1,\cdots,e_{n-1}$ be an orthonormal frame where $\p_t$ is tangent to the interval $(0,b)$ while each $e_i$ is tangent to the sphere $S^{n-1}$. Then
\\
\begin{equation*}\label{eqn;ricwarp}
\begin{split}
Ric(\p_t)&=-(n-1)\frac{\ddot{f}}{f},\\
Ric(e_i)&=(n-2)\frac{1-\dot{f}^2}{f^2}-\frac{\ddot{f}}{f} \hspace{0.2cm}\text{, when $i=1,\cdots ,n-1$}.
\end{split}
\end{equation*}

\noindent Thus, the scalar curvature is

\begin{equation}\label{eqn;scalwarp}
R=-2(n-1)\frac{\ddot{f}}{f}+(n-1)(n-2)\frac{1-\dot{f}^2}{f^2}.
\end{equation}

Let $\F(0,b)$ denote the space of all smooth functions 
$f:(0,b)\rightarrow(0,\infty)$ which satisfy the following conditions.

\begin{equation}\label{eqns;smoothwarp}
\begin{array}{rlc}
f(0)=0,&\qquad 
f(b)=0,\\
\dot{f}(0)=1,&\qquad 
\dot{f}(b)=-1,\\
f^{(even)}(0)=0,&\qquad 
f^{(even)}(b)=0,\\ 
\ddot{f}\leq 0,&\qquad 
\dddot{f}(0)<0,\qquad
\dddot{f}(b)>0,&\\
\ddot{f}(t)<0,&  
\text{when $t$ is near but not at $0$ and $b$}.
\end{array}
\end{equation}

\begin{figure}[!htbp]
\vspace{1cm}

\begin{picture}(0,0)%
\includegraphics{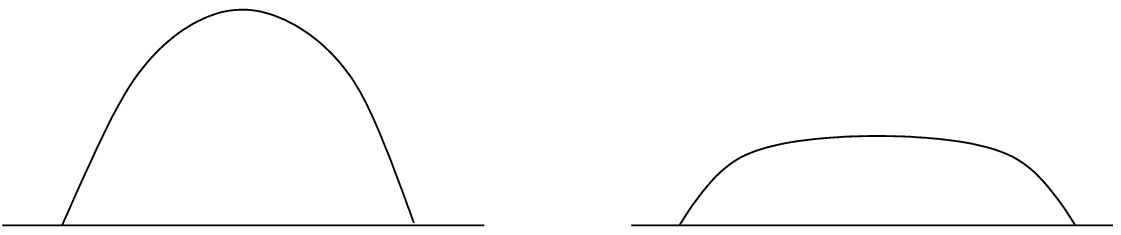}%
\end{picture}%
\setlength{\unitlength}{3947sp}%
\begingroup\makeatletter\ifx\SetFigFont\undefined%
\gdef\SetFigFont#1#2#3#4#5{%
  \reset@font\fontsize{#1}{#2pt}%
  \fontfamily{#3}\fontseries{#4}\fontshape{#5}%
  \selectfont}%
\fi\endgroup%
\begin{picture}(5355,1376)(627,-3209)
\put(939,-3118){\makebox(0,0)[lb]{\smash{{\SetFigFont{10}{8}{\rmdefault}{\mddefault}{\updefault}{\color[rgb]{0,0,0}$0$}%
}}}}
\put(2564,-3130){\makebox(0,0)[lb]{\smash{{\SetFigFont{10}{8}{\rmdefault}{\mddefault}{\updefault}{\color[rgb]{0,0,0}$b$}%
}}}}
\put(3814,-3143){\makebox(0,0)[lb]{\smash{{\SetFigFont{10}{8}{\rmdefault}{\mddefault}{\updefault}{\color[rgb]{0,0,0}$0$}%
}}}}
\put(5664,-3143){\makebox(0,0)[lb]{\smash{{\SetFigFont{10}{8}{\rmdefault}{\mddefault}{\updefault}{\color[rgb]{0,0,0}$b$}%
}}}}
\end{picture}%

\caption{Typical elements of $\F(0,b)$}
\label{fig:F(0,b)}
\end{figure}

For each function $f$ in $\F(0,b)$, there is an associated smooth metric 
$g=dt^{2}+f(t)^{2}ds_{n-1}^{2}$ on $S^{n}$. We will denote the space of 
all such metrics by $\W(0,b)$. Note that $\F(0,b)$ is assumed to have the 
standard $C^{k}$ function space topology with $k\geq 2$, see chapter 2 of \cite{Hirsch} for details.

\begin{Proposition}\label{firstpsc} 
The space $\W(0,b)=\{dt^{2}+f(t)^{2}ds_{n-1}^{2}:f\in\F(0,b)\}$ 
is a path-connected subspace of $\Riem^{+}(S^{n})$.
\end{Proposition}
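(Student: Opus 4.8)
The plan is to establish two things: first, that every metric in $\W(0,b)$ has positive scalar curvature; and second, that $\W(0,b)$ is path-connected. For the curvature statement, I would simply read off the conclusion from the scalar curvature formula \eqref{eqn;scalwarp}, namely $R = -2(n-1)\frac{\ddot f}{f} + (n-1)(n-2)\frac{1-\dot f^{2}}{f^{2}}$. For $f\in\F(0,b)$ the defining conditions \eqref{eqns;smoothwarp} give $\ddot f\leq 0$ everywhere, so the first term is $\geq 0$; and since $\dot f(0)=1$, $\dot f(b)=-1$ and $\ddot f\leq 0$ forces $\dot f$ to be non-increasing with $|\dot f|\leq 1$ on $(0,b)$, the factor $1-\dot f^{2}\geq 0$, so the second term is $\geq 0$ (using $n\geq 3$). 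Hence $R\geq 0$ pointwise. To get \emph{strict} positivity I would examine where both terms could vanish simultaneously: the second term vanishes only where $\dot f^{2}=1$, which by monotonicity of $\dot f$ happens only near the endpoints; but the last condition in \eqref{eqns;smoothwarp} says $\ddot f(t)<0$ for $t$ near (but not at) $0$ and $b$, making the first term strictly positive there, while at the endpoints themselves the conditions $\dddot f(0)<0$, $\dddot f(b)>0$ (together with $f^{(\even)}(0)=f^{(\even)}(b)=0$, $\dot f(0)=1$) guarantee $R>0$ in the limit, which is the standard smoothness-and-positivity check at the poles of such a warped sphere. So $R>0$ on all of $S^{n}$ and $\W(0,b)\subset\Riem^{+}(S^{n})$.

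For path-connectedness, the key observation is that $\W(0,b)$ is parameterised by $\F(0,b)$ via the continuous (indeed homeomorphic onto image) map $f\mapsto dt^{2}+f(t)^{2}ds_{n-1}^{2}$, so it suffices to show $\F(0,b)$ is path-connected in the $C^{k}$ topology. The natural candidate for a canonical basepoint is the function $f_{0}(t)=\sin(\tfrac{\pi t}{b})\cdot\tfrac{b}{\pi}$, suitably normalised so that $\dot f_{0}(0)=1$, $\dot f_0(b)=-1$; this is the round metric (rescaled) and it lies in $\F(0,b)$ since $\ddot f_{0}<0$ on the open interval, all even derivatives vanish at the endpoints, and $\dddot f_0(0)<0$, $\dddot f_0(b)>0$. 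Given an arbitrary $f\in\F(0,b)$, I would take the straight-line homotopy $f_{s}=(1-s)f + s f_{0}$, $s\in[0,1]$, and check that $f_{s}\in\F(0,b)$ for all $s$. The linear conditions — $f_s(0)=f_s(b)=0$, $\dot f_s(0)=1$, $\dot f_s(b)=-1$, $f_s^{(\even)}(0)=f_s^{(\even)}(b)=0$, and $\ddot f_s\leq 0$ — are all preserved under convex combination since both endpoints satisfy them. The conditions $\dddot f_s(0)<0$ and $\dddot f_s(b)>0$ are also preserved because they are convex (strict) inequalities satisfied at both ends. The only genuinely non-linear condition is $f_s(t)>0$ on $(0,b)$, i.e. that $f_s$ doesn't develop a zero in the interior; but $\ddot f_s\leq 0$ with $f_s(0)=f_s(b)=0$ forces $f_s$ to be concave and hence $f_s(t)\geq 0$, and strictly positive on $(0,b)$ unless $f_s\equiv 0$ — which cannot happen since $\dot f_s(0)=1$. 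The same concavity argument handles the last condition ($\ddot f_s<0$ near the endpoints) once one notes that near $0$, $f_s = t + O(t^{3})$ with the cubic coefficient $\tfrac16\dddot f_s(0)<0$.

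The main obstacle I anticipate is not any single step but rather the bookkeeping at the endpoints: one must be careful that the convex combination genuinely stays inside $\F(0,b)$, in particular that concavity ($\ddot f_s \le 0$) together with the boundary data really does rule out interior zeros and does force $\ddot f_s<0$ near $0$ and $b$. A subtlety worth flagging is that the linear homotopy might momentarily produce a function with $\ddot f_s(t)=0$ on a whole subinterval in the middle (if, say, $f$ were affine there) — but that is still allowed by \eqref{eqns;smoothwarp}, which only demands $\ddot f\leq 0$ globally and $\ddot f<0$ \emph{near the endpoints}. So the homotopy is legitimate. If one wanted to be maximally safe one could first connect $f$ to a nearby function which is strictly concave throughout, but I expect the straight-line homotopy to suffice directly. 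Continuity of $s\mapsto f_s$ in the $C^{k}$ topology is immediate since it is affine in $s$, and composing with the (continuous) warping map $f\mapsto dt^2+f^2ds_{n-1}^2$ gives a path in $\W(0,b)$ from the given metric to the round metric, proving path-connectedness.
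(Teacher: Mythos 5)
Your proposal is correct and follows essentially the same route as the paper: positivity is read off from the warped-product scalar curvature formula (\ref{eqn;scalwarp}) using $\ddot f\leq 0$ together with the strict concavity near the endpoints (and the endpoint check you wave at is exactly the l'Hospital computation with $\dddot f(0)<0$, $\dddot f(b)>0$ that the paper carries out), while path-connectedness comes from convexity of $\F(0,b)$ — your straight-line homotopy to the rescaled sine function is precisely that convexity argument.
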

\begin{proof}

The first three conditions of (\ref{eqns;smoothwarp}) guarantee smoothness of such metrics 
on $S^{n}$, by Proposition \ref{smoothwarp}.
We will now consider the scalar curvature when $0<t<b$.
Recall that $\ddot{f}(t)\leq 0$ and that near the endpoints this inequality is strict. 
This means that when $0<t<b$, $|\dot{f}(t)|<1$ and so while the first term in 
(\ref{eqn;scalwarp}) is at worst non-negative, the second term is strictly positive. At the end points, several applications of 
l'Hospital's rule give that
\begin{equation*}
\begin{array}{c}
\lim_{t\rightarrow0^{+}}\frac{-\ddot{f}}{f}=-\dddot{f}(0),\qquad \lim_{t\rightarrow0^{+}}\frac{1-\dot{f}^{2}}{f^{2}}=-\dddot{f}(0)>0,\\
\lim_{t\rightarrow b^{-}}\frac{-\ddot{f}}{f}=\dddot{f}(b),\qquad \lim_{t\rightarrow b^{-}}\frac{1-\dot{f}^{2}}{f^{2}}=\dddot{f}(b)>0.
\end{array}
\end{equation*}

\noindent Thus $\W(0,b)\subset \Riem^{+}S^{n}$. Path connectedness now follows from the convexity of $\F(0,b)$ which in turn follows 
from an elementary calculation. 
\end{proof}

\noindent It is convenient to allow $b$ to vary. Thus we will define 
$\F=\bigcup_{b\in(0,\infty)}\F(0,b)$ and $\W=\bigcup_{b\in(0,\infty)}\W(0,b)$. 
Each metric in $\W$ is defined on $(0,b)\times{S^{n-1}}$ for some $b>0$. 
In particular, the round metric of radius $\epsilon$, $\epsilon^{2}ds_{n}^{2}$, is an element of $\W(0,\epsilon\pi)$.

\begin{Proposition} \label{doubletorpedopsc}
The space $\W$ is a path-connected subspace 
of $\Riem^{+}(S^{n})$.
\end{Proposition}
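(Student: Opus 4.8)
The plan is to deduce path-connectedness of $\W$ from the path-connectedness of each slice $\W(0,b)$ (Proposition \ref{firstpsc}) by using the family of round metrics of all radii as a ``spine'' that simultaneously meets every slice. The inclusion $\W=\bigcup_{b>0}\W(0,b)\subset\Riem^{+}(S^{n})$ is immediate from Proposition \ref{firstpsc}, so only path-connectedness needs an argument, and for that it suffices to join an arbitrary metric $g\in\W$ to a fixed basepoint, which I will take to be the round metric $ds_{n}^{2}\in\W(0,\pi)$.

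So let $g\in\W$, say $g\in\W(0,b)$. The key observation is the one already recorded just before the statement: the round metric of radius $\epsilon$, namely $\epsilon^{2}ds_{n}^{2}$, lies in $\W(0,\epsilon\pi)$; taking $\epsilon=b/\pi$ shows that $g_{b}:=(b/\pi)^{2}ds_{n}^{2}$ belongs to $\W(0,b)$. (Concretely, $g_{b}$ is the warped product metric associated to the function $f_{b}(t)=\tfrac{b}{\pi}\sin(\tfrac{\pi t}{b})$, which one checks directly lies in $\F(0,b)$.) Since $\W(0,b)$ is path-connected by Proposition \ref{firstpsc} — equivalently, using the convexity of $\F(0,b)$ noted in its proof — there is a path inside $\W(0,b)\subset\W$ from $g$ to $g_{b}$.

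It then remains to connect $g_{b}$ to $ds_{n}^{2}$ within $\W$, and here I would simply rescale: consider $s\mapsto\bigl((1-s)\tfrac{b}{\pi}+s\bigr)^{2}ds_{n}^{2}$ for $s\in[0,1]$. Writing $\epsilon_{s}=(1-s)\tfrac{b}{\pi}+s>0$, each metric $\epsilon_{s}^{2}ds_{n}^{2}$ is the round metric of radius $\epsilon_{s}$ and hence an element of $\W(0,\epsilon_{s}\pi)\subset\W$; the assignment $s\mapsto\epsilon_{s}^{2}ds_{n}^{2}$ is continuous (indeed smooth) in the $C^{2}$-topology on $\Riem^{+}(S^{n})$, being a smoothly varying scalar multiple of a single fixed metric. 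It equals $g_{b}$ at $s=0$ and $ds_{n}^{2}$ at $s=1$. Concatenating the two paths joins $g$ to $ds_{n}^{2}$, which proves the proposition.

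In truth there is no serious obstacle here; the whole argument is bookkeeping once one notices that the round metrics of every radius sit simultaneously inside $\W$. The only point I would be careful to state cleanly is that along the connecting path the parameter $b$ (hence the domain $(0,b)\times S^{n-1}$ of the warped-product description) varies, but this causes no trouble: by Proposition \ref{smoothwarp} each $\epsilon^{2}ds_{n}^{2}$ is a genuine smooth metric on the fixed manifold $S^{n}$, and it is on $S^{n}$ that both the $C^{2}$-topology and the containment $\W\subset\Riem^{+}(S^{n})$ are being taken, so continuity of the path is unambiguous.
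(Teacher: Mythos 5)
Your argument is correct and is essentially the paper's own proof: connect $g$ within its slice $\W(0,b)$ (via convexity of $\F(0,b)$) to the round metric of radius $b/\pi$, then use the rescaling path of round metrics, each of which lies in some $\W(0,\epsilon\pi)\subset\W$. Your extra remarks on the varying parameter $b$ and continuity in the $C^{2}$-topology are just a more explicit rendering of the paper's phrase ``isotopic by an obvious rescaling.''
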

\begin{proof}

Let $g$ be an element of $\W$. Then $g=dt^{2}+f(t)^{2}ds_{n-1}^{2}$ on 
$(0,b)\times S^{n-1}$ for some $f\in \F(0,b)$ and some $b>0$. As $\F(0,b)$ is convex,
there is a path connecting $g$ to the metric 
$dt^{2}+(\frac{b}{\pi})^{2}\sin^{2}(\frac{\pi t}{b})ds_{n-1}^{2}$, the round metric of 
radius $(\frac{b}{\pi})^{2}$ in $\W(0,b)$. As all round metrics on $S^{n}$ are isotopic by an obvious
rescaling, $g$ can be isotopied to any metric in the space.
\end{proof}

\subsection{Torpedo metrics on the disk}\label{torpedofunc}
\indent A $\delta $-{\it torpedo metric} on a disk $D^{n}$, 
denoted $g_{tor}^n(\delta)$, is an ${\rm O(n)}$ symmetric positive 
scalar curvature metric which is a product with the standard 
$n-1$-sphere of radius $\delta$ near the boundary of $D^{n}$ 
and is the standard metric on the $n$-sphere of radius $\delta$ 
near the centre of the disk. It is not hard to see how such 
metrics can be constructed. Let $f_\delta$ be a smooth 
function on $(0,\infty)$ which satisfies the following conditions.

(i) $f_\delta(t)=\delta\sin{(\frac{t}{\delta})}$ when $t$ is near $0$.

(ii) $f_\delta(t)=\delta$ when $t\geq\delta\frac{\pi}{2}$.

(iii) $\ddot{f_{\delta}}(t)\leq 0$.

\noindent From now on $f_{\delta}$ will be known as a {\it $\delta-$torpedo function}.
\\
\begin{figure}[!htbp]
\vspace{1cm}
\begin{picture}(0,0)%
\includegraphics{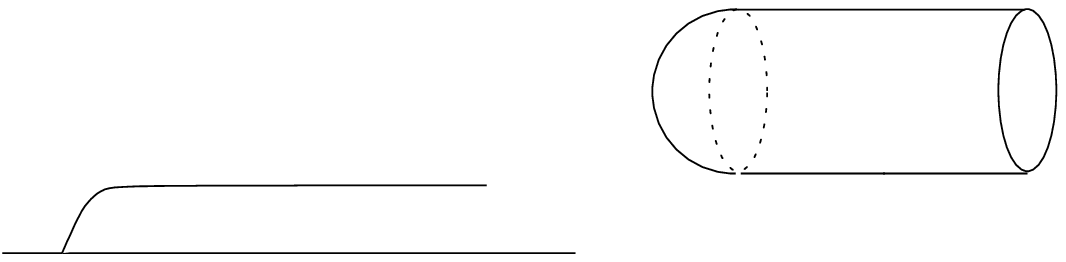}%
\end{picture}%
\setlength{\unitlength}{3947sp}%
\begingroup\makeatletter\ifx\SetFigFont\undefined%
\gdef\SetFigFont#1#2#3#4#5{%
  \reset@font\fontsize{#1}{#2pt}%
  \fontfamily{#3}\fontseries{#4}\fontshape{#5}%
  \selectfont}%
\fi\endgroup%
\begin{picture}(5079,1559)(1902,-7227)
\put(2114,-7136){\makebox(0,0)[lb]{\smash{{\SetFigFont{10}{8}{\rmdefault}{\mddefault}{\updefault}{\color[rgb]{0,0,0}$0$}%
}}}}
\put(4189,-7161){\makebox(0,0)[lb]{\smash{{\SetFigFont{10}{8}{\rmdefault}{\mddefault}{\updefault}{\color[rgb]{0,0,0}$b$}%
}}}}
\end{picture}%
\caption{A torpedo function and the resulting torpedo metric}
\label{torpedo}
\end{figure} 

Let $r$ be the standard radial distance function on $\mathbb{R}^n$. 
By Lemma \ref{smoothwarp}, the metric $dr^2 + f_\delta(r)^2ds_{n-1}^2$ on $(0,\infty)\times S^{n-1}$ 
extends smoothly as a metric on $\mathbb{R}^n$, as $\dot{f_{\delta}}(0)=1$ 
and $f_{\delta}^{even}(0)=0$. The resulting metric is a torpedo metric 
of radius $\delta$ on $\mathbb{R}^n$. By restricting to $(0,b)\times 
S^{n-1}$ for some $b>\delta\frac{\pi}{2}$ we obtain a torpedo metric on 
a disk $D^{n}$, see Fig. \ref{torpedo}. From formula (\ref{eqn;scalwarp}), it is clear that this metric has positive scalar curvature and moreover, the scalar curvature can be bounded below by an arbitrarily large constant by choosing $\delta$ sufficiently small. 

We will refer to the cylindrical part of this metric as the {\it tube}, and the remaining piece as the {\it cap} of $g_{tor}^{n}(\delta)$. Notice that we can always isotopy $g_{tor}^{n}(\delta)$ to make the tube arbitrarily long. Strictly speaking then, $g_{tor}^{n}(\delta)$ denotes a collection of isotopic metrics, each with isometric cap of radius $\delta$. It is convenient however, to think of $g_{tor}^{n}(\delta)$ as a fixed metric, the tube length of which may be adjusted if necessary.

The torpedo metric on a disk $D^{n}$ can be used to construct a collection of psc-metrics on $S^{n}$ which will be of use to us later on. The first of these is the double torpedo metric on $S^{n}$.
By considering the torpedo metric as a metric on a hemisphere, we can 
obtain a metric on $S^{n}$ by taking its double. More precisely let 
$\bar{f}_{\delta}(t)$ be the smooth function on $(0,b)$ which satisfies 
the following conditions.

(i) $\bar{f}_{\delta}(t)=f_\delta(t)$ on $[0,\frac{b}{2}]$

(ii) $\bar{f}_{\delta}(t)=f_\delta(b-t)$ on $[\frac{b}{2}, b]$,

\noindent where $\frac{b}{2}>\delta\frac{\pi}{2}$.

\begin{figure}[htbp]
\vspace{1cm}

\begin{picture}(0,0)%
\includegraphics{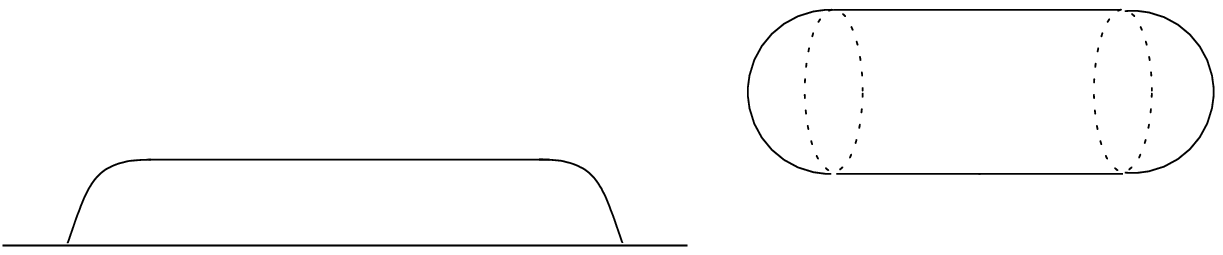}%
\end{picture}%
\setlength{\unitlength}{3947sp}%
\begingroup\makeatletter\ifx\SetFigFont\undefined%
\gdef\SetFigFont#1#2#3#4#5{%
  \reset@font\fontsize{#1}{#2pt}%
  \fontfamily{#3}\fontseries{#4}\fontshape{#5}%
  \selectfont}%
\fi\endgroup%
\begin{picture}(5833,1472)(995,-3093)
\put(3944,-3027){\makebox(0,0)[lb]{\smash{{\SetFigFont{10}{8}{\rmdefault}{\mddefault}{\updefault}{\color[rgb]{0,0,0}$b$}%
}}}}
\put(1294,-3027){\makebox(0,0)[lb]{\smash{{\SetFigFont{10}{8}{\rmdefault}{\mddefault}{\updefault}{\color[rgb]{0,0,0}$0$}%
}}}}
\end{picture}%

\caption{A double torpedo function and the resulting double torpedo metric}
\label{doubletorpedo}
\end{figure}

As $\bar{f}\in\F(0,b)$, the metric $dt^{2}+\bar{f}_{\delta}(t)^{2}ds_{n-1}^{2}$ 
on $(0,b)\times S^{n-1}$ gives rise to a smooth psc-metric on $S^{n}$. Such a 
metric will be called a {\it double torpedo metric of radius $\delta$} 
and denoted $g_{Dtor}^{n}(\delta)$, see Fig. \ref{doubletorpedo}. Then Proposition \ref{doubletorpedopsc} implies that 
$g_{Dtor}^{n}(\delta)$ is isotopic to $ds_{n}^{2}$.

\subsection{Doubly warped products and mixed torpedo metrics}
Henceforth $p$ and $q$ will denote a pair of non-negative integers satisfying $p+q+1=n$. 
The standard sphere $S^{n}$ decomposes as a union of sphere-disk products
as shown below.
\begin{equation*}
\begin{array}{cl}
S^{n} &= \p D^{n+1}\\
&=\p (D^{p+1}\times D^{q+1}),\\ 
&=S^{p}\times D^{q+1}\cup_{S^{p}\times S^{q}} D^{p+1}\times S^{q}.
\end{array}
\end{equation*}
\\
\begin{figure}[!htbp]

\includegraphics[height=40mm]{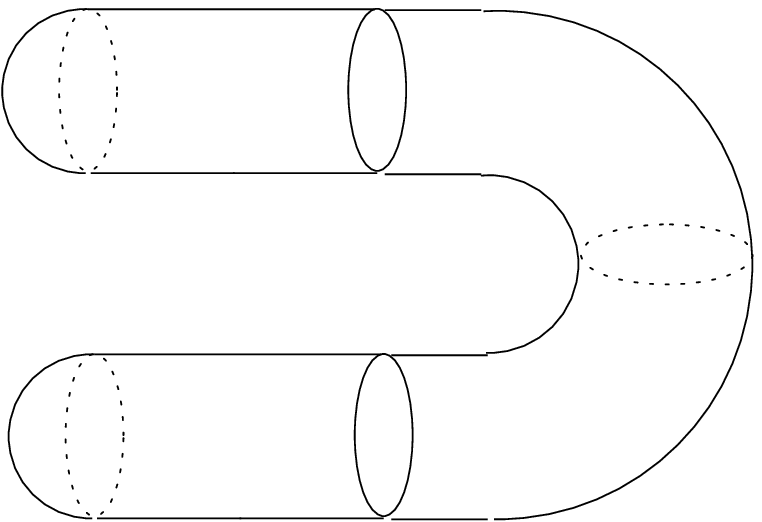}

\caption{$S^{n}$ decomposed as $S^{p}\times D^{q+1}\cup_{S^{p}\times S^{q}} D^{p+1}\times S^{q}$ and equipped with a mixed torpedo metric $g_{Mtor}^{p,q}$}
\label{fig:decomposing_S^n}
\end{figure}

We can utilise this decomposition to construct a new metric on $S^{n}$. 
Equip $S^{p}\times D^{q+1}$ with the product metric 
$\epsilon^{2}ds_{p}^{2}+g_{tor}^{q+1}(\delta)$.
Then equip $D^{p+1}\times S^{q}$ with $g_{tor}^{p+1}(\epsilon)+\delta^{2}ds_{q}^{2}$. 
These metrics glue together smoothly along the common boundary 
$S^{p}\times S^{q}$ to form a smooth metric on $S^{n}$. Such metrics will be known
as {\it mixed torpedo metrics} on $S^{n}$ and denoted $g_{Mtor}^{p,q}$. For the remainder of this section 
we will show how to realise these metrics in a more computationally useful form.

\indent Recall that a metric of the form $dt^{2}+f(t)^{2}ds_{n-1}^{2}$ 
on $(0,b)\times S^{n-1}$, where $f:(0,b)\rightarrow (0,\infty)$ is a 
smooth function, is known as a {\it warped product metric}. We have 
observed that the standard round sphere metric: $ds_{n}^{2}$, can be 
represented as the warped product metric $dt^{2}+\sin^{2}(t)ds_{n-1}^{2}$ 
on $(0,\pi)\times S^{n-1}$. The notion of a warped product metric on 
$(0,b)\times S^{n-1}$ generalises to something called a {\it doubly 
warped product metric} on $(0,b)\times S^{p}\times S^{q}$. Here the 
metric takes the form $dt^{2}+u(t)^{2}ds_{p}^{2}+v(t)^{2}ds_{q}^{2}$, 
where $u,v:(0,b)\rightarrow(0,\infty)$ are smooth functions. 

From page 72 of \cite{P}, we obtain the following curvature formulae. Let 
$\p_t,e_1,\cdots,e_{p}, e_{1}',\cdots,e_{q}'$ be an 
orthonormal frame where $e_1,\cdots,e_{p}$ are tangent to $S^{p}$ and 
$e_{1}',\cdots,e_{q}'$ are tangent to $S^{q}$. Then
\begin{equation*}
\begin{split}
Ric(\p_t)&=-(p)\frac{\ddot{u}}{u}-(q)\frac{\ddot{v}}{v},\\
Ric(e_i)&=(p-1)\frac{1-\dot{u}^2}{u^2}-\frac{\ddot{u}}{u}-q\frac{\dot{u}\dot{v}}{uv} \hspace{0.2cm}\text{, $i=1,\cdots ,p$},\\
Ric(e_i')&=(q-1)\frac{1-\dot{v}^2}{v^2}-\frac{\ddot{v}}{v}-p\frac{\dot{u}\dot{v}}{uv} \hspace{0.2cm}\text{, $i=1,\cdots ,q$}.
\end{split}
\end{equation*}

\noindent Thus, the scalar curvature is
\begin{equation}\label{eqn;scaldoublewarp}
\begin{split}
R=-2p\frac{\ddot{u}}{u}-2q\frac{\ddot{v}}{v}+p(p-1)\frac{1-\dot{u}^2}{u^2}+q(q-1)\frac{1-\dot{v}^2}{v^2}-2pq\frac{\dot{u}\dot{v}}{uv}.
\end{split}
\end{equation}

\indent We observe that the round metric $ds_{n}^{2}$ can be represented by a 
doubly warped product. Recalling that $p+q+1=n$, consider the map
\begin{equation}\label{map;doublewarpembedding} 
\begin{split}
(0,\frac{\pi}{2})\times{{S}^{p}}\times{{S}^{q}}&\longrightarrow\mathbb{R}^{p+1}\times\mathbb{R}^{q+1}\\
 (t,\phi,\theta)&\longmapsto(\cos{t}.\phi,\sin{t}.\theta)
\end{split}
\end{equation}

\noindent Here $S^{p}$ and $S^{q}$ denote the standard unit spheres in $\mathbb{R}^{p+1}$ and $\mathbb{R}^{q+1}$ respectively. 
The metric induced by this embedding is given by the formula
\begin{equation*} 
\begin{array}{c}
dt^{2}+\cos^{2}(t)ds_{p}^{2}+\sin^{2}(t)ds_{q}^{2},
\end{array}
\end{equation*} 
a doubly warped product representing the round metric on $S^{n}$. 
More generally the round metric of radius $\epsilon$ takes the form 
$dt^{2}+\epsilon^{2}\cos^{2}(\frac{t}{\epsilon})ds_{p}^{2}+\epsilon^{2}\sin^{2}(\frac{t}{\epsilon})ds_{q}^{2}$ 
on $(0,\epsilon\frac{\pi}{2})\times{S^{p}}\times{S^{q}}$.

As before, by imposing appropriate conditions on the functions 
$u,v:(0,b)\rightarrow(0,\infty)$, the metric 
$dt^{2}+u(t)^{2}ds_{p}^{2}+v(t)^{2}ds_{q}^{2}$ 
gives rise to a smooth metric on $S^{n}$. By combining propositions 1 and 2 on page 13 of \cite{P}, we obtain the following proposition which makes these conditions clear.
\vspace{2mm}

\begin{Proposition}
\label{smoothdoublewarp}
{\rm{(Page 13, \cite{P})}}
Let $u,v:(0,b)\rightarrow(0,\infty)$ be smooth functions with $u(b)=0$ and $v(0)=0$. Then the metric $dt^{2}+u(t)^{2}ds_{p}^{2}+v(t)^{2}ds_{q}^{2}$ on $(0,b)\times{S^{p}}\times{S^{q}}$ is a smooth metric on $S^{n}$ if and only if the following conditions hold.
\begin{eqnarray}
\label{eqns;usmooth}
&u(0)>0,\qquad 
u^{(odd)}(0)=0,\qquad 
\dot{u}(b)=-1,\qquad 
u^{(even)}(b)=0.\\
\label{eqns;vsmooth} 
&v(b)>0,\qquad
v^{(odd)}(b)=0,\qquad
\dot{v}(0)=1,\qquad
v^{(even)}(0)=0.
\end{eqnarray}
\end{Proposition}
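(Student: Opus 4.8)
The plan is to verify directly that the conditions listed in (\ref{eqns;usmooth}) and (\ref{eqns;vsmooth}) are exactly what is needed for the doubly warped product $dt^{2}+u(t)^{2}ds_{p}^{2}+v(t)^{2}ds_{q}^{2}$ on $(0,b)\times S^{p}\times S^{q}$ to close up smoothly to a metric on $S^{n}$. The starting point is Proposition \ref{smoothwarp}, which handles the single warped case, together with the two referenced propositions on page 13 of \cite{P}. First I would analyze the behaviour near $t=0$: since $v(0)=0$, the $S^{q}$ factor is collapsing, and the pair $(dt^{2}+v(t)^{2}ds_{q}^{2})$ together with the parameters of $S^{p}$ (which stays at positive radius $u(0)$) looks near $t=0$ like a warped product $dr^{2}+v(r)^{2}ds_{q}^{2}$ on a disk $D^{q+1}$ times the factor $S^{p}$ with its metric $u(0)^{2}ds_{p}^{2}$ perturbed by the $t$-dependence of $u$. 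Applying Proposition \ref{smoothwarp} (in the form giving smoothness of $dr^{2}+v(r)^{2}ds_{q}^{2}$ on a disk rather than a sphere, i.e. only the conditions at one endpoint) gives $v(0)=0$, $\dot v(0)=1$, $v^{(even)}(0)=0$; and the requirement that $u$ extend smoothly across the collapsing $S^{q}$ — where the local coordinate is essentially $v(t)\theta\in\mathbb{R}^{q+1}$, so $t$ behaves like $|y|$ for $y\in\mathbb{R}^{q+1}$ and only even functions of $t$ are smooth — forces $u^{(odd)}(0)=0$. Symmetrically, near $t=b$ the $S^{p}$ factor collapses ($u(b)=0$), and the same reasoning yields $\dot u(b)=-1$, $u^{(even)}(b)=0$, $v^{(odd)}(b)=0$, together with $v(b)>0$.

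Having established that these conditions are necessary, I would then argue they are sufficient. The interior smoothness on $(0,b)\times S^{p}\times S^{q}$ is automatic since $u,v$ are smooth and positive there. Near each endpoint I would produce an explicit smooth chart: near $t=0$, use $(r,\phi,\theta)\mapsto$ coordinates in which $r=t$ parametrizes the collapsing $D^{q+1}$ direction, writing points of $\mathbb{R}^{q+1}$ as $v(t)\theta$ and checking that $v(t)$ being odd-flat of the right type makes the map a smooth embedding of $D^{p+1}\times\cdots$, while $u(t)^{2}$ being an even (hence smooth) function of the coordinate radius gives a smooth metric coefficient on the $S^{p}$ factor; the hemisphere condition at $b$ is handled by the mirror construction. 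This is precisely the content of the two cited propositions from \cite{P}, so the bulk of the argument is really an invocation of those, organized around the two collapsing directions.

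The main obstacle — or rather the point requiring the most care — is the mixed nature of the conditions at each endpoint: at $t=0$ one factor ($S^{q}$) is behaving like the collapsing sphere in an ordinary warped product and so inherits the "round-cap" conditions $\dot v(0)=1$, $v^{(even)}(0)=0$, while the other factor ($S^{p}$) is a bystander whose warping function $u$ must nonetheless be an even function of the radial coordinate to extend across the cap, giving $u^{(odd)}(0)=0$ and $u(0)>0$. Keeping straight which of $u,v$ plays which role at which endpoint, and correctly translating "smooth function of $t\geq 0$ extending across a collapsing $S^{q}$" into "even function of $t$", is where a direct proof would spend its attention. Since the proposition is explicitly attributed to page 13 of \cite{P}, I expect the author's proof to simply cite that reference, perhaps with a one-line remark identifying $u$ and $v$ with the $f_{1},f_{2}$ (or similar) of the cited statement and noting the role reversal between the two ends; the genuinely new work in this section lies in the curvature formula (\ref{eqn;scaldoublewarp}) and its consequences, not in this smoothness criterion.
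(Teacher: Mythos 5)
Your proposal is correct, and it matches the paper's treatment: the paper gives no independent proof but simply combines the two propositions on page 13 of \cite{P}, exactly as you anticipated, with the conditions (\ref{eqns;usmooth}) and (\ref{eqns;vsmooth}) arising from the collapsing $S^{q}$ at $t=0$ and the collapsing $S^{p}$ at $t=b$ respectively. Your sketch of the direct verification (round-cap conditions on the collapsing factor, evenness of the bystander warping function across each cap) is the standard argument behind the cited result and is accurate.
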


Let $\U(0,b)$ denote the space of all functions $u:(0,b)\rightarrow(0,\infty)$ which satisfy (\ref{eqns;usmooth}) above and the condition that $\ddot{u}\leq 0$ with $\ddot{u}(t)<0$ when $t$ is near but not at $b$ and $\dddot{u}(b)>0$. 

Similarly $\V(0,b)$ will denote the space of all functions $v:(0,b)\rightarrow(0,\infty)$ which satisfy (\ref{eqns;vsmooth}) and for which $\ddot{v}\leq 0$ with $\ddot{v}(t)<0$ when $t$ is near but not at $0$ and $\dddot{v}(0)<0$. 

Each pair $u,v$ from the space $\U(0,b)\times\V(0,b)$ gives rise to a metric
$dt^{2}+u(t)^{2}ds_{p}^{2}+v(t)^{2}ds_{q}^{2}$ on $S^{n}$. We denote the space of such metrics
\begin{equation*}
\begin{array}{c}
\hat{\W}^{p,q}(0,b)=\{dt^{2}+u(t)^{2}ds_{p}^{2}+v(t)^{2}ds_{q}^{2}:(u,v)\in\U(0,b)\times\V(0,b)\}.
\end{array}
\end{equation*}
We now obtain the following lemma.

\begin{Lemma}\label{lem:markslemm1.5}
Let $n\geq 3$ and let $p$ and $q$ be any pair of non-negative integers satisfying $p+q+1=n$. Then the space $\hat{\W}^{p,q}(0,b)$ is a path-connected subspace of $\Riem^{+}{S^{n}}$.
\end{Lemma}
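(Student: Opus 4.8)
The plan is to establish path-connectedness of $\hat{\W}^{p,q}(0,b)$ in two stages: first verify that every metric in this space actually has positive scalar curvature (so that $\hat{\W}^{p,q}(0,b)\subset\Riem^{+}(S^n)$ genuinely), and then produce a path between any two such metrics. The positivity step proceeds exactly as in Proposition \ref{firstpsc}: for $0<t<b$ the conditions $\ddot u\le 0$ and $\ddot v\le 0$ force $|\dot u(t)|<1$ and $|\dot v(t)|<1$ in the interior (using $\dot v(0)=1$, $\dot u(b)=-1$ together with the strict concavity near the relevant endpoints). Feeding this into formula (\ref{eqn;scaldoublewarp}), the terms $-2p\ddot u/u$ and $-2q\ddot v/v$ are non-negative, the curvature-type terms $p(p-1)(1-\dot u^2)/u^2$ and $q(q-1)(1-\dot v^2)/v^2$ are non-negative, and one must control the single sign-indefinite term $-2pq\,\dot u\dot v/(uv)$. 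The cleanest way is to observe $\dot u\le 0$ everywhere (since $\dot u(b)=-1$ and $\ddot u\le 0$ means $\dot u$ is non-increasing, hence $\ge 0$ would be impossible past where it... actually $\dot u$ decreasing with $\dot u(b)=-1$ gives $\dot u\ge -1$ throughout but not automatically $\le 0$); I would instead argue via the endpoint l'Hôpital limits, as in Proposition \ref{firstpsc}, that $R$ extends continuously and is positive at $t=0$ and $t=b$ (where $v\to 0$ or $u\to 0$ respectively, and the dominant term $q(q-1)(1-\dot v^2)/v^2\to -q(q-1)\dddot v(0)>0$, resp.\ the $u$-analogue), and positive in the interior by grouping $-2pq\dot u\dot v/(uv)$ with the adjacent concavity terms. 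A short estimate of Cauchy–Schwarz type, $2pq|\dot u\dot v|/(uv)\le p\ddot{(\,)}$-type bounds, will close this; this is the one genuinely computational point and I expect it to be the main obstacle — in particular one may need $n\ge 3$ (equivalently $p+q\ge 2$) so that at least one of $p(p-1)$, $q(q-1)$, or the second-derivative terms dominates.

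For the connectivity step, I would first show $\U(0,b)$ and $\V(0,b)$ are each convex. The defining conditions on $\U(0,b)$ — namely (\ref{eqns;usmooth}), $\ddot u\le 0$, $\ddot u(t)<0$ near $b$, and $\dddot u(b)>0$ — are all either linear equalities, or convex inequalities ($\ddot u\le 0$), or open conditions stable under convex combination (strict inequalities on $\ddot u$ near $b$ and on $\dddot u(b)$); the only subtlety is that $u(t)>0$ must be preserved, which follows because a convex combination of positive functions is positive. Likewise for $\V(0,b)$. Hence $\U(0,b)\times\V(0,b)$ is convex, therefore path-connected, and the assignment $(u,v)\mapsto dt^2+u^2ds_p^2+v^2ds_q^2$ is continuous from $\U(0,b)\times\V(0,b)$ (with the $C^k$ topology, $k\ge 2$) into $\Riem^+(S^n)$ — continuity being clear since the metric depends polynomially on $u,v$ and their first derivatives. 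The image of the path-connected set $\U(0,b)\times\V(0,b)$ under this continuous map is path-connected, and that image is precisely $\hat{\W}^{p,q}(0,b)$.

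Combining the two steps: $\hat{\W}^{p,q}(0,b)$ is a continuous image of a convex (hence path-connected) space, and lies inside $\Riem^+(S^n)$, which is exactly the assertion of the lemma. I would present the convexity verifications as a one-line remark (they mirror the calculation already invoked for $\F(0,b)$ in Proposition \ref{firstpsc}) and devote the bulk of the write-up to the scalar-curvature positivity estimate, since that is where the doubly-warped case differs substantively from the singly-warped case treated earlier — the cross term $-2pq\,\dot u\dot v/(uv)$ has no analogue in (\ref{eqn;scalwarp}).
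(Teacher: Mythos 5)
Your framework (interior positivity plus endpoint limits, then convexity of $\U(0,b)\times\V(0,b)$ pushed forward by the continuous map $(u,v)\mapsto dt^{2}+u^{2}ds_{p}^{2}+v^{2}ds_{q}^{2}$) is the same as the paper's, and the connectivity half is fine as you describe it. The genuine gap is in the one step you yourself flag as "the main obstacle": the cross term $-2pq\,\dot{u}\dot{v}/(uv)$ in (\ref{eqn;scaldoublewarp}). You start to claim $\dot{u}\le 0$, correctly notice that $\dot u(b)=-1$ together with $\ddot u\le 0$ alone does not give it, and then retreat to an unspecified "Cauchy--Schwarz type" absorption of the cross term into the concavity terms. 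That fallback does not work as described: the terms $-2p\ddot u/u$, $-2q\ddot v/v$, $p(p-1)(1-\dot u^{2})/u^{2}$, $q(q-1)(1-\dot v^{2})/v^{2}$ can all vanish identically on a subinterval (e.g.\ where $u$ and $v$ are affine, as on the cylindrical middle portion of a mixed torpedo metric), so there is nothing available to absorb a genuinely negative cross term there.

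What you missed is that the cross term is never negative, and this is immediate from the definitions: condition (\ref{eqns;usmooth}) includes $u^{(odd)}(0)=0$, so $\dot u(0)=0$, and since $\ddot u\le 0$ makes $\dot u$ non-increasing, $\dot u\le 0$ on all of $[0,b]$ (and $\dot u\ge -1$ since $\dot u(b)=-1$); symmetrically, (\ref{eqns;vsmooth}) gives $\dot v(b)=0$, whence $0\le\dot v\le 1$. Thus $\dot u\dot v\le 0$, the fifth term of (\ref{eqn;scaldoublewarp}) is non-negative, the first two terms are non-negative by concavity, and the third and fourth are non-negative with strict positivity where the dimensional coefficients allow -- exactly the paper's argument, with the endpoint behaviour handled by l'H\^{o}pital as you anticipate. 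So the missing idea is not an estimate at all but the use of the boundary conditions $\dot u(0)=0$ and $\dot v(b)=0$ to pin down the signs of $\dot u$ and $\dot v$ globally; without that observation the central positivity claim of your write-up is unproved.
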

\begin{proof} Let $g=dt^{2}+u(t)^{2}ds_{p}^{2}+v(t)^{2}ds_{q}^{2}$ be an element of $\hat{\W}^{p,q}(0,b)$. 
Smoothness of this metric on $S^{n}$ follows from Proposition \ref{smoothdoublewarp}. 
We will first show that $g$ has positive scalar curvature when $0<t<b$. Recall that $u$ 
and $v$ are both concave downward, that is $\ddot{u}, \ddot{v}< 0$. 
This means that the first 2 terms in (\ref{eqn;scaldoublewarp}) are at worst non-negative.  
Downward concavity and the fact that $\dot{u}(0)=0$ and $\dot{u}(b)=-1$ 
imply that $-1<\dot{u}\leq 0$. A similar argument gives that 
$0\leq\dot{v}<1$. This means that the fifth term in (\ref{eqn;scaldoublewarp}) is also 
non-negative and at least one of the third and fourth terms in (\ref{eqn;scaldoublewarp}) 
is strictly positive (the other may be $0$ for dimensional reasons). 

When $t=0$ and $t=b$, some elementary limit computations using 
l'Hospital's rule show that the scalar curvature is positive.
Thus $\hat{\W}(0,b)^{p,q}\subset {\Riem}^{+}(S^{n})$.

Finally, path connectivity follows immediately from the convexity of the space $\U(0,b)\times\V(0,b)$. 
\end{proof}

As before, it is convenient to allow $b$ to vary. 
Thus, we define $\U\times\V=\bigcup_{b\in(0,\infty)}\U(0,b)\times\V(0,b)$ 
and $\hat{\W}^{p,q}=\bigcup_{b\in(0,\infty)}\hat{\W}^{p,q}(0,b)$. Finally we let $\hat{\W}=\bigcup_{p+q+1=n}\hat{\W}^{p,q}$ where $0\leq p,q\leq n+1$.

\begin{Proposition} 
Let $n\geq 3$. The space $\hat{\W}$ is a path-connected subspace of $\Riem^{+}{S^{n}}.$\label{mixedtorpedopsc}
\end{Proposition}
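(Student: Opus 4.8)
The plan is to reduce the path-connectedness of $\hat{\W}$ to the path-connectedness of each piece $\hat{\W}^{p,q}$, which is already established in Lemma \ref{lem:markslemm1.5}, together with the fact that all the pieces share a common metric in their closure, namely the round metric $ds_n^2$. So the proof has two parts: first, show that each $\hat{\W}^{p,q}$ contains (or can be isotoped within $\Riem^+(S^n)$ to) the round metric $ds_n^2$; second, conclude by transitivity that any two metrics in $\hat{\W}$, lying in possibly different pieces $\hat{\W}^{p,q}$ and $\hat{\W}^{p',q'}$, are joined by a path.

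First I would observe that $\hat{\W}^{p,q}$ already contains a round metric: the round metric of radius $\epsilon$ has the doubly warped representation $dt^2 + \epsilon^2\cos^2(\tfrac{t}{\epsilon})ds_p^2 + \epsilon^2\sin^2(\tfrac{t}{\epsilon})ds_q^2$ on $(0,\epsilon\tfrac{\pi}{2})\times S^p\times S^q$, as noted just before Proposition \ref{smoothdoublewarp}. One must check that the pair of functions $u(t)=\epsilon\cos(\tfrac{t}{\epsilon})$, $v(t)=\epsilon\sin(\tfrac{t}{\epsilon})$ on $(0,b)$ with $b=\epsilon\tfrac{\pi}{2}$ genuinely lies in $\U(0,b)\times\V(0,b)$: the smoothness conditions (\ref{eqns;usmooth}) and (\ref{eqns;vsmooth}) hold, $\ddot u = -\tfrac{1}{\epsilon}\cos(\tfrac t\epsilon)\le 0$ with strict inequality away from $t=b$ and $\dddot u(b)=\tfrac1{\epsilon^2}\sin(\tfrac t\epsilon)|_{t=b}=\tfrac1{\epsilon^2}>0$, and symmetrically for $v$. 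Hence $ds_n^2$ (up to the harmless rescaling by $\epsilon$, which is an isotopy of round metrics) lies in $\hat{\W}^{p,q}$ for every admissible pair $(p,q)$.

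Then by Lemma \ref{lem:markslemm1.5}, within each $\hat{\W}^{p,q}$ there is a path from any given $g\in\hat{\W}^{p,q}$ to this round metric $\epsilon^2 ds_n^2$, and any two round metrics on $S^n$ are isotopic by rescaling (a path entirely inside $\Riem^+(S^n)$, indeed inside $\bigcup_{p,q}\hat{\W}^{p,q}$). Given arbitrary $g\in\hat{\W}^{p,q}$ and $g'\in\hat{\W}^{p',q'}$, concatenate: a path in $\hat{\W}^{p,q}$ from $g$ to $\epsilon^2 ds_n^2$, a rescaling path from $\epsilon^2 ds_n^2$ to $(\epsilon')^2 ds_n^2$, and a path in $\hat{\W}^{p',q'}$ from $(\epsilon')^2 ds_n^2$ to $g'$. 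Since each $\hat{\W}^{p,q}\subset \Riem^+(S^n)$ by Lemma \ref{lem:markslemm1.5} and the rescaling path lies in $\Riem^+(S^n)$, the whole concatenation is a path in $\Riem^+(S^n)$; thus $\hat{\W}=\bigcup_{p+q+1=n}\hat{\W}^{p,q}$ is path-connected and contained in $\Riem^+(S^n)$.

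I do not expect a serious obstacle here — the statement is essentially a bookkeeping corollary of Lemma \ref{lem:markslemm1.5}. The only point requiring genuine care is the first step: verifying that the round metric, in its doubly warped coordinates adapted to the $S^p\times S^q$ splitting, satisfies the slightly restrictive endpoint conditions ($\dddot u(b)>0$, $\dddot v(0)<0$, and downward concavity of $u,v$) built into the definitions of $\U(0,b)$ and $\V(0,b)$. If one wanted to be safe about the strictness conditions at interior points, one could instead connect $g$ to a round metric via Lemma \ref{lem:markslemm1.5}'s convexity argument and only invoke the round metric's membership in the \emph{closure} plus a short explicit collar deformation; but the direct check above should go through.
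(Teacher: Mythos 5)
Your proof is correct and follows essentially the same route as the paper: the paper likewise establishes path-connectedness of each $\hat{\W}^{p,q}$ (by the argument of Proposition \ref{doubletorpedopsc}, i.e.\ convexity plus rescaling of round metrics) and then notes that every piece contains the round metric $ds_n^{2}=dt^{2}+\cos^{2}t\,ds_p^{2}+\sin^{2}t\,ds_q^{2}$, exactly your common-point/concatenation argument. Your explicit verification that $u=\epsilon\cos(t/\epsilon)$, $v=\epsilon\sin(t/\epsilon)$ satisfy the concavity and third-derivative conditions of $\U(0,b)\times\V(0,b)$ is the only detail the paper leaves implicit, and it checks out.
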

\begin{proof} The proof that $\hat{W}^{p,q}$ is path connected is almost identical to that of 
Proposition \ref{doubletorpedopsc}. The rest follows from the fact that each $\hat{\W}^{p,q}$ contains the round metric
$ds_{n}^{2}=dt^{2}+\cos^{2}{t}ds_{p}^{2}+\sin^{2}{t}ds_{q}^{2}$.
\end{proof}

At the beginning of this section we demonstrated that $S^{n}$ could be 
decomposed into a union of $S^{p}\times D^{q+1}$ and $D^{p+1}\times S^{q}$.
This can be seen explicitly by appropriate restriction of the embedding in(\ref{map;doublewarpembedding}). 
Thus, provided $t$ is near $0$, the metric $dt^{2}+u(t)^{2}ds_{p}^{2}+v(t)^{2}ds_{q}^{2}$,
with $u,v\in \U(0,b)\times\V(0,b)$, is a metric on $S^{p}\times D^{q+1}$. When $t$ is near $b$
we obtain a metric on $D^{p+1}\times S^{q}$. We can now construct a mixed torpedo metric on $S^{n}$, as follows.
Let $f_{\epsilon}(t)$ and $f_{\delta}(t)$ be the torpedo functions on $(0,b)$ defined in section \ref{torpedofunc} with
$b>\max\{\epsilon\pi,\delta\pi\}$.
Then the metric
\begin{equation}
\begin{array}{c}
 g_{Mtor}^{p,q}=dt^{2}+f_{\epsilon}(b-t)^{2}ds_{p}^{2}+f_{\delta}(t)^{2}ds_{q}^{2}
\end{array}
\end{equation}
is a mixed torpedo metric on $S^{n}$, see Fig. \ref{fig:mixed torpedo}. 
\\
\begin{figure}[htbp]
\vspace{1cm}

\includegraphics[height=30mm]{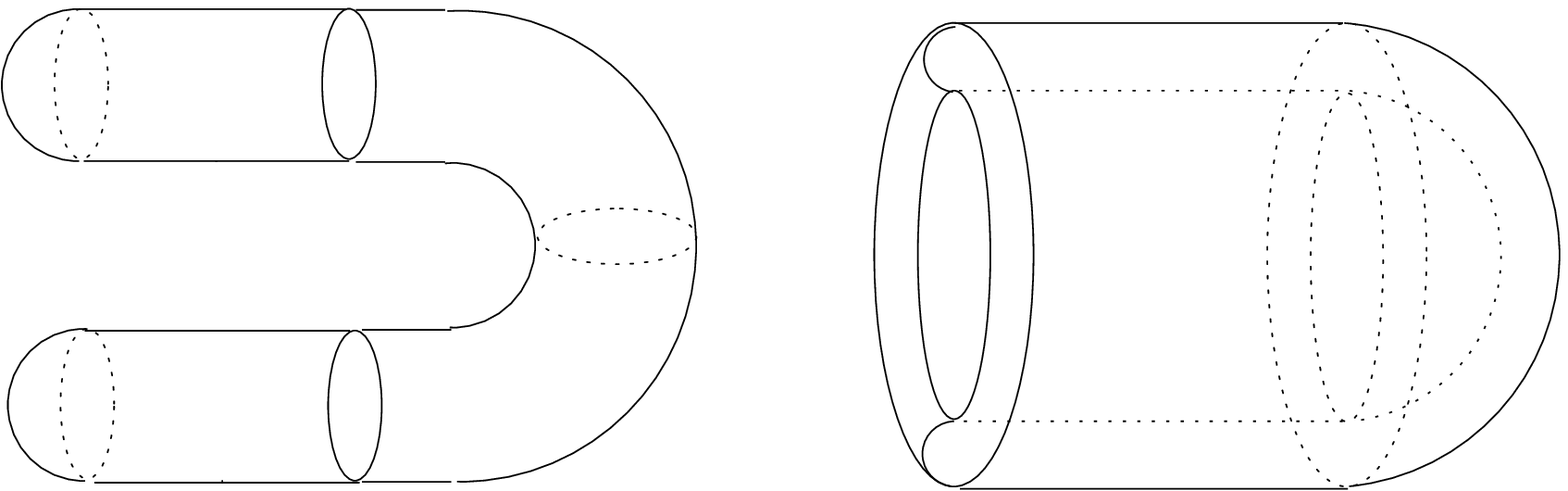}
\caption{The mixed torpedo metrics $g_{Mtor}^{p,q}$ and $g_{Mtor}^{p+1,q-1}$}
\label{fig:mixed torpedo}
\end{figure}

\begin{Lemma} \label{toriso}
Let $n\geq 3$. For any non-negative integers $p$ and $q$ with $p+q+1=n$, the metric $g_{Mtor}^{p,q}$ is isotopic to $ds_{n}^{2}$.
\end{Lemma}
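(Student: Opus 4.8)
The plan is to exhibit an explicit isotopy through positive scalar curvature metrics on $S^n$ connecting $g_{Mtor}^{p,q}$ to the round metric $ds_n^2$, using the doubly warped product machinery developed above. The key observation is that $g_{Mtor}^{p,q}=dt^2+f_\epsilon(b-t)^2ds_p^2+f_\delta(t)^2ds_q^2$ is already a doubly warped product metric on $(0,b)\times S^p\times S^q$, and so it is natural to ask whether it lies in the space $\hat{\W}^{p,q}(0,b)$, which by Lemma \ref{lem:markslemm1.5} is path-connected inside $\Riem^+(S^n)$ and contains $ds_n^2$. The function $u(t)=f_\epsilon(b-t)$ satisfies $u(0)=\epsilon>0$, $u^{(odd)}(0)=0$ (since $f_\epsilon^{(even)}$ vanishes at $0$), $\dot u(b)=-\dot f_\epsilon(0)=-1$, $u^{(even)}(b)=0$, and $\ddot u=\ddot f_\epsilon(b-t)\leq 0$; similarly $v(t)=f_\delta(t)$ satisfies the conditions defining $\V(0,b)$. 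So once the strict-concavity and third-derivative sign conditions are checked near the endpoints, we get $g_{Mtor}^{p,q}\in\hat{\W}^{p,q}(0,b)$ and the result follows immediately from Lemma \ref{lem:markslemm1.5} together with the fact (noted just before Proposition \ref{mixedtorpedopsc}) that $ds_n^2\in\hat{\W}^{p,q}$.

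The one subtlety is that a torpedo function $f_\delta$ as defined in section \ref{torpedofunc} has $\ddot f_\delta$ vanishing identically on the tube (where $f_\delta\equiv\delta$), so $\ddot u,\ddot v$ are not strictly negative on the bulk of the interval — but this is fine, since the spaces $\U(0,b),\V(0,b)$ only require strict concavity \emph{near} the relevant endpoint, and near $t=0$ we have $f_\delta(t)=\delta\sin(t/\delta)$ with $\ddot f_\delta<0$ and $\dddot f_\delta(0)=-1/\delta^2<0$, and symmetrically for $u$ near $t=b$. If one wants to be careful, one should first note that $g_{Mtor}^{p,q}$ as literally written may fail strict concavity at isolated interior points or have $\dddot{f}$ vanishing where the tube meets the cap; in that case I would first perform a preliminary small isotopy, supported away from the endpoints, replacing $f_\epsilon$ and $f_\delta$ by functions in $\F$-type classes with everywhere $\ddot{}\leq 0$ and the correct strict behaviour near the endpoints (this is the kind of elementary smoothing already invoked repeatedly in this section), landing the metric in $\hat{\W}^{p,q}(0,b)$.

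I expect the main obstacle to be purely bookkeeping: verifying that the reparametrised torpedo functions $u(t)=f_\epsilon(b-t)$ and $v(t)=f_\delta(t)$ genuinely satisfy \emph{all} the defining conditions of $\U(0,b)$ and $\V(0,b)$ — in particular matching up the parity conditions at the endpoints ($u^{(odd)}(0)=0$ coming from $f_\epsilon^{(even)}(0)=0$ after the reflection $t\mapsto b-t$, and likewise for $v$) and confirming the signs of $\dddot u(b)$ and $\dddot v(0)$. There is no hard geometry here beyond what Lemma \ref{lem:markslemm1.5} already provides; the content is entirely in recognising $g_{Mtor}^{p,q}$ as a member of an already-understood path-connected family. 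Alternatively, and perhaps more transparently, one can build the isotopy in two stages: first isotope $f_\delta$ within $\V(0,b)$-type functions to $\epsilon\sin(t/\epsilon)$-type behaviour so that the $S^q$-warping matches $\sin$, then do the same for the $S^p$-warping, at each stage staying inside $\hat{\W}^{p,q}(0,b)\subset\Riem^+(S^n)$ by the convexity of $\U(0,b)\times\V(0,b)$, finally arriving at the round doubly warped representative $dt^2+\epsilon^2\cos^2(t/\epsilon)ds_p^2+\epsilon^2\sin^2(t/\epsilon)ds_q^2$ of $\epsilon^2 ds_n^2$, which is isotopic to $ds_n^2$ by rescaling.
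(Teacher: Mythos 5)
Your proposal is correct and is essentially the paper's own argument: one checks that $u(t)=f_{\epsilon}(b-t)$ and $v(t)=f_{\delta}(t)$ lie in $\U(0,b)$ and $\V(0,b)$, so that $g_{Mtor}^{p,q}\in\hat{\W}^{p,q}(0,b)$, and then invokes the path-connectedness results (Lemma \ref{lem:markslemm1.5} and Proposition \ref{mixedtorpedopsc}) together with the fact that the round metric lies in this space. Your worry about needing a preliminary smoothing isotopy is unnecessary, since the defining conditions of $\U(0,b)$ and $\V(0,b)$ only demand $\ddot{u},\ddot{v}\leq 0$ globally with strictness and the third-derivative sign near the relevant endpoint, which the torpedo functions already satisfy there.
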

\begin{proof} An elementary calculation shows that the functions $f_{\epsilon}(b-t)$ and 
$f_{\delta}(t)$ lie in $\U(0,b)$and $\V(0,b)$ respectively. Thus $g_{Mtor}^{p,q}\in\hat{\W}^{p,q}(0,b)$.
As the standard round metric lies in $\hat{\W}^{p,q}(0,b)$, the proof follows from Proposition 
\ref{mixedtorpedopsc}.
\end{proof}

\begin{figure}[htbp]
\begin{picture}(0,0)%
\includegraphics[height=60mm]{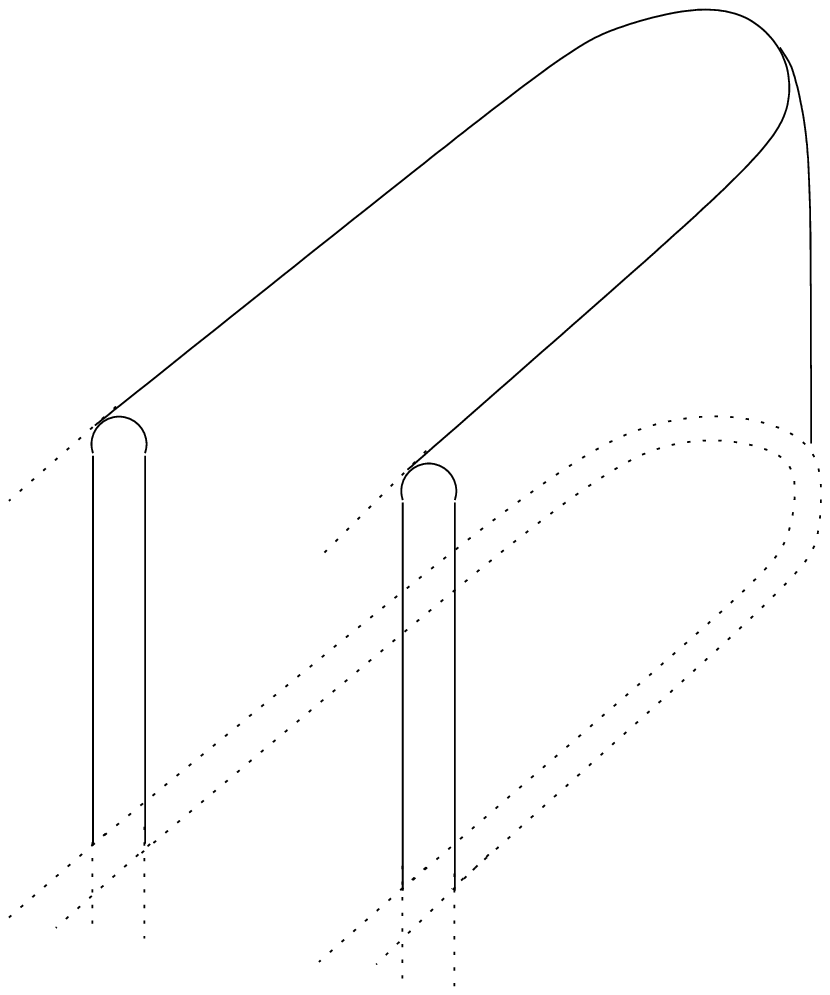}%
\end{picture}%
\setlength{\unitlength}{3947sp}%
\begingroup\makeatletter\ifx\SetFigFont\undefined%
\gdef\SetFigFont#1#2#3#4#5{%
  \reset@font\fontsize{#1}{#2pt}%
  \fontfamily{#3}\fontseries{#4}\fontshape{#5}%
  \selectfont}%
\fi\endgroup%
\begin{picture}(1968,2757)(2374,-5278)
\put(3226,-2836){\makebox(0,0)[lb]{\smash{{\SetFigFont{10}{8}{\rmdefault}{\mddefault}{\updefault}{\color[rgb]{0,0,0}$\mathbb{R}^{p+1}$}%
}}}}
\put(2189,-4411){\makebox(0,0)[lb]{\smash{{\SetFigFont{10}{8}{\rmdefault}{\mddefault}{\updefault}{\color[rgb]{0,0,0}$\mathbb{R}^{q+1}$}%
}}}}
\end{picture}%

\caption{The plane $\mathbb{R}^{n+1}$ equipped with the metric $h$}
\label{h}
\end{figure}

\subsection{Inducing a mixed torpedo metric with an embedding}\label{embedmixedtorp}
We close this section with a rather technical observation 
which will be of use later on. It is of course possible to realise mixed torpedo metrics on the sphere as 
the induced metrics of some embedding. Let $\mathbb{R}^{n+1}=\mathbb{R}^{p+1}\times\mathbb{R}^{q+1}$
where of course $p+q+1=n$. Let $(\rho, \phi)$ and $(r,\theta)$ denote
standard spherical coordinates on $\mathbb{R}^{p+1}$ and $\mathbb{R}^{q+1}$ 
where $\rho$ and $r$ are the respective Euclidean distance functions and $\phi\in S^{p}$ and
$\theta\in S^{q}$. Then equip 
$\mathbb{R}^{n+1}=\mathbb{R}^{p+1}\times\mathbb{R}^{q+1}$ with the metric $h=h^{p,q}$ defined
\begin{equation}
\begin{array}{c}
h^{p,q}=d\rho^{2}+f_\epsilon(\rho)^{2}ds_{p}^{2}+dr^{2}+f_\delta(r)^{2}ds_{q}^{2},
\end{array}
\end{equation}
shown in Fig. \ref{h}, where $f_\epsilon, f_\delta:(0,\infty)\rightarrow(0,\infty)$ are the torpedo functions defined in section \ref{torpedofunc}.

\begin{figure}
\begin{picture}(0,0)%
\includegraphics[height=40mm]{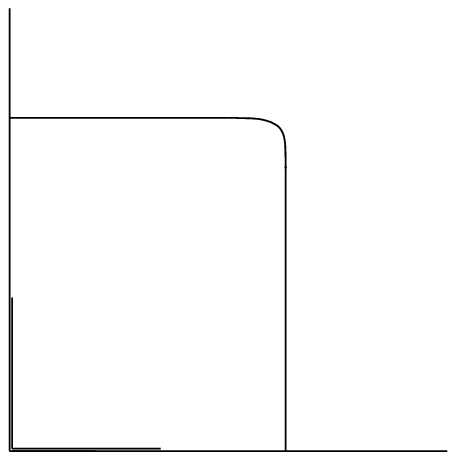}%
\end{picture}%
\setlength{\unitlength}{3947sp}%
\begingroup\makeatletter\ifx\SetFigFont\undefined%
\gdef\SetFigFont#1#2#3#4#5{%
  \reset@font\fontsize{#1}{#2pt}%
  \fontfamily{#3}\fontseries{#4}\fontshape{#5}%
  \selectfont}%
\fi\endgroup%
\begin{picture}(2677,2516)(2399,-4690)
\put(3826,-4549){\makebox(0,0)[lb]{\smash{{\SetFigFont{10}{8}{\rmdefault}{\mddefault}{\updefault}{\color[rgb]{0,0,0}$c_1$}%
}}}}
\put(2414,-3236){\makebox(0,0)[lb]{\smash{{\SetFigFont{10}{8}{\rmdefault}{\mddefault}{\updefault}{\color[rgb]{0,0,0}$c_2$}%
}}}}
\put(2576,-3999){\makebox(0,0)[lb]{\smash{{\SetFigFont{10}{8}{\rmdefault}{\mddefault}{\updefault}{\color[rgb]{0,0,0}$\delta\frac{\pi}{2}$}%
}}}}
\put(3251,-4624){\makebox(0,0)[lb]{\smash{{\SetFigFont{10}{8}{\rmdefault}{\mddefault}{\updefault}{\color[rgb]{0,0,0}$\epsilon\frac{\pi}{2}$}%
}}}}
\end{picture}%
\caption{The curve $\alpha$}
\label{fig:xandy}
\end{figure}

We will now parameterise an embedded sphere $S^{n}$ in $(\mathbb{R}^{n+1},h)$, the induced metric on which will be precisely the mixed torpedo metric described earlier. Let $c_1$ and $c_2$ be constants satisfying $c_1>\epsilon\frac{\pi}{2}$ and $c_2>\delta\frac{\pi}{2}$. Let $a=(a_1, a_2)$ denote a smooth unit speed curve in the first quadrant of $\mathbb{R}^{2}$ which begins at $(c_1,0)$ follows a vertical trajectory, bends by an angle of $\frac{\pi}{2}$ towards the vertical axis and continues as a horizontal line to end at $(0,c_2)$. We will assume that the bending takes place above the horizontal line through line $(0,\delta\frac{\pi}{2})$, see Fig. \ref{fig:xandy}. We also assume that $a_1\in\U(0,b)$ and $a_2\in\V(0,b)$ for sufficiently large $b>0$.

\begin{figure}[htbp]
\includegraphics[height=50mm]{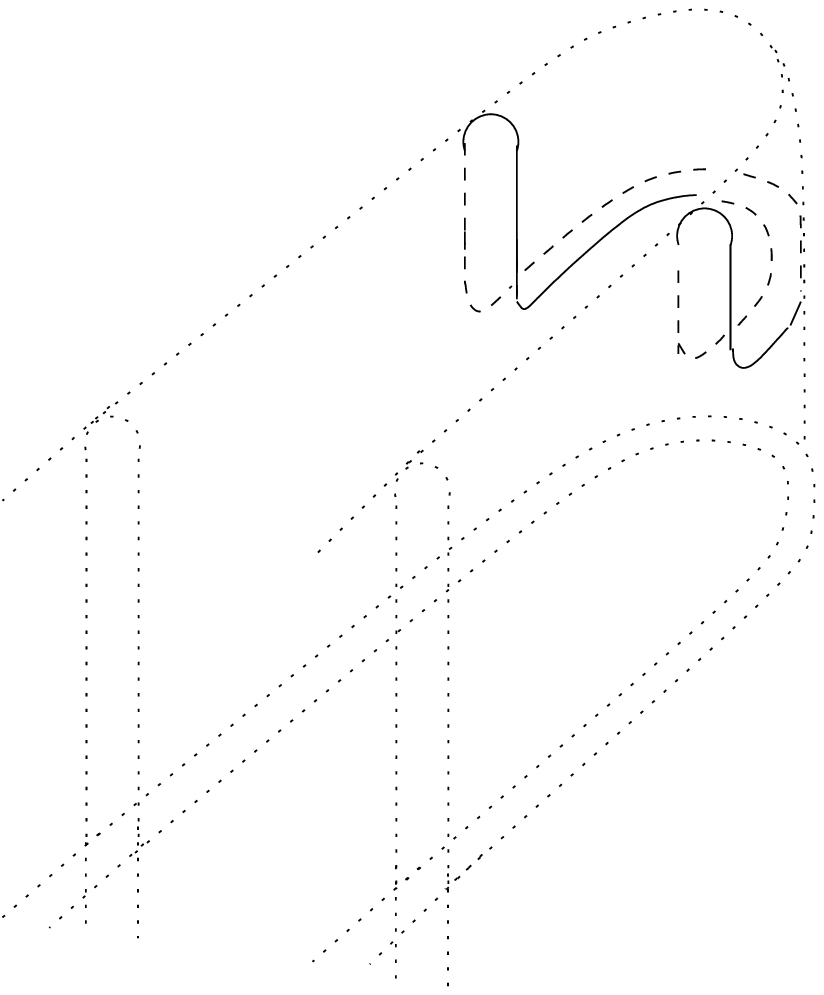}
\caption{The map $J$ gives a parameterisation for $S^{n}$}
\label{Fig.:J}
\end{figure}

We will now specify an embedding of the $n-$sphere into $(\mathbb{R}^{n+1}, h)$ which 
induces the mixed torpedo metric $g_{Mtor}^{p,q}$ described above. Let $J$ be the embedding defined as follows

\begin{equation*} 
\begin{split}
J:(0,b)\times{S^{p}}\times{S^{q}}&\longrightarrow\mathbb{R}^{p+1}\times\mathbb{R}^{q+1},\\
(t,\theta,\phi)&\longmapsto((a_1(t),\phi),(a_2(t),\theta)),
\end{split}
\end{equation*}
see Fig. \ref{Fig.:J}.
Provided that $\epsilon$ and $\delta$ are chosen sufficiently small, this embedding induces 
the mixed torpedo metric $g_{Mtor}^{p,q}$ on $S^{n}$. Indeed, we have

\begin{equation*}\label{mixedtorembcalc}
\begin{array}{cl}
J^{*}h&=J^{*}(d\rho^{2}+f_\epsilon(\rho)^{2}ds_{p}^{2}+dr^{2}+f_\delta(r)^{2}ds_{q}^{2})\\
&= dt^{2}+f_\epsilon(\alpha_1(t))^{2}ds_{p}^{2}+f_\delta(\alpha_2(t))^{2}ds_{q}^{2}\\
&= dt^{2}+f_\epsilon(b-t)^{2}ds_{p}^{2}+f_\delta(t)^{2}ds_{q}^{2}\\
&=g_{Mtor}^{p,q}.
\end{array}
\end{equation*}
The second equality follows from the fact that $\alpha$ is a unit speed curve and the third equality from the fact that $f_\epsilon(s)$ and $f_\delta(s)$ are both constant when $s>\max\{\epsilon\frac{\pi}{2},\delta\frac{\pi}{2}\}$.

\section{Revisiting the Surgery Theorem}\label{surgerysection}

Over the the next two sections we will provide a proof of Theorem \ref{GLcob}. The proof involves the construction of a psc-metric on a compact cobordism $\{W^{n+1};X_0,X_1\}$ which extends a psc-metric $g_0$ from $X_0$ and is a product near $\p{W}$. A specific case of this is Theorem \ref{ImprovedsurgeryTheorem} (stated below) which we prove in this section. It can be thought of as a building block for the more general case of the proof of Theorem \ref{GLcob} which will be completed in section \ref{GLcobordsection}. Before stating Theorem \ref{ImprovedsurgeryTheorem}, it is worth briefly reviewing some basic notions about surgery and cobordism.

\subsection{Surgery and cobordism}
A {\it surgery} on a smooth manifold $X$ of dimension $n$, is the construction of a new $n$-dimensional manfiold $X'$ by removing an embedded sphere of dimension $p$ from $X$ and replacing it with a sphere of dimension $q$ where $p+q+1=n$. More precisely, suppose $i:S^{p}\hookrightarrow X$ is an embedding. Suppose also that the normal bundle of this embedded sphere is trivial. Then we can extend $i$ to an embedding $\bar{i}:S^{p}\times D^{q+1}\hookrightarrow X$. The map $\bar{i}$ is known as a {\it framed embedding} of $S^{p}$. By removing an open neighbourhood of $S^{p}$, we obtain a manifold $X-\bar{i}(S^{p}\times {\oD})$ with boundary $S^{p}\times S^{q}$. Here ${\oD}$ denotes the interior of the disk $D^{q+1}$. As the handle $D^{p+1}\times S^{q}$ has the same boundary, we can use the map $\bar{i}|_{S^{p}\times S^{q}}$, to glue the manifolds $X-\bar{i}(S^{p}\times {\oD})$ and $D^{p+1}\times S^{q}$ along their common boundary and obtain the manifold 

\begin{equation*}
X'=(X-\bar{i}(S^{p}\times {\oD}))\cup_{\bar{i}}D^{p+1}\times S^{q}.
\end{equation*}

The manifold $X'$ can be taken as being smooth (although some minor smoothing of corners is necessary where the attachment took place). Topologically, $X'$ is quite different from the manifold $X$. It is well known that the topology of $X'$ depends on the embedding $i$ and the choice of framing $\bar{i}$, see \cite{R} for details. In the case when $i$ embeds a sphere of dimension $p$ we will describe a surgery on this sphere as either a {\it $p-$surgery} or a {\it surgery of codimenison $q+1$}.

The {\it trace} of a $p-$surgery is a smooth $n+1$-dimensional manifold $W$ with boundary $\p{W}=X\sqcup X'$, see Fig. \ref{fig:trace}. It is formed by attaching a solid handle $D^{p+1}\times D^{q+1}$ onto the cylinder $X\times I$, identifying the $S^{p}\times D^{q+1}$ part of the boundary of $D^{p+1}\times D^{q+1}$ with the embedded $S^{p}\times D^{q+1}$ in $X\times \{1\}$ via the framed embedding $\bar{i}$. The trace of a surgery is an example of a cobordism. In general, a {\it cobordism} between $n$-dimensional manifolds $X_0$ and $X_1$ is an $n+1$-dimensional manifold $W^{n+1}=\{W^{n+1};X_0,X_1\}$ with boundary $\p {W}=X_0\sqcup X_1$. Cobordisms which arise as the trace of a surgery are known as {\it elementary cobordisms}. By taking appropriate unions of elementary cobordisms it is clear that more general cobordisms can be constructed. An important consequence of Morse theory is that the converse is also true, that is any compact cobordism $\{W^{n+1};X_0,X_1\}$ may be decomposed as a finite union of elementary cobordisms. This is a subject we will return to later on.

\begin{figure}[htbp]
\begin{picture}(0,0)%
\includegraphics{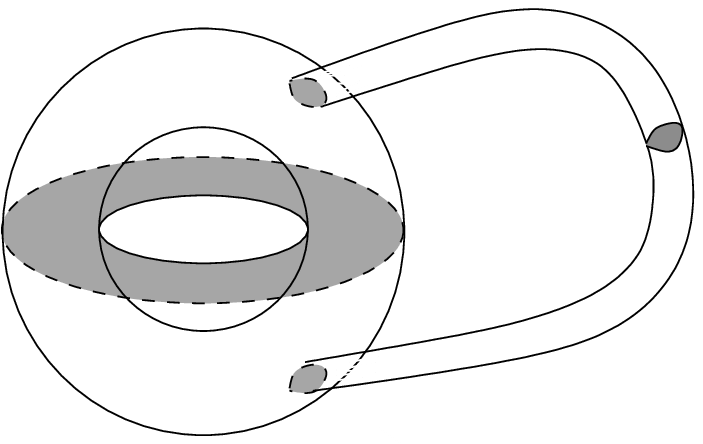}%
\end{picture}%
\setlength{\unitlength}{3947sp}%
\begingroup\makeatletter\ifx\SetFigFont\undefined%
\gdef\SetFigFont#1#2#3#4#5{%
  \reset@font\fontsize{#1}{#2pt}%
  \fontfamily{#3}\fontseries{#4}\fontshape{#5}%
  \selectfont}%
\fi\endgroup%
\begin{picture}(3339,2281)(1665,-3492)
\put(2226,-1849){\makebox(0,0)[lb]{\smash{{\SetFigFont{10}{8}{\rmdefault}{\mddefault}{\updefault}{\color[rgb]{0,0,0}$X$}%
}}}}
\put(3076,-1386){\makebox(0,0)[lb]{\smash{{\SetFigFont{10}{8}{\rmdefault}{\mddefault}{\updefault}{\color[rgb]{0,0,0}$X'$}%
}}}}
\end{picture}%

\caption{The trace of a $p$-surgery on $X$}
\label{fig:trace}
\end{figure}

\subsection{Surgery and positive scalar curvature}
The Surgery Theorem of Gromov-Lawson and Schoen-Yau can now be stated as follows. 
\\

\noindent{\emph{\bf Surgery Theorem.}} {\rm (\cite{GL1}, \cite{SY})}\label{SurgeryTheorem}
 {\sl Let $(X,g)$
    be a Riemannian manifold of positive scalar curvature. Let $X'$ be
    a manifold which has been obtained from $X$ by a surgery of
    codimension at least $3$. Then $X'$ admits a metric $g'$ which also has positive scalar curvature.}

\begin{Remark}
We will concentrate on the technique used by Gromov and Lawson, however, the proof of the Surgery Theorem by Schoen and Yau in \cite{SY} is rather different and involves conformal methods. There is in fact another approach to the problem of classifying manifolds of positive scalar curvature which involves conformal geometry, see for example the work of Akutagawa and Botvinnik in \cite{AB}. 
\end{Remark}

\begin{figure}[htbp]
\begin{picture}(0,0)%
\includegraphics{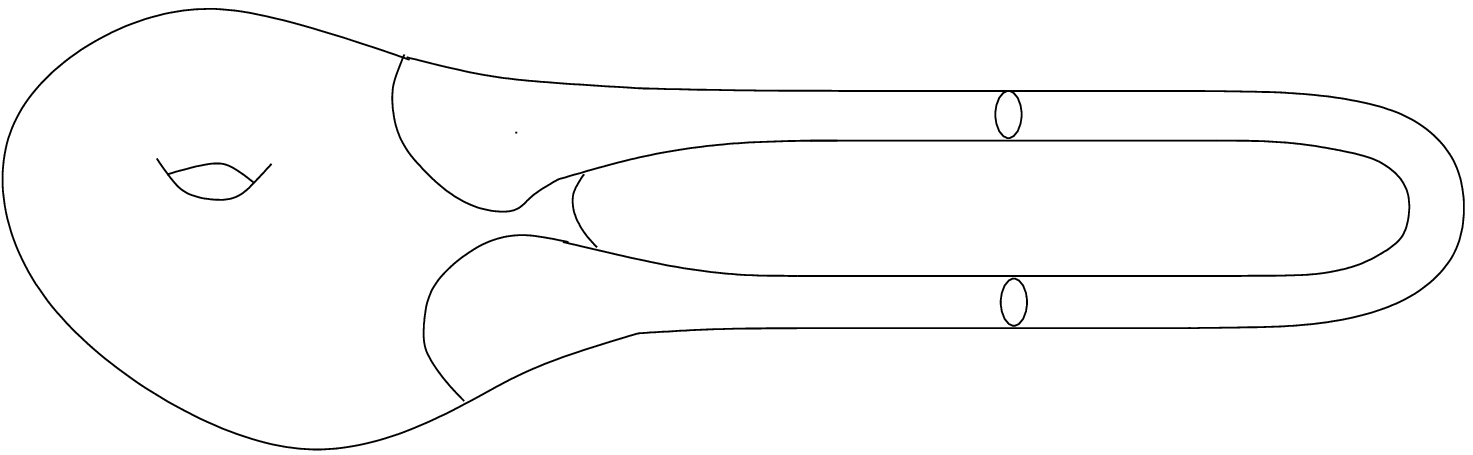}%
\end{picture}%
\setlength{\unitlength}{3947sp}%
\begingroup\makeatletter\ifx\SetFigFont\undefined%
\gdef\SetFigFont#1#2#3#4#5{%
  \reset@font\fontsize{#1}{#2pt}%
  \fontfamily{#3}\fontseries{#4}\fontshape{#5}%
  \selectfont}%
\fi\endgroup%
\begin{picture}(7038,2154)(330,-2515)
\put(726,-1548){\makebox(0,0)[lb]{\smash{{\SetFigFont{10}{8}{\rmdefault}{\mddefault}{\updefault}{\color[rgb]{0,0,0}Original metric $g$}%
}}}}
\put(3451,-2248){\makebox(0,0)[lb]{\smash{{\SetFigFont{10}{8}{\rmdefault}{\mddefault}{\updefault}{\color[rgb]{0,0,0}Transition metric}%
}}}}
\put(5414,-2174){\makebox(0,0)[lb]{\smash{{\SetFigFont{10}{8}{\rmdefault}{\mddefault}{\updefault}{\color[rgb]{0,0,0}Standard metric}%
}}}}
\put(5426,-2449){\makebox(0,0)[lb]{\smash{{\SetFigFont{10}{8}{\rmdefault}{\mddefault}{\updefault}{\color[rgb]{0,0,0}$g_{tor}^{p+1}(\epsilon)+\delta^{2}ds_{q}^{2}$}%
}}}}
\end{picture}%
\caption{The metric $g'$, obtained by the Surgery Theorem}
\label{fig:GLmetric}
\end{figure}

\noindent In their proof, Gromov and Lawson provide a technique for constructing the metric $g'$, see Fig. \ref{fig:GLmetric}. Their technique can be strengthened to yield the following theorem.

\begin{Theorem}\label{ImprovedsurgeryTheorem}
Let $(X,g)$ be a Riemannian manifold of positive scalar curvature. 
If $W^{n+1}$ is the trace of a surgery on $X$ in codimension at least $3$, then 
we can extend the metric $g$ to a metric $\bar{g}$ on $W$ which has positive scalar 
curvature and is a product near the boundary.
\end{Theorem}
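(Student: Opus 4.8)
The plan is to put the metric $g$ into a completely standard form near the surgery sphere, at the cost of an explicit psc-isotopy, and then to fill in the handle with an equally explicit psc-metric built from torpedoes. Write $W$, as in the discussion above, as the cylinder $X\times I$ with a solid handle $H=D^{p+1}\times D^{q+1}$ attached to $X\times\{1\}$ along the framed embedding $\bar i\colon S^{p}\times D^{q+1}\hookrightarrow X\times\{1\}$, and let $N=\bar i(S^{p}\times D^{q+1})$ be the corresponding tubular neighbourhood of the surgery sphere $S^{p}\subset X$.

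First I would carry out the Gromov--Lawson construction to deform $g$, through positive scalar curvature metrics on $X$, to a metric $g_{\mathrm{std}}$ which agrees with $g$ outside $N$ and which, on a slightly smaller neighbourhood of $S^{p}$, is exactly the product $\epsilon^{2}ds_{p}^{2}+g_{tor}^{q+1}(\delta)$ for suitably small $\epsilon,\delta>0$; see \cite{GL1}. This is where the codimension hypothesis $q\geq 2$ (that is, $q+1\geq 3$) enters: one bends the metric outward along a curve in the $(r,t)$-plane, where $r$ is the radial coordinate on the $D^{q+1}$-factor of $N$, and the positivity of the resulting scalar curvature is forced by the large intrinsic curvature of the small normal $q$-spheres, which dominates the bounded error terms precisely when $q\geq 2$. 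Since this is an isotopy in $\Riem^{+}(X)$ from $g$ to $g_{\mathrm{std}}$ that is stationary off $N$, Lemma~\ref{isotopyimpliesconc} and Corollary~\ref{isoconc} produce a psc-metric $\bar h$ on the cylinder $X\times[0,1]$ which is $g+dt^{2}$ near $X\times\{0\}$ and $g_{\mathrm{std}}+dt^{2}$ near $X\times\{1\}$; in particular, on a collar of $N\times\{1\}$ it equals $\epsilon^{2}ds_{p}^{2}+g_{tor}^{q+1}(\delta)+dt^{2}$.

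On the handle $H=D^{p+1}\times D^{q+1}$ I would put the product metric $g_{tor}^{p+1}(\epsilon)+g_{tor}^{q+1}(\delta)$. Its scalar curvature is $R_{1}+R_{2}$, and $R_{1}\geq 0$ because a torpedo arises from a concave warping function (the first term of (\ref{eqn;scalwarp}) is then non-negative, and the second is non-negative since the warping derivative lies in $[0,1]$), while $R_{2}>0$ everywhere because $g_{tor}^{q+1}(\delta)$ lives on a disk of dimension $q+1\geq 3$; hence the product has positive scalar curvature. Near $\p D^{p+1}\times D^{q+1}$ the factor $g_{tor}^{p+1}(\epsilon)$ is the cylinder $\epsilon^{2}ds_{p}^{2}+ds^{2}$, so there the handle metric is $\epsilon^{2}ds_{p}^{2}+g_{tor}^{q+1}(\delta)+ds^{2}$; identifying the collar parameter $s$ with $1-t$, this matches $\bar h$ near $N\times\{1\}$ to all orders, so the two pieces glue to a smooth psc-metric $\bar g$ on $W$ (the corners along $S^{p}\times S^{q}$ are rounded in the routine way, since near them both metrics are products of $\epsilon^{2}ds_{p}^{2}+\delta^{2}ds_{q}^{2}$ with a flat two-dimensional corner).

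Finally I would read off the boundary behaviour. Near $X\times\{0\}$ one has $\bar g=g+dt^{2}$. The other boundary component is $X'=(X\setminus\bar i(S^{p}\times\oD))\cup D^{p+1}\times S^{q}$: on the first piece $\bar g=g_{\mathrm{std}}+dt^{2}=g+dt^{2}$ since $g_{\mathrm{std}}=g$ off $N$, and near $D^{p+1}\times\p D^{q+1}$ the handle metric is $g_{tor}^{p+1}(\epsilon)+\delta^{2}ds_{q}^{2}+dr^{2}$, again a product; these two product structures agree near $S^{p}\times S^{q}$, so they assemble to a psc-metric $g'$ on $X'$ with $\bar g=g'+dt^{2}$ near $X'$. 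Thus $\bar g$ is the desired extension. The one genuinely delicate step is the first: producing a controlled Gromov--Lawson isotopy that terminates at the \emph{exact} standard form $\epsilon^{2}ds_{p}^{2}+g_{tor}^{q+1}(\delta)$ near $S^{p}$, rather than merely at an approximate product, and it is in making this deformation precise and canonical that the argument of \cite{Gajer} requires correction.
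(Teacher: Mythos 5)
Your route is the same as the paper's: isotope $g$ through psc-metrics to the standard form $\epsilon^{2}ds_{p}^{2}+g_{tor}^{q+1}(\delta)$ near the surgery sphere, convert that isotopy into a concordance on $X\times I$ via Lemma \ref{isotopyimpliesconc}, and cap with the handle metric $g_{tor}^{p+1}(\epsilon)+g_{tor}^{q+1}(\delta)$; the positivity of the handle metric and the smooth isometric matching along $\p D^{p+1}\times D^{q+1}$ are fine (modulo the small bookkeeping point that the handle must be attached along the sub-neighbourhood of $N$ on which $g_{std}$ is \emph{exactly} standard, not along all of $N$). The genuine gap is in your last two steps, where the corner along $S^{p}\times S^{q}$ and the product structure near the outgoing boundary are treated as automatic. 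Before any rounding, the collar direction on the cylinder side is $\p_t$ while on the handle side it is $\p_r$, and these are orthogonal at the corner; so the assertion that ``these two product structures agree near $S^{p}\times S^{q}$'' is false as stated, and indeed the boundary is not even smooth there until the corner is resolved.

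Rounding the corner is, as you say, harmless for positivity: near the corner the metric is the product of a flat two-dimensional region with $\epsilon^{2}ds_{p}^{2}+\delta^{2}ds_{q}^{2}$, whose scalar curvature dominates any bounded bending terms once $\delta$ is small. But after rounding, the metric is \emph{not} a product near the new smooth boundary: in collar coordinates $(s,t)$ adapted to the rounding curve (the embedding $E$ of Fig. \ref{fig:embE}, dashed curve in Fig. \ref{Gajercorrectionmetric}) it takes the form $Y_{s}ds^{2}+Y_{t}dt^{2}+\epsilon^{2}ds_{p}^{2}+\delta^{2}ds_{q}^{2}$ on the bent region, with $Y_{s},Y_{t}\not\equiv 1$ precisely because the rounding curve has nonzero curvature. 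One still has to deform $Y_{s},Y_{t}$ to $1$ on a collar of the boundary, keeping them unchanged away from it, and check that the resulting curvature error is bounded independently of $\delta$ so that positivity survives after shrinking $\delta$; only then is $\bar{g}=g'+ds^{2}$ near $X'$, with $g'$ the Gromov--Lawson surgery metric. This adjustment is exactly the second half of the paper's proof, and since a product structure near the boundary is the entire content of the ``Improved'' Surgery Theorem beyond \cite{GL1}, omitting it leaves the statement unproved. (Your attribution of the delicate point to the isotopy terminating in exact standard form is reasonable, but note that the specific error in \cite{Gajer} occurs in the bending argument, where strict positivity of the scalar curvature is needed to make the initial bend; see Remark \ref{Gajercomment}.)
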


\noindent In fact, the restriction of the metric $\bar{g}$ to $X'$, the boundary component of $W$ which is 
the result of the surgery, is the metric $g'$ of the Surgery Theorem. Theorem \ref{ImprovedsurgeryTheorem} is sometimes referred to as the Improved Surgery Theorem and was originally proved by Gajer in \cite{Gajer}.
We have two reasons for providing a proof of Theorem \ref{ImprovedsurgeryTheorem}. Firstly, there is an error 
in Gajer's original proof. Secondly, this construction will be used as a ``building block" for generating concordances. In turn, it will allow us to describe a space of concordances, see section \ref{introthree} for a discussion of this.

\begin{figure}[htbp]
\begin{picture}(0,0)%
\includegraphics{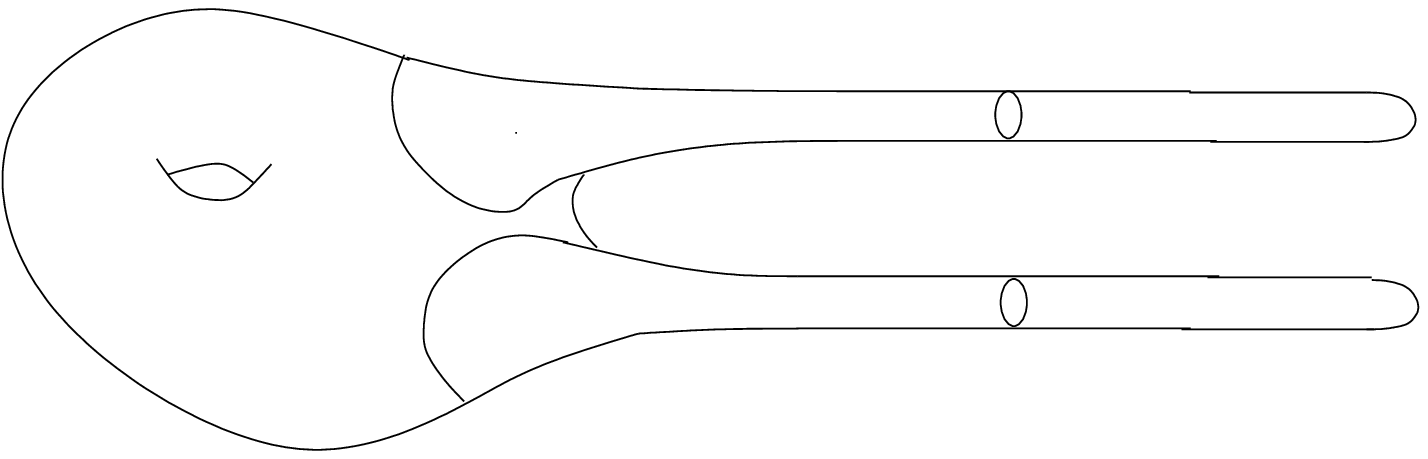}%
\end{picture}%
\setlength{\unitlength}{3947sp}%
\begingroup\makeatletter\ifx\SetFigFont\undefined%
\gdef\SetFigFont#1#2#3#4#5{%
  \reset@font\fontsize{#1}{#2pt}%
  \fontfamily{#3}\fontseries{#4}\fontshape{#5}%
  \selectfont}%
\fi\endgroup%
\begin{picture}(6820,2228)(618,-2077)
\put(3451,-1749){\makebox(0,0)[lb]{\smash{{\SetFigFont{10}{8}{\rmdefault}{\mddefault}{\updefault}{\color[rgb]{0,0,0}Transition metric}%
}}}}
\put(5751,-1724){\makebox(0,0)[lb]{\smash{{\SetFigFont{10}{8}{\rmdefault}{\mddefault}{\updefault}{\color[rgb]{0,0,0}Standard metric}%
}}}}
\put(5764,-2011){\makebox(0,0)[lb]{\smash{{\SetFigFont{10}{8}{\rmdefault}{\mddefault}{\updefault}{\color[rgb]{0,0,0}$\epsilon^{2}ds_{p}^{2}+g_{tor}^{q+1}(\delta)$}%
}}}}
\put(1064,-1186){\makebox(0,0)[lb]{\smash{{\SetFigFont{10}{8}{\rmdefault}{\mddefault}{\updefault}{\color[rgb]{0,0,0}Original metric $g$}%
}}}}
\end{picture}%
\caption{The ``surgery-ready" metric obtained by Theorem \ref{IsotopyTheorem}}
\label{fig:IsotopyLemmametric}
\end{figure}

The proof of Theorem \ref{ImprovedsurgeryTheorem} will dominate much of the rest of this section. 
We will first prove a theorem which strengthens the Surgery Theorem in a slightly different way. This is Theorem \ref{IsotopyTheorem} below, which will play a vital role throughout our work.
 
\begin{Theorem}\label{IsotopyTheorem}
Let $(X,g)$ be a Riemannian manifold of positive scalar curvature with $dim X=n$ and let $g_p$ be any metric on the sphere $S^{p}$. Suppose $i:S^{p}\hookrightarrow X$ is an embedding of $S^{p}$, with trivial normal bundle. Suppose also that $p+q+1=n$ and that $q\geq 2$. Then, for some $\delta>0$ there is an isotopy of $g$, to a psc-metric $g_{std}$ on $X$, which has the form $g_p+g_{tor}^{q+1}(\delta)$ on a tubular neighbourhood of the embedded $S^{p}$ and is the original metric $g$ away from this neighbourhood.
\end{Theorem}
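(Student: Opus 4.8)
The plan is to reprove the Gromov--Lawson construction, being careful enough that the resulting standard piece has exactly the desired product form $g_p + g_{tor}^{q+1}(\delta)$ and, crucially, that the whole modification is realized by an \emph{isotopy} rather than merely a homotopy of metrics. First I would fix a tubular neighbourhood $N \cong S^p \times D^{q+1}$ of the embedded sphere and, by compactness of $S^p$, pass to Fermi-type coordinates $(y,x)$ with $y \in S^p$, $x \in D^{q+1}$, in which $g$ agrees with the Euclidean-type product $g_p + (dx^2)$ to first order along $S^p \times \{0\}$. Write $r = |x|$ and consider the hypersurfaces $S_r = S^p \times S^q_r$ inside $N$; the key point inherited from Gromov--Lawson is that the second fundamental form and the induced scalar curvature of $S_r$ behave, for $r$ small, like those of the round product, so the ambient scalar curvature along $S_r$ is controlled. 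I would then describe the metric near $S^p$ in the ``polar'' form (schematically) $dr^2 + r^2 ds_q^2 + (\text{metric on } S^p \text{ depending on } (r,\cdot))$ plus lower-order cross terms, and treat it as a one-parameter family over the $r$-axis.

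The central geometric step, following Gromov--Lawson, is to replace the straight radial line $t \mapsto (t,0)$ in the $(r,z)$-half-plane (where $z$ is a new ``height'' coordinate) by a carefully chosen curve $\gamma$ that starts out going straight up the $z$-axis near $r=0$, bends once, and then becomes the horizontal ray $r = \bar r$ for some small fixed $\bar r$. The metric is taken to be the one induced on the hypersurface swept out by $\gamma$ inside $N \times \mathbb{R}_z$ equipped with $g + dz^2$. Along the horizontal part one gets precisely a product $ds_q^2$-factor of radius $\bar r$, hence after an overall rescaling the $g_{tor}^{q+1}(\delta)$ tube; near $r=0$ one recovers the original metric; and the scalar-curvature estimate along $S_r$ guarantees that if $\gamma$ bends slowly enough and $\bar r$ is small enough (using $q \ge 2$, so that the $S^q$-factor contributes positively and the torpedo cap has large positive scalar curvature, cf. the curvature formula \eqref{eqn;scaldoublewarp} and the torpedo construction of section \ref{torpedofunc}), positivity of scalar curvature is preserved throughout. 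To get the $S^p$-direction into the exact form $g_p$: along the bend and the horizontal part the $S^p$-metric can be simultaneously deformed, rel the $r=0$ end, from the restriction of $g$ to the prescribed metric $g_p$, shrinking it by the factor $\epsilon$ if needed; since on $S^p$ there is no lower bound to fight (concavity is not required there, we only need the dominant $S^q$-torpedo term to keep $R>0$), this deformation costs nothing once $\delta$ is small. Straightening corners gives the smooth metric $g_{std}$.

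Finally, to upgrade ``there is a psc-metric $g_{std}$'' to ``there is an \emph{isotopy} from $g$ to $g_{std}$,'' I would observe that the whole construction depends on two choices: the bending parameter of the curve $\gamma$ (equivalently, how far out along the straight radial direction we let the curve travel before bending) and the $S^p$-deformation parameter. Both can be run as continuous one-parameter families: at one end the curve does not bend at all and the $S^p$-metric is unchanged, recovering $g$; at the other end we obtain $g_{std}$. The scalar-curvature estimates are uniform over the family (this is exactly the kind of compact-family statement promised by Theorem \ref{GLcompact}), so every metric in the family has positive scalar curvature, giving a path in $\Riem^+(X)$. One must check that the family is continuous in the $C^2$ topology and that outside $N$ nothing moves, both of which are routine given the explicit formulas.

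The main obstacle I expect is not positivity of scalar curvature per se --- that is the classical Gromov--Lawson estimate --- but rather the bookkeeping needed to make the standard piece come out as the \emph{exact} product $g_p + g_{tor}^{q+1}(\delta)$ (Gajer's error, alluded to in Remark \ref{Gajercomment}, is presumably in precisely this matching), together with verifying that the deformation of the $S^p$-factor can be carried out simultaneously with the Gromov--Lawson bending without destroying the curvature bound. Concretely, one has to ensure the cross terms between the $dr^2 + r^2 ds_q^2$ block and the $ds_p^2$ block, which are $O(r)$ in the original Fermi coordinates, are killed off along the horizontal part of $\gamma$ in a controlled way; handling these error terms uniformly in the isotopy parameter, using only $q \ge 2$, is the delicate part of the argument.
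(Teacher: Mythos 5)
Your outline follows the paper's strategy (push out the geodesic sphere bundles in $N\times\mathbb{R}$ along a curve $\gamma$, take the induced hypersurface metric, then standardise fibre and base, and realise the whole modification as a path of psc-metrics), but there is a genuine gap at the heart of the bending step, and you have mislocated the delicacy. The condition ``$\gamma$ bends slowly enough and $\bar r$ is small enough'' is not sufficient, and this is exactly the error in Gajer's argument (see Remark \ref{Gajercomment}) --- it has nothing to do with matching the standard piece. Once $r$ is small the constraint on the curvature $k$ of $\gamma$ reduces to $k<\frac{\sin\theta}{2r}$ (inequality (\ref{keqn})); since $k=\frac{d\theta}{ds}$ this reads $\frac{d(\log\theta)}{ds}<\frac{1}{2r}$ near $\theta=0$, so no matter how small $r$ is and how slowly you bend, the codimension term $(q-1)\frac{\sin\theta}{r}$ alone can never move $\theta$ off $0$. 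The proof needs a two-stage bend: first an initial bend by a small angle $\theta_0$, carried out at moderate radius and powered by the \emph{strict} positivity of the ambient scalar curvature through the term $R_0\frac{r}{\sin\theta}$ in (\ref{cureqn}); then a straight descent to small $r$ at angle $\theta_0$; and only then the main bend to horizontal, which uses $(q-1)\frac{\sin\theta}{r}$ and is legitimate precisely because $\theta\geq\theta_0>0$ there. Your single ``slow'' bend is the reasoning that fails, and it is also why the hypothesis $R>0$ (not just $R\geq0$) is indispensable.

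The isotopy statement is not free either. You propose to run the construction as a one-parameter family from ``no bend'' to the full Gromov--Lawson curve and assert the curvature estimates are uniform, citing Theorem \ref{GLcompact}; but that theorem is proved afterwards, by rerunning the present construction in families, so the appeal is circular. In the paper the isotopy is produced by explicitly homotoping $\gamma$ back to the vertical line in two stages --- the main bend is undone via the linear homotopy of inverse functions $h_s^{-1}=(1-s)f^{-1}+sl^{-1}$, and the initial bend via the scaling $\lambda f_0$ --- and at every stage inequality (\ref{cureqn}) must be re-verified for the intermediate curves (this is where the elementary but non-automatic checks such as (\ref{polly}) and the comparison $\dot{h_s}\geq\dot{f}$ enter). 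Intermediate, partially bent curves do not satisfy the constraint for free. Finally, your claim that deforming the $S^p$-factor to $g_p$ ``costs nothing once $\delta$ is small'' is right in spirit, but the actual mechanism in the paper is the $C^{2}$-convergence of the fibre metrics to $g_{tor}^{q+1}(\delta)$ (Lemma \ref{C2convergence}) followed by an isotopy through Riemannian submersions controlled by O'Neill's formula and the canonical variation, which is what turns ``$\delta$ small enough'' into a proof.
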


\begin{Corollary}\label{GLconc}
There is a metric $\bar{g}$ on $X\times I$ satisfying\\
(i) $\bar{g}$ has positive scalar curvature.\\
(ii) $\bar{g}$ restricts to $g$ on $X\times{\{0\}}$, $g_{std}$ on $X\times{\{1\}}$ and is product near the boundary.\\
$\bar{g}$ is therefore a concordance of $g$ and $g_{std}$.
\end{Corollary}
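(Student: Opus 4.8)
The plan is to deduce this directly from Theorem \ref{IsotopyTheorem} together with Corollary \ref{isoconc}. By Theorem \ref{IsotopyTheorem}, the metric $g$ is isotopic to $g_{std}$ in $\Riem^{+}(X)$; that is, there is a smooth path $g_r$, $r\in I$, in $\Riem^{+}(X)$ with $g_0=g$ and $g_1=g_{std}$. First I would recall that Corollary \ref{isoconc} (via Lemma \ref{isotopyimpliesconc}) produces from any such path a psc-metric on the cylinder $X\times I$ which is the product $g+ds^{2}$ near $X\times\{0\}$ and the product $g_{std}+ds^{2}$ near $X\times\{1\}$: one chooses a smooth increasing reparametrisation $f:\mathbb{R}\to[0,1]$ that is $0$ for $s\le k_1$ and $1$ for $s\ge k_2$, with $|\dot f|,|\ddot f|\le\Lambda$ (possible once $k_2-k_1$ is large), forms $g_{f(s)}+ds^{2}$ on $X\times[A_1,A_2]$, and pulls back to $X\times I$. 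This is precisely the concordance asserted: it has positive scalar curvature by Lemma \ref{isotopyimpliesconc}, it restricts to $g$ on $X\times\{0\}$ and to $g_{std}$ on $X\times\{1\}$, and it is a product near both boundary components.

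Concretely, I would write $\bar g$ for the metric on $X\times I$ obtained in this way and simply verify the three bulleted conditions against the construction in the proof of Corollary \ref{isoconc}. Condition (i) is Lemma \ref{isotopyimpliesconc} applied to the smooth path $g_r$ furnished by Theorem \ref{IsotopyTheorem}. Condition (ii) follows because $f\equiv 0$ near $s=A_1$ and $f\equiv 1$ near $s=A_2$, so $g_{f(s)}+ds^{2}$ is literally $g+ds^{2}$ and $g_{std}+ds^{2}$ on collar neighbourhoods of the two ends; these product structures survive the pullback to $X\times I$. The final sentence of the corollary is then just the definition of concordance.

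I do not expect any genuine obstacle here: the corollary is an immediate packaging of Theorem \ref{IsotopyTheorem} (which supplies the isotopy) with the already-proved Corollary \ref{isoconc} (which converts isotopies into concordances). The only point deserving a word of care is that Theorem \ref{IsotopyTheorem} should be understood to deliver a \emph{smooth} path, or else one first approximates the isotopy by a smooth one exactly as in the proof of Corollary \ref{isoconc}; after that, the rescaling argument applies verbatim. Thus the proof is essentially one line: ``By Theorem \ref{IsotopyTheorem}, $g$ and $g_{std}$ are isotopic; now apply Corollary \ref{isoconc}.''
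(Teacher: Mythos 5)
Your proposal is correct and follows exactly the paper's route: the paper's own proof is the one-liner that the concordance follows immediately from Corollary \ref{isoconc} applied to the isotopy between $g$ and $g_{std}$ supplied by Theorem \ref{IsotopyTheorem}. Your elaboration of the rescaling construction from Lemma \ref{isotopyimpliesconc} is just the unpacking of that same argument.
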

\begin{proof} This follows immediately from Lemma \ref{isoconc}.
\end{proof}

\begin{Remark}
The proof of Theorem \ref{IsotopyTheorem} is not made any simpler by choosing a particular metric for $g_p$. Indeed, the embedded sphere $S^{p}$ can be replaced by any closed codimension$\geq 3$ submanifold with trivial normal bundle, and the result still holds with an essentially identical proof. That said, we are really only
interested in the case of an embedded sphere and moreover, the case when $g_p$ is the round metric $\epsilon^{2}ds_p^{2}.$ 
\end{Remark}

The proof of Theorem \ref{IsotopyTheorem} is long and technical. Contained in it is the proof 
of the original Surgery Theorem of Gromov and Lawson, see \cite{GL1}. Here the authors consider an embedded 
surgery sphere $S^{p}$ in a Riemannian manifold $(X^{n},g)$ where $n-p\geq 3$ and $g$ is a psc-metric. 
Their construction directly implies that the metric $g$ can be replaced 
by the psc-metric $g'$ described in the statement of Theorem \ref{IsotopyTheorem}, where in this case $g_p=\epsilon^{2}ds_{p}^{2}$. Thus, Gromov and 
Lawson prepare the metric for surgery by making it standard near the surgery sphere. By performing the surgery entirely on the standard region, it is then possible to 
attach a handle $D^{p+1}\times S^{n-p-1}$ with a correponding standard metric, $g_{tor}^{p+1}(\epsilon)+\delta^{2}ds_{n-p-1}^{2}$ onto $X-\bar{i}(S^{p}\times {\oDnp})$, as in Fig. \ref{fig:GLmetric}.

Rather than attaching a handle metric, Theorem \ref{IsotopyTheorem} states that the ``surgery-ready" 
metric $g_{std}$ on $X$, see Fig. \ref{fig:IsotopyLemmametric}, is actually isotopic to the original metric $g$. Thus the concordance $\bar{g}$ on $X\times I$, which is described in Corollary \ref{GLconc}, can be built. The proof of Theorem \ref{ImprovedsurgeryTheorem} then proceeds by attaching a solid handle $D^{p+1}\times D^{n-p}$ to $X\times I$, with an appropriate standard metric. After smoothing, this will result in a metric of positive scalar curvature on the trace of the surgery on $S^{p}$. The only remaining task in the proof of Theorem \ref{ImprovedsurgeryTheorem} is to show 
that this metric can be adjusted to also carry a product structure near the boundary.

\subsection{Outline of the proof of Theorem \ref{IsotopyTheorem}}
Although the result is known, Theorem \ref{IsotopyTheorem} is based on a number of technical 
lemmas from a variety of sources, in particular \cite{GL1}, \cite{RS}. 
For the most part, it is a reworking of Gromov and Lawson's proof of the Surgery Theorem. To aid the reader we relegate many of the more technical proofs to the appendix. We begin with a brief summary.
\\

\noindent{\bf Part 1:} Using the exponential map we can specify a tubular neighbourhood $N=S^{p}\times D^{q+1}$, of the embedded sphere $S^{p}$. Henceforth, all of our work will take place in this neighbourhood. We construct a hypersurface $M$ in $N\times \mathbb{R}$ where $N\times\mathbb{R}$ is equipped with the metric $g+dt^{2}$. Letting $r$ denote the radial distance from $S^{p}\times\{0\}$ in $N$, this hypersurface is obtained by pushing out bundles of geodesic spheres of radius $r$ in $N$ along the $t$-axis with respect to some smooth curve $\gamma$ of the type depicted in Fig. \ref{gamma}. In Lemmas \ref{principalM} and \ref{scalM}, we compute the scalar curvature of the metric $g_\gamma$ which is induced on the hypersurface $M$.

\noindent {\bf Part 2:} We recall the fact that $\gamma$ can be chosen so that the metric $g_\gamma$ has positive scalar curvature. This fact was originally proved in \cite{GL1} although later, in \cite{RS}, an error in the original proof was corrected. We will employ the method used by Rosenberg and Stolz in \cite{RS} to construct such a curve $\gamma$ and we will then demonstrate that $\gamma$ can be homotopied through appropriate curves back to the vertical axis inducing an isotopy from the psc-metric $g_\gamma$ back to the orginal psc-metric $g$. We will also comment on the error in the proof of the ``Improved Surgery Theorem", Theorem 4 in \cite{Gajer}, see Remark \ref{Gajercomment}. 

\noindent {\bf Part 3:} We will now make a further deformation to the metric $g_\gamma$ induced on $M$. Here we restrict our attention to the part of $M$ arising from the torpedo part of $\gamma$. Lemma \ref{GLlemma1} implies that $M$ can be chosen so that the metric induced on the fibre disks can be made arbitrarily close to the standard torpedo metric of radius $\delta$. It is therefore possible to isotopy the metric $g$, through psc-metrics, to one which near $S^{p}$ is a Riemannian submersion with base metric $g|_{S^{p}}$ and fibre metric $g_{tor}^{q+1}(\delta)$. Using the formulae of O'Neill, we will show that the positivity of the curvature on the disk factor allows us to isotopy through psc-submersion metrics near $S^{p}$ to obtain the desired metric $g_{std}=g_p+g_{tor}^{q+1}(\delta)$.
 
\begin{proof} Let $X^{n}$ be a manifold of dimension $n\geq 3$ and $g$ a metric of 
positive scalar curvature on $X$.
\\

\subsection{Part 1 of the proof: Curvature formulae for the first deformation}

Let $i:S^{p}\hookrightarrow X$ be an embedding with trivial normal bundle, denoted by $\N$, and with $q\geq 2$ where $p+q+1=n$. By choosing an orthonormal frame for $\N$ over $i(S^{p})$, we specify a bundle isomorphism $\tilde{i}:S^{p}\times \mathbb{R}^{q+1}\rightarrow\mathcal{N}$. Points in $S^{p}\times \mathbb{R}^{q+1}$ will be denoted $(y,x)$. Let $r$ denote the standard Euclidean distance function in $\mathbb{R}^{q+1}$ and let $D^{q+1}(\bar{r})=\{x\in\mathbb{R}^{q+1}:r(x)\leq\bar{r}\}$ denote the standard Euclidean disk of radius $\bar{r}$ in $\mathbb{R}^{q+1}$. Provided $\bar{r}$ is sufficiently small, the composition $\exp\circ\tilde{i}|_{S^{p}\times D^{q+1}(\bar{r})}$, where $\exp$ denotes the exponential map with respect to the metric $g$, is an embedding. We will denote by $N=N(\bar{r})$, the image of this embedding and the coordinates $(y,x)$ will be used to denote points on $N$. Note that curves of the form $\{y\}\times l$, where $l$ is a ray in $D^{{q}}(\bar{r})$ emanating from $0$, are geodesics in $N$.

Before proceeding any further we state a lemma concerning the metric induced on a geodesic sphere of a Riemannian manifold. Fix $z\in X$ and let $D$ be a normal coordinate ball of radius $\bar{r}$ around $z$. Recall, this means first choosing an orthonormal basis $\{e_1,...,e_n\}$ for $T_z X$. This determines an isomorphism $E:(x_1,...,x_n)\mapsto x_1 e_1+\cdots +x_n e_n$ from $\mathbb{R}^{n}$ to $T_z X$. The composition $E^{-1}\circ \exp^{-1}$ is a coordinate map provided we restrict it to an appropriate neighbourhood of $z$. Thus we identify $D=\{x\in \mathbb{R}^{n}:|x|\leq \bar{r}\}$. The quantity $r(x)=|x|$ is the radial distance from the point $z$, and  $S^{n-1}(\epsilon)=\{x\in\mathbb{R}^{n}:|x|=\epsilon\}$ will denote the geodesic sphere of radius $\epsilon$ around $z$.
\\
\begin{Lemma} \label{GLlemma1} {\rm{(Lemma 1, \cite{GL1})}}

\noindent (a) The principal curvatures of the hypersurfaces $S^{n-1}{(\epsilon})$ in D are each of the form $\frac{-1}{\epsilon}+O(\epsilon)$ for $\epsilon$ small. 

\noindent (b) Furthermore, let $g_\epsilon$ be the induced metric on $S^{n-1}{(\epsilon})$ and let $g_{0,\epsilon}$ be the standard Euclidean metric of curvature $\frac{1}{\epsilon^2}$. Then as $\epsilon\rightarrow 0, \frac{1}{\epsilon^2}{g_\epsilon}\rightarrow{\frac{1}{\epsilon^2}g_{0,\epsilon}}=g_{0,1}$ in the $C^{2}$-topology.

\noindent Below we use the following notation. A function $f(r)$ is $O(r)$ as $r\rightarrow 0$ if $\frac{f(r)}{r}\rightarrow constant$ as $r\rightarrow 0$.
\end{Lemma}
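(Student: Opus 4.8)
The plan is to derive both statements from a single fact: the Taylor expansion of $g$ in the geodesic normal coordinates on $D$, namely $g_{ij}(x)=\delta_{ij}+O(|x|^2)$, with the accompanying bounds $\partial_k g_{ij}(x)=O(|x|)$ and $\partial_k\partial_l g_{ij}(x)=O(1)$ uniformly on $D$ (the implied constants depending only on bounds for the curvature tensor and its first covariant derivative on the fixed ball $D$). Two consequences of this will be used repeatedly: the Christoffel symbols satisfy $\Gamma^k_{ij}(x)=O(|x|)$, and, by the Gauss Lemma, $r(x)=|x|$ has $g$-gradient of unit length, the radial segments $t\mapsto t\omega$ are unit-speed geodesics, and $S^{n-1}(\epsilon)=r^{-1}(\epsilon)$ has unit normal $\nu=\partial_r=(x_i/r)\,\partial_i$.

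For part (a), recall that the shape operator of $S^{n-1}(\epsilon)$ is, up to the sign dictated by the choice of normal, the restriction of the Hessian $\nabla^2 r$ to the tangent space of the sphere. Writing $(\nabla^2 r)_{ij}=\partial_i\partial_j r-\Gamma^k_{ij}\partial_k r=\big(\tfrac{\delta_{ij}}{r}-\tfrac{x_ix_j}{r^3}\big)-\Gamma^k_{ij}\tfrac{x_k}{r}$, I observe that on a vector $V$ Euclidean-orthogonal to $x$ the first (Euclidean) term contributes exactly $\tfrac1\epsilon|V|^2_{\mathrm{eucl}}$, while the Christoffel term is $O(\epsilon)$ since $\Gamma=O(\epsilon)$ and $|x|=\epsilon$ on the sphere. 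Because $g$ and the Euclidean metric agree up to $O(\epsilon^2)$, the $g$-orthogonal complement of $\nu$ differs from the Euclidean one by $O(\epsilon^2)$ and the two inner products by $O(\epsilon^2)$; hence the eigenvalues of the shape operator are $-\tfrac1\epsilon+O(\epsilon)$. The point to emphasise is that there is no $O(1)$ correction, and this is exactly because the linear term of the metric vanishes in normal coordinates — in arbitrary coordinates one would pick up such a term, which is presumably the source of the subtlety GL were careful about.

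For part (b) I would parametrise $S^{n-1}(\epsilon)$ by $x=\epsilon\omega$ with $\omega$ ranging over the unit sphere $S^{n-1}(1)$. In any local coordinates $u=(u^1,\dots,u^{n-1})$ on $S^{n-1}(1)$ this gives $(g_\epsilon)_{ab}=g_{ij}(\epsilon\omega)\,\epsilon^2\,\partial_a\omega^i\partial_b\omega^j$, so that $\tfrac1{\epsilon^2}(g_\epsilon)_{ab}=(g_{0,1})_{ab}+h_{ij}(\epsilon\omega)\,\partial_a\omega^i\partial_b\omega^j$, where $h_{ij}=g_{ij}-\delta_{ij}=O(|x|^2)$ and $g_{0,1}$ is the round metric of radius $1$. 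The error term is $O(\epsilon^2)$; each $u$-derivative applied to $h_{ij}(\epsilon\omega)$ produces a factor $\epsilon$ from the chain rule but lowers the vanishing order of $h$ by one, so differentiating twice still leaves a quantity that is $O(\epsilon^2)$. Hence $\tfrac1{\epsilon^2}g_\epsilon-g_{0,1}\to 0$ in $C^2$, which is the assertion.

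The conceptual content is brief; the genuine work — and the main place to be careful — is the uniform $C^2$ bookkeeping in part (b), tracking how the chain-rule factors of $\epsilon$ interact with the decay of $h$ and its derivatives, and, in part (a), keeping the sign conventions for the normal and the second fundamental form consistent with the statement. I expect neither difficulty to be serious, and the argument to be essentially a careful transcription of the computation in \cite{GL1}, with the normal-coordinate expansion of the metric doing all the heavy lifting.
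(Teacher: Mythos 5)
Your argument is correct, and it rests on exactly the same foundation as the paper's appendix proof (Lemma \ref{GLlemma1app}): the normal-coordinate expansion $g_{ij}=\delta_{ij}+O(|x|^{2})$ and the consequent estimate $\Gamma_{ij}^{k}=O(|x|)$. Part (b) is essentially identical to the paper's: the paper pulls back $g_\epsilon$ by $f_\epsilon(x)=\epsilon x$ and reads off $\tfrac{1}{\epsilon^{2}}f_\epsilon^{*}g_\epsilon=\sum g_{ij}(\epsilon x)\,dx_i\,dx_j\to g_{0,1}$ in $C^{2}$, which is your chain-rule bookkeeping in local coordinates on $S^{n-1}(1)$. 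In part (a) you take a recognizably different route: the paper computes $A(\dot\alpha(0),\dot\alpha(0))=\ddot\alpha^{(1)}(0)+\Gamma^{1}_{ij}\dot\alpha^{(i)}\dot\alpha^{(j)}$ for an explicit coordinate circle $\alpha$ on $S^{n-1}(\epsilon)$ (the $-1/\epsilon$ arising as the Euclidean acceleration), then normalizes $|\dot\alpha(0)|^{2}=1+O(\epsilon^{2})$ and invokes an orthogonal change of coordinates to conclude the estimate for every unit direction, hence for the principal directions; you instead identify the second fundamental form with $\mp\nabla^{2}r$ on $\ker dr$, compute the Hessian in coordinates as $\bigl(\tfrac{\delta_{ij}}{r}-\tfrac{x_ix_j}{r^{3}}\bigr)-\Gamma^{k}_{ij}\tfrac{x_k}{r}$, and obtain all eigenvalues at once by comparing the two quadratic forms $A=-\tfrac{1}{\epsilon}(\cdot)_{\mathrm{eucl}}+O(\epsilon)$ and $g=(\cdot)_{\mathrm{eucl}}(1+O(\epsilon^{2}))$. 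This is more invariant and dispenses with the special curve and the final change-of-basis remark; the paper's version makes the geometric source of the $-1/\epsilon$ term more visible. Two small points you should make explicit when writing it up: the tangent space to $r^{-1}(\epsilon)$ is precisely the Euclidean orthogonal complement of $x$ (the kernel of $dr$), so restricting the Hessian to Euclidean-orthogonal $V$ is exactly restricting to the sphere's tangent space (your sentence about the two orthogonal complements differing by $O(\epsilon^{2})$ is unnecessary); and the Gauss-lemma identification $\mathrm{grad}_g\,r=(x_i/r)\p_i$ with unit $g$-length is what fixes the sign so that the outward-normal principal curvatures come out as $-\tfrac{1}{\epsilon}+O(\epsilon)$, matching the paper's convention $S(X)=-\nabla_X\xi$.
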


\noindent This lemma was originally proved in \cite{GL1}. In the appendix, we provide a complete proof, which includes details suppressed in the original, see Theorem \ref{GLlemma1app}. In order to deform the metric on $N$ we will construct a hypersurface in $N\times\mathbb{R}$. Let $r$ denote the radial distance from $S^{p}\times \{0\}$ on $N$ and $t$ the coordinate on $\mathbb{R}$. Let $\gamma$ be a $C^{2}$ curve in the $t-r$ plane which satisfies the following conditions, see Fig. \ref{gamma}. 

\noindent 1. For some $\bar{t}>0$, $\gamma$ lies entirely inside the rectangle $[0,\bar{r}]\times[0,{\bar{t}}]$, beginning at the point $(0,\bar{r})$ and ending at the point $(\bar{t},0)$. There are points $(0, r_1), (t_1', r_1'), (t_0,r_0)$ and $(t_\infty, r_\infty)$ on the interior of $\gamma$ with $0<r_\infty<r_0<\frac{r_1}{2}<r_1'<r_1<\bar{r}$ and $0<t_1'<t_0<t_\infty<\bar{t}$. We will assume that $\bar{t}-t_\infty$ is much larger than $r_\infty$.

\noindent 2. When $r\in[r_0,\bar{r}]$, $\gamma$ is the graph of a function $f_0$ with domain on the $r$-axis satisfying: $f_0(r)=0$ when $r\in[r_1,\bar{r}]$, $f_0(r)=t_1'-\tan{\theta_0}(r-r_1')$ for some $\theta_0\in(0,\frac{\pi}{2})$ when $r\in[r_0, r_1']$ and with $\dot{f_0}\leq 0$ and $\ddot{f_0}\geq 0$.

\noindent 3. When $r\in[0,r_\infty]$, $\gamma$ is the graph of a function $f_\infty$ defined over the interval $[t_\infty, \bar{t}]$ of the $t$-axis. The function $f_\infty$ is given by the formula $f_{\infty}(t)=f_{r_\infty}(\bar{t}-t)$ where $f_{r_\infty}$ is an $r_\infty$-torpedo function of the type described at the beginning of section \ref{torpedofunc}.

\noindent 4. Inside the rectangle $[t_0, t_\infty]\times[r_\infty, r_0]$, $\gamma$ is the graph of a $C^{2}$ function $f$ with $f(t_0)=r_0$, $f(t_\infty)=r_\infty$, $\dot{f}\leq 0$ and $\ddot{f}\geq 0$. 

\begin{figure}[htbp]
\begin{picture}(0,0)%
\includegraphics[height=60mm]{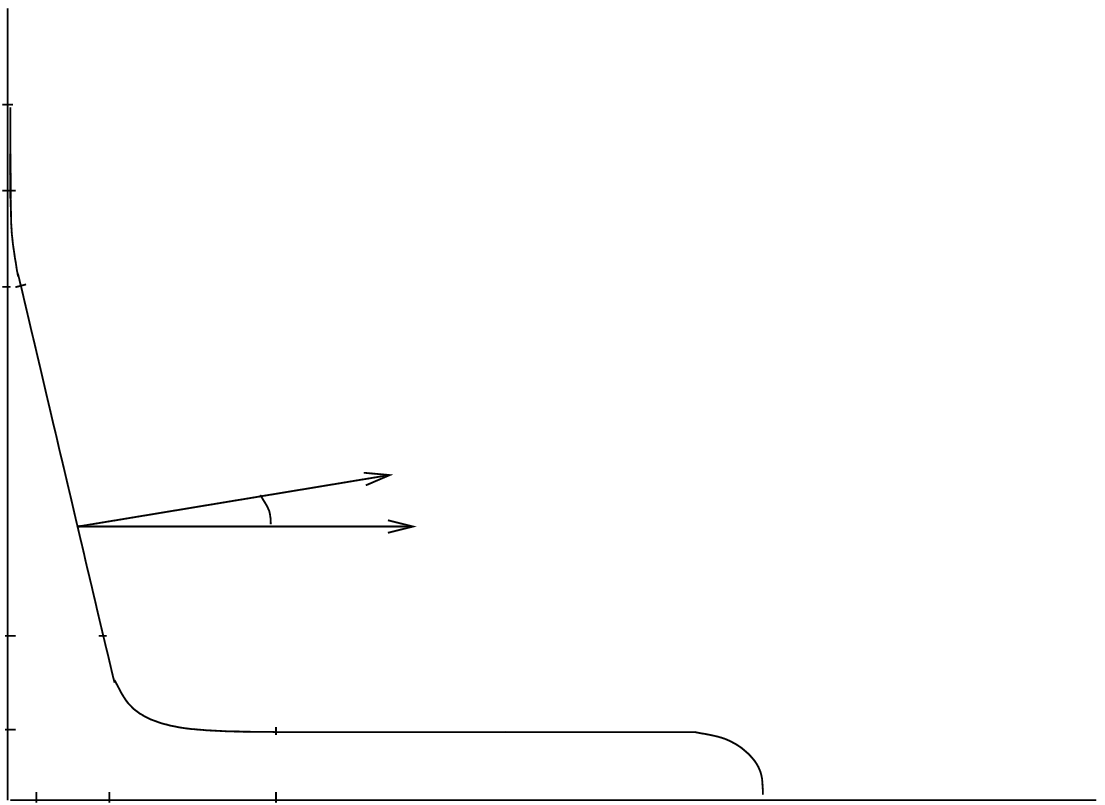}%
\end{picture}%
\setlength{\unitlength}{3947sp}%
\begingroup\makeatletter\ifx\SetFigFont\undefined%
\gdef\SetFigFont#1#2#3#4#5{%
  \reset@font\fontsize{#1}{#2pt}%
  \fontfamily{#3}\fontseries{#4}\fontshape{#5}%
  \selectfont}%
\fi\endgroup%
\begin{picture}(3889,2884)(1374,-4954)
\put(1180,-4690){\makebox(0,0)[lb]{\smash{{\SetFigFont{8}{8}{\rmdefault}{\mddefault}{\updefault}{\color[rgb]{0,0,0}$r_\infty$}%
}}}}
\put(1180,-4375){\makebox(0,0)[lb]{\smash{{\SetFigFont{8}{8}{\rmdefault}{\mddefault}{\updefault}{\color[rgb]{0,0,0}$r_0$}%
}}}}
\put(1180,-2825){\makebox(0,0)[lb]{\smash{{\SetFigFont{8}{8}{\rmdefault}{\mddefault}{\updefault}{\color[rgb]{0,0,0}$r_1$}%
}}}}
\put(1180,-3145){\makebox(0,0)[lb]{\smash{{\SetFigFont{8}{8}{\rmdefault}{\mddefault}{\updefault}{\color[rgb]{0,0,0}$r_1'$}%
}}}}
\put(1180,-2553){\makebox(0,0)[lb]{\smash{{\SetFigFont{8}{8}{\rmdefault}{\mddefault}{\updefault}{\color[rgb]{0,0,0}$\bar{r}$}%
}}}}
\put(1700,-5088){\makebox(0,0)[lb]{\smash{{\SetFigFont{8}{8}{\rmdefault}{\mddefault}{\updefault}{\color[rgb]{0,0,0}$t_0$}%
}}}}
\put(2294,-5088){\makebox(0,0)[lb]{\smash{{\SetFigFont{8}{8}{\rmdefault}{\mddefault}{\updefault}{\color[rgb]{0,0,0}$t_\infty$}%
}}}}
\put(2414,-3938){\makebox(0,0)[lb]{\smash{{\SetFigFont{8}{8}{\rmdefault}{\mddefault}{\updefault}{\color[rgb]{0,0,0}$\theta_0$}%
}}}}
\put(1800,-4375){\makebox(0,0)[lb]{\smash{{\SetFigFont{8}{8}{\rmdefault}{\mddefault}{\updefault}{\color[rgb]{0,0,0}$(t_0,r_0)$}%
}}}}
\put(4159,-5088){\makebox(0,0)[lb]{\smash{{\SetFigFont{8}{8}{\rmdefault}{\mddefault}{\updefault}{\color[rgb]{0,0,0}$\bar{t}$}%
}}}}
\put(2294,-4640){\makebox(0,0)[lb]{\smash{{\SetFigFont{8}{8}{\rmdefault}{\mddefault}{\updefault}{\color[rgb]{0,0,0}$(t_\infty, r_\infty)$}%
}}}}
\put(1430,-5088){\makebox(0,0)[lb]{\smash{{\SetFigFont{8}{8}{\rmdefault}{\mddefault}{\updefault}{\color[rgb]{0,0,0}$t_1'$}%
}}}}
\put(1500,-3145){\makebox(0,0)[lb]{\smash{{\SetFigFont{8}{8}{\rmdefault}{\mddefault}{\updefault}{\color[rgb]{0,0,0}$(t_1', r_1')$}%
}}}}
\end{picture}%
\caption{The curve $\gamma$}
\label{gamma}
\end{figure}

The curve $\gamma$ specifies a hypersurface in $N\times\mathbb{R}$ in the following way. Equip $N\times\mathbb{R}$ with the product metric $g+dt^{2}$. Define $M=M_{\gamma}$ to be the hypersurface, shown in Fig. \ref{hyperm} 
and defined 

\begin{equation*}
M_{\gamma}=\{(y,x,t)\in S^{p}\times D^{q+1}(\bar{r})\times\mathbb{R}:(r(x),t)\in{\gamma}\}.
\end{equation*}

\noindent We will denote by $g_\gamma$, the metric induced on the hypersurface $M$. The fact that $\gamma$ is a vertical line near the point $(0,\bar{r})$ means that $g_\gamma=g$, near $\p{N}$. Thus, $\gamma$ specifies a metric on $X$ which is the orginal metric $g$ outside of $N$ and then transitions smoothly to the metric $g_\gamma$. Later we will show that such a curve can be constructed so that $g_\gamma$ has positive scalar curvature. In the meantime, we will derive an expression for the scalar curvature of $g_\gamma$, by computing principal curvatures for $M$ with respect to the outward unit normal vector field and then utilising the Gauss curvature equation, see Lemmas \ref{principalM} and \ref{scalM}. Details of these computations can be found in the appendix, see Lemmas \ref{principalMapp} and \ref{scalarMapp} respectively.

\begin{figure}[htbp]
\begin{picture}(0,0)%
\includegraphics{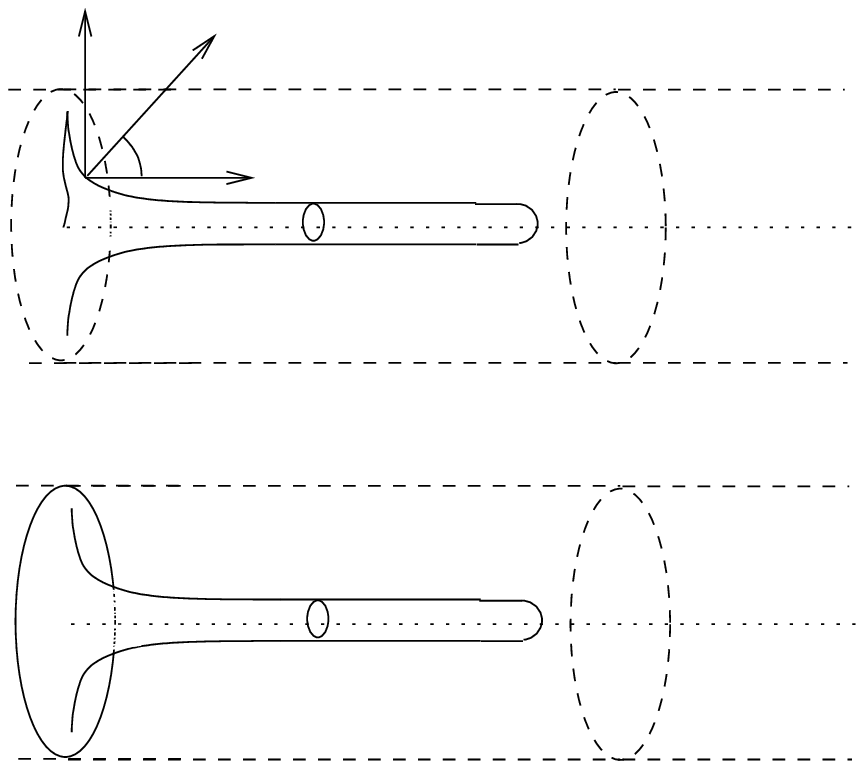}%
\end{picture}%
\setlength{\unitlength}{3947sp}%
\begingroup\makeatletter\ifx\SetFigFont\undefined%
\gdef\SetFigFont#1#2#3#4#5{%
  \reset@font\fontsize{#1}{#2pt}%
  \fontfamily{#3}\fontseries{#4}\fontshape{#5}%
  \selectfont}%
\fi\endgroup%
\begin{picture}(4666,3733)(1218,-3861)
\put(5869,-2318){\makebox(0,0)[lb]{\smash{{\SetFigFont{10}{8}{\rmdefault}{\mddefault}{\updefault}{\color[rgb]{0,0,0}$N\times\mathbb{R}$}
}}}}
\put(1716,-282){\makebox(0,0)[lb]{\smash{{\SetFigFont{10}{8}{\rmdefault}{\mddefault}{\updefault}{\color[rgb]{0,0,0}$\eta_N$}
}}}}
\put(2398,-386){\makebox(0,0)[lb]{\smash{{\SetFigFont{10}{8}{\rmdefault}{\mddefault}{\updefault}{\color[rgb]{0,0,0}$\eta$}
}}}}
\put(2512,-933){\makebox(0,0)[lb]{\smash{{\SetFigFont{10}{8}{\rmdefault}{\mddefault}{\updefault}{\color[rgb]{0,0,0}$\eta_{\mathbb{R}}$}
}}}}
\put(1972,-933){\makebox(0,0)[lb]{\smash{{\SetFigFont{10}{8}{\rmdefault}{\mddefault}{\updefault}{\color[rgb]{0,0,0}$\theta$}
}}}}
\put(1365,-1367){\makebox(0,0)[lb]{\smash{{\SetFigFont{10}{8}{\rmdefault}{\mddefault}{\updefault}{\color[rgb]{0,0,0}$x$}
}}}}
\put(1417,-1088){\makebox(0,0)[lb]{\smash{{\SetFigFont{10}{8}{\rmdefault}{\mddefault}{\updefault}{\color[rgb]{0,0,0}$l$}
}}}}
\end{picture}
\caption{The hypersurface $M$ in $N\times\mathbb{R}$, the sphere $S^{p}$ is represented schematically as a pair of points}
\label{hyperm}
\end{figure}

\begin{Lemma}\label{principalM}
The prinipal curvatures to $M$ with respect to the outward unit normal vector field have the form
\begin{equation}
\begin{array}{cl}
\lambda_j =
\begin{cases}
k & \text{if $j=1$}\\
(-\frac{1}{r}+O(r))\sin{\theta} & \text{if $2\leq j\leq q+1$}\\
O(1)\sin{\theta} & \text{if $q+2\leq j\leq n$}.
\end{cases}
\end{array}
\end{equation} 
Here $k$ is the curvature of $\gamma$, $\theta$ is the angle between the outward normal vector $\eta$ and the horizontal (or the outward normal to the curve $\gamma$ and the $t$-axis) and the corresponding principle directions $e_j$ are tangent to the curve $\gamma$ when $j=1$, the fibre sphere $S^{q}$ when $2\leq j\leq q+1$ and $S^{p}$ when $q+2\leq j\leq n$. 
\end{Lemma}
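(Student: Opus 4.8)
The plan is to exploit the product structure of $N\times\mathbb{R}$ together with the observation that, away from $S^p$, the hypersurface $M_\gamma$ is swept out by the level hypersurfaces of the geodesic tube about $i(S^p)$ in $(N,g)$, transported along the curve $\gamma$. First I would fix a point of $M$ and work in the natural (suitably orthonormalised) frame: let $e_1$ be tangent to the lift of $\gamma$ (the curve obtained by moving $(r,t)$ along $\gamma$ with the direction $x/r\in S^q$ and the base point $y\in S^p$ held fixed), let $e_2,\dots,e_{q+1}$ span the tangent space to the fibre geodesic sphere $S^q(r)\subset D^{q+1}(\bar r)$, and let $e_{q+2},\dots,e_n$ span the tangent space to $S^p$. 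Writing $\nu=\p_r$ for the outward unit normal in $N$ of the level hypersurface $\Sigma_r:=S^p\times S^q(r)$ — well defined since $r$ is the distance function from $i(S^p)$ and the rays $\{y\}\times l$ are unit-speed geodesics — one checks from the definition of $\theta$ that the outward unit normal to $M$ in $N\times\mathbb{R}$ is $\eta=\cos\theta\,\p_t+\sin\theta\,\nu$: orthogonality $\eta\perp e_1$ pins this down up to sign, and the sign is fixed by the boundary cases (a vertical arc of $\gamma$ gives $\eta=\p_t$, i.e. $\theta=0$, a horizontal arc gives $\eta=\nu$, i.e. $\theta=\tfrac{\pi}{2}$). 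Note also that $\theta$ depends only on the point $(r,t)\in\gamma$, so $e_j(\theta)=0$ for every $j\geq 2$, and that $e_2,\dots,e_n$ are all tangent to $\Sigma_r\subset N$, hence orthogonal to both $\nu$ and $\p_t$.

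For $\lambda_1$ I would pass to the totally geodesic surface $\Sigma=(\{y_0\}\times l_{\omega_0})\times\mathbb{R}\subset N\times\mathbb{R}$ obtained by fixing a base point $y_0\in S^p$ and a fibre direction $\omega_0\in S^q$: this is a geodesic of $N$ times $\mathbb{R}$, hence flat and totally geodesic, $M\cap\Sigma$ is precisely the lift of $\gamma$, and $\eta=\cos\theta\,\p_t+\sin\theta\,\p_r$ lies in $T\Sigma$. Consequently $\eta$ is the in-$\Sigma$ unit normal of the plane curve $\gamma$, and total geodesy of $\Sigma$ gives $\mathrm{II}_M(e_1,e_1)=\mathrm{II}^{\Sigma}_{\gamma}(e_1,e_1)=k$, the curvature of $\gamma$. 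The remaining mixed terms $\mathrm{II}_M(e_1,e_j)$ for $j\geq 2$ vanish: using $e_j(\theta)=0$, that $\p_t$ is parallel in the product metric ($\nabla_{e_j}\p_t=0$), and $\langle\nabla_{e_j}\nu,\nu\rangle=\langle\nabla_{e_j}\nu,\p_t\rangle=0$, one gets $\langle\nabla_{e_j}\eta,e_1\rangle=0$. So $e_1$ is a genuine principal direction with $\lambda_1=k$.

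For $j\geq 2$, since $e_j$ is tangent to $N$ and to $\Sigma_r$ with $e_j(\theta)=0$, the second fundamental form collapses (using again $\nabla_{e_j}\p_t=0$ and $N\subset N\times\mathbb{R}$ totally geodesic) to $\mathrm{II}_M(e_j,e_j)=-\langle\nabla_{e_j}\eta,e_j\rangle=\sin\theta\cdot\bigl(-\langle\nabla^N_{e_j}\nu,e_j\rangle\bigr)=\sin\theta\cdot\mathrm{II}^N_{\Sigma_r}(e_j,e_j)$, and likewise the $S^q$–$S^p$ cross terms of $\mathrm{II}_M$ equal $\sin\theta$ times those of $\mathrm{II}^N_{\Sigma_r}$. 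The whole statement thus reduces to the corresponding estimate for the level hypersurfaces $\Sigma_r$ of the geodesic tube about $i(S^p)$: their principal curvatures are $-\tfrac1r+O(r)$ in the fibre directions and $O(1)$ in the $S^p$ directions, with the two blocks orthogonal to the order needed. This is the tube version of Lemma \ref{GLlemma1}(a); I would obtain it either by applying Lemma \ref{GLlemma1} fibrewise or, more robustly, from the Fermi-coordinate expansion $g=dr^2+h_r$ near $i(S^p)$, where $h_r$ equals $r^2ds_q^2$ on the fibre block and the metric of $i(S^p)$ plus lower-order terms on the base block, so that the shape operator $-\tfrac12 h_r^{-1}\p_r h_r$ of $\Sigma_r$ is $-\tfrac1r+O(r)$ on the fibre block and $O(1)$ on the base block. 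Multiplying by $\sin\theta$ gives exactly the claimed $\lambda_j$.

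The main obstacle is precisely this last reduction: controlling the principal curvatures of the geodesic tube about the submanifold $S^p$ (rather than a point), and in particular verifying that the fibre and base directions are the principal directions of $\Sigma_r$ up to errors that do not disturb the stated orders, so that the $e_j$ above really diagonalise the shape operator of $M$. I would carry out this estimate in detail in the appendix, modelled on Gromov–Lawson's Lemma 1 (see Lemma \ref{principalMapp}), and keep the argument in the body at the level of the product-metric bookkeeping sketched above.
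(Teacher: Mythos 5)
Your proposal is correct and follows essentially the same route as the paper's proof (Lemma \ref{principalMapp}): you use the same decomposition $\eta=\cos\theta\,\p_t+\sin\theta\,\p_r$, the same totally geodesic surface $l\times\mathbb{R}$ to identify $\lambda_1=k$, and the same reduction $\lambda_j=\sin\theta\,\lambda_j^{N}$ followed by the tube estimate coming from Lemma \ref{GLlemma1} for the fibre directions and bounded curvature for the $S^{p}$ directions. Your explicit vanishing of the mixed terms $\mathrm{II}_M(e_1,e_j)$ is just an equivalent reformulation of the paper's eigenvector argument for $\dot{\gamma_l}$, so no substantive difference.
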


\begin{proof}
See appendix, lemma \ref{principalMapp}.
\end{proof}

\begin{Lemma}\label{scalM}
The scalar curvature of the metric induced on $M$ is given by
\begin{equation}\label{scalarMeqn}
\begin{split}
R^{M}&=R^{N}+\sin^{2}{\theta}\cdot O(1)-2k\cdot q\frac{\sin{\theta}}{r}\\
&\hspace{0.4cm} +2q(q-1)\frac{\sin^{2}{\theta}}{r^2}+k\cdot qO(r)\sin{\theta}.
\end{split}
\end{equation}
\end{Lemma}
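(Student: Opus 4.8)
The plan is to derive \eqref{scalarMeqn} from the Gauss curvature equation for the hypersurface $M\subset(N\times\mathbb{R},\,g+dt^{2})$ together with the principal curvatures supplied by Lemma \ref{principalM}. Recall that for a two-sided hypersurface $M^{n}$ inside a Riemannian manifold $(Z^{n+1},h)$, with unit normal $\eta$, shape operator $A$, mean curvature $H=\mathrm{tr}\,A$, and principal curvatures $\lambda_{1},\dots,\lambda_{n}$, the Gauss equation gives
\begin{equation*}
R^{M}=R^{Z}-2\,Ric^{Z}(\eta,\eta)+H^{2}-|A|^{2}.
\end{equation*}
I would apply this with $Z=N\times\mathbb{R}$, $h=g+dt^{2}$, and the principal frame $e_{1},\dots,e_{n}$ of Lemma \ref{principalM}.

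The first step is to treat the ambient terms. Since $g+dt^{2}$ is a Riemannian product with a flat second factor, $R^{Z}=R^{N}$ and the curvature tensor of $h$ only involves directions tangent to $N$. By Lemma \ref{principalM}, $e_{1}$ is tangent to $\gamma$ and so lies in the $\p_r$--$\p_t$ plane with $\p_r$-component of length $\cos\theta$, while $e_{2},\dots,e_{n}$ are tangent to the distance-$r$ sphere about $S^{p}$ inside $N$; hence the unit normal $\eta$ has $\p_r$-component of length $\sin\theta$ and $\p_t$-component of length $\cos\theta$, whence $Ric^{Z}(\eta,\eta)=\sin^{2}\theta\cdot Ric^{N}(\p_r,\p_r)$. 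Because $N$ is a fixed smooth Riemannian manifold and $\p_r$ is a unit vector field, $Ric^{N}(\p_r,\p_r)$ remains bounded as $r\to 0$, so the ambient contribution is $R^{N}+\sin^{2}\theta\cdot O(1)$, which supplies the first two terms of \eqref{scalarMeqn}.

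The second step is to substitute $\lambda_{1}=k$, $\lambda_{j}=(-\tfrac1r+O(r))\sin\theta$ for $2\le j\le q+1$, and $\lambda_{j}=O(1)\sin\theta$ for $q+2\le j\le n$ into $H^{2}-|A|^{2}=\sum_{i\neq j}\lambda_{i}\lambda_{j}$. The $k^{2}$ terms cancel between $H^{2}$ and $|A|^{2}$. The mixed products $2\lambda_{1}\lambda_{j}$ over the $q$ fibre directions produce $-2kq\tfrac{\sin\theta}{r}+k\cdot qO(r)\sin\theta$; the products over the $p$ base directions add only a $k\cdot O(1)\sin\theta$ term, of lower order in $r$ than the first of these. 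The products among the fibre directions contribute the dominant positive term of order $\tfrac{\sin^{2}\theta}{r^{2}}$, with combinatorial coefficient determined by the number of fibre directions, while every remaining pair involves at least one base direction of principal curvature $O(1)\sin\theta$ and so contributes only $\sin^{2}\theta\cdot O(1)$, together with a fibre--base cross term of the form $\tfrac{\sin^{2}\theta}{r}\cdot O(1)$. Collecting everything yields \eqref{scalarMeqn}.

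The real content lies in Lemma \ref{principalM} itself, which is proved in the appendix and in turn rests on the asymptotics of Lemma \ref{GLlemma1} for geodesic spheres in $N$; granting that, the computation above is routine algebra. The one point requiring genuine care is uniformity: one must know that the ambient Ricci curvature of $N$ and all error terms in Lemma \ref{principalM} are bounded independently of $r$ (and of which fibre or base direction is taken), which is precisely what the smoothness of $g$ on the fixed tubular neighbourhood $N$ and Lemma \ref{GLlemma1}(a) guarantee. This, rather than the bookkeeping, is the part I would treat most carefully.
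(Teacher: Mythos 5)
Your proposal is correct and takes essentially the same route as the paper's appendix proof (Lemma \ref{scalarMapp}): there the Gauss equation is used in the equivalent form $\tfrac{1}{2}R^{M}=\sum_{i<j}\bigl(K^{N\times\mathbb{R}}_{ij}+\lambda_i\lambda_j\bigr)$, the ambient contribution is reduced via the product structure to $\tfrac{1}{2}R^{N}-\sin^{2}\theta\cdot Ric^{N}(\p_r,\p_r)$ exactly as in your normal-vector decomposition, and the principal curvatures of Lemma \ref{principalM} are then substituted with bounded quantities absorbed into the $O(1)$ and $O(r)$ terms. Your closing remark about uniformity of the error terms over $N$ is the same point the paper relies on through Lemma \ref{GLlemma1}, so there is no substantive difference between the two arguments.
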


\begin{proof}
See appendix, lemma \ref{scalarMapp}.
\end{proof}

\subsection{Part 2 of the proof: A continuous bending argument}

In this section we will prove the following lemma. 

\begin{Lemma}\label{Pushoutcurvehaspsc} 
{\it The curve $\gamma$ can be chosen so that the induced metric $g_\gamma$, on the hypersurface $M=M_{\gamma}$, has positive scalar curvature and is isotopic to the original metric $g$.}
\end{Lemma}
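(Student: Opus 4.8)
The plan is to run the Gromov--Lawson bending argument, in the corrected form due to Rosenberg and Stolz \cite{RS}, choosing the parameters of the curve $\gamma$ in the right order so that the scalar curvature formula (\ref{scalarMeqn}) of Lemma \ref{scalM} stays positive along all of $M_{\gamma}$. Set $R_0=\min_{X}R_g>0$; since the term $R^{N}$ in (\ref{scalarMeqn}) is just $R_g$ at the foot of the relevant fibre, we have $R^{N}\geq R_0$ everywhere on $M_\gamma$. By Lemma \ref{GLlemma1} together with compactness of $X$, after shrinking $\bar r$ we may take the ``$O(1)$'' and ``$O(r)$'' terms of (\ref{scalarMeqn}) to be bounded, uniformly in the base point and in $r\in(0,\bar r]$, by constants $C_1$ and $C_2 r$ respectively, and we may assume the geodesic $q$-spheres of radius $r$ in each fibre are $C^{2}$-close to the round ones. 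The only term in (\ref{scalarMeqn}) with a bad sign is $-2kq\,\sin\theta/r$, present wherever $\gamma$ bends towards the $t$-axis; against it we play off the manifestly non-negative term $2q(q-1)\sin^{2}\theta/r^{2}$, which is where the hypothesis $q\geq 2$ enters.

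\textbf{The initial bend and the straight segment.} First I would fix the bending angle $\theta_0\in(0,\tfrac{\pi}{2})$ so small that $C_1\sin^{2}\theta_0<\tfrac12 R_0$. On the first bending piece of $\gamma$ (where $r\in[r_1',r_1]$, so $r\geq r_1/2$ is bounded away from $0$) the turn through $\theta_0$ can be accomplished with curvature $k\leq\kappa$, and $\kappa$ is small because $\theta_0$ is; then (\ref{scalarMeqn}) gives $R^{M}\geq R_0-C_1\sin^{2}\theta_0-(4q/r_1)\kappa-qC_2\bar r\kappa>\tfrac14 R_0>0$ once $\kappa$ is small enough. On the straight segment at angle $\theta_0$ one has $k=0$, so (\ref{scalarMeqn}) immediately yields $R^{M}\geq R_0-C_1\sin^{2}\theta_0+2q(q-1)\sin^{2}\theta_0/r^{2}>\tfrac12 R_0>0$ with no restriction on how small $r$ becomes; this is what allows $\gamma$ to descend from the level $r\approx\bar r$ down to the level $r=r_0$.

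\textbf{The second bend and the torpedo cap.} The heart of the matter is the second bending piece, over the rectangle $[t_0,t_\infty]\times[r_\infty,r_0]$, where $\gamma$ must turn from angle $\theta_0$ up to the horizontal while $r$ decreases to $r_\infty=\delta$. Following \cite{RS} I would construct the $C^{2}$ profile $f$ there (with $\dot f\leq 0$, $\ddot f\geq 0$, matching the line at $t_0$ and the torpedo profile at $t_\infty$) so that its curvature obeys the pointwise bound $k\leq\tfrac{q-1}{2}\cdot\frac{\sin\theta}{r}$; with this the two middle terms of (\ref{scalarMeqn}) combine to $-2kq\frac{\sin\theta}{r}+2q(q-1)\frac{\sin^{2}\theta}{r^{2}}\geq q(q-1)\frac{\sin^{2}\theta}{r^{2}}$, while the error $kqO(r)\sin\theta$ is bounded by a fixed multiple of $\sin^{2}\theta$. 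Choosing $r_0$ small enough that $q(q-1)/r_0^{2}$ exceeds $C_1$ plus that multiple, we get $R^{M}\geq R_0>0$ throughout this piece, since $r\leq r_0$. On the torpedo cap $r\in[0,r_\infty]$ the $k$-bound above can fail, but a different argument applies: by Lemma \ref{GLlemma1} and the shape of $\gamma$ the induced metric is $C^{2}$-close to $g|_{S^{p}}+g_{tor}^{q+1}(\delta)$, whose scalar curvature is at least $c/\delta^{2}$ (using $q+1\geq 3$), so for $\delta$ small $R^{M}>0$ on the cap as well. This last region is exactly where the proof of Theorem~4 of \cite{Gajer} is flawed; see Remark \ref{Gajercomment}.

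\textbf{The isotopy, and the main obstacle.} It remains to see that $g_\gamma$ is isotopic to $g$. For this I would deform $\gamma$ back to the vertical segment $\{0\}\times[0,\bar r]$ --- for which $M_\gamma=N\times\{0\}$ and $g_\gamma=g$ --- through a family $\gamma_\tau$, $\tau\in I$, of curves of the same type: first shrink the length of the torpedo tube to zero, then let $\theta_0$ and $r_0$ decrease to $0$, and finally interpolate the behaviour of the curve near $r=0$ from the rounded torpedo profile back to that of the vertical segment. All the inequalities used above are open, and the two competing quantities $k$ and $\sin\theta/r$ scale the same way under a change of linear scale, so along such a family the metric stays in $\Riem^{+}(X)$ and varies continuously, ending at $g$. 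The main obstacle throughout is the estimate on the second bending piece: producing a profile curve whose curvature is pointwise dominated by $(q-1)\sin\theta/r$ while it nevertheless turns through the full remaining angle and matches the torpedo cap smoothly. This is precisely the point at which the original Gromov--Lawson argument was incomplete and the Rosenberg--Stolz analysis is needed, and keeping the same balance under control all along the homotopy $\gamma_\tau$ --- where $r\to 0$ and the curvatures are unbounded --- is the delicate part of the isotopy claim.
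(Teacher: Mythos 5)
Your positivity argument follows essentially the same route as the paper (and as Rosenberg--Stolz): an initial bend through a small angle $\theta_0$ at radius comparable to $r_1$, financed by the strict positivity of $R_0$ via the term $R_0\frac{r}{\sin\theta}$ in (\ref{cureqn}); the straight segment where $k=0$; and a second bend at radii $\leq r_0$ in which the bad term $-2kq\frac{\sin\theta}{r}$ is absorbed by $2q(q-1)\frac{\sin^{2}\theta}{r^{2}}$ under a pointwise curvature bound of the form $k\lesssim\frac{\sin\theta}{r}$. Two caveats. First, the existence of a $C^{2}$ profile on $[t_0,t_\infty]\times[r_\infty,r_0]$ realizing the full turn to the horizontal while obeying that curvature bound and matching the line at $t_0$ and the torpedo at $t_\infty$ is the technical heart of the lemma; the paper does not cite it away but constructs it explicitly (the piecewise function of Lemma \ref{technical}, solving the differential inequality (\ref{diffkeqn}) with the boundary conditions (\ref{cond1})--(\ref{cond3})). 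Deferring this wholesale to \cite{RS} leaves the key step unproved in your write-up, though you correctly identify it as the main obstacle. Second, your treatment of the torpedo cap is both unnecessary and shakier than the paper's: on $r\in[0,r_\infty]$ the profile is concave downward, so $k\leq 0$ and (\ref{cureqn}) holds trivially -- your statement that the $k$-bound "can fail" there misreads the sign of $k$. Your substitute argument (that the induced metric is $C^{2}$-close to $g|_{S^{p}}+g_{tor}^{q+1}(\delta)$, whose scalar curvature is of order $1/\delta^{2}$) is morally the content of Lemma \ref{C2convergence} and the O'Neill computation of Part 3 of the paper, but as stated it needs the closeness quantified against a comparison curvature that blows up as $\delta\to 0$, so it should not be presented as an immediate consequence of Lemma \ref{GLlemma1}.

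The genuine gap is in the isotopy half of the statement. Your proposed family $\gamma_\tau$ (shrink the tube, let $\theta_0$ and $r_0$ decrease to $0$, then interpolate near $r=0$) is justified only by "the inequalities are open" and by the remark that $k$ and $\sin\theta/r$ scale the same way; but (\ref{cureqn}) is not scale-invariant: the term $R_0\frac{r}{\sin\theta}$, which is the only source of curvature allowance at small angles, scales like the linear size and disappears under shrinking, and this is exactly the phenomenon behind the failure of (\ref{keqn}) for $\theta$ near $0$ recorded in Remark \ref{Gajercomment}. A family in which $\theta_0\to 0$ while the curve still carries its full turn to the horizontal at radii $\leq r_0$ is precisely the configuration in which positivity is threatened, and you give no verification that your interpolation near $r=0$ remains within the admissible region; indeed you concede at the end that this is "the delicate part," which amounts to leaving the isotopy claim unproved. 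The paper handles it by ordering the deformation the other way around and checking the inequality at every stage: first the horizontal run is tilted slightly downward (the modification $f\mapsto f^{\infty}$), and the small-radius bend is straightened through the linear homotopy of inverse functions $h_s^{-1}=(1-s)f^{-1}+sl^{-1}$, for which $\dot{h_s}\geq\dot{f}$ and the bound on $\ddot{h_s}$ preserve (\ref{diffkeqn}); only after the curve has returned to $\gamma^{\theta_0}$ is the initial bend undone by scaling $f_0\mapsto\lambda f_0$, where preservation of positivity reduces to the elementary inequality $\mu^{3}b-\mu b-\mu+1\geq 0$. To complete your proof you would need either to carry out an analogous stage-by-stage verification for your family $\gamma_\tau$, or to replace it by a deformation of this ordered type.
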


Before proving this lemma, it is worth simplifying some of our formulae. From formula (\ref{scalarMeqn}) we see that to keep $R^{M}>0$ we must choose $\gamma$ so that 

\begin{equation*}
\begin{array}{c}
k\left[2q\frac{\sin{\theta}}{r}+qO(r)\sin{\theta}\right] < R^{N}+\sin^{2}{\theta}\cdot O(1)+2q(q-1)\frac{\sin^{2}{\theta}}{r^2}.
\end{array}
\end{equation*}

\noindent This inequality can be simplified to  

\begin{equation*}
\begin{array}{c}
k[\frac{\sin{\theta}}{r}+O(r)\sin{\theta}]<R_{0}+\sin^{2}{\theta}\cdot O(1)+(q-1)\frac{\sin^{2}{\theta}}{r^2}
\end{array}
\end{equation*}

\noindent where 

\begin{equation*}
\begin{array}{c}
R_0=\frac{1}{2q}[\inf_{N}(R^{N})] 
\end{array}
\end{equation*}
\noindent and $\inf_{N}(R^{N})$ is the infimum of the function $R^{N}$ on the neighbourhood $N$. Simplifying further, we obtain

\begin{equation*}
\begin{array}{c}
k[1+O(r)r]<R_{0}\frac{r}{\sin{\theta}}+r{\sin{\theta}}\cdot O(1)+(q-1)\frac{\sin{\theta}}{r}.
\end{array}
\end{equation*}

\noindent Replace $O(r)$ with $C'r$ for some constant $C'>0$ and replace $O(1)$ with $-C$ where $C>0$, assuming the worst case scenario that $O(1)$ is negative. Now we have

\begin{equation}\label{cureqn}
\begin{array}{c}
k[1+C'r^{2}]<R_{0}\frac{r}{\sin{\theta}}+(q-1)\frac{\sin{\theta}}{r}-Cr{\sin{\theta}}.
\end{array}
\end{equation}

The proof of Lemma \ref{Pushoutcurvehaspsc} is quite complicated and so it is worth giving an overview. We denote by $\gamma^{0}$, the curve which in the $t-r$-plane runs vertically down the $r$-axis, beginning at $(0,\bar{r})$ and finishing at $(0,0)$. Now consider the curve $\gamma^{\theta_0}$, shown in Fig. \ref{fig:initialbend}. This curve begins as $\gamma^{0}$ before smoothly bending upwards over some angle small angle $\theta_0\in(0,\frac{\pi}{2})$ to proceed as a straight line segment before finally bending downwards to intersect the $t$-axis vertically. The corresponding hypersurface in $N\times\mathbb{R}$, constructed exactly as before, will be denoted by $M_{\gamma^{\theta_0}}$ and the induced metric by $g_{\gamma^{\theta_0}}$. The strict positivity of the scalar curvature of $g$ means that provided we choose $\theta_0$ to be sufficiently small, the scalar curvature of the metric $g_{\gamma^{\theta_0}}$ will be strictly positive. It will then be a relatively straightforward exercise to construct a homotopy of $\gamma^{\theta_0}$ back to $\gamma^{0}$ which induces an isotopy of the metrics $g_{\gamma^{\theta_0}}$ and $g$.

To obtain the curve $\gamma$, we must perform one final upward bending on $\gamma^{\theta_0}$. This will take place on the straight line piece below the first upward bend. This time we will bend the curve right around by an angle of $\frac{\pi}{2}-\theta_0$ to proceed as a horizontal line segment, before bending downwards to intersect the $t$-axis vertically, see Fig. \ref{gamma}. We must ensure throughout that inequality (\ref{cureqn}) is satisfied. In this regard, we point out that the downward bending, provided we maintain downward concavity, causes us no difficulty as here $k\leq 0$. The difficulty lies in performing an upward bending, where this inequality is reversed. 

Having constructed $\gamma$, our final task will be to demonstrate that it is possible to homotopy $\gamma$ back to $\gamma^{\theta_0}$ in such a way as to induce an isotopy between the metrics $g_\gamma$ and $g_{\gamma^{\theta_0}}$. This, combined with the previously constructed isotopy of $g_{\gamma}$ and $g$, will complete the proof.

\begin{figure}[htbp]
\begin{picture}(0,0)%
\includegraphics{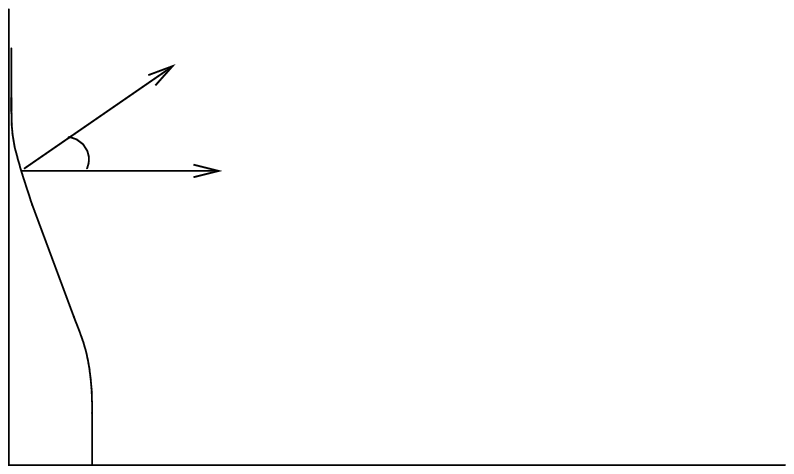}%
\end{picture}%
\setlength{\unitlength}{3947sp}%
\begingroup\makeatletter\ifx\SetFigFont\undefined%
\gdef\SetFigFont#1#2#3#4#5{%
  \reset@font\fontsize{#1}{#2pt}%
  \fontfamily{#3}\fontseries{#4}\fontshape{#5}%
  \selectfont}%
\fi\endgroup%
\begin{picture}(4365,2828)(1636,-3890)
\put(4239,-3824){\makebox(0,0)[lb]{\smash{{\SetFigFont{10}{8}{\rmdefault}{\mddefault}{\updefault}{\color[rgb]{0,0,0}$t$}%
}}}}
\put(1651,-2161){\makebox(0,0)[lb]{\smash{{\SetFigFont{10}{8}{\rmdefault}{\mddefault}{\updefault}{\color[rgb]{0,0,0}$r$}%
}}}}
\put(2751,-1749){\makebox(0,0)[lb]{\smash{{\SetFigFont{10}{8}{\rmdefault}{\mddefault}{\updefault}{\color[rgb]{0,0,0}$\theta_0$}%
}}}}
\put(2594,-3474){\makebox(0,0)[lb]{\smash{{\SetFigFont{10}{8}{\rmdefault}{\mddefault}{\updefault}{\color[rgb]{0,0,0}$\bar{t}$}%
}}}}
\put(2021,-2649){\makebox(0,0)[lb]{\smash{{\SetFigFont{10}{8}{\rmdefault}{\mddefault}{\updefault}{\color[rgb]{0,0,0}$r_\infty$}%
}}}}
\put(2021,-1326){\makebox(0,0)[lb]{\smash{{\SetFigFont{10}{8}{\rmdefault}{\mddefault}{\updefault}{\color[rgb]{0,0,0}$\bar{r}$}%
}}}}
\put(2021,-1586){\makebox(0,0)[lb]{\smash{{\SetFigFont{10}{8}{\rmdefault}{\mddefault}{\updefault}{\color[rgb]{0,0,0}$r_1$}%
}}}}
\put(2021,-1886){\makebox(0,0)[lb]{\smash{{\SetFigFont{10}{8}{\rmdefault}{\mddefault}{\updefault}{\color[rgb]{0,0,0}$r_1'$}%
}}}}

\end{picture}%

\caption{The curve $\gamma^{\theta_0}$ resulting from the initial bend}
\label{fig:initialbend}
\end{figure}

\begin{proof}\noindent{\bf The initial bending:} For some $\theta_0>0$, $\gamma^{\theta_0}$ will denote the curve depicted in Fig. \ref{fig:initialbend}, parameterised by the arc length parameter $s$. Beginning at $(0,\bar{r})$, the curve $\gamma^{\theta_0}$ runs downward along the vertical axis to the point $(0,r_1)$ for some fixed $0<r_1<\bar{r}$. It then bends upwards by an angle of $\theta_0$, proceeding as a straight line segment with slope $m_0=\frac{-1}{\tan{\theta_0}}$, before finally bending downwards and with downward concavity to intersect the $t$-axis vertically. The curvature of $\gamma^{\theta_0}$ at the point $\gamma^{\theta_0}(s)$ is denoted by $k(s)$ and $\theta=\theta(s)$ will denote the angle made by the normal vector to $\gamma^{\theta_0}$ and the $t$-axis, at the point $\gamma^{\theta_0}(s)$.    

The bending itself will be obtained by choosing a very small bump function for $k$, with support on an interval of length $\frac{r_1}{2}$, see Fig. \ref{fig:k(s)}. This will ensure that the entire upward bending takes place over some interval $[r_1',r_1]$ which is contained entirely in $[\frac{r_1}{2},r_1]$.  
The downward bending will then begin at $r=r_\infty$, for some $r_\infty\in(0,\frac{r_1}{2})$.

We will first show that the parameters $\theta_0\in(0,\frac{\pi}{2})$ and $r_1\in(0,\bar{r})$ can be chosen so that inequality (\ref{cureqn}) holds for all $\theta\in[0,\theta_0]$ and all $r\in(0,r_1]$. Begin by choosing some $\theta_0\in(0,\arcsin{\sqrt{\frac{R_0}{C}}})$. This guarantees that the right hand side of (\ref{cureqn}) remains positive for all $\theta\in[0,\theta_0]$. For now, the variable $\theta$ is assumed to lie in $[0,\theta_0]$. Provided $\theta$ is close to zero, the term $R_{0}\frac{r}{\sin{\theta}}$ is positively large and dominates. When $\theta=0$ the right hand side of (\ref{cureqn}) is positively infinite. Once $\theta$ becomes greater than zero, the term $(q-1)\frac{\sin{\theta}}{r}$ can be made positively large by choosing $r$ small, and so can be made to dominate. Recall here that $q\geq 2$ by the assumption that the original surgery sphere had codimension at least three. It is therefore possible to choose $r_1>0$ so that inequality (\ref{cureqn}) holds for all $\theta\in[0,\theta_0]$ and for all $r\in(0,r_1]$. Note also that without the assumption that the scalar curvature of the original metric $g$ is strictly positive, this argument fails.

\begin{figure}[htbp]
\begin{picture}(0,0)%
\includegraphics{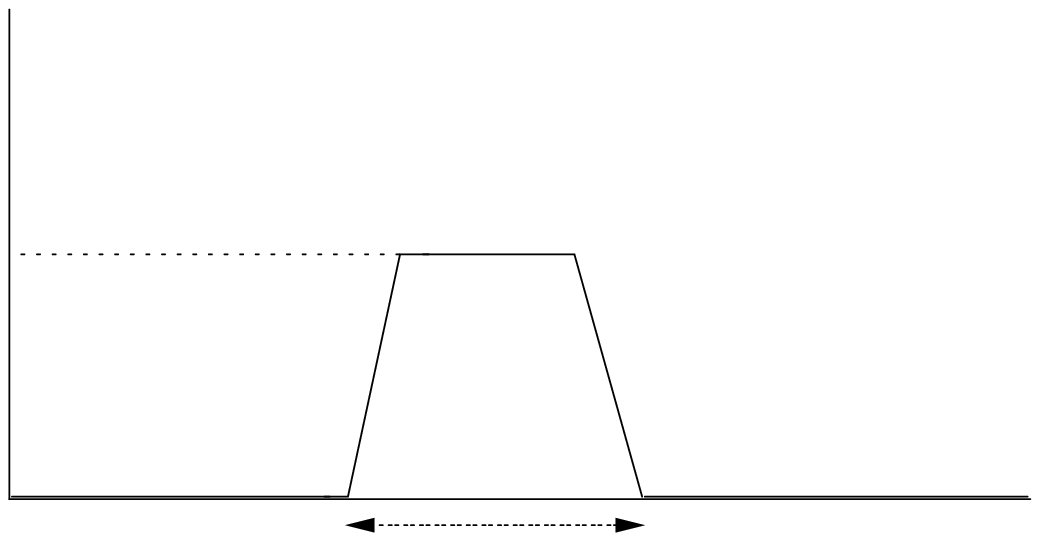}%
\end{picture}%
\setlength{\unitlength}{3947sp}%
\begingroup\makeatletter\ifx\SetFigFont\undefined%
\gdef\SetFigFont#1#2#3#4#5{%
  \reset@font\fontsize{#1}{#2pt}%
  \fontfamily{#3}\fontseries{#4}\fontshape{#5}%
  \selectfont}%
\fi\endgroup%
\begin{picture}(5490,2903)(811,-3977)
\put(3726,-3711){\makebox(0,0)[lb]{\smash{{\SetFigFont{10}{8}{\rmdefault}{\mddefault}{\updefault}{\color[rgb]{0,0,0}$\frac{r_1}{2}$}%
}}}}
\put(826,-2286){\makebox(0,0)[lb]{\smash{{\SetFigFont{10}{8}{\rmdefault}{\mddefault}{\updefault}{\color[rgb]{0,0,0}$k_{max}$}%
}}}}
\end{picture}%

\caption{The bump function $k$}
\label{fig:k(s)}
\end{figure}

We will now bend $\gamma^{0}$ to $\gamma^{\theta_0}$, smoothly increasing $\theta$ from $0$ to $\theta_0$. We do this by specifying a bump function $k$ which describes the curvature along $\gamma^{\theta_0}$, see Fig. \ref{fig:k(s)}. This gives

\begin{equation*}
\begin{array}{c}
\Delta\theta=\int kds\approx \frac{1}{2}r_1\cdot k_{max}.
\end{array}
\end{equation*}

\noindent This approximation can be made as close to equality as we wish. If necessary re-choose $\theta_0$ so that $\theta_0<\frac{1}{2}r_1\cdot k_{max}$. Note that $r_1$ has been chosen to make inequality (\ref{cureqn}) hold for all $\theta\in[0,\theta_0]$ and so rechoosing a smaller $\theta_0$ does not affect the choice of $r_1$. We need to show that $k_{max}>0$ can be found so that

\begin{equation*}
\begin{array}{c}
k_{max}[1+C'{r_1}^{2}]<R_{0}\frac{r_1}{\sin{\theta}}+(q-1)\frac{\sin{\theta}}{r_1}-C{r_1}{\sin{\theta}},
\end{array}
\end{equation*}

\noindent for all $\theta\in[0,\theta_0]$. But from the earlier argument, $r_1$ and $\theta_0$ have been chosen so that the right-hand side of this inequality is positive for all $\theta\in[0,\theta_0]$. So some such $k_{max}>0$ exists. This completes the initial upward bending.

The curve $\gamma^{\theta_0}$ then proceeds as a straight line before bending downwards, with downward concavity, to vertically intersect the $t$-axis. This downward concavity ensures that $k\leq 0$ and so inequality (\ref{cureqn}) is easily satisfied, completing the construction of $\gamma^{\theta_0}$.
\\

\noindent {\bf The initial isotopy:} Next we will show that $\gamma^{\theta_0}$ can be homotopied back to $\gamma^{0}$ in such a way as to induce an isotopy between the metrics $g_{\gamma^{\theta_0}}$ and $g$. Treating $\gamma^{\theta_0}$ as the graph of a smooth function $f_{0}$ over the interval $[0,\bar{r}]$, we can compute the curvature $k$, this time in terms of $r$, as

\begin{equation*}
k=\frac{\ddot{f_{0}}}{(1+\dot{f_{0}}^{2})^{\frac{3}{2}}}.
\end{equation*}

By replacing $f_{0}$ with $\lambda f_{0}$ where $\lambda\in[0,1]$ we obtain a homotopy from $\gamma^{\theta_{0}}$ back to $\gamma^{0}$. To ensure that the induced metric has positive scalar curvature at each stage in this homotopy it is enough to show that on the interval $[\frac{r_1}{2}, r_1]$, $k^{\lambda}\leq k$ for all $\lambda\in[0,1]$, where $k^{\lambda}$ is the curvature of $\lambda f_{0}$. Note that away from this interval downward concavity means that $k^{\lambda}\leq 0$ for all $\lambda$ and so inequality (\ref{cureqn}) is easily satisfied.

We wish to show that for all $\lambda\in[0,1]$,

\begin{equation*}
\frac{\lambda\ddot{f_{0}}}{(1+\lambda^{2}\dot{f_{0}}^{2})^{\frac{3}{2}}}\leq\frac{\ddot{f_{0}}}{(1+\dot{f_{0}}^{2})^{\frac{3}{2}}}.
\end{equation*}

\noindent A slight rearrangement of this inequality gives

\begin{equation*}
\frac{\ddot{f_{0}}}{((\lambda^{\frac{-2}{3}})(1+\lambda^{2}\dot{f_{0}}^{2}))^{\frac{3}{2}}}\leq\frac{\ddot{f_{0}}}{(1+\dot{f_{0}}^{2})^{\frac{3}{2}}},
\end{equation*}
\noindent and hence it is enough to show that 

\begin{equation*}
\frac{1+\lambda^{2}\dot{f_{0}}^{2}}{1+\dot{f_{0}}^{2}}\geq\lambda^{\frac{2}{3}} \hspace{1cm}\text{for all $\lambda\in[0,1]$}.
\end{equation*}

\noindent Replacing $\lambda^{\frac{2}{3}}$ with $\mu$ and $\dot{f_{0}}^{2}$ with $b$ we obtain the following inequality.

\begin{equation}\label{polly}
\begin{array}{c}
{\mu}^{3}b-\mu b-\mu+1\geq0.
\end{array}
\end{equation}

The left hand side of this inequality is zero when $\mu=1$ or when $\mu=\frac{-b^{2}\pm\sqrt{b^{2}+4b}}{2b^{2}}$. We may assume that $\theta_0$ has been chosen small enough so that $\tan^{2}(\theta_0)<\frac{1}{4}$. Thus $b=\dot{f_{0}}^{2}<\frac{1}{4}$. A simple computation then shows that the left hand side of (\ref{polly}) is non-zero when $\mu$ (and thus $\lambda$) is in $[0,1]$, and so the inequality holds.
\\

\noindent {\bf The final bending:} We will now construct the curve $\gamma$ so that the induced metric $g_\gamma$ has positive scalar curvature. From the description of $\gamma$ given in Part 1, we see that it is useful to regard $\gamma$ as consisting of three pieces. When $r>r_0$, $\gamma$ is just the curve $\gamma^{\theta_0}$ constructed above and when $r\in[0,r_\infty]$, $\gamma$ is the graph of the  concave downward function $f_\infty$. In both of these cases, inequality (\ref{cureqn}) is easily satisfied. The third piece is where the difficulty lies. In the rectangle $[t_0, t_\infty]\times[r_\infty, r_0]$ we must specify a curve which connects the previous two pieces to form a $C^{2}$ curve and satisfies inequality (\ref{cureqn}). This will be done by constructing an appropriate $C^{2}$ function $f:[t_0, t_\infty]\longrightarrow[r_\infty, r_0]$. Before discussing the construction of $f$ we observe that inequality (\ref{cureqn}) can be simplified even further. 
 
Choose $r_0\in(0,\frac{r_1}{2})$ so that $0<r_0<min\{\frac{1}{\sqrt{4C}},\frac{1}{\sqrt{2C'}}\}$. Now, when $r\in(0,r_0]$ and $\theta\geq\theta_0$, we have

\begin{equation*}
\begin{array}{cl}
(q-1)\frac{\sin{\theta}}{r}-Cr{\sin{\theta}}& \geq\sin{\theta[\frac{q-1}{r}-Cr]}\\
&\geq\frac{\sin{\theta}}{r}[1-Cr^{2}].
\end{array}
\end{equation*}

\noindent When $r<\frac{1}{\sqrt{4C}}$, $r^{2}<\frac{1}{4C}$.
So $Cr^{2}<\frac{1}{4}$ and $1-Cr^{2}>\frac{3}{4}$. Thus

\begin{equation*}
\begin{array}{c}
(q-1)\frac{\sin{\theta}}{r}-Cr{\sin{\theta}}\geq\frac{3}{4}\frac{\sin{\theta}}{r}.
\end{array}
\end{equation*}

\noindent Also $r<\sqrt{\frac{1}{2C'}}$.
So $r^{2}<\frac{1}{2C'}$ giving that $2C'r^{2}<1$. Thus, $1+C'r^{2}<\frac{3}{2}$.
Hence from inequality (\ref{cureqn}) we get

\begin{equation*}
k<\frac{2}{3}.\frac{3}{4}\frac{\sin{\theta}}{r}=\frac{\sin{\theta}}{2r}.
\end{equation*}

\noindent So if we begin the second bend when $r\in [0,r_0]$, it suffices to maintain 

\begin{equation}\label{keqn}
k<\frac{\sin{\theta}}{2r}. 
\end{equation}

It should be pointed out that the inequality (\ref{keqn}) {\bf only holds when $\theta > \theta_0$ and does not hold for only $\theta>0$, no matter how small $r$ is chosen.} The following argument demonstrates this. Assuming $\theta$ close to zero and using the fact that $k(s)=\frac{d\theta}{ds}$, we can assume (\ref{keqn}) is
 
\begin{equation*}
\frac{d\theta}{ds}<\frac{{\theta}}{2r}.
\end{equation*}

\noindent But this is 
\begin{equation*}
\frac{d\log(\theta)}{ds}<\frac{1}{2r},
\end{equation*}
\noindent the left hand side of which is unbounded as $\theta$ approaches $0$. It is for this reason that the initial bend and hence the {\bf strict positivity} of the scalar curvature of $g$ is so important. 
 
\begin{Remark}\label{Gajercomment}
From the above one can see that the inequality on page 190 of \cite{Gajer} breaks down when $\theta$ is near $0$. In this case the bending argument aims at maintaining non-negative mean curvature.  Since apriori the mean curvature is not strictly positive, an analogous initial bend to move $\theta$ away from 0 is not possible.   
\end{Remark}

We will now restrict our attention entirely to the rectangle $[t_0, t_\infty]\times[r_\infty, r_0]$. Here we regard $\gamma$ as the graph of a function $f$. Thus we obtain,
\begin{equation*}
\sin{\theta}=\frac{1}{\sqrt{1+\dot{f}^{2}}}
\end{equation*}
\noindent and 
\begin{equation*}
k=\frac{\ddot{f}}{(1+\dot{f}^{2})^{\frac{3}{2}}}.
\end{equation*}
\noindent Hence, (\ref{keqn}) gives rise to the following differential inequality
\begin{equation*}
\frac{\ddot{f}}{(1+\dot{f}^{2})^{\frac{3}{2}}}<\frac{1}{\sqrt{1+\dot{f}^{2}}}\frac{1}{2f}.
\end{equation*}

\noindent This simplifies to

\begin{equation}\label{diffkeqn}
{\ddot{f}}<\frac{{1+\dot{f}^{2}}}{2f}.
\end{equation}

\noindent Of course to ensure that $\gamma$ is a $C^{2}$ curve we must insist that as well as satisfying (\ref{diffkeqn}), $f$ must also satisfy conditions (\ref{cond1}), (\ref{cond2}) and (\ref{cond3}) below. 

\begin{eqnarray}
\label{cond1}
&f(t_0)=r_0,\qquad 
f(t_\infty)>0, \qquad\\
\label{cond2}
&\dot{f}(t_0)=m_0,\qquad 
\dot{f}(t_\infty)=0,\qquad\\
\label{cond3}
&\ddot{f}(t_0)=0,\qquad
\ddot{f}(t_\infty)=0,
\end{eqnarray}

\noindent where $m_0=\frac{-1}{\tan{\theta_0}}$. The fact that such a function can be constructed is the subject of the following lemma. Having constructed such a function, $r_\infty$ will then be set equal to $f(t_\infty)$ and the construction of $\gamma$ will be complete.

\begin{Lemma}\label{technical}
For some $t_\infty>t_0$, there is a $C^{2}$ function $f:[t_0,t_\infty]\longrightarrow[0,r_0]$ which satisfies inequality (\ref{diffkeqn}) as well as conditions  
(\ref{cond1}), (\ref{cond2}) and (\ref{cond3}).
\end{Lemma}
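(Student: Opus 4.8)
\emph{Proof proposal.} The plan is to write $f$ down almost explicitly: away from the two endpoints it will be a down‑scaled parabola, and near $t_0$ and $t_\infty$ I will splice in two short ``transition caps'' that switch the curvature on and off so as to realise the prescribed $2$‑jets in (\ref{cond1})--(\ref{cond3}). The point of departure is that the equality version of (\ref{diffkeqn}), namely $2f\ddot f = 1+\dot f^2$, is solved by the parabolas $f(t)=\tfrac A4 t^2+\tfrac1A$, which decrease from $+\infty$ down to the positive value $\tfrac1A$ as $t$ runs over $(-\infty,0]$, with slope vanishing exactly at the vertex $t=0$. Scaling such a parabola by a factor $\mu\in(0,1)$ gives, for $A>0$, $P(t)=\mu\big(\tfrac A4 t^2+\tfrac1A\big)$, and a one‑line computation yields the identity
\begin{equation*}
2P(t)\ddot P(t)-\big(1+\dot P(t)^2\big)=\mu^2-1<0,
\end{equation*}
so $P$ satisfies the \emph{strict} inequality (\ref{diffkeqn}) with a constant negative slack $\mu^2-1$. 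Thus $P$ already has essentially the right shape on $(-\infty,0]$ (decreasing, bottoming out at $\tfrac\mu A>0$ where the slope is $0$); it fails only the endpoint curvature conditions, since $\ddot P\equiv\tfrac{\mu A}{2}\neq0$.

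Concretely I would fix $\mu\in(0,1)$ and set $A:=\dfrac{m_0^2+\mu^2}{\mu\,r_0}$, where $m_0=-\cot\theta_0$. With this choice $P$ passes through a point $(s_0,r_0)$, $s_0=\tfrac{2m_0}{\mu A}<0$, with slope exactly $m_0$ there, and its vertex value is $r_\infty:=\tfrac\mu A=\tfrac{\mu^2 r_0}{m_0^2+\mu^2}>0$. After translating so that this arc lies over the $t$‑axis, I build $f$ in three stages. \emph{Start cap:} on a short interval $[t_0,t_0+\eta_1]$, interpolate in $C^2$ between the prescribed jet $(r_0,m_0,0)$ at $t_0$ and the jet of $P$ at the right end of the cap, keeping $0\le\ddot f\le\tfrac{\mu A}{2}$ (a standard construction: prescribe a smooth non‑negative bump for $\ddot f$ rising from $0$ to $\tfrac{\mu A}{2}$ and integrate twice; since the resulting endpoint jet depends continuously on $\eta_1$ and tends to $(r_0,m_0,\tfrac{\mu A}{2})$ as $\eta_1\to0$, a small readjustment of $A$ makes things fit). \emph{Middle arc:} let $f=P$ up to a parameter $t_1$ chosen just short of the vertex, so $\dot f(t_1)$ is a small negative number and $f(t_1)$ slightly exceeds $r_\infty$. \emph{Landing cap:} on $[t_1,t_\infty]$, interpolate in $C^2$ between the jet of $P$ at $t_1$ and a jet $(r_\infty',0,0)$ at $t_\infty$ with $r_\infty'\in(0,f(t_1))$, keeping $\dot f\le0$ and $0\le\ddot f\le\tfrac{\mu A}{2}$; choosing $t_1$ close enough to the vertex keeps $r_\infty'>0$. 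Since $\dot f\le0$ throughout, $f$ is non‑increasing, hence $f\le f(t_0)=r_0$ and $f\ge r_\infty'>0$, so $f:[t_0,t_\infty]\to[0,r_0]$, and (\ref{cond1})--(\ref{cond3}) hold by construction; $t_\infty$ is finite, as required.

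It then remains to verify (\ref{diffkeqn}) on the two caps (on the middle arc it is the boxed identity). This is where the slack is spent. On the start cap, taking $\eta_1$ small keeps $f$ near $r_0$ and $\dot f$ near $m_0$, so $2f\ddot f-(1+\dot f^2)$ stays near $2r_0\cdot\tfrac{\mu A}{2}-(1+m_0^2)=r_0\mu A-(1+m_0^2)=\mu^2-1<0$ (this uses $r_0>0$, which keeps $\tfrac{1+\dot f^2}{2f}$ bounded below near $t_0$). On the landing cap, taking $t_1$ near the vertex keeps $f$ near $r_\infty$ and $\dot f$ near $0$, so $2f\ddot f-(1+\dot f^2)$ stays near $2r_\infty\cdot\tfrac{\mu A}{2}-1=\mu A r_\infty-1=\mu^2-1<0$. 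Hence (\ref{diffkeqn}) is strict everywhere.

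\textbf{Main obstacle.} The genuine difficulty is not the inequality on the ``interior'' of the construction but the simultaneous matching of the boundary $2$‑jets — in particular $\ddot f(t_0)=\ddot f(t_\infty)=0$ together with $\dot f(t_0)=m_0$ and $\dot f(t_\infty)=0$ — without ever violating (\ref{diffkeqn}): one must turn the bending on and off. The scaled‑parabola core is precisely what makes this possible, because it obeys (\ref{diffkeqn}) with a uniform slack $1-\mu^2>0$; on sufficiently short transition caps the quantities $f,\dot f$ remain close to their endpoint values, where — again using $\mu<1$ and the positivity of $r_0$ and of the landing height — the inequality still holds with room to spare.
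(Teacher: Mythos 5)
Your construction is correct and is essentially the paper's own proof: your scaled parabola $P(t)=\mu\bigl(\frac{A}{4}t^{2}+\frac{1}{A}\bigr)$ is exactly the family $c+\frac{C_1}{4}(t-C_2)^{2}$ with $C_1=\mu A$ and $cC_1=\mu^{2}<1$ used there, and your two caps, obtained by prescribing a bump for $\ddot f$ and integrating twice, are the paper's transition pieces on $[t_0,t_0']$ and $[t_\infty',t_\infty]$ (whose second derivatives are precisely such ramps). The only difference is presentational: where you keep (\ref{diffkeqn}) on the short caps, and re-match the parabola after the first cap, by continuity together with the uniform slack $1-\mu^{2}$, the paper runs the same smallness argument explicitly through its matching equation relating $C_1$ and $\delta_0$ and the choice of small $\delta_\infty$.
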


\begin{proof}
The following formula describes a family of functions, all of which satisfy inequality (\ref{diffkeqn}).

\begin{equation*}
\begin{array}{c}
f(t)=c+\frac{C_1}{4}(t-C_2)^{2}, \hspace{1.0cm}\text{where $C_1,C_2>0\hspace{2mm}\text{and}\hspace{2mm}c\in(0,\frac{1}{C_1})$.}
\end{array}
\end{equation*}

\noindent Such a function $f$ has first and second derivatives

\begin{eqnarray*}
&\dot{f}=\frac{C_1}{2}(t-C_2)\hspace{2mm}\text{and}\hspace{2mm} \ddot{f}=\frac{C_1}{2}.
\end{eqnarray*}

\noindent We will shortly see that $C_1$ and $C_2$ can be chosen so that on the interval $[t_0, C_2]$, $f(t_0)=r_0$, $\dot{f}(t_0)= m_0$, $\dot{f}(C_2)= 0$ and $f(C_2)=c>0$. The choice of $C_1$ needs to be very large which makes $\ddot{f}$ a large positive constant. Thus, some adjustment is required near the end points if such a function is to satisfy the requirements of the lemma. We will achieve this by restricting the function to some proper subinterval $[t_0', t_\infty']\subset [t_0, C_2]$ and pasting in appropriate transition functions on the intervals $[t_0, t_0']$ and $[t_\infty', t_\infty]$ (where $t_\infty$ is close to $C_2$).

More precisely, let $t_0'-t_0=\delta_0$, $t_\infty-t_\infty'=\delta_\infty$ and $C_2-t_\infty'=\frac{\delta_\infty}{2}$. We will now show that for appropriate choices of $C_1, C_2, \delta_0$ and $\delta_\infty$, the following function satisfies the conditions of the lemma. To aid the reader, we include the graph of the second derivative of this function, see Fig. \ref{secondderivoff}.  

\begin{equation}\label{pwf(t)}
\begin{array}{c}
f(t) =
\begin{cases}
r_0+m_0(t-t_0)+\frac{C_1}{12\delta_0}(t-t_0)^{3}, & \text{if $t\in[t_0, t_0']$}\\
c+\frac{C_1}{4}(t-C_2)^{2}, & \text{if $t\in[t_0',t_\infty']$}\\
c-\frac{C_1}{48}{\delta_\infty}^{2}-\frac{C_1}{12\delta_\infty}(t-t_\infty)^3, & \text{if $t\in[t_\infty', t_\infty]$}.
\end{cases}
\end{array}
\end{equation} 

\begin{figure}[htbp]
\begin{picture}(0,0)%
\includegraphics{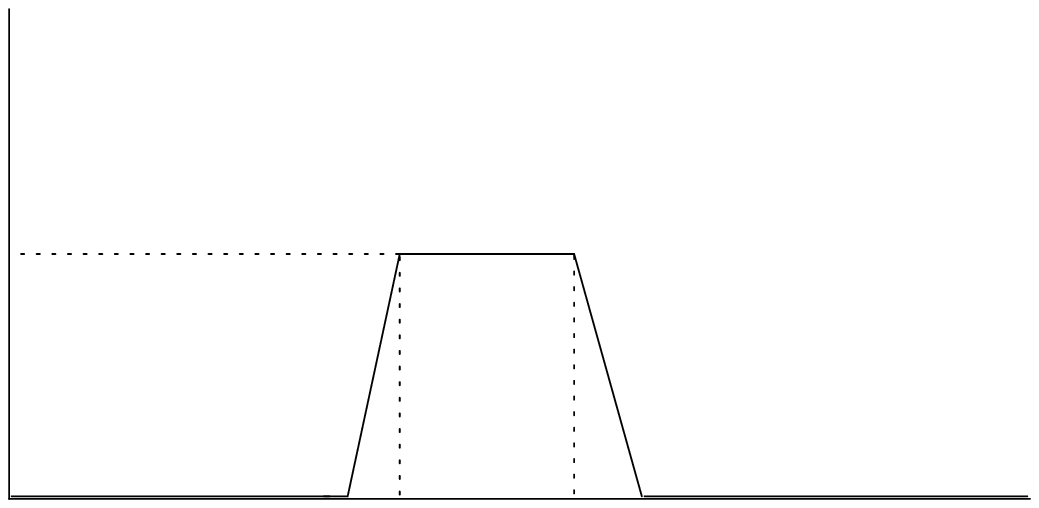}%
\end{picture}%
\setlength{\unitlength}{3947sp}%
\begingroup\makeatletter\ifx\SetFigFont\undefined%
\gdef\SetFigFont#1#2#3#4#5{%
  \reset@font\fontsize{#1}{#2pt}%
  \fontfamily{#3}\fontseries{#4}\fontshape{#5}%
  \selectfont}%
\fi\endgroup%
\begin{picture}(5305,2770)(1086,-3740)
\put(1101,-2249){\makebox(0,0)[lb]{\smash{{\SetFigFont{12}{14.4}{\rmdefault}{\mddefault}{\updefault}{\color[rgb]{0,0,0}$\frac{C_1}{2}$}%
}}}}
\put(2864,-3661){\makebox(0,0)[lb]{\smash{{\SetFigFont{12}{14.4}{\rmdefault}{\mddefault}{\updefault}{\color[rgb]{0,0,0}$t_0$}%
}}}}
\put(3239,-3674){\makebox(0,0)[lb]{\smash{{\SetFigFont{12}{14.4}{\rmdefault}{\mddefault}{\updefault}{\color[rgb]{0,0,0}$t_0'$}%
}}}}
\put(4026,-3674){\makebox(0,0)[lb]{\smash{{\SetFigFont{12}{14.4}{\rmdefault}{\mddefault}{\updefault}{\color[rgb]{0,0,0}$t_\infty'$}%
}}}}
\put(4451,-3674){\makebox(0,0)[lb]{\smash{{\SetFigFont{12}{14.4}{\rmdefault}{\mddefault}{\updefault}{\color[rgb]{0,0,0}$t_\infty$}%
}}}}
\put(6376,-3424){\makebox(0,0)[lb]{\smash{{\SetFigFont{12}{14.4}{\rmdefault}{\mddefault}{\updefault}{\color[rgb]{0,0,0}$t$}%
}}}}
\put(1489,-1124){\makebox(0,0)[lb]{\smash{{\SetFigFont{12}{14.4}{\rmdefault}{\mddefault}{\updefault}{\color[rgb]{0,0,0}$r$}%
}}}}
\end{picture}%
\caption{The second derivative of $f$}
\label{secondderivoff}
\end{figure}

\noindent A simple check shows that $f(t_0)=r_0$, $\dot{f}(t_0)=m_0$ and $\ddot{f}(t_0)=0$. Now we must show that $C_1$ can be chosen so that this function is $C^{2}$ at $t_0'$. We begin by solving, for $t_0'$, the equation

\begin{equation*}
\begin{array}{c}
c+\frac{C_{1}}{4}(t_0'-C_2)^{2}=r_0+m_0(t_0'-t_0)+\frac{C_1}{12\delta_0}(t_0'-t_0)^{3}.
\end{array}
\end{equation*}

\noindent This results in the following formula for $t_0'$,

\begin{equation*}
\begin{array}{c}
t_0'=C_2-\sqrt{\frac{4}{C_1}(r_0+m_0\delta_0+\frac{C_1}{12}\delta_0^{2}-c)}.
\end{array}
\end{equation*}

\noindent Equating the first derivatives of the first two components of (\ref{pwf(t)}) at $t_0'$ and replacing $t_0'$ with the expression above results in the following equation.
\begin{equation}\label{firstderiveqn}
\begin{array}{c}
C_1(r_0-c)-m_0^{2}=-\frac{C_1}{2}\delta_0 m_0-\frac{C_1^{2}}{48}\delta_0^{2}.
\end{array}
\end{equation}

The second derivatives of the first two components of (\ref{pwf(t)}) agree at $t_0'$ and so provided $C_1$ and $\delta_0$ are chosen to satisfy (\ref{firstderiveqn}), $f$ is $C^{2}$ at $t_0'$. It remains to show that $\delta_0$ can be chosen so that $f$ satisfies inequality (\ref{diffkeqn}) on $[t_0, t_0']$. 
The parameter $C_1$ varies continuously with respect to $\delta_0$. Denoting by $\bar{C_1}$, the solution to the equation $\bar{C_1}(r_0-c)-m_0^{2}=0$, it follows from equation (\ref{firstderiveqn}), that for small $\delta_0$, $C_1$ is given by a formula $C_1(\delta_0)=\bar{C_1}+\epsilon(\delta_0)$ for some continuous parameter $\epsilon$ with $\epsilon(0)=0$ and $\epsilon(\delta_0)>0$ when $\delta_0>0$. When $\delta_0=0$, we obtain the strict inequality 

\begin{equation*}
\bar{C_1}<\frac{1+m_0^{2}}{r_0}.
\end{equation*}

\noindent Thus, there exists some sufficiently small $\delta_0$, so that for all $s\in[0,1]$,

\begin{equation*}
C_1=\bar{C_1}+\epsilon(\delta_0)<\frac{1+(m_0+\frac{C_1}{4}\delta_0 s)^{2}}{r_0},
\end{equation*} 
 
\noindent while at the same time,

\begin{equation*}
\begin{array}{c}
-r_0<m_0\delta_0s+\frac{C_1}{12}\delta_0^{2}s^{3}\leq 0.
\end{array}
\end{equation*}

\noindent Hence, 
\begin{equation}\label{sineq}
C_1<\frac{1+(m_0+\frac{C_1}{4}\delta_0 s)^{2}}{r_0+m_0\delta_0 s+\frac{C_1}{12}\delta_0^{2}s^{3}}
\end{equation}

\noindent holds for all $s\in[0,1]$. Replacing $s$ with $\frac{t-t_0}{\delta_0}$ in (\ref{sineq}) yields inequality (\ref{diffkeqn}) for $t\in[t_0,t_0']$ and so $f$ satisfies (\ref{diffkeqn}) at least on $[t_0,t_\infty']$. 

Given $r_0$, the only choice we have made so far in the construction of $f$, is the choice of $\delta_0$. This choice determines uniquely, the choices of $C_1$ and $C_2$. Strictly speaking, we need to choose some $c$ in $(0,\frac{1}{C_1})$ but we can always regard this as given by the choice of $C_1$, by setting $c=\frac{1}{2C_1}$ say. There is one final choice to be made and that is the choice of $\delta_\infty$.
 
Some elementary calculations show that $f$ is $C^{2}$ at $t_\infty'$. The choice of $\delta_\infty$ is completely independent of any of the choices we have made so far and so can be made arbitrarily small. Thus, an almost identical argument to the one made when choosing $\delta_0$ shows that for a sufficiently small choice of $\delta_\infty$, inequality (\ref{diffkeqn}) is satisfied when $t\in[t_\infty', t_\infty]$.
Also, the independence of $\delta_0$ and $C_1$ means that $f(t_\infty)=c-\frac{C_1}{12}\delta_\infty^{2}$ can be kept strictly positive by ensuring $\delta_\infty$ is sufficiently small. The remaining conditions of the lemma are then trivial to verify.
\end{proof}

\noindent{\bf The final isotopy:} The final task in the proof of Lemma \ref{Pushoutcurvehaspsc} is the construction of a homotopy between $\gamma$ and $\gamma^{\theta_0}$ which induces an isotopy between the metrics $g_\gamma$ and $g_{\gamma^{\theta_0}}$. We will begin by making a very slight adjustment to $\gamma$. Recall that the function $f$ has as its second derivative: a bump function with support on the interval $[t_0, t_\infty]$, see Fig. \ref{secondderivoff}. By altering this bump function on the region $[t_\infty',t_\infty]$, we make adjustments to $f$. In particular, we will replace $f$ with the $C^{2}$ function which agrees with $f$ on $[t_0, t_\infty']$ but whose second derivative is the bump function shown in Fig. \ref{alterbump}, with support on $[t_0, t_\infty'']$, where $t_\infty''\in[C_2, t_\infty]$. We will denote this new function $f^{\infty}$.

\begin{figure}[htbp]

\begin{picture}(0,0)%
\includegraphics{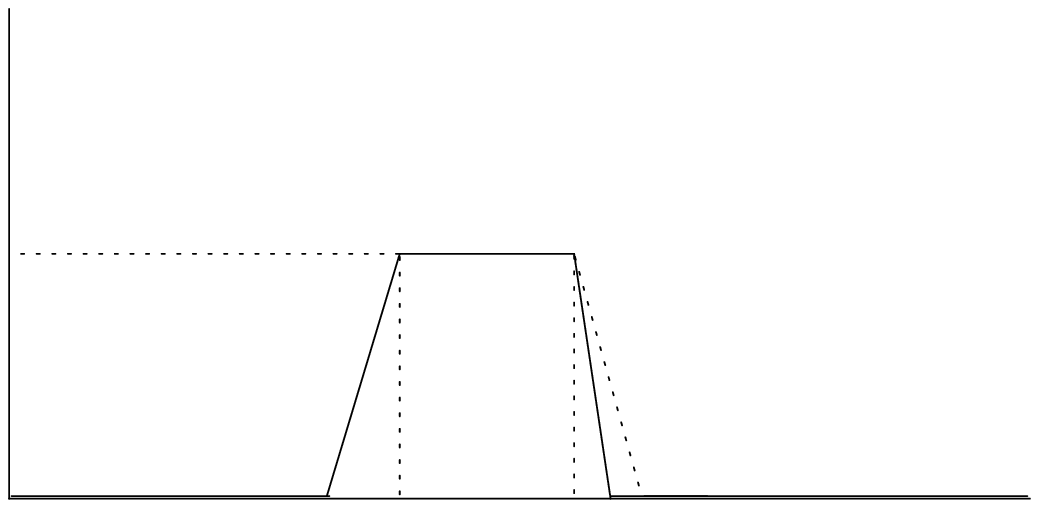}%
\end{picture}%
\setlength{\unitlength}{3947sp}%
\begingroup\makeatletter\ifx\SetFigFont\undefined%
\gdef\SetFigFont#1#2#3#4#5{%
  \reset@font\fontsize{#1}{#2pt}%
  \fontfamily{#3}\fontseries{#4}\fontshape{#5}%
  \selectfont}%
\fi\endgroup%
\begin{picture}(5305,2795)(1086,-3765)
\put(1101,-2249){\makebox(0,0)[lb]{\smash{{\SetFigFont{12}{14.4}{\rmdefault}{\mddefault}{\updefault}{\color[rgb]{0,0,0}$\frac{C_1}{2}$}%
}}}}
\put(2864,-3699){\makebox(0,0)[lb]{\smash{{\SetFigFont{12}{14.4}{\rmdefault}{\mddefault}{\updefault}{\color[rgb]{0,0,0}$t_0$}%
}}}}
\put(3239,-3699){\makebox(0,0)[lb]{\smash{{\SetFigFont{12}{14.4}{\rmdefault}{\mddefault}{\updefault}{\color[rgb]{0,0,0}$t_0'$}%
}}}}
\put(6376,-3424){\makebox(0,0)[lb]{\smash{{\SetFigFont{12}{14.4}{\rmdefault}{\mddefault}{\updefault}{\color[rgb]{0,0,0}$t$}%
}}}}
\put(1489,-1124){\makebox(0,0)[lb]{\smash{{\SetFigFont{12}{14.4}{\rmdefault}{\mddefault}{\updefault}{\color[rgb]{0,0,0}$r$}%
}}}}
\put(4256,-3699){\makebox(0,0)[lb]{\smash{{\SetFigFont{12}{14.4}{\rmdefault}{\mddefault}{\updefault}{\color[rgb]{0,0,0}$t_\infty''$}%
}}}}
\put(4476,-3699){\makebox(0,0)[lb]{\smash{{\SetFigFont{12}{14.4}{\rmdefault}{\mddefault}{\updefault}{\color[rgb]{0,0,0}$t_\infty$}%
}}}}
\put(4001,-3699){\makebox(0,0)[lb]{\smash{{\SetFigFont{12}{14.4}{\rmdefault}{\mddefault}{\updefault}{\color[rgb]{0,0,0}$t_\infty'$}%
}}}}
\end{picture}%

\caption{The second derivative of the function $f^{\infty}$}
\label{alterbump}
\end{figure}

When $t_\infty''=t_\infty$, no change has been made and $f^{\infty}=f$. When $t_\infty''<t_\infty$, the derivative of $f^{\infty}$ on the interval $[t_\infty'', t_\infty]$ is a negative constant, causing the formerly horizontal straight line piece of $\gamma$ to tilt downwards with negative slope. Thus, by continuously decreasing $t_\infty''$ from $t_\infty$ by some sufficiently small amount, we can homotopy $\gamma$ to a curve of the type shown in Fig. \ref{GLcurvebent}, where the second straight line piece now has small negative slope before bending downwards to intersect the $t$-axis vertically at $\bar{t}$. Note that the rectangle $[t_0, t_\infty]\times[r_\infty, r_0]$ is now replaced by the rectangle $[t_0, t_\infty'']\times[r_\infty'', r_0]$, where $f^{\infty}(t_\infty'')=r_\infty''$. It is easy to see how, on $[t_\infty'', \bar{t}]$, $\gamma$ can be homotopied through curves each of which is the graph of a $C^{2}$ function with non-positive second derivative, thus satisfying inequality (\ref{diffkeqn}). We do need to verify however, that on $[t_\infty', t_\infty'']$, this inequality is valid. Recall, this means showing that

\begin{equation*}
\ddot{f^{\infty}}<\frac{1+\dot{f^{\infty}}^{2}}{2f^{\infty}}.
\end{equation*}

When $t_\infty''=t_\infty$, $f^{\infty}=f$ and so this inequality is already strict on the interval $[t_\infty', t_\infty'']$. Now suppose $t_\infty''$ is slightly less than $t_\infty$. Then, on $[t_\infty', t_\infty'']$, $\ddot{f^{\infty}}\leq \ddot{f}$, while the $2$-jets of $f$ and $f^{\infty}$ agree at $t_\infty'$. This means that $\dot{f^{\infty}}\leq\dot{f}$ and $f^{\infty}\leq f$ on $[t_\infty', t_\infty'']$. But $\dot{f}<0$ on this interval and so $\dot{f^{\infty}}^{2}\geq \dot{f}^{2}$. Also, provided $t_\infty''$ is sufficiently close to $t_\infty$, we can keep $f^{\infty}>0$ and sufficiently large on this interval so that the curve $\gamma$ can continue as the graph of a decreasing non-negative concave downward function all the way to the point $\bar{t}$. Thus, the inequality in (\ref{diffkeqn}) actually grows as $t_\infty''$ decreases.

\begin{figure}[htbp]
\begin{picture}(0,0)%
\includegraphics[height=60mm]{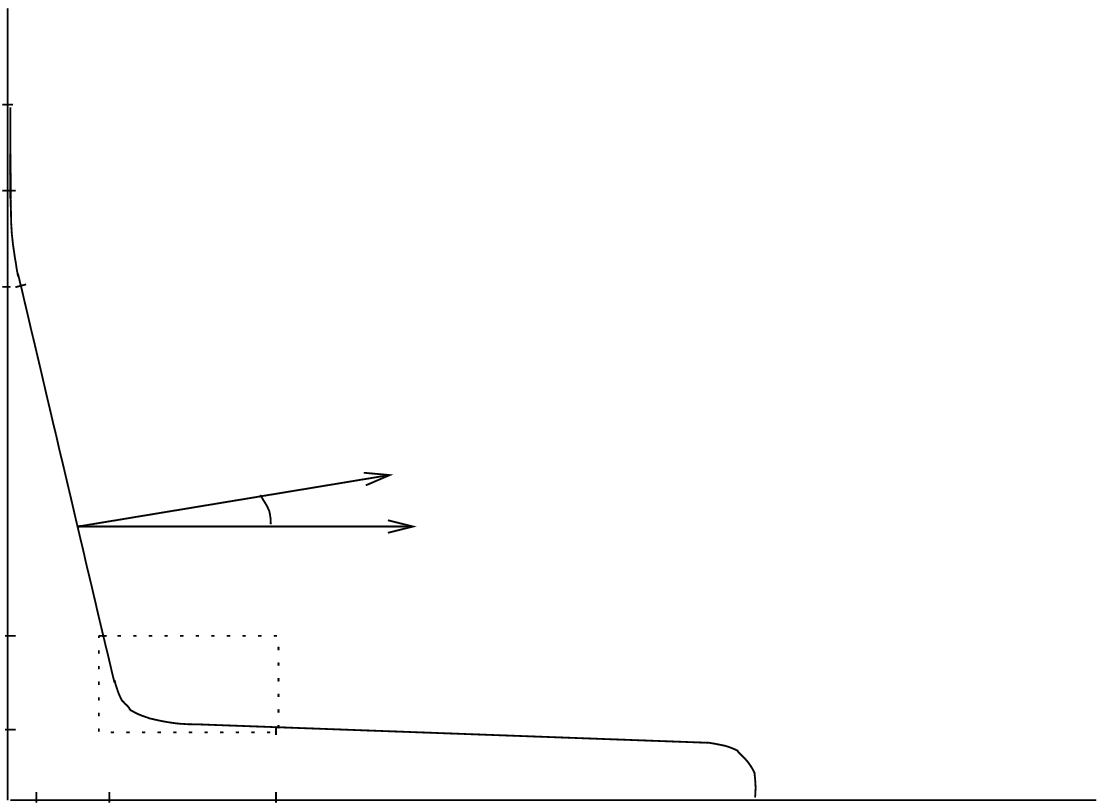}%
\end{picture}%
\setlength{\unitlength}{3947sp}%
\begingroup\makeatletter\ifx\SetFigFont\undefined%
\gdef\SetFigFont#1#2#3#4#5{%
  \reset@font\fontsize{#1}{#2pt}%
  \fontfamily{#3}\fontseries{#4}\fontshape{#5}%
  \selectfont}%
\fi\endgroup%
\begin{picture}(3889,2884)(1374,-4954)
\put(1180,-4690){\makebox(0,0)[lb]{\smash{{\SetFigFont{8}{8}{\rmdefault}{\mddefault}{\updefault}{\color[rgb]{0,0,0}$r_\infty''$}%
}}}}
\put(1180,-4375){\makebox(0,0)[lb]{\smash{{\SetFigFont{8}{8}{\rmdefault}{\mddefault}{\updefault}{\color[rgb]{0,0,0}$r_0$}%
}}}}
\put(1180,-2553){\makebox(0,0)[lb]{\smash{{\SetFigFont{8}{8}{\rmdefault}{\mddefault}{\updefault}{\color[rgb]{0,0,0}$\bar{r}$}%
}}}}
\put(1700,-5088){\makebox(0,0)[lb]{\smash{{\SetFigFont{8}{8}{\rmdefault}{\mddefault}{\updefault}{\color[rgb]{0,0,0}$t_0$}%
}}}}
\put(2294,-5088){\makebox(0,0)[lb]{\smash{{\SetFigFont{8}{8}{\rmdefault}{\mddefault}{\updefault}{\color[rgb]{0,0,0}$t_\infty''$}%
}}}}
\put(2414,-3938){\makebox(0,0)[lb]{\smash{{\SetFigFont{8}{8}{\rmdefault}{\mddefault}{\updefault}{\color[rgb]{0,0,0}$\theta_0$}%
}}}}
\put(4159,-5088){\makebox(0,0)[lb]{\smash{{\SetFigFont{8}{8}{\rmdefault}{\mddefault}{\updefault}{\color[rgb]{0,0,0}$\bar{t}$}%
}}}}
\put(1430,-5088){\makebox(0,0)[lb]{\smash{{\SetFigFont{8}{8}{\rmdefault}{\mddefault}{\updefault}{\color[rgb]{0,0,0}$t_1'$}%
}}}}
\end{picture}%
\caption{The effect of such an alteration on the curve $\gamma$}
\label{GLcurvebent}
\end{figure}

It remains to show that this slightly altered $\gamma$ can be homotopied back to $\gamma^{\theta_{0}}$ in such a way as to induce an isotopy of metrics. To ease the burden of notation we will refer to the function $f^{\infty}$ as simply $f$ and the rectangle $[t_0,t_\infty'']\times[r_\infty'', r_0]$ as simply $[t_0,t_\infty]\times[r_\infty, r_0]$. It is important to remember that $f$ differs from the function constructed in Lemma \ref{technical} in that $m_0\leq\dot{f}<0$ on $[t_0, t_\infty]$. We wish to continuously deform the graph of $f$ to obtain the straight line of slope $m_0$ intersecting the point $(t_0, r_0)$. We will denote this straight line segment by $l$, given by the formula $l(r)=r_0+m_0(t-t_0)$. We will now construct a homotopy by considering the functions which are inverse to $f$ and $l$, see Fig. \ref{lastisotopy}. Consider the linear homotopy $h^{-1}_s=(1-s)f^{-1}+sl^{-1}$, where $s\in[0,1]$. Let $h_s$ denote the corresponding homotopy from $f$ to $l$, where for each $s$, $h_s$ is inverse to $h^{-1}_s$. Note that the domain of $h_s$ is $[t_0,(1-s)t_\infty+sl^{-1}(r_\infty)]$. For each $r\in[r_\infty, r_0]$, $\dot{h^{-1}_s}(r)\leq\dot{f^{-1}}(r)$. This means that for any $s\in[0,1]$ and any $r\in[r_\infty, r_0]$, $\dot{h_s}(t_s)\geq\dot{f}(t)$, where $h_s(t_s)=f(t)=r$. As the second derivative of $h_s$ is bounded by $\ddot{f}$, this means that inequality (\ref{diffkeqn}) is satisfied throughout the homotopy.

\begin{figure}[htbp]
\begin{picture}(0,0)%
\includegraphics{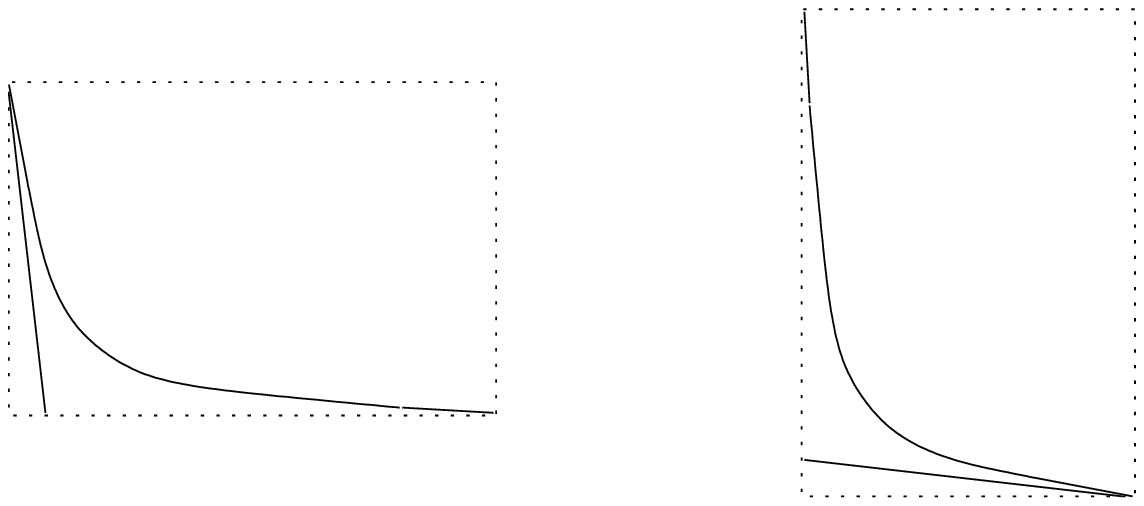}%
\end{picture}%
\setlength{\unitlength}{3947sp}%
\begingroup\makeatletter\ifx\SetFigFont\undefined%
\gdef\SetFigFont#1#2#3#4#5{%
  \reset@font\fontsize{#1}{#2pt}%
  \fontfamily{#3}\fontseries{#4}\fontshape{#5}%
  \selectfont}%
\fi\endgroup%
\begin{picture}(6030,2783)(861,-3065)
\put(1089,-2599){\makebox(0,0)[lb]{\smash{{\SetFigFont{12}{14.4}{\rmdefault}{\mddefault}{\updefault}{\color[rgb]{0,0,0}$t_0$}%
}}}}
\put(1426,-2199){\makebox(0,0)[lb]{\smash{{\SetFigFont{12}{14.4}{\rmdefault}{\mddefault}{\updefault}{\color[rgb]{0,0,0}$l$}%
}}}}

\put(3539,-2624){\makebox(0,0)[lb]{\smash{{\SetFigFont{12}{14.4}{\rmdefault}{\mddefault}{\updefault}{\color[rgb]{0,0,0}$t_\infty$}%
}}}}
\put(876,-749){\makebox(0,0)[lb]{\smash{{\SetFigFont{12}{14.4}{\rmdefault}{\mddefault}{\updefault}{\color[rgb]{0,0,0}$r_0$}%
}}}}
\put(889,-2374){\makebox(0,0)[lb]{\smash{{\SetFigFont{12}{14.4}{\rmdefault}{\mddefault}{\updefault}{\color[rgb]{0,0,0}$r_\infty$}%
}}}}
\put(1889,-1974){\makebox(0,0)[lb]{\smash{{\SetFigFont{12}{14.4}{\rmdefault}{\mddefault}{\updefault}{\color[rgb]{0,0,0}$f$}%
}}}}
\put(5051,-2999){\makebox(0,0)[lb]{\smash{{\SetFigFont{12}{14.4}{\rmdefault}{\mddefault}{\updefault}{\color[rgb]{0,0,0}$r_\infty$}%
}}}}
\put(6689,-2974){\makebox(0,0)[lb]{\smash{{\SetFigFont{12}{14.4}{\rmdefault}{\mddefault}{\updefault}{\color[rgb]{0,0,0}$r_0$}%
}}}}
\put(6876,-2736){\makebox(0,0)[lb]{\smash{{\SetFigFont{12}{14.4}{\rmdefault}{\mddefault}{\updefault}{\color[rgb]{0,0,0}$t_0$}%
}}}}
\put(6839,-436){\makebox(0,0)[lb]{\smash{{\SetFigFont{12}{14.4}{\rmdefault}{\mddefault}{\updefault}{\color[rgb]{0,0,0}$t_\infty$}%
}}}}
\put(5151,-2541){\makebox(0,0)[lb]{\smash{{\SetFigFont{12}{14.4}{\rmdefault}{\mddefault}{\updefault}{\color[rgb]{0,0,0}$l^{-1}$}%
}}}}
\put(5289,-2049){\makebox(0,0)[lb]{\smash{{\SetFigFont{12}{14.4}{\rmdefault}{\mddefault}{\updefault}{\color[rgb]{0,0,0}$f^{-1}$}%
}}}}
\end{picture}%
\caption{The graphs of the functions $f$ and $l$ and their inverses}
\label{lastisotopy}
\end{figure}

This homotopy extends easily to the part of $\gamma$ on the region where $t\geq (1-s)t_\infty+sl^{-1}(r_\infty)$, which can easily be homotopied through curves, each the graph of a concave downward decreasing non-negative function. The result is a homtopy between $\gamma$ and $\gamma^{\theta_0}$, through curves which satisfy inequality (\ref{cureqn}) at every stage. This, combined with the initial isotopy, induces an isotopy through metrics of positive scalar curvature between $g$ and $g_\gamma$, completing the proof of Lemma \ref{Pushoutcurvehaspsc}.
\end{proof}

\subsection{Part 3 of the proof: Isotopying to a standard product} 

Having constructed the psc-metric $g_\gamma$ and having demonstrated that $g_\gamma$ is isotopic to the original metric $g$, one final task remains. We must show that the metric $g_\gamma$ can be isotopied to a psc-metric which, near the embedded sphere $S^{p}$, is the product $g_p+g_{tor}^{q+1}(\delta)$. Composing this with the isotopy from Part 2 yields the desired isotopy from $g$ to $g_{std}$ and proves Theorem \ref{IsotopyTheorem}.  

We denote by $\pi:\N \rightarrow S^{p}$, the normal bundle to the embedded $S^{p}$ in $X$. The Levi-Civita connection on $X$, with respect to the metric $g_{\gamma}$, gives rise to a normal connection on the total space of this normal bundle . This gives rise to a horizontal distribution $\mathcal{H}$ on the total space of $\N$. Now equip the fibres of the bundle $\N$ with the metric $g_{tor}^{q+1}(\delta)$. Equip $S^{p}$ with the metric $g_{\gamma}|_{S^{p}}$, the induced metric on $S^{p}$. The projection $\pi:(\N,\tilde{g})\rightarrow (S^{p},\check{g})$ is now a Riemannian submersion with base metric $\check{g}=g_{\gamma}|_{S^{p}}$ and fibre metric $\hat{g}=g_{tor}^{q+1}(\delta)$. The metric $\tilde{g}$ denotes the unique submersion metric arising from $\check{g},\hat{g}$ and $\mathcal{H}$. 

Our focus will mostly be on the restriction of this Riemannian submersion to the disk bundle, $\pi:D\N(\epsilon)\rightarrow S^{p}$. We will retain $\tilde{g}$, $\hat{g}$ and $\check{g}$ to denote the relevant restrictions. Before saying anything more specific about this disk bundle, it is worth introducing some useful notation. For some $t_L\in(t_\infty, \bar{t}-r_\infty)$, we define the following submanifolds of $M$,   
\begin{equation*}
\begin{array}{c}
M[t_L, \bar{t}]=\{(y,x,t)\in S^{p}\times D^{q+1}(\bar{r})\times\mathbb{R}:(r(x),t)\in{\gamma}\hspace{2mm} \text{and $t\geq t_L$}\}.
\end{array}
\end{equation*}
\noindent and 
\begin{equation*}
\begin{array}{c}
M[t_\infty, t_L]=\{(y,x,t)\in S^{p}\times D^{q+1}(\bar{r})\times\mathbb{R}:(r(x),t)\in{\gamma}\hspace{2mm} \text{and $t_\infty \leq t \leq t_L$}\}.
\end{array}
\end{equation*}
\begin{figure}[htbp]
\begin{picture}(0,0)%
\includegraphics{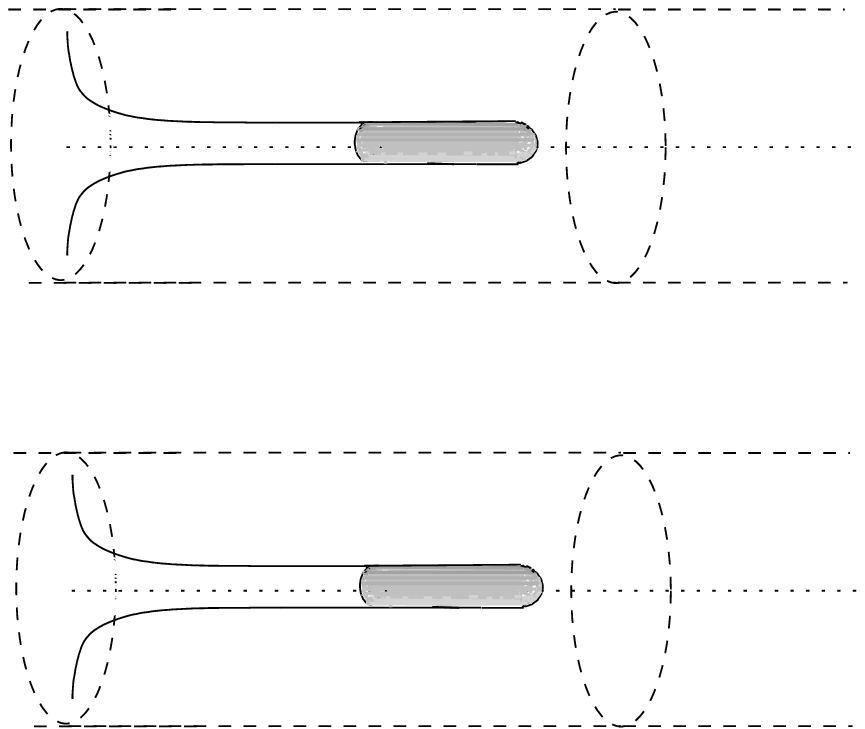}%
\end{picture}%
\setlength{\unitlength}{3947sp}%
\begingroup\makeatletter\ifx\SetFigFont\undefined%
\gdef\SetFigFont#1#2#3#4#5{%
  \reset@font\fontsize{#1}{#2pt}%
  \fontfamily{#3}\fontseries{#4}\fontshape{#5}%
  \selectfont}%
\fi\endgroup%
\begin{picture}(4670,3467)(1214,-4082)
\put(5869,-2318){\makebox(0,0)[lb]{\smash{{\SetFigFont{10}{8}{\rmdefault}{\mddefault}{\updefault}{\color[rgb]{0,0,0}$N\times\mathbb{R}$}%
}}}}
\end{picture}%
\caption{The shaded piece denotes the region $M[t_L,\bar{t}]$}
\label{fig:torpedopiece}
\end{figure}

\noindent Note that $M[t_L, \bar{t}]$ is, for appropriately small $\epsilon$, the disk bundle $D\N(\epsilon)$ and $M[t_\infty, t_L]$ is a cylindrical region (diffeomorphic to $S^{p}\times S^{q}\times [t_\infty, t_L]$) which connects this disk bundle with the rest of $M$. We will make our primary adjustments on the disk bundle $D\N(\epsilon)$, where we will construct an isotopy from the metric $g_\gamma$ to a metric which is a product. The cylindrical piece will be then be used as a transition region.

On $D\N(\epsilon)$ we can use the exponential map to compare the metrics $g_\gamma$ and $\tilde{g}$. Replacing the term $r_\infty$ with $\delta$, we observe the following convergence.

\begin{Lemma} \label{C2convergence}
There is $C^{2}$ convergence of the metrics $g_\gamma$ and $\tilde{g}$ as $\delta\rightarrow 0$.
\end{Lemma}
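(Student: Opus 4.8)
The plan is to write $g_\gamma$ and $\tilde g$ in one coordinate system adapted to the normal bundle $\N$, and then to estimate their difference componentwise, applying the Gromov--Lawson estimate of Lemma \ref{GLlemma1} fibrewise and uniformly in $y\in S^{p}$. First I would fix Fermi coordinates along $S^{p}$: using the exponential map of $g$ and the trivialisation $\tilde i:S^{p}\times\mathbb{R}^{q+1}\to\N$, identify a neighbourhood of $S^{p}$ with $S^{p}\times D^{q+1}(\bar r)$, write points as $(y,x)$ and put $r=r(x)=|x|$. Since the rays $\{y\}\times l$ are $g$-geodesics, $r$ is the $g$-distance to $S^{p}\times\{0\}$, so $g$ splits as $dr^{2}+g_{r}$, where $g_{r}$ is the metric induced on the geodesic sphere bundle $\{r(x)=r\}\cong S^{p}\times S^{q}$ and $g_{0}=g_\gamma|_{S^{p}}=\check g$ on the zero section. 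On $M[t_L,\bar t]$ I would use coordinates $(y,\rho,\theta)\in S^{p}\times[0,\ell]\times S^{q}$, with $\rho$ the arclength along $\gamma$ measured from its endpoint $(\bar t,0)$ and $\theta=x/|x|$.

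Next, I would compute $g_\gamma$ on $M[t_L,\bar t]$ by pulling back $g+dt^{2}$ along $(y,\rho,\theta)\mapsto((y,r(\rho)\theta),t(\rho))$, where $(t(\rho),r(\rho))$ traces $\gamma$. Using $\dot r^{2}+\dot t^{2}=1$ together with the splitting $g=dr^{2}+g_{r}$ (so that the $\partial_\rho$-direction is $g_\gamma$-orthogonal to the $S^{p}$- and $S^{q}$-directions) gives $g_\gamma|_{M[t_L,\bar t]}=d\rho^{2}+g_{r(\rho)}$, viewed as a $\rho$-dependent metric on $S^{p}\times S^{q}$; in particular its restriction to the fibre over $y$ is $d\rho^{2}+\sigma_{r(\rho)}$, where $\sigma_{r}$ is the metric induced on a geodesic $q$-sphere of radius $r$ inside the normal disk $\{y\}\times D^{q+1}$. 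The submersion metric $\tilde g$, in the same coordinates, has fibre metric $g_{tor}^{q+1}(\delta)=d\rho^{2}+f_{\delta}(\rho)^{2}ds_{q}^{2}$, base metric $\check g=g_\gamma|_{S^{p}}$, and horizontal distribution $\H$ the $g_\gamma$-normal connection on $\N$, so $\tilde g$ and $g_\gamma$ agree on the zero section and the $\tilde g$-horizontal spaces are $g_\gamma$-orthogonal to the fibres there. The comparison then separates into the fibre components, the horizontal (base) components, and the mixed horizontal--vertical components, each to be controlled off the zero section.

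For the fibre components, with $r_\infty=\delta$ the relevant piece of $\gamma$ is, by its construction (condition 3 in the description of $\gamma$), modelled on the $\delta$-torpedo profile, so that $r(\rho)$ traces the torpedo warping; combined with Lemma \ref{GLlemma1}(b), which gives $r^{-2}\sigma_{r}\to ds_{q}^{2}$ in $C^{2}$ as $r\to 0$ uniformly in $y$, this shows that, after the rescaling of the normal directions built into that lemma, the fibre metrics of $g_\gamma$ converge in $C^{2}$ to those of $\tilde g$ as $\delta\to 0$. For the base and mixed components, the Taylor expansion of $g$ in the Fermi coordinates gives $g_{r}=\check g+O(r^{2})$ on $S^{p}$-directions, while the mixed $\partial_y$--$\partial_\theta$ components are $O(r)$ and are determined to leading order by the normal connection, i.e. by $\H$; together with the derivative bounds of Lemma \ref{GLlemma1}(a) this yields convergence of the base and mixed components of $g_\gamma$ to those of $\tilde g$ in $C^{2}$ after the same rescaling. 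Assembling the three pieces gives the claimed $C^{2}$ convergence.

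The hard part is the last one: controlling the $C^{2}$-size --- not merely the pointwise size --- of the mixed horizontal--vertical components, because rescaling the normal directions amplifies derivatives, so one must check that the $O(r)$ and $O(r^{2})$ bounds survive two differentiations, uniformly over the compact base $S^{p}$ and over the admissible curves $\gamma$. This is exactly the type of estimate Lemma \ref{GLlemma1} is built to supply, so in the end the argument reduces to inserting that lemma, with its uniformity in $y$, into the explicit formulas for $g_\gamma$ and $\tilde g$ derived above, the remainder being the routine but lengthy bookkeeping of the lower-order terms in the Fermi expansion of $g$.
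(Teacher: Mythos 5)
Your argument is essentially the paper's: both reduce, via the exponential-map (Fermi) description of $g_\gamma$ as a submersion-type metric sharing its base metric and horizontal data with $\tilde{g}$, to comparing the fibre metrics, and then invoke the uniform $C^{2}$ convergence of Lemma \ref{GLlemma1}(b) for geodesic fibre spheres of radius $f_{\delta}\le\delta$ as $\delta\rightarrow 0$. The paper simply asserts that it suffices to check the fibre metrics, so your extra bookkeeping of the base and mixed horizontal--vertical components is added detail within the same approach rather than a different route.
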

\begin{proof} Treating $g_\gamma$ as a submersion metric (or at least the metric obtained by pulling back $g_\gamma$ via the exponential map), it suffices to show convergence of the fibre metrics. In the case of $g_\gamma$, the metric on each fibre $D^{q+1}(\epsilon)_{y}$, where $y\in S^{p}$, is of the form

\begin{equation*}
\begin{array}{c}
\hat{g_\gamma}=dt^{2}+{g|_{S^{q}(f_{\delta}(\bar{t}-t))}}_{y}.
\end{array}
\end{equation*}

\noindent Here $t\in[t_L, \bar{t}]$ and recall that ${S^{q}(f_{\delta}(\bar{t}-t))}_{y}$ is the geodesic fibre sphere of radius $f_{\delta}(\bar{t}-t)$ at the point $y\in S^{p}$. In the same coordinates, the fibre metric for $\tilde{g}$ is 

\begin{equation*}
\begin{array}{c}
\hat{g}=dt^{2}+f_{\delta}(\bar{t}-t)^{2}ds_{q}^{2}.
\end{array}
\end{equation*}

\noindent We know from Lemma \ref{GLlemma1} that as $r\rightarrow 0$, \\
\begin{equation*}
\begin{array}{c}
{g|_{S^{q}(r)}}_{y}\longrightarrow r^{2}ds_{q}^{2}
\end{array}
\end{equation*}
\noindent in the $C^{2}$ topology. Now, $0<f_{\delta}(\bar{t}-t)\leq \delta$ and so as $\delta\rightarrow 0$, we get that

\begin{equation*}
\begin{array}{c}
{g|_{S^{q}(f_{\delta}(\bar{t}-t))}}_{y}\longrightarrow f_{\delta}(\bar{t}-t)^{2}ds_{q}^{2}.
\end{array}
\end{equation*}
\end{proof}

Hence we can isotopy $g_{\gamma}$, through submersion metrics, to one which pulls back on $S^{p}\times D^{q+1}(\epsilon)$ to the submersion metric $\tilde{g}$. In fact, we can do this with arbitrarily small curvature effects and so maintain positive scalar curvature. Furthermore, the fact that there is $C^{2}$ convergence of ${g_{\gamma}|_{S^{q}(f(\bar{t}-t))}}_{x}$ to $f(\bar{t}-t)^{2}ds_{q}^{2}$ means that we can ensure a smooth transition along the cylinder $M[t_\infty, t_L]$, although this may neccessitate making the cylindrical piece very long.

Now, by the formulae of O'Neill, we get that the scalar curvature of $\tilde{g}$ is

\begin{equation*}
\begin{array}{c}
\tilde{R}=\check{R}\circ\pi+\hat{R}-|A|^{2}-|T|^{2}-|\bar{n}|^{2}-2\check{\delta}(\bar{n}).
\end{array}
\end{equation*}

\noindent where $\tilde{R}, \check{R}$ and $\hat{R}$ are the scalar curvatures of $\tilde{g}, \check{g}$ and $\hat{g}$ respectively. For full definitions and formulae for $A,T,\bar{n}$ and $\check{\delta}$, see \cite{B}. Briefly, the terms  $T$ and $A$ are the first and second tensorial invariants of the submersion. Here $T$ is the obstruction to the bundle having totally geodesic fibres and so by construction $T=0$, while $A$ is the obstruction to the integrability of the distribution. The $\bar{n}$ term is also $0$ as $\bar{n}$ is the mean curvature vector and vanishes when $T$ vanishes. We are left with

\begin{equation}\label{O'Neill}
\begin{array}{c}
\tilde{R}=\check{R}\circ\pi+\hat{R}-|A|^{2}.
\end{array}
\end{equation}

We wish to deform $\tilde{g}$ through Riemannian submersions to one which has base metric $g_p$, preserving positive scalar curvature throughout the deformation. We can make $\hat{R}$ arbitrarily positively large by choosing small $\delta$. As the deformation takes place over a compact interval, curvature arising from the base metric is bounded. We must ensure, however, as we shrink the fibre metric, that $|A|^{2}$ is not affected in a significant way.

Letting $\tau=\bar{t}-t$, the metric on the fibre is

\begin{equation*}
\begin{array}{cl}
g_{tor}^{q+1}(\delta)&=d{\tau}^{2}+f_{\delta}(\tau)^{2}ds_{q}^{2}\\
&=\delta^{2}d(\frac{\tau}{\delta})^{2}+\delta^{2}f_{1}(\frac{\tau}{\delta})^{2}ds_{q}^{2}\\
&=\delta^{2}g_{tor}^{q+1}(1).
\end{array}
\end{equation*}

\noindent The canonical variation formula, \cite{B}, now gives that

\begin{equation*}
\begin{array}{c}
\tilde R=R_{\delta^{2}}=\frac{1}{\delta^{2}}\hat{R}+\check{R}\circ\pi-\delta^{2}|A|^{2}
\end{array}
\end{equation*}

\noindent Thus, far from the $|A|^{2}$ term becoming more significant as we shrink $\delta$, it actually diminishes.

Having isotopied $\tilde{g}$ through positive scalar curvature Riemannian submersions to obtain a submersion metric with base metric $\check{g}=g_p$ and fibre metric $\hat{g}=g_{tor}^{q+1}(\delta)$, we finish by isotopying through Riemannian submersions to the product metric $g_p+g_{tor}^{q+1}(\delta)$. This involves a linear homotopy of the distribution to one which is flat i.e. where $A$ vanishes. 
As $|A|^{2}$ is bounded throughout, we can again shrink $\delta$ if necessary to ensure positivity of the scalar curvature.

At this point we have constructed an isotopy between $\tilde{g}=g_{std}|_{D\N(\epsilon)}$ and $g_p+g_{tor}^{q+1}(\delta)$. In the original Gromov-Lawson construction, this isotopy is performed only on the sphere bundle $S\N(\epsilon)$ and so the resulting metric is the product $g_p+\delta^{2}ds_{q}^{2}$ (in this case $g_p=ds_{p}^{2}$). In fact, the restriction of the above isotopy to the boundary of the disk bundle $D\N(\epsilon)$ is precisely this Gromov-Lawson isotopy. Thus, as the metric on $D\N(\epsilon)$ is being isotopied from $g_\gamma$ to $g_p+g_{tor}^{q+1}(\delta)$, we can continuously transition along the cylinder $M[t_\infty, t_L]$ from this metric to the orginal metric $g_\gamma$. Again, this may require a considerable lengthenning of the cylindrical piece. This completes the proof of Theorem \ref{IsotopyTheorem}.

\end{proof}

\subsection{Applying Theorem \ref{IsotopyTheorem} over a compact family of psc-metrics}
Before proceeding with the proof of Theorem \ref{ImprovedsurgeryTheorem} we make an important observation. Theorem \ref{IsotopyTheorem} can be extended to work for a compact family of positive scalar curvature metrics on $X$ as well as a compact family of embedded surgery spheres. A compact family of psc-metrics on $X$ will be specified with a continuous map from some compact indexing space $B$ into the space $\Riem^{+}(X)$. In the case of a compact family of embedded surgery spheres, we need to introduce some notation. The set of smooth maps $C^{\infty}(W, Y)$ between the compact manifolds $W$ and $Y$ can be equipped with a standard $C^{\infty}$ topology, see chapter 2 of \cite{Hirsch}. Note that as $W$ is compact there is no difference between the so called ``weak" and ``strong" topologies on this space. Contained in $C^{\infty}(W, Y)$, as an open subspace, is the space $Emb(W, Y)$ of smooth embeddings. We can now specify a compact family of embedded surgery spheres on a compact manifold $X$, with a continuous map from some compact indexing space $C$ into $Emb(S^{p}, X)$.

\begin{Theorem}\label{GLcompact}
Let $X$ be a smooth compact manifold of dimension $n$ and $B$ and $C$ a pair of compact spaces. Let $\mathcal{B}=\{g_{b}\in\Riem^{+}(X):{b\in B}\}$ be a continuous family of psc-metrics on $X$ and $\mathcal{C}=\{{i_c}\in Emb(S^{p}, X):{c\in C}\}$, a continuous family of embeddings, with $p+q+1=n$ and $q\geq 2$. Finally, let $g_p$ be any metric on $S^{p}$. Then, for some $\delta>0$, there is a continuous map

\begin{equation*}
\begin{split}
\mathcal{B}\times \mathcal{C}&\longrightarrow \Riem^{+}(X)\\
(g_b,i_c)&\longmapsto g_{std}^{b,c}
\end{split}
\end{equation*}

\noindent satisfying

\noindent (i) Each metric $g_{std}^{b,c}$ has the form $g_p+g_{tor}^{q+1}(\delta)$ on a tubular neighbourhood of $i_c (S^{p})$ and is the original metric $g_b$ away from this neighbourhood. 

\noindent (ii) For each $c\in C$, the restriction of this map to $\mathcal{B}\times\{ i_c \}$ is homotopy equivalent to the inclusion $\mathcal{B}\hookrightarrow \Riem^{+}(X)$. 
\end{Theorem}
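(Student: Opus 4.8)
The plan is to re-run the proof of Theorem \ref{IsotopyTheorem} keeping track of the fact that every choice made in it can be made continuously, indeed uniformly, over a compact parameter space. First I would reduce the family of embeddings to a single one: since $C$ is compact and $Emb(S^p,X)$ is open in $C^\infty(S^p,X)$, the map $c\mapsto i_c$ lands in a compact subset, and by the isotopy extension theorem (applied to the evaluation map $C\times S^p\to X$, after a small perturbation to make it smooth in all variables) there is a continuous family of diffeomorphisms $\psi_c\in\Diff(X)$ with $\psi_c\circ i_{c_0}=i_c$ for a fixed basepoint $c_0$. Pulling back, it suffices to treat a \emph{fixed} embedded sphere $i=i_{c_0}$ and a compact family $\{\psi_c^*g_b\}$ of psc-metrics; the resulting standard metrics are then pushed forward by $\psi_c$, and continuity in $c$ is automatic. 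So the real content is: given a compact family $\mathcal{B}=\{g_b\}$ of psc-metrics and one embedded $S^p$ with trivial normal bundle, produce $g_{std}^b$ depending continuously on $b$, standard near $i(S^p)$, equal to $g_b$ away from a neighbourhood, and with the family $\{g_{std}^b\}$ homotopic rel nothing to $\{g_b\}$ inside $\Riem^+(X)$.

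Next I would inspect the three parts of the proof of Theorem \ref{IsotopyTheorem} for uniformity. In Part 1 the tubular neighbourhood $N=N(\bar r)$ is built from the exponential map of $g_b$; by compactness of $B$ one can choose a single $\bar r>0$ that works for all $b$, and the curvature formulae of Lemmas \ref{principalM} and \ref{scalM} hold with the $O(\cdot)$ terms bounded uniformly in $b$ (this is the one place to be slightly careful: the $O(r)$ and $O(1)$ constants come from the geometry of $g_b$ on the compact set $\bar N$, hence are bounded by compactness of $B$). Consequently the constant $R_0=\tfrac1{2q}\inf_N R^N$ has a uniform positive lower bound $R_0^{\min}>0$ over $B$, and $C,C'$ have uniform upper bounds. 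In Part 2 the bending construction chooses $\theta_0$, then $r_1$, then $k_{max}$, then (in Lemma \ref{technical}) $\delta_0,\delta_\infty$, so that inequality (\ref{cureqn}) — equivalently (\ref{keqn}), (\ref{diffkeqn}) — holds; since every threshold in those choices depends only on $R_0$, $C$, $C'$, $q$, a \emph{single} curve $\gamma$ works simultaneously for all $b\in B$. Thus $g_{\gamma}^b$ is defined by the same hypersurface recipe applied in $(N,g_b)\times\mathbb{R}$, and it visibly depends continuously on $b$. The accompanying isotopies of Part 2 (the initial isotopy $\lambda f_0$, the final isotopy via $h_s^{-1}=(1-s)f^{-1}+sl^{-1}$) use the same curve family, so they give a continuous-in-$b$ path $g_b\leadsto g_\gamma^b$ in $\Riem^+(X)$. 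In Part 3, the submersion metric $\tilde g^b$, the O'Neill/canonical-variation estimates, and the homotopy of the distribution to a flat one all depend continuously on the base metric $\check g^b=g_\gamma^b|_{S^p}$ and on $b$; since $|A|^2$ stays bounded over the compact family and $\hat R$ can be made as large as we like by shrinking a single $\delta$, the final isotopy to $g_p+g_{tor}^{q+1}(\delta)$ also runs continuously over $B$. Concatenating, we obtain $b\mapsto g_{std}^b$ continuous, satisfying (i), together with a continuous homotopy $H\colon B\times I\to\Riem^+(X)$ from $b\mapsto g_b$ to $b\mapsto g_{std}^b$; this homotopy is exactly statement (ii) (for the fixed $i_{c_0}$, and then for each $i_c$ by conjugating with $\psi_c$).

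I expect the main obstacle to be the first step — promoting the family of embeddings to a family of ambient diffeomorphisms and checking that the tubular-neighbourhood data and the exponential-map identifications can be chosen continuously in $(b,c)$ jointly. In particular one must make sure the single radius $\bar r$ and the single curve $\gamma$ are chosen \emph{after} fixing the compact family, so that all the $O(\cdot)$-constants in Lemmas \ref{principalM}–\ref{scalM} are controlled, and one must verify that the isotopy extension step can be done so that the pulled-back metrics $\{\psi_c^*g_b\}$ still form a compact family (they do, being the image of a continuous map from the compact space $B\times C$). The remainder is genuinely a matter of re-reading the proof of Theorem \ref{IsotopyTheorem} with a parameter attached: no new geometric input is needed, only the observation — already implicit in that proof — that every quantitative choice was made using quantities (infimum of scalar curvature, bounds on curvature tensors) that vary continuously, hence are uniformly controlled, over a compact family.
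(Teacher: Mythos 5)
Your uniformity analysis of Parts 1--3 (a single $\bar r$ from a uniform injectivity radius bound, uniform $O(1)$, $O(r)$ constants and a uniform positive lower bound for $R_0$, hence a single curve $\gamma$; then uniform control of $|A|^2$ and a single $\delta$ in the submersion step) is exactly the content of the paper's proof, which runs the construction of Theorem \ref{IsotopyTheorem} directly over the two-parameter family, building the tubular neighbourhoods $N_{b,c}$ from $\exp_{g_b}$ around $i_c(S^p)$ and noting that every quantitative choice can be made once for all $(b,c)$ by compactness of $B\times C$.

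The genuine gap is your opening reduction. The isotopy extension theorem produces an ambient isotopy covering an \emph{isotopy} of embeddings, i.e.\ a family parameterised by $C\times I$ together with a lift at time $0$; it does not lift an arbitrary continuous map $C\to Emb(S^p,X)$ to a map $C\to \Diff(X)$ with $\psi_c\circ i_{c_0}=i_c$. For a general compact $C$ no such family $\psi_c$ need exist at all: if $C$ is disconnected the embeddings $i_c$ need not even be isotopic to $i_{c_0}$, and even when all the $i_c$ lie in one isotopy class, lifting through the fibration $\Diff(X)\to Emb(S^p,X)$ is obstructed (e.g.\ a loop $C=S^1$ of embeddings whose class in $\pi_1(Emb(S^p,X))$ is not in the image of $\pi_1(\Diff(X))$ admits pointwise but not continuous lifts). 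Since you also use $\psi_c$ to get continuity in $c$ and to transport the homotopy of statement (ii), this step cannot be patched by a softer argument; it should simply be dropped. Fortunately it is unnecessary: the rest of your proof applies verbatim to the pair $(b,c)$, with $\check g_{b,c}=g_b|_{i_c(S^p)}$, families of neighbourhoods $N_{b,c}$, hypersurfaces $M^{b,c}_\gamma$, and submersions $\pi_{b,c}$, all varying continuously because the exponential map and the curvature bounds vary continuously in $(b,c)$ --- which is precisely how the paper proceeds, and then (ii) follows for each fixed $c$ from the continuously varying isotopies of Parts 2 and 3.
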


\begin{proof} For each pair $b,c$, the exponential map $\exp_b$ of the metric $g_b$ can be used to specify a tubular neighbourhood $N_{b,c}(\bar{r})$ of the embedded sphere $i_c(S^{p})$, exactly as in Part 1 of Theorem \ref{IsotopyTheorem}. Compactness gives that the infimum of injectivity radii over all metrics $g_b$ on $X$ is some positive constant and so a single choice $\bar{r}>0$ can be found, giving rise to a continuous family of such tubular neighbourhoods $\{N_{b,c}=N_{b,c}(\bar{r}):b,c\in B\times C\}$. Each metric $g_{b}$ may be adjusted in $N_{b,c}$ by specifying a hypersurface $M_{\gamma}^{b,c}\subset N_{b,c}\times\mathbb{R}$ constructed with respect to a curve $\gamma$, exactly as described in the proof of Theorem \ref{IsotopyTheorem}. Equipping each $N_{b,c}\times\mathbb{R}$ with the metric $g_b|_{N{b,c}}+dt^{2}$ induces a continuous family of metrics $g_{\gamma}^{b,c}$ on the respective hypersurfaces $M_{\gamma}^{b,c}$.

We will first show that a single curve $\gamma$ can be chosen so that the resulting metrics $g_\gamma^{b,c}$ have positive scalar curvature for all $b$ and $c$. The homotopy of $\gamma$ to the vertical line segment in Part 2 of the proof Theorem \ref{IsotopyTheorem} can be applied exactly as before, inducing an isotopy between $g_\gamma^{b,c}$ and $g_b$ which varies continuously with respect to $b$ and $c$. Finally, Part 3 of Theorem \ref{IsotopyTheorem} can be generalised to give rise to an isotopy between $g_\gamma^{b,c}$ and $g_{std}^{b,c}$, which again varies continuously with respect to $b$ and $c$.

Recall from the proof of Theorem \ref{IsotopyTheorem} that for any curve $\gamma$, the scalar curvature on the hypersurface $M=M_\gamma$ is given by:

\begin{equation*}
\begin{array}{cl}
R^{M}&=R^{N}+\sin^{2}{\theta}.O(1)-2k\cdot q\frac{\sin{\theta}}{r}\\
&\hspace{0.4cm} +2q(q-1)\frac{\sin^{2}{\theta}}{r^2}+k\cdot qO(r)\sin{\theta}.
\end{array}
\end{equation*}

\noindent The $O(1)$ term comes from the principal curvatures on the embedded surgery sphere $S^{p}$ and the Ricci curvature on $N$, both of which are bounded. Over a compact family of psc-metrics $g_b, b\in B$ and and a compact family of embeddings $i_c, c\in C$, these curvatures remain bounded and so the $O(1)$ term is unchanged. Here, the tubular neighbourhood $N$ is replaced with the continuous family of tubular neighbourhoods $N_{b,c}$ described above. Recall that we can specify all of these neighbourhoods with a single choice of radial distance $\bar{r}$. The $O(r)$ term comes from the principal curvatures on the geodesic spheres $S^{q-1}(r)$, which were computed in Lemma \ref{GLlemma1}. This computation works exactly the same for a compact family of metrics and so this $O(r)$ term is unchanged. The expression now becomes

\begin{equation*}
\begin{array}{cl}
R^{M_{b,c}}&=R^{N_{b,c}}+\sin^{2}{\theta}.O(1)-2k\cdot q\frac{\sin{\theta}}{r}\\
&\hspace{0.4cm} +2q(q-1)\frac{\sin^{2}{\theta}}{r^2}+k\cdot qO(r)\sin{\theta}.
\end{array}
\end{equation*}

\noindent Inequality (\ref{cureqn}) can be obtained as before as 

\begin{equation*}
\begin{array}{c}
k[1+C'r^{2}]<R_{0}\frac{r}{\sin{\theta}}+(q-1)\frac{\sin{\theta}}{r}-Cr{\sin{\theta}},
\end{array}
\end{equation*}

\noindent where in this case $R_0=\frac{1}{2q}[\inf(R^{N_{b,c}})]$, taken over all pairs $b,c$. The important thing is that $R_0$ is still positive.
The construction of a curve $\gamma$ which satisfies this inequality then proceeds exactly as in Part 2 of Theorem \ref{IsotopyTheorem}. The resulting curve $\gamma$ specifies a family of hypersurfaces $M_{\gamma}^{b,c}\subset N_{b,c}\times\mathbb{R}$. For each $(b,c)$, the induced metric on $M_{\gamma}^{b,c}$ has positive scalar curvature. The curve $\gamma$ can then be homotopied back to the vertical line, exactly as in Part 2 of Theorem \ref{IsotopyTheorem}, inducing a continuous deformation of the family $\{g_{\gamma}^{b,c}\}$ to the family $\{g_b\}$.

Part 3 of Theorem \ref{IsotopyTheorem}, can be applied almost exactly as before. The bundle $\mathcal{N}$ and distribution $\mathcal{H}$ are now replaced with continuous families $\mathcal{N}_{b,c}$ and $\mathcal{H}_{b,c}$, giving rise to a continuous family of Riemannian submersions $\pi_{b,c}:(\mathcal{N}_{b,c},\tilde{g}_{b,c})\rightarrow(i_{c}(S^{p}),\check{g}_{b,c})$ where the base metric $\check{g}_{b,c}=g_b|_{i_c(S^{p})}$, the fibre metric is $\hat{g}=g_{tor}^{q+1}(\delta)$ as before and $\tilde{g}_{b,c}$ is the respective submersion metric. By compactness, a single choice of $\epsilon$ gives rise to a family of disk bundles $D\N_{b,c}(\epsilon)$ all specifying appropriate submanifolds $M_{\gamma}^{b,c}[t_\infty, t_L]$ and $M_{\gamma}^{b,c}[t_L,\bar{t}]$ of $M_{\gamma}^{b,c}$ (see Part 3 of Theorem \ref{IsotopyTheorem} for details). Lemma \ref{C2convergence} easily generalises to show that as $\delta\rightarrow 0$ there is uniform $C^{2}$ convergence $g_\gamma^{b,c}\rightarrow \tilde{g}_{b,c}$. Thus, there is a continuously varying family of isotopies over $b$ and $c$, through psc-submersion metrics, deforming each $g_\gamma^{b,c}$ into $\tilde{g}_{b,c}$. 

Formula \ref{O'Neill} now generalises to give the following formula for the scalar curvature of $\tilde{g}_{b,c}$, varying continuously over $b$ and $c$.

\begin{equation}\label{O'Neillfamily}
\begin{array}{c}
\tilde{R_{b,c}}=\check{R_{b,c}}\circ\pi_{b,c}+\hat{R}-|A_{b,c}|^{2}.
\end{array}
\end{equation}

\noindent Here $\tilde{R_{b,c}}, \check{R_{b,c}}$ and $\hat{R}$ denote the scalar curvatures of $\tilde{g}_{b,c}, \check{g}_{b,c}$ and $\hat{g}$ respectively. The term $A_{b,c}$ satisfies all of the properties of $A$ in formula (\ref{O'Neill}), namely $|A_{b,c}|$ is bounded and in fact diminishes uniformly as $\delta$ decreases. Thus there is a sufficiently small $\delta>0$, so that the family $\{\tilde{g}_{b,c}\}$ can be isotopied through families of psc-submersion metrics to the desired family $\{g_{std}^{b,c}\}$, as in the proof of Theorem \ref{IsotopyTheorem}. 
\end{proof}

\begin{Remark}
Note that Theorem \ref{GLcompact} claims only the existence of such a map. To write down a well-defined function of this type means incorporating the various parameter choices made in the construction of Theorem \ref{IsotopyTheorem}. For our current purposes, in this paper, that is not necessary. 
\end{Remark}

\begin{Corollary}\label{GLisotopy}
Let $g$ and $h$ be isotopic psc-metrics on $X$. Let $g'$ and $h'$ be respectively the metrics obtained by application of the Surgery Theorem on a codimension$\geq 3$ surgery. Then $g'$ and $h'$ are isotopic.
\end{Corollary}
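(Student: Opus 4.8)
The plan is to apply the parameterised Gromov--Lawson construction of Theorem~\ref{GLcompact} to the isotopy joining $g$ and $h$, and then to perform the surgery simultaneously over the resulting family of standardised metrics.

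Since $g$ and $h$ are isotopic we may fix a smooth path $\{g_b\}_{b\in I}$ in $\Riem^{+}(X)$ with $g_0=g$ and $g_1=h$, regarded as a compact family indexed by $B=I$. Let $i\colon S^{p}\hookrightarrow X$ be the given framed surgery sphere, with $q=n-p-1\geq 2$, and let the indexing space $C$ of embeddings be a single point. Taking $g_p=\epsilon^{2}ds_{p}^{2}$ and applying Theorem~\ref{GLcompact} produces a $\delta>0$ and a continuous family $\{g_{std}^{b}\}_{b\in I}$ of psc-metrics on $X$, each equal to the product $g_p+g_{tor}^{q+1}(\delta)$ on a fixed tubular neighbourhood $N=i(S^{p}\times D^{q+1})$ of the surgery sphere and equal to $g_b$ outside $N$. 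In particular $g_{std}^{0}$ and $g_{std}^{1}$ are the surgery-ready metrics produced from $g$ and from $h$ by Theorem~\ref{IsotopyTheorem}, and $b\mapsto g_{std}^{b}$ is a path in $\Riem^{+}(X)$ joining them.

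Next, perform the codimension $q+1$ surgery fibrewise: for each $b$, remove $i(S^{p}\times\oD)$ from $(X,g_{std}^{b})$, glue in the fixed handle $(D^{p+1}\times S^{q},\,g_{tor}^{p+1}(\epsilon)+\delta^{2}ds_{q}^{2})$ along $i|_{S^{p}\times S^{q}}$, and smooth corners. Because every $g_{std}^{b}$ is literally the same product metric $g_p+g_{tor}^{q+1}(\delta)$ in a neighbourhood of the gluing locus, all of this cutting, gluing and corner-smoothing takes place where the metric is independent of $b$, so it is carried out continuously in $b$; the result is a continuous family $\{g_b'\}_{b\in I}$ of psc-metrics on $X'$. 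By construction $g_0'$ is the metric $g'$ obtained from $g$ by the Surgery Theorem and $g_1'$ is $h'$, whence $b\mapsto g_b'$ is an isotopy from $g'$ to $h'$.

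The step that genuinely requires an argument rather than a calculation is this last fibrewise surgery: one must note that the passage from the surgery-ready metric $g_{std}$ to the surgered metric $g'$ in the Surgery Theorem is purely local near $S^{p}$ and depends only on the germ of the metric there, which here is the same fixed product for all $b$, so it survives parameterisation intact. If one does not wish to pin down particular auxiliary choices in the Surgery Theorem, part~(ii) of Theorem~\ref{GLcompact}, applied to a family interpolating between two sets of such choices, shows that any two metrics on $X'$ produced in this way are isotopic, so the conclusion is independent of those choices.
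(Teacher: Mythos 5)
Your proposal is correct and follows essentially the same route the paper intends: Corollary \ref{GLisotopy} is stated as an immediate consequence of Theorem \ref{GLcompact}, applied to the isotopy $\{g_b\}$ as the compact family $\mathcal{B}$ with a single embedding, after which the surgery is performed fibrewise on the standard region where all the metrics $g_{std}^{b}$ coincide, yielding a continuous path from $g'$ to $h'$ in $\Riem^{+}(X')$. Your closing remark about independence of the auxiliary choices is a sensible addition, consistent with the paper's observation that Theorem \ref{GLcompact} asserts existence of such a map rather than a canonical one.
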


\subsection{The proof of Theorem \ref{ImprovedsurgeryTheorem}(The Improved Surgery Theorem)}
\begin{proof}  Recall that $g$ denotes a positive scalar curvature metric on the closed manifold $X^{n}$, $i:S^{p}\hookrightarrow X$ denotes an embedding of the sphere $S^{p}$ with trivial normal bundle and that $p+q+1=n$ with $q\geq 2$. Let $W$ denote the trace of a surgery on $X$ with respect to this embedded sphere. We wish to extend $g$ over $W$ to obtain a psc-metric which is product near the boundary.

Corollary \ref{GLconc} implies the existence of a psc-metric $\bar{g}$ on the cylinder $X\times I$ so that near $X\times\{0\}$, $\bar{g}=g+ds^{2}$ and near $X\times\{1\}$, $\bar{g}=g_{std}+ds^{2}$ where $g_{std}$ is the metric obtained in Theorem \ref{IsotopyTheorem}. Thus, by choosing $g_p=\epsilon^{2}ds_p^{2}$, near $S^{p}$ the metric $g_{std}$ has the form $\epsilon^{2}ds_{p}^{2}+g_{tor}^{q+1}(\delta)$ for some sufficiently small $\delta>0$. Using the exponential map for $g_{std}$ we can specify a tubular neighbourhood of $S^{p}$ , $N=S^{p}\times D^{q+1}(\bar{r})$, so that the restriction of $g_{std}$ on $N$ is precisely the metric $\epsilon^{2}ds_{p}^{2}+g_{tor}^{q+1}(\delta)$. As before, $N$ is equipped with the coordinates $(y,x)$ where $y\in S^{p}$, $x\in D^{q+1}(\bar{r})$ and $D^{q+1}(\bar{r})$ is the Euclidean disk of radius $\bar{r}$. The quantity $r$ will denote the Euclidean radial distance on $D^{q+1}(\bar{r})$. Moreover, we may assume that $\delta$ is arbitrarily small and that the tube part of $g_{tor}^{q+1}(\delta)$ is arbitrarily long, thus the quantity $\bar{r}-\delta$ can be made as large as we like.

We will now attach a handle $D^{p+1}\times D^{q+1}$ to the cylinder $X\times I$. Recall that in section \ref{prelim}, we equipped the plane $\mathbb{R}^{n+1}=\mathbb{R}^{p+1}\times\mathbb{R}^{q+1}$ with a metric $h=g_{tor}^{p+1}(\epsilon)+g_{tor}^{q+1}(\delta)$. By equipping $\mathbb{R}^{p+1}$ and $\mathbb{R}^{q+1}$ with standard spherical coordinates $(\rho, \phi)$ and $(r,\theta)$, we realised the metric $h$ as 
\begin{equation*}
\begin{array}{c}
h=d\rho^{2}+f_\epsilon(\rho)^{2}ds_{p}^{2}+dr^{2}+f_\delta(r)^{2}ds_{q}^{2},
\end{array}
\end{equation*}
where $f_\epsilon, f_\delta:(0,\infty)\rightarrow(0,\infty)$ are the torpedo curves defined in section \ref{prelim}. The restriction of $h$ to the disk product $D^{p+1}(\bar{\rho})\times D^{q+1}(\bar{r})$ is the desired handle metric, where $\bar{\rho}$ is as large as we like. We can then glue the boundary component $\p({D^{p+1}(\bar{\rho})})\times D^{q+1}(\bar{r})$ to $N$ with the isometry

\begin{equation*}
\begin{split}
S^{p}\times D^{q+1}(\bar{r})&\longrightarrow N\\
(y,x)&\longmapsto(i(y),L_y(x)),
\end{split}
\end{equation*}

\noindent where $L_y\in \rm{O}(q+1)$ for all $y\in S^{p}$. Different choices of map $y\mapsto L_y\in \rm{O}(q+1)$ give rise to different framings of the embedded surgery sphere $S^{p}$ in $X$. The resulting manifold (which is not yet smooth) is represented in Fig. \ref{Gajercorrectionmetric}. Recall that $\bar{\rho}$ and $\bar{r}$ are radial distances with respect to the Euclidean metric on $\mathbb{R}^{p+1}$ and $\mathbb{R}^{q+1}$ respectively. By choosing $\epsilon$ and $\delta$ sufficiently small and the corresponding tubes long enough, we can ensure that $\frac{\pi}{2}\epsilon<\bar{\rho}$ and $\frac{\pi}{2}\delta<\bar{r}$. 

\begin{figure}[htbp]
\begin{picture}(0,0)%
\includegraphics{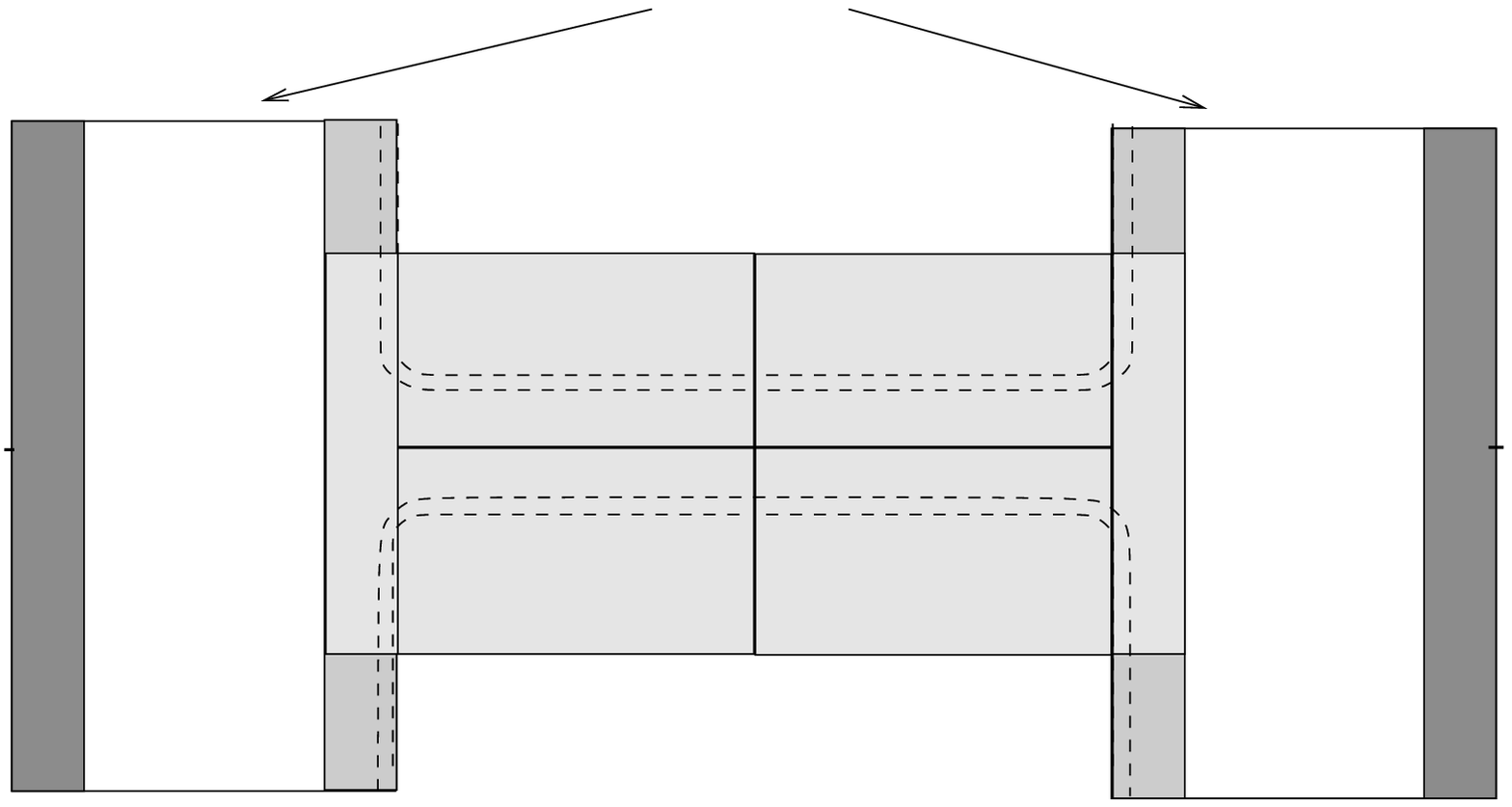}%
\end{picture}%
\setlength{\unitlength}{3947sp}%
\begingroup\makeatletter\ifx\SetFigFont\undefined%
\gdef\SetFigFont#1#2#3#4#5{%
  \reset@font\fontsize{#1}{#2pt}%
  \fontfamily{#3}\fontseries{#4}\fontshape{#5}%
  \selectfont}%
\fi\endgroup%
\begin{picture}(7569,4395)(1529,-4390)
\put(4489,-1161){\makebox(0,0)[lb]{\smash{{\SetFigFont{10}{8}{\rmdefault}{\mddefault}{\updefault}{\color[rgb]{0,0,0}$D^{p+1}(\bar{\rho})\times D^{q+1}(\bar{r})$}%
}}}}
\put(5001,-149){\makebox(0,0)[lb]{\smash{{\SetFigFont{10}{8}{\rmdefault}{\mddefault}{\updefault}{\color[rgb]{0,0,0}$X\times I$}%
}}}}
\put(1639,-4299){\makebox(0,0)[lb]{\smash{{\SetFigFont{10}{8}{\rmdefault}{\mddefault}{\updefault}{\color[rgb]{0,0,0}$g+dt^{2}$}%
}}}}
\put(3226,-4324){\makebox(0,0)[lb]{\smash{{\SetFigFont{10}{8}{\rmdefault}{\mddefault}{\updefault}{\color[rgb]{0,0,0}$g_{std}+dt^{2}$}%
}}}}
\put(4826,-3749){\makebox(0,0)[lb]{\smash{{\SetFigFont{10}{8}{\rmdefault}{\mddefault}{\updefault}{\color[rgb]{0,0,0}standard metric}%
}}}}
\put(6501,-2524){\makebox(0,0)[lb]{\smash{{\SetFigFont{10}{8}{\rmdefault}{\mddefault}{\updefault}{\color[rgb]{0,0,0}$\rho$}%
}}}}
\put(5376,-1661){\makebox(0,0)[lb]{\smash{{\SetFigFont{10}{8}{\rmdefault}{\mddefault}{\updefault}{\color[rgb]{0,0,0}$r$}%
}}}}
\end{picture}%

\caption{The metric $(X\times I,\bar{g})\cup (D^{p+1}(\bar{\rho})\times D^{q+1}(\bar{r}),h)$ and the smooth handle represented by the dashed curve}
\label{Gajercorrectionmetric}
\end{figure}

Two tasks remain. Firstly, we need to make this attaching smooth near the corners. This will be done in the obvious way by specifying a smooth hypersurface inside $D^{p+1}(\bar{\rho})\times D^{q+1}(\bar{r})$ which meets $N$ smoothly near its boundary, as shown by the dashed curve in Fig. \ref{Gajercorrectionmetric}. This is similar to the hypersurface $M$ constructed in the original Gromov-Lawson construction. Again we must ensure that the metric induced on this hypersurface has positive scalar curvature. This is considerably easier than in the case of $M$, given the ambient metric we are now working with. We will in fact show that the metric induced on this hypersurface is precisely the metric obtained by the Gromov-Lawson construction. The second task is to show that this metric can be adjusted to have a product structure near the boundary.

The spherical coordinates $(\rho, \phi, r, \theta)$ on the handle $D^{p+1}(\bar{\rho})\times D^{q+1}(\bar{r})$ can be extended to overlap with $X\times I$ on $N(\bar{r})\times [1-\epsilon_1, 1]$, where $\epsilon_1$ is chosen so that $\bar{g}|_{X\times[1-\epsilon_1, 1]}=g_{std}+dt^{2}$. We denote this region $D^{p+1}(\bar{\rho})\times D^{q+1}(\bar{r})$. Let $E$ be the embedding

\begin{equation}\label{map;Gajercorrection}
\begin{split}
E:[0,\epsilon_1]\times[0,\infty)&\longrightarrow\mathbb{R}\times\mathbb{R}\\
(s,t)&\longmapsto(a_1(s,t),a_2(s,t))
\end{split}
\end{equation}

\noindent shown in Fig. \ref{fig:embE}.

\begin{figure}[htbp]
\begin{picture}(0,0)%
\includegraphics{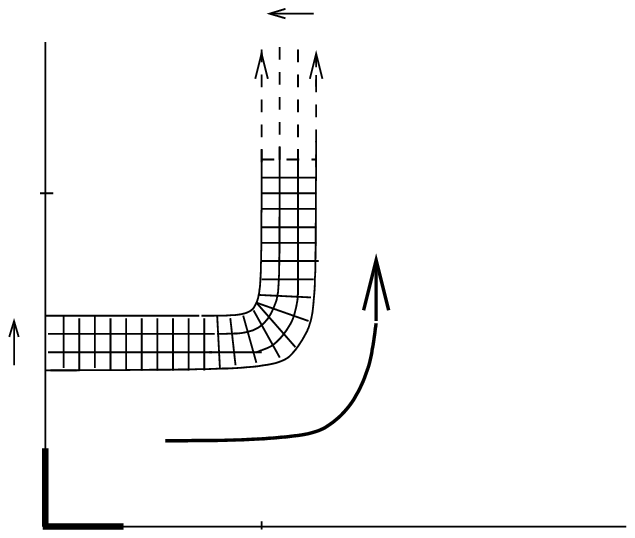}%
\end{picture}%
\setlength{\unitlength}{3947sp}%
\begingroup\makeatletter\ifx\SetFigFont\undefined%
\gdef\SetFigFont#1#2#3#4#5{%
  \reset@font\fontsize{#1}{#2pt}%
  \fontfamily{#3}\fontseries{#4}\fontshape{#5}%
  \selectfont}%
\fi\endgroup%
\begin{picture}(3277,3033)(1124,-3140)
\put(1739,-3049){\makebox(0,0)[lb]{\smash{{\SetFigFont{10}{8}{\rmdefault}{\mddefault}{\updefault}{\color[rgb]{0,0,0}$\epsilon\frac{\pi}{2}$}%
}}}}
\put(1189,-2724){\makebox(0,0)[lb]{\smash{{\SetFigFont{10}{8}{\rmdefault}{\mddefault}{\updefault}{\color[rgb]{0,0,0}$\delta\frac{\pi}{2}$}%
}}}}
\put(1139,-2024){\makebox(0,0)[lb]{\smash{{\SetFigFont{10}{8}{\rmdefault}{\mddefault}{\updefault}{\color[rgb]{0,0,0}$s$}%
}}}}
\put(2664,-261){\makebox(0,0)[lb]{\smash{{\SetFigFont{10}{8}{\rmdefault}{\mddefault}{\updefault}{\color[rgb]{0,0,0}$s$}%
}}}}
\put(4064,-2699){\makebox(0,0)[lb]{\smash{{\SetFigFont{10}{8}{\rmdefault}{\mddefault}{\updefault}{\color[rgb]{0,0,0}$\rho$}%
}}}}
\put(1714,-586){\makebox(0,0)[lb]{\smash{{\SetFigFont{10}{14.4}{\rmdefault}{\mddefault}{\updefault}{\color[rgb]{0,0,0}$r$}%
}}}}
\put(1626,-1711){\makebox(0,0)[lb]{\smash{{\SetFigFont{10}{14.4}{\rmdefault}{\mddefault}{\updefault}{\color[rgb]{0,0,0}$E(0,0)$}%
}}}}
\put(1639,-2286){\makebox(0,0)[lb]{\smash{{\SetFigFont{10}{14.4}{\rmdefault}{\mddefault}{\updefault}{\color[rgb]{0,0,0}$E(\epsilon_1,0)$}%
}}}}
\put(3151,-2361){\makebox(0,0)[lb]{\smash{{\SetFigFont{10}{14.4}{\rmdefault}{\mddefault}{\updefault}{\color[rgb]{0,0,0}$t$}%
}}}}
\put(2651,-2974){\makebox(0,0)[lb]{\smash{{\SetFigFont{10}{14.4}{\rmdefault}{\mddefault}{\updefault}{\color[rgb]{0,0,0}$\bar{\rho}$}%
}}}}
\put(1439,-1286){\makebox(0,0)[lb]{\smash{{\SetFigFont{10}{14.4}{\rmdefault}{\mddefault}{\updefault}{\color[rgb]{0,0,0}$\bar{r}$}%
}}}}
\end{picture}%

\caption{The embedding $E$}
\label{fig:embE}
\end{figure}

The map $E$ will satisfy the following conditions. 
 
(1) For each $s_0\in[0,\epsilon_1]$, $E(s_0,t)$ is the curve $(t,c_2(s_0))$ when $t\in[0,\frac{\epsilon\pi}{2}]$, and ends as the unit speed vertical line $(c_1(s_0),t)$. Here $c_1$ and $c_2$ are functions on $[0,\epsilon_1]$ defined as follows. For each $s$, $c_1(s)=\bar{\rho}+s$ and $c_2(s)=c_2(0)-s$, where $c_1(0)-\epsilon_1>\frac{\pi}{2}\epsilon$ and $c_2(s)>\frac{\pi}{2}\delta$.

(2) For each $t_0\in[0,\infty)$, the path $E(s,t_0)$ runs orthogonally to the levels $E(s_0,t)$ for each $s_0\in[0,\epsilon_1]$. That is, for each $(s_0,t_0)$, $\frac{\partial E}{\partial t}(s_0,t_0).\frac{\partial E}{\partial t}(s_0,t_0)=0$. 

\noindent Provided $\epsilon_1$ is chosen sufficiently small, the map
\begin{equation*}
\begin{split}
Z:[0,\epsilon_1]\times(0,\infty)\times{S^{p}}\times S^{q}&\rightarrow{D}^{p+1}(\bar{\rho}+\epsilon_1)\times{D}^{q+1}(\bar{r})\\
(s,t,\phi,\theta)&\mapsto(a_1(s,t),\phi,a_2(s,t),\theta)
\end{split}
\end{equation*}

\noindent parameterises a region in $D^{p+1}(\bar{\rho}+\epsilon_1)\times D^{q+1}(\bar{r})$. Consider the hypersurface parameterised by the $Z(s=0,\phi,t,\theta)$. The metric induced on the region bounded by this hypersurface extends $\bar{g}$ as a psc-metric over the trace of the surgery. Now we need to show that this metric can be deformed to one which is a product near the boundary while maintaining positive scalar curvature.

We begin by computing the metric near the boundary with respect to the parameterisation $Z$. Letting 

\begin{equation*}
\begin{array}{c}
Y_s={\frac{\partial a_1}{\partial s}}^{2}+ {\frac{\partial a_2}{\partial s}}^{2}
\qquad \text{and}\qquad Y_t={{\frac{\partial a_1}{\partial t}}^{2}+{\frac{\partial a_2}{\partial t}}^{2}},
\end{array}
\end{equation*}

\begin{equation*}
\begin{array}{cl}
Z^{*}(d\rho^{2}+f_{\epsilon}(\rho)^{2}ds_{p}^{2}+dr^{2}+f_{\delta}(r)^{2}ds_{q}^{2})&=d{a_1}^{2}+f_{\epsilon}(a_1)^{2}ds_{p}^{2}+d{a_2}^{2}+f_{\delta}(a_2)^{2}ds_{q}^{2}\\
&=Y_s(s,t)d{s}^{2}+Y_{t}(s,t)dt^{2}+f_{\epsilon}(a_1)^{2}ds_{p}^{2} +f_{\delta}(a_2)^{2}ds_{q}^{2}.
\end{array}
\end{equation*}

\noindent Now on the straight pieces of our neighbourhood, it is clear that $Y_s=1$ and $Y_t=1$. Thus on the straight region running parallel to the horizontal axis, the metric is

\begin{equation*}
\begin{array}{cl}
d{s}^{2}+dt^{2}+f_{\epsilon}(a_1)^{2}ds_{p}^{2}+f_{\delta}(a_2)^{2}ds_{q}^{2}&=d{s}^{2}+dt^{2}+f_{\epsilon}(t)^{2}ds_{p}^{2} +f_{\delta}(c_2(s))^{2}ds_{q}^{2}\\
&=d{s}^{2}+dt^{2}+f_{\epsilon}(t)^{2}ds_{p}^{2}+\delta^{2}ds_{q}^{2},\hspace{0.4cm}\text{since $c_2>\frac{\pi}{2}\delta$.}
\end{array}
\end{equation*}

\noindent On the straight region running parallel to the vertical axis, the metric is

\begin{equation*}
\begin{array}{clc}
d{s}^{2}+dt^{2}+f_{\epsilon}(a_1)^{2}ds_{p}^{2}+f_{\delta}(a_2)^{2}ds_{q}^{2}&=d{s}^{2}+dt^{2}+f_{\epsilon}(c_1(s))^{2}ds_{p}^{2} +f_{\delta}(t)^{2}ds_{q}^{2}\\
&=d{s}^{2}+dt^{2}+\epsilon^{2}ds_{p}^{2}+{\delta}^{2}ds_{q}^{2},\\
&=d{s}^{2}+dt^{2}+f_{\epsilon}(t)^{2}ds_{p}^{2}+{\delta}^{2}ds_{q}^{2},\\
\end{array}
\end{equation*}

\noindent The second equality holds because $c_1>\frac{\pi}{2}\epsilon$ and $t>\frac{\pi}{2}\delta$. The last equality follows from the fact that $t>c_1>\frac{\pi}{2}\epsilon$ and $t>\frac{\pi}{2}\delta$. As we do not have unit speed curves in $s$ and $t$, the best we can say about the remaining ``bending" region is that the metric is of the form

\begin{equation*}
\begin{array}{c}
Y_sd{s}^{2}+Y_{t}dt^{2}+\epsilon^{2}ds_{p}^2+{\delta}^2 ds_{q}^{2}.
\end{array}
\end{equation*}

\begin{figure}[htbp]
\begin{picture}(0,0)%
\includegraphics{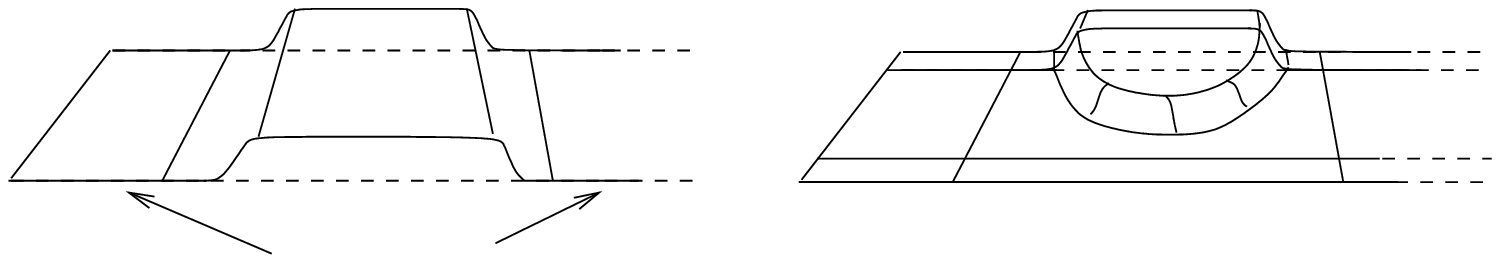}%
\end{picture}%
\setlength{\unitlength}{3947sp}%
\begingroup\makeatletter\ifx\SetFigFont\undefined%
\gdef\SetFigFont#1#2#3#4#5{%
  \reset@font\fontsize{#1}{#2pt}%
  \fontfamily{#3}\fontseries{#4}\fontshape{#5}%
  \selectfont}%
\fi\endgroup%
\begin{picture}(8539,1620)(749,-2215)
\put(2564,-2149){\makebox(0,0)[lb]{\smash{{\SetFigFont{10}{8}{\rmdefault}{\mddefault}{\updefault}{\color[rgb]{0,0,0}$Y_s=Y_t=1$}%
}}}}
\put(1289,-1036){\makebox(0,0)[lb]{\smash{{\SetFigFont{10}{8}{\rmdefault}{\mddefault}{\updefault}{\color[rgb]{0,0,0}$s=\epsilon_1$}%
}}}}
\put(764,-1624){\makebox(0,0)[lb]{\smash{{\SetFigFont{10}{8}{\rmdefault}{\mddefault}{\updefault}{\color[rgb]{0,0,0}$s=0$}%
}}}}
\put(4664,-1100){\makebox(0,0)[lb]{\smash{{\SetFigFont{10}{8}{\rmdefault}{\mddefault}{\updefault}{\color[rgb]{0,0,0}no change}%
}}}}
\put(6451,-1874){\makebox(0,0)[lb]{\smash{{\SetFigFont{10}{8}{\rmdefault}{\mddefault}{\updefault}{\color[rgb]{0,0,0}$Y_s'=Y_t'=1$}%
}}}}
\put(2589,-749){\makebox(0,0)[lb]{\smash{{\SetFigFont{10}{8}{\rmdefault}{\mddefault}{\updefault}{\color[rgb]{0,0,0}$Y_s,Y_t\neq 1$}%
}}}}
\end{picture}%

\caption{Adjusting $Y_s$ and $Y_t$}
\label{fig:XsYt}
\end{figure}

The graphs of $Y_s$ and $Y_t$ are surfaces, shown schematically in Fig. \ref{fig:XsYt}. Outside of a compact region, $Y_s=1$ and $Y_t=1$. We can replace $Y_s$ and $Y_t$ with smooth functions $Y_s'$ and $Y_t'$, so that on $[\epsilon_2,\epsilon_1]\times(0,\infty))$, $Y_s=Y_s'$ and $Y_t=Y_t'$ and so that on $[0,\epsilon_3]\times(0,\infty)$, $Y_s'=Y_t'=1$ for some $\epsilon_1>\epsilon_2>\epsilon_3>0$. Moreover, this can be done so that $Y_s-Y_s'$ and $Y_t-Y_t'$ have support in a compact region. Any curvature resulting from these changes is bounded and completely independent of the metric on the sphere factors. Thus we can always choose $\delta$ sufficiently small to guarantee the positive scalar curvature of the resulting metric

\begin{equation*}
\begin{array}{c}
Y_{s}'d{s}^{2}+Y_{t}'dt^{2}+f_{\epsilon}(t)^{2}ds_{p}^{2}+{\delta}^{2}ds_{q}^{2}
\end{array}
\end{equation*}

\noindent which, when $s\in[0,\epsilon_3]$, is the metric
\begin{equation*}
\begin{array}{c}
d{s}^{2}+dt^{2}+f_{\epsilon}(t)^{2}ds_{p}^{2}+{\delta}^{2}ds_{n}^{2}.
\end{array}
\end{equation*}

\noindent This is of course the metric $ds^{2}+g_{tor}^{p+1}(\epsilon)+\delta^{2}ds_{q}^{2}$,  completing the proof of Theorem \ref{ImprovedsurgeryTheorem}.

\end{proof}

\section{Constructing Gromov-Lawson cobordisms}\label{GLcobordsection}
In section \ref{surgerysection} we showed that a psc-metric $g$ on $X$ can be extended over the trace of a codimension$\geq 3$ surgery to a psc-metric with product structure near the boundary. Our goal in section \ref{GLcobordsection} is to generalise this result in the form of Theorem \ref{GLcob}. Here $\{W^{n+1};X_0,X_1\}$ denotes a smooth compact cobordism and $g_0$ is a psc-metric on $X_0$. If $W$ can be decomposed into a union of elementary cobordisms, each the trace of a codimension$\geq 3$ surgery, then we should be able to extend $g_0$ to a psc-metric on $W$, which is product near the boundary, by repeated application of Theorem \ref{ImprovedsurgeryTheorem}. Two questions now arise. Assuming $W$ admits such a decomposition, how do we realise it? Secondly, how many such decompositions can we realise? In order to answer these questions it is worth briefly reviewing some basic Morse Theory. For a more thorough account of this see \cite{Smale} and \cite{Hatcher}.

\subsection{Morse Theory and admissible Morse functions}
Let $\F=\F(W)$ denote the space of smooth functions $f:W\rightarrow I$ on the cobordism $\{W;X_0,X_1\}$ with the property that $f^{-1}(0)=X_0$ and $f^{-1}(1)=X_1$, and having no critical points near $\p {W}$. The space $\F$ is a subspace of the space of smooth functions on $W$ with its standard $C^{\infty}$ topology, see chapter 2 of \cite{Hirsch} for the full definition. A function $f\in \F$ is a Morse function if whenever $w$ is a critical point of $f$, $det(D^{2}f(w))\neq 0$. Here $D^{2}f(w)$ denotes the Hessian of $f$ at $w$. The Morse index of the critical point $w$ is the number of negative eigenvalues of $D^{2}f(w)$. The well known Morse Lemma, Lemma 2.2 of \cite{Smale}, then says that there is a coordinate chart $\{x=(x_1,x_2,\cdots,x_{n+1})\}$ near $w$, with $w$ identified with $(0,\cdots,0)$, so that in these coordinates,
\begin{equation} \label{Morsequadratic}
 f(x)=c-x_1^{2}-\cdots-x_{p+1}^{2}+x_{p+2}^{2}+\cdots+x_{n+1}^{2},
\end{equation} 
\noindent where $c=f(w)$. Here $p+1$ is the Morse index of $w$ and this coordinate chart is known as a {\it Morse coordinate chart}.

\begin{figure}[htbp]
\begin{picture}(0,0)%
\includegraphics{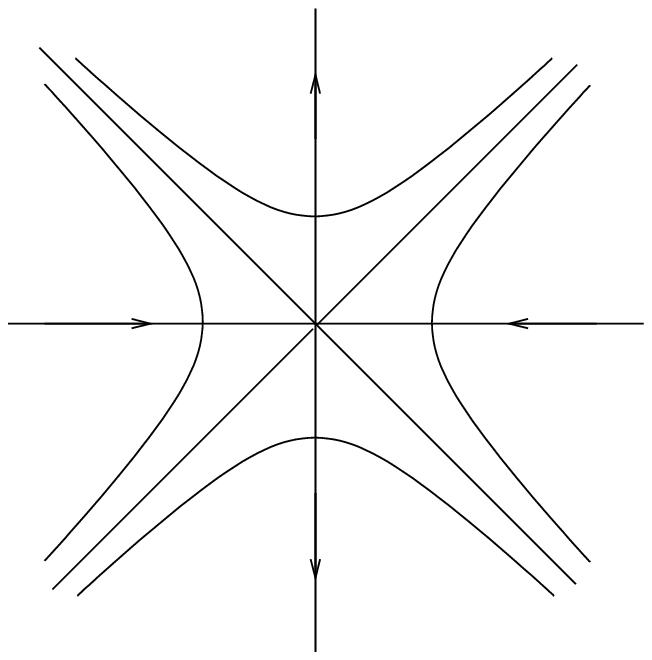}%
\end{picture}%
\setlength{\unitlength}{3947sp}%
\begingroup\makeatletter\ifx\SetFigFont\undefined%
\gdef\SetFigFont#1#2#3#4#5{%
  \reset@font\fontsize{#1}{#2pt}%
  \fontfamily{#3}\fontseries{#4}\fontshape{#5}%
  \selectfont}%
\fi\endgroup%
\begin{picture}(3317,3445)(2874,-6927)
\put(4351,-6861){\makebox(0,0)[lb]{\smash{{\SetFigFont{10}{8}{\rmdefault}{\mddefault}{\updefault}{\color[rgb]{0,0,0}$\mathbb{R}^{q+1}$}%
}}}}
\put(6176,-5074){\makebox(0,0)[lb]{\smash{{\SetFigFont{10}{8}{\rmdefault}{\mddefault}{\updefault}{\color[rgb]{0,0,0}$\mathbb{R}^{p+1}$}%
}}}}
\put(2864,-4436){\makebox(0,0)[lb]{\smash{{\SetFigFont{10}{8}{\rmdefault}{\mddefault}{\updefault}{\color[rgb]{0,0,0}$f=c-\epsilon$}%
}}}}
\put(3614,-3874){\makebox(0,0)[lb]{\smash{{\SetFigFont{10}{8}{\rmdefault}{\mddefault}{\updefault}{\color[rgb]{0,0,0}$f=c+\epsilon$}%
}}}}
\put(2889,-3636){\makebox(0,0)[lb]{\smash{{\SetFigFont{10}{8}{\rmdefault}{\mddefault}{\updefault}{\color[rgb]{0,0,0}$f=c$}%
}}}}
\end{picture}%
\caption{Morse coordinates around a critical point}
\label{morsecoords}
\end{figure}

Inside of this coordinate chart it is clear that level sets below the critical level are diffeomorphic to $S^{p}\times D^{q+1}$ and that level sets above the critical level are diffeomorphic to $D^{p+1}\times S^{q}$ where $p+q+1=n$, see Fig. \ref{morsecoords}. In the case where $f$ has exactly one critical point $w$ of index $p+1$, the cobordism $W$ is diffeomorphic to the trace of a $p$-surgery on $X_0$. If $W$ admits a Morse function $f$ with no critical points then by theorem 3.4 of \cite{Smale}, $W$ is diffeomorphic to the cylinder $X_0\times I$ (and consequently $X_0$ is diffeomorphic to $X_1$).

The critical points of a Morse function are isolated and as $W$ is compact, $f$ will have only finitely many. Denote the critical points of $f$ as $w_0, w_1, \cdots, w_k$ where each $w_i$ has Morse index $p_i+1$. We will assume that $0<f(w_0)=c_0\leq f(w_1)=c_1\leq\cdots f(w_k)=c_k<1$. 

\begin{Definition}
\rm{The Morse function $f$ is {\em well-indexed} if critical points on the same level set have the same index and for all $i$, $p_i\leq p_{i+1}$.} \end{Definition}

In the case when the above inequalities are all strict, $f$ decomposes $W$ into a union of elementary cobordisms $C_0 \cup C_1 \cup \cdots \cup C_k$. Here each $C_i=f^{-1}([c_{i-1}+\tau, c_{i}+\tau])$ when $0<i<k$, and $C_0=f^{-1}([0,c_0+\tau])$ and $C_{k}=f^{-1}([c_{k-1}+\tau,1])$, for some appropriately small $\tau>0$. Each $C_i$ is the trace of a $p_i$-surgery. When these inequalities are not strict, in other words $f$ has level sets with more than one critical point, then $W$ is decomposed into a union of cobordisms $C_0'\cup C_1'\cup\cdots \cup C_l'$ where $l<k$. A cobordism $C_i'$ which is not elementary, is the trace of several distinct surgeries. It is of course possible, with a minor adjustment of $f$, to make the above inequalities strict.

By equipping $W$ with a Riemannian metric $m$, we can define ${grad}_{m}f$ the gradient vector field for $f$. This metric is called a {\it background metric} for $f$ and has no relation to any of the other metrics mentioned here. In particular, no assumptions are made about its curvature. More generally, we define {\it gradient-like} vector fields on $W$ with respect to $f$ and $m$, as follows.  

\begin{Definition}
\rm{
A {\em gradient-like} vector field with respect to $f$ and $m$ is a vector field $V$ on $W$ satisfying the following properties.

(1) $df_{x}(V_x)>0$ for all $x$ in $W$.

(2) Each critical point $w$ of $f$, lies in a neighbouhood $U$ so that for all $x\in U$, $V_x={grad}_{m}f(x)$. 
}
\end{Definition}  

\noindent We point out that the space of background metrics for a particular Morse function $f:W\rightarrow I$ is a convex space. So too, is the space of gradient-like vector fields associated with any particular pair $(f,m)$, see chapter 2, section 2 of \cite{Hatcher}. We can now define an admissible Morse function on $W$.

\begin{Definition}
\rm{
An {\em admissible Morse function} $f$ on a compact cobordism $\{W;X_0,X_1\}$ is a triple $f=(f,m,V)$ where $f:W\rightarrow I$ is a Morse function, $m$ is a background metric for $f$, $V$ is a gradient like vector field with respect to $f$ and $m$, and finally, any critical point of $f$ has Morse index less than or equal to $n-2$
}
\end{Definition}

\begin{Remark}
We emphasise the fact that an admissible Morse function is actually a triple consisting of a Morse function, a Riemannian metric and a gradient-like vector field. However, to ease the burden of notation, an admissible Morse function $(f, m, V)$ will be denoted as simply $f$. 
\end{Remark}

\begin{figure}[htbp]
\begin{picture}(0,0)%
\includegraphics{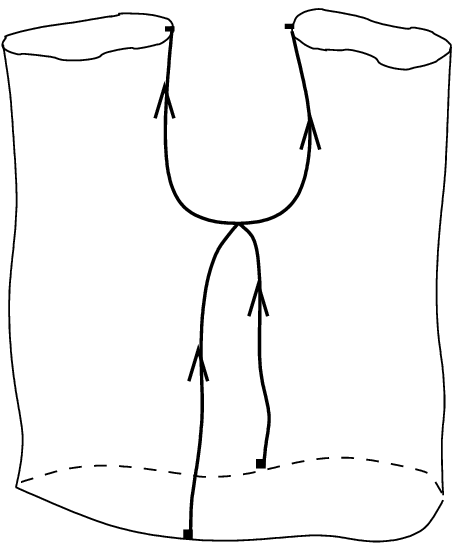}%
\end{picture}%
\setlength{\unitlength}{3947sp}%
\begingroup\makeatletter\ifx\SetFigFont\undefined%
\gdef\SetFigFont#1#2#3#4#5{%
  \reset@font\fontsize{#1}{#2pt}%
  \fontfamily{#3}\fontseries{#4}\fontshape{#5}%
  \selectfont}%
\fi\endgroup%
\begin{picture}(2177,2814)(1974,-3771)
\put(3076,-2136){\makebox(0,0)[lb]{\smash{{\SetFigFont{10}{8}{\rmdefault}{\mddefault}{\updefault}{\color[rgb]{0,0,0}$w$}%
}}}}
\put(2164,-1836){\makebox(0,0)[lb]{\smash{{\SetFigFont{10}{8}{\rmdefault}{\mddefault}{\updefault}{\color[rgb]{0,0,0}$K_{-}^{q+1}(w)$}%
}}}}
\put(3314,-2924){\makebox(0,0)[lb]{\smash{{\SetFigFont{10}{8}{\rmdefault}{\mddefault}{\updefault}{\color[rgb]{0,0,0}$K_{-}^{p+1}(w)$}%
}}}}
\put(3039,-3611){\makebox(0,0)[lb]{\smash{{\SetFigFont{10}{8}{\rmdefault}{\mddefault}{\updefault}{\color[rgb]{0,0,0}$S_{-}^{p}(w)$}%
}}}}
\put(2851,-1111){\makebox(0,0)[lb]{\smash{{\SetFigFont{10}{8}{\rmdefault}{\mddefault}{\updefault}{\color[rgb]{0,0,0}$S_{+}^{q}(w)$}%
}}}}
\end{picture}%

\caption{Trajectory spheres for a critical point $w$ on an elementary cobordism}
\label{trajflow}
\end{figure}

Associated to each critical point $w$ of index $p+1$ is a pair of trajectory spheres $S_{-}^{p}(w)$ and $S_{+}^{q}(w)$, respectively converging towards and emerging from $w$, see Fig. \ref{trajflow}. As usual $p+q+1=n$. Let us assume for simplicity that $f$ has exactly one critical point $w$ and that $w$ has Morse index $p+1$. Then associated to $w$ is an embedded sphere $S^{p}=S_{-}^{p}(w)$ in $X_0$ which follows a trajectory towards $w$. The trajectory itself consists of the union of segments of integral curves of the gradient vector field beginning at the embedded $S^{p}\subset X_0$ and ending at $w$. It is topologically a $(p+1)$-dimensional disk $D^{p+1}$. We denote it $K_{-}^{p+1}(w)$. Similarly, there is an embedded sphere $S^{q}=S_{+}^{q}\subset X_1$ which bounds a trajectory $K_{+}^{q}(w)$ (homeomorphic to a disk $D^{q}$) emerging from $w$. Both spheres are embedded with trivial normal bundle and the elementary cobordism $W$ is in fact diffeomorphic to the trace of a surgery on $X_0$ with respect to $S^{p}$.

We are now in a position to prove Theorem \ref{GLcob}. This is the construction, given a positive scalar curvature metric $g_0$ on $X_0$ and an admissible Morse function $f$ on $W$, of a psc-metric $\bar{g}=\bar{g}(g_0,f)$ on $W$ which extends $g_0$ and is a product near the boundary. As pointed out in the introduction, the metric $\bar{g}$ is known as a {\it Gromov-Lawson cobordism with respect to $g_0$ and $f$}. The resulting metric induced on $X_1$, $g_1=\bar{g}|_{X_1}$, is said to be {\it Gromov-Lawson cobordant} to $g_0$.
\\

\noindent {\bf Theorem \ref{GLcob}.} 
{\em Let $\{W^{n+1};X_0,X_1\}$ be a smooth compact cobordism. Suppose $g_0$ is a metric of positive scalar curvature on $X_0$ and $f:W\rightarrow I$ is an admissible Morse function. Then there is a psc-metric $\bar{g}=\bar{g}(g_0,f)$ on $W$ which extends $g_0$ and has a product structure near the boundary.}

\begin{proof}
Let $f$ be an admissible Morse function on $W$. Let $m$ be the background metric on $W$, as described above. Around each critical point $w_i$ of $f$ we choose mutually disjoint Morse coordinate balls $B(w_i)=B_{m}(w_i,\bar{\epsilon})$ where $\bar{\epsilon}>0$ is some sufficiently small constant. For the moment, we may assume that $f$ has only one critical point $w$ of Morse index $p+1$ where as usual $p+q+1=n$ and $q\geq 2$. Let $c=f(w)\in(0,1)$. Associated to $w$ are the trajectory spheres $S^{p}=S_{-}^{p}(w)$ and $S_{+}^{q}(w)$, defined earlier in this section. Let $N=S^{p}\times D^{q+1}(\bar{r})\subset X_0$ denote the tubular neighbourhood defined in the proof of Theorem \ref{IsotopyTheorem}, constructed using the exponential map for the metric $g_0$. The {\em normalised} gradient-like flow of $f$ (obtained by replacing $V$ with $\frac{V}{m(V,V)}$ away from critical points and smoothing with an appropriate bump function) gives rise to a diffeomorphism from $f^{-1}([0,\epsilon_0])$ to $f^{-1}([0,c-\tau])$ where $0<\epsilon_0<c-\tau<c$. In particular, normalisation means that it maps $f^{-1}([0,\epsilon_0])$ diffeomorphically onto $f^{-1}([c-\tau-\epsilon_0, c-\tau])$. For sufficiently small $\bar{r}$, $\epsilon_0$ and $\tau$, the level set $f^{-1}(c-\tau)$ may be chosen to intersect with $B(w)$ so as to contain the image of $N\times [0,\epsilon_0]$ under this diffeomorphism, see Fig. \ref{actionofflow}. 

\begin{figure}[htbp]

\begin{picture}(0,0)%
\includegraphics{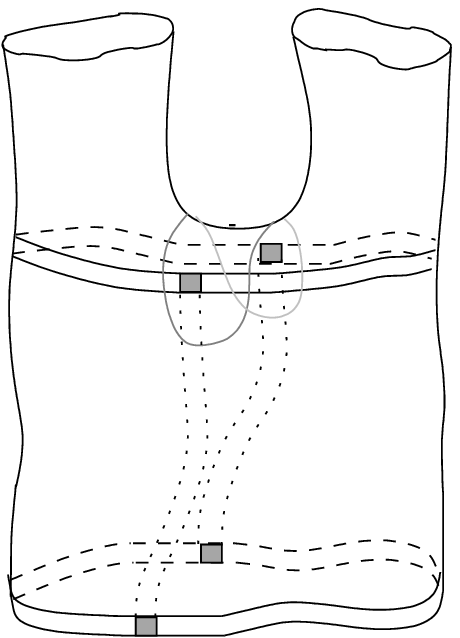}%
\end{picture}%
\setlength{\unitlength}{3947sp}%
\begingroup\makeatletter\ifx\SetFigFont\undefined%
\gdef\SetFigFont#1#2#3#4#5{%
  \reset@font\fontsize{#1}{#2pt}%
  \fontfamily{#3}\fontseries{#4}\fontshape{#5}%
  \selectfont}%
\fi\endgroup%
\begin{picture}(2217,3032)(1974,-4223)
\put(2409,-2761){\makebox(0,0)[lb]{\smash{{\SetFigFont{10}{8}{\rmdefault}{\mddefault}{\updefault}{\color[rgb]{0,0,0}$B(w)$}%
}}}}
\put(2776,-4001){\makebox(0,0)[lb]{\smash{{\SetFigFont{10}{8}{\rmdefault}{\mddefault}{\updefault}{\color[rgb]{0,0,0}$N\times{[0,\epsilon_0]}$}%
}}}}
\put(4176,-4086){\makebox(0,0)[lb]{\smash{{\SetFigFont{10}{8}{\rmdefault}{\mddefault}{\updefault}{\color[rgb]{0,0,0}$X\times{[0,\epsilon_0]}$}%
}}}}
\put(4139,-2486){\makebox(0,0)[lb]{\smash{{\SetFigFont{10}{8}{\rmdefault}{\mddefault}{\updefault}{\color[rgb]{0,0,0}$f^{-1}([c-\tau-\epsilon_0,c-\tau])$}%
}}}}
\put(3014,-2186){\makebox(0,0)[lb]{\smash{{\SetFigFont{10}{8}{\rmdefault}{\mddefault}{\updefault}{\color[rgb]{0,0,0}$w$}%
}}}}
\end{picture}%

\caption{The action of the gradient-like flow on $N\times [0,\epsilon_0]$}
\label{actionofflow}
\end{figure}

\begin{figure}[htbp]

\begin{picture}(0,0)%
\includegraphics{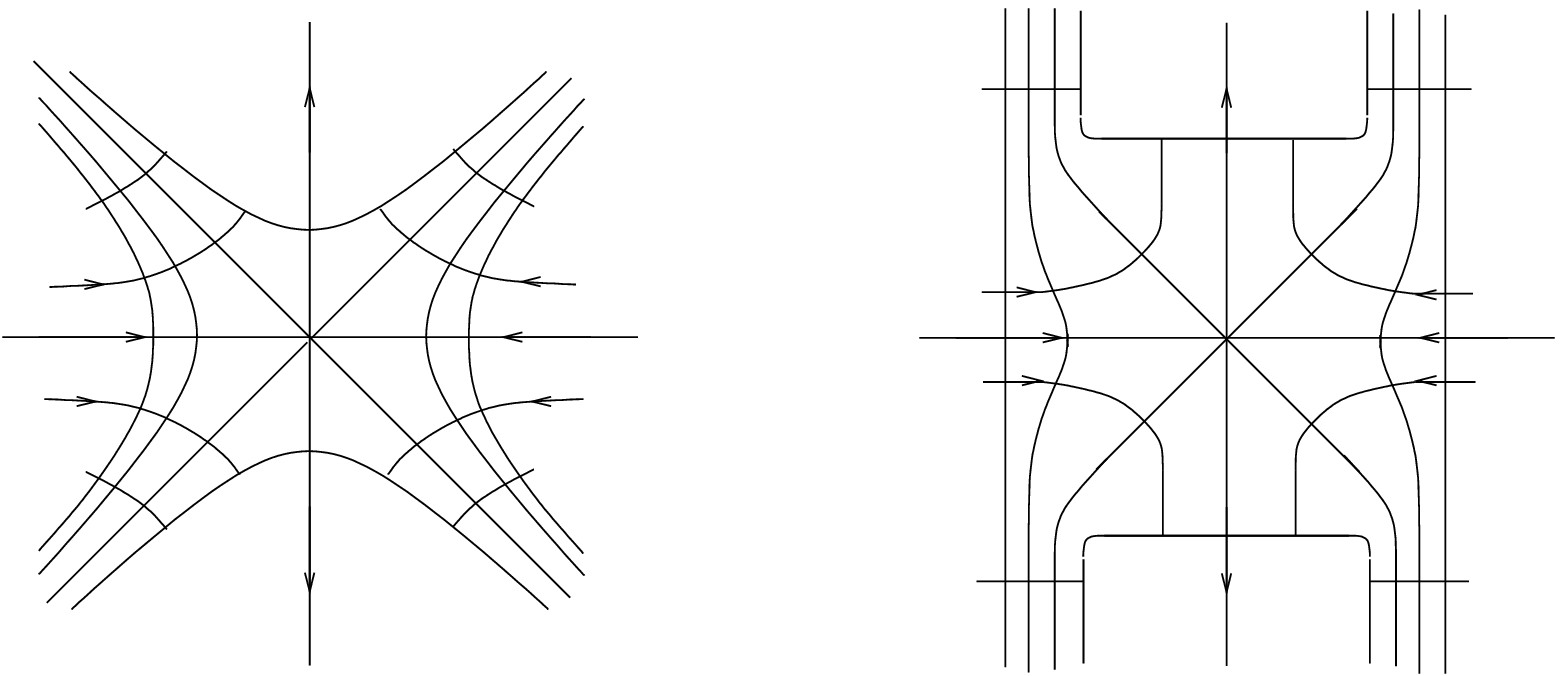}%
\end{picture}%
\setlength{\unitlength}{3947sp}%
\begingroup\makeatletter\ifx\SetFigFont\undefined%
\gdef\SetFigFont#1#2#3#4#5{%
  \reset@font\fontsize{#1}{#2pt}%
  \fontfamily{#3}\fontseries{#4}\fontshape{#5}%
  \selectfont}%
\fi\endgroup%
\begin{picture}(7475,3217)(1364,-4095)
\end{picture}%
\caption{A diffeomorphism on the handle.}
\label{coordchange}
\end{figure}
In fact, we can use the normalised gradient-like flow to construct a diffeomorphism between $X_0\times [0,c-\tau]$ and $f^{-1}([0,c-\tau])$ which for each $s\in[0,c-\tau]$, maps $X\times\{s\}$ diffeomorphically onto $f^{-1}(s)$. Corollary \ref{GLconc} then allows us to extend the metric $g_0$ from $X_0$ as a psc-metric over $f^{-1}([0,c-\tau])$ which is product near the boundary. Moreover this extension can be constructed so that the resulting metric $\bar{g_0}$, is the product $g_0+dt^{2}$ outside of $B(w)$ and on $f^{-1}([c-\tau-\epsilon_0, c-\tau])$ is the metric $(g_0)_{std}+dt^{2}$ where $(g_0)_{std}$ is the metric constructed in Theorem \ref{IsotopyTheorem} with respect to $N$. Recall that on $X_0$, the metric $(g_0)_{std}$ is the original metric $g_0$ outside of $N$ but that near $S^{p}$, $(g_0)_{std}=\epsilon^{2}ds_{p}^{2}+g_{tor}^{q+1}(\delta)$. Choose $r_0\in(0,\bar{r})$, so that on the neighbourhood $N(r_0)=S^{p}\times D^{q+1}(r_0)\subset N$, $(g_0)_{std}=\epsilon^{2}ds_{p}^{2}+g_{tor}^{q+1}(\delta)$. Observe that the trajectory of $X_0\setminus N(r_0)$ does not pass any critical points. Thus it is possible to extend $\bar{g_0}$ as $(g_0)_{std}+dt^2$ along this trajectory up to the level set $f^{-1}(c+\tau)$. To extend this metric over the rest of $f^{-1}[0,c+\tau]$ we use a diffeomorphism of the type described in Fig. \ref{coordchange} to adjust coordinates near $w$. Thus, away from the origin, the level sets and flow lines of $f$ are the vertical and horizontal lines of the the standard Cartesian plane and the extension along the trajectory of $X_0\setminus N(r_0)$ is assumed to take place on this region, see Fig. \ref{extension}. Over the rest of $f^{-1}(c+\tau)$, the metric $\bar{g_0}$ can be extended extended as the metric constructed in Theorem \ref{ImprovedsurgeryTheorem}.

\begin{figure}[htbp]
\begin{picture}(0,0)%
\includegraphics{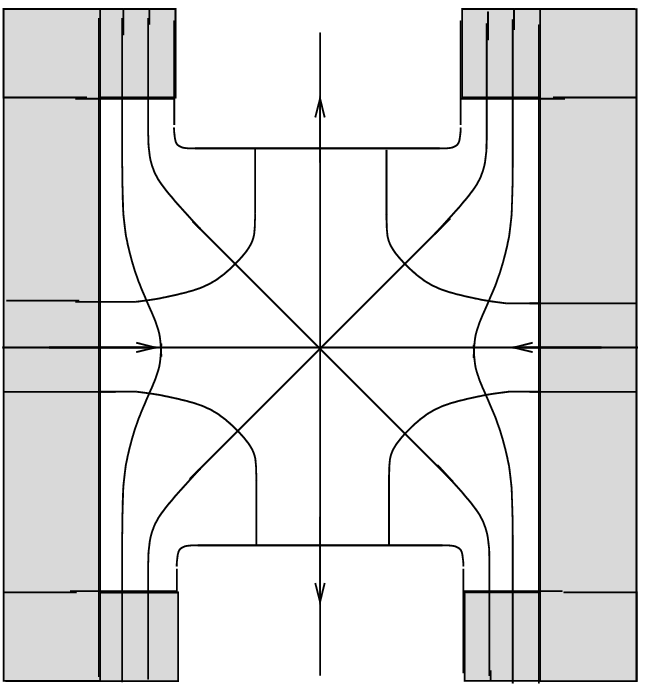}%
\end{picture}%
\setlength{\unitlength}{3947sp}%
\begingroup\makeatletter\ifx\SetFigFont\undefined%
\gdef\SetFigFont#1#2#3#4#5{%
  \reset@font\fontsize{#1}{#2pt}%
  \fontfamily{#3}\fontseries{#4}\fontshape{#5}%
  \selectfont}%
\fi\endgroup%
\begin{picture}(3074,3261)(2202,-4567)
\end{picture}%
\caption{Extending $\bar{g_0}$ along the trajectory of $X_0\setminus N(r_0)$ to the level set $f^{-1}(c+\tau)$.}
\label{extension}
\end{figure}

At this stage we have constructed a psc-metric on $f^{-1}(c+\tau)$, which extends the original metric $g_0$ on $X_0$ and is product near the boundary. As $f^{-1}([c+\tau,1])$ is diffeomorphic to the cylinder $X_1\times[c+\tau,1]$, this metric can then be extended as a product metric over the rest of $W$. This construction is easily generalised to the case where $f$ has more than one critical point on the level set $f^{-1}(c)$. In the case where $f$ has more than one critical level, and thus decomposes $W$ into cobordisms $C_0'\cup C_1'\cup\cdots \cup C_l'$ as described above, repeated application of this construction over each cobordism results in the desired metric $\bar{g}(g_0,f)$.
\end{proof}    

\subsection{A reverse Gromov-Lawson cobordism}

Given a Morse triple $f=(f,m,V)$ on a smooth compact cobordism $\{W;X_0,X_1\}$, with $f^{-1}(0)=X_0$ and $f^{-1}(1)=X_1$, we denote by $1-f$, the Morse triple $(1-f, m, -V)$ which has the gradient-like flow of $f$, but running in the opposite direction. In particular,  $(1-f)^{-1}(0)=X_1$ and $(1-f)^{-1}(1)=X_0$ and so it is easier to think of this as simply ``turning the cobordism $W$ upside down". Although $1-f$ has the same critical points as $f$, there is a change in the indices. Each critical point of $f$ with index $p+1$ is a critical point of $1-f$ with index $q+1$, where $p+q+1=n$ and $\dim W=n+1$. Just as $f$ describes a sequence of surgeries which turns $X_0$ into $X_1$, $1-f$ describes a sequence of complementary surgeries which reverses this process and turns $X_1$ back into $X_0$. 
 
Given an admissible Morse function $f$ on a cobordism $\{W; X_0, X_1\}$, Theorem \ref{GLcob} allows us to construct, from a psc-metric $g_0$ on $X_0$, a new psc-metric $g_1$ on $X_1$. Suppose now that $1-f$ is also an admissible Morse function. The following theorem describes what happens if we reapply the construction of Theorem \ref{GLcob} on the metric $g_1$ with respect to the function $1-f$.  
\\

\noindent {\bf Theorem \ref{Reversemorse}.}
{\em Let $\{W^{n+1};X_0,X_1\}$ be a smooth compact cobordism, $g_0$ a psc-metric on $X_0$ and $f:W\rightarrow I$, an admissible Morse function. Suppose that $1-f$ is also an admissible Morse function. Let $g_1=\bar{g}(g_0,f)|_{X_1}$ denote the restriction of the Gromov-Lawson cobordism $\bar{g}(g_0,f)$ to $X_1$. Let $\bar{g}(g_1,1-f)$ be a Gromov-Lawson cobordism with respect to $g_1$ and $1-f$ and let $g_0'=\bar{g}(g_1, 1-f)|_{X_0}$ denote the restriction of this metric to $X_0$. Then $g_0$ and $g_0'$ are canonically isotopic metrics in $\Riem^{+}(X_0)$.}

\begin{proof}
It is enough to consider the case where $f$ has a single critical point $w$ of index $p+1$. The metric $g_1$ is the restriction of the metric $\bar{g}(g_0, f)$, constructed in Theorem \ref{GLcob}, to $X_1$. In constructing the metric $g_0'$ we apply the Gromov-Lawson construction to this metric with respect to surgery on an embedded sphere $S^{q}$. The admissible Morse function $1-f$ determines a neighbourhood $S^{q}\times D^{p+1}$ on which this surgery takes place. Recall that, by construction, the metric $g_1$ is already the standard metric $\delta^{2}ds_{q}^{2}+g_{tor}^{p+1}(\epsilon)$ near this embedded sphere. Thus, $g_0'$ is precisely the metric obtained by applying the Gromov-Lawson construction on this standard piece. Removing a tubular neighbourhood of $S^{q}$ in this standard region results in a metric on $X_1\setminus{S^{q}\times D^{p+1}}$, which is the standard product $\delta^{2}ds_{q}^{2}+\epsilon^{2}ds_{p}^{2}$. The construction is completed by attaching the product $D^{q+1}\times S^{p}$ with the standard metric $g_{tor}^{q+1}(\delta)+\epsilon^{2}ds_{p}^{2}$. In Fig. \ref{reversemorse} we represent this, using a dashed curve, as a hypersurface of the standard region. The resulting metric is isotopic to the metric $(g_0)_{std}$, the metric obtained from $g_0$ in Theorem \ref{IsotopyTheorem}, by a continuous rescaling of the tube length of the torpedo factor. In turn $(g_0)_{std}$ can then be isotopied back to $g_0$ by Theorem \ref{IsotopyTheorem}.

\begin{figure}[htbp]
\begin{picture}(0,0)%
\includegraphics{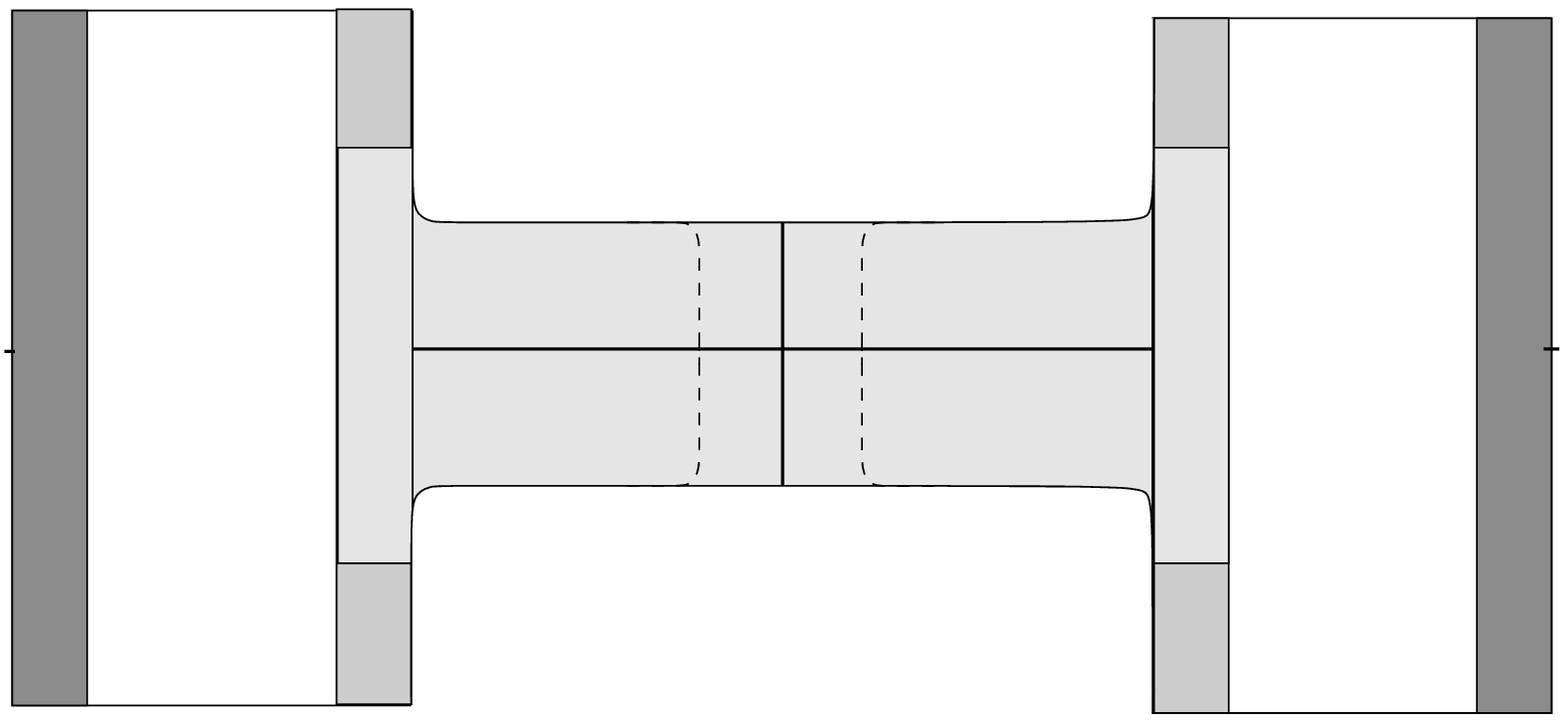}%
\end{picture}%
\setlength{\unitlength}{3947sp}%
\begingroup\makeatletter\ifx\SetFigFont\undefined%
\gdef\SetFigFont#1#2#3#4#5{%
  \reset@font\fontsize{#1}{#2pt}%
  \fontfamily{#3}\fontseries{#4}\fontshape{#5}%
  \selectfont}%
\fi\endgroup%
\begin{picture}(7569,3697)(1529,-4390)
\put(1639,-4299){\makebox(0,0)[lb]{\smash{{\SetFigFont{10}{8}{\rmdefault}{\mddefault}{\updefault}{\color[rgb]{0,0,0}$g_0+dt^{2}$}%
}}}}
\put(3226,-4324){\makebox(0,0)[lb]{\smash{{\SetFigFont{10}{8}{\rmdefault}{\mddefault}{\updefault}{\color[rgb]{0,0,0}$(g_0)_{std}+dt^{2}$}%
}}}}
\put(4426,-1574){\makebox(0,0)[lb]{\smash{{\SetFigFont{10}{8}{\rmdefault}{\mddefault}{\updefault}{\color[rgb]{0,0,0}$D^{p+1}(\bar{\rho})\times D^{q+1}(\bar{r})$}%
}}}}
\put(4714,-3311){\makebox(0,0)[lb]{\smash{{\SetFigFont{10}{8}{\rmdefault}{\mddefault}{\updefault}{\color[rgb]{0,0,0}standard metric}%
}}}}
\end{picture}%

\caption{The metric induced by $\bar{g}(g_1, -f)$ on a level set below the critical level}
\label{reversemorse}
\end{figure} 

\end{proof}

\subsection{Continuous families of Morse functions}

The construction of Theorem \ref{GLcob} easily generalises to the case of a compact family of admissible Morse functions. Before doing this we should briefly discuss the space $\M= \M(W)$ of Morse functions in $\F=\F(W)$. It is well known that $\M$ is an open dense subspace of $\F$, see theorem 2.7 of \cite{Smale}. Let $\tilde{\F}$ denote the space of triples $(f,m,V)$ so that $f\in \F$, $m$ is a backgound metric for $f$ and $V$ is a gradient-like vector field with respect to $f$ and $m$. The space $\tilde{\F}$ is then homotopy equivalent to the space ${\F}$. In fact, by equipping $W$ with a fixed background metric $\bar{m}$, the inclusion map
\begin{equation}
f\longmapsto(f,\bar{m},{grad}_{\bar{m}}f)
\end{equation}
forms part of a deformation retract of $\tilde{\F}$ down to $\F$, see chapter 2, section 2 of \cite{Hatcher} for details.
 
Denote by $\tilde{\M}=\tilde{\M}(W)$, the subspace of $\tilde{F}$, of triples $(f,m,V)$ where $f$ is a Morse function. Elements of $\tilde{\M}$ will be known as {\it Morse triples}. The above deformation retract then restricts to a deformation retract of $\tilde{\M}$ to $\M$. The subspace of $\tilde{\M}$ consisting of admissible Morse functions will be denoted $\tilde{\M}^{adm}=\tilde{\M}^{adm}(W)$. To economise in notation we will shorten $(f,m,V)$ to simply $f$. Let $f_0,f_1\in \tilde{\M}$.

\begin{Definition}
\rm{
The Morse triples $f_0$ and $f_1$ are {\em isotopic} if they lie in the same path component of $\tilde{\M}$. A path $f_t,t\in I$ connecting $f_0$ and $f_1$ is called an {\em isotopy} of Morse triples.  
}
\end{Definition}

\begin{Remark}
This dual use of the word isotopy is unfortunate, however, it should be clear from context which meaning we wish to employ.
\end{Remark}

\noindent In order for two Morse triples $f_0$ and $f_1$ to lie in the same path component of $\tilde{\M}$, it is necessary that both have the same number of index $p$ critical points for each $p\in\{0,1,\cdots n+1\}$. Thus if $f_0$ and $f_1$ are both admissible Morse functions, an isotopy of Morse triples connecting $f_0$ to $f_1$ is contained entirely in $\tilde{\M}^{adm}$. We now prove Theorem \ref{GLcobordismcompact}.
\\

\noindent {\bf Theorem \ref{GLcobordismcompact}.}
{\em Let $\{W, X_0, X_1\}$ be a smooth compact cobordism and let $B$ and $C$ be a pair of compact spaces. Let $\mathcal{B}=\{g_b\in \Riem^{+}(X_0):b\in B\}$ be a continuous family of psc-metrics on $X_0$ and let $\mathcal{C}=\{f_c\in\tilde{\M}^{adm}(W):c\in C\}$ be a continuous family of admissible Morse functions on $W$. Then there is a continuous map 

\begin{equation*}
\begin{split}
\mathcal{B}\times \mathcal{C}&\longrightarrow \Riem^{+}(W)\\
(g_b,f_c)&\longmapsto \bar{g}_{b,c}=\bar{g}(g_b, f_c)
\end{split}
\end{equation*}

\noindent so that for each pair $(b,c)$, the metric $\bar{g}_{b,c}$ is the metric constructed in Theorem \ref{GLcob}.}

\begin{proof} For each $c\in C$, $f_c$ will have the same number of critical points of the same index. The family $\{f_c\}$ can be thought of as a continuous rearrangement of the critical points. In turn this means a continuous rearrangement of embedded surgery spheres. The proof then follows directly Lemma \ref{GLcompact}. 

\end{proof}

\begin{Corollary}\label{GLmetricisotopy}
Let $f_t,t\in I$ be an isotopy in the space admissible Morse functions, $\tilde{\M}^{adm}(W)$. Then there is a continuous family of psc-metrics $\bar{g}_t$ on $W$ so that for each $t$, $\bar{g}_t=\bar{g}(g_0,f_t)$ is a Gromov-Lawson cobordism of the type constructed in Theorem \ref{GLcob}. In particular, $\bar{g}_t|_{X_1}, t\in I$ is an isotopy of psc-metrics on $X_1$. 
\end{Corollary}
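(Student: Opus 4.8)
The plan is to deduce this directly from Theorem \ref{GLcobordismcompact} by specialising the two compact parameter spaces. First I would take $B$ to be a single point, so that $\mathcal{B}=\{g_0\}$ is the trivially compact, trivially continuous one-element family consisting of the fixed psc-metric $g_0$ on $X_0$. Then I would take $C=I=[0,1]$, which is compact, together with $\mathcal{C}=\{f_t:t\in I\}$, which by hypothesis is a continuous family of admissible Morse functions, i.e.\ a continuous map $I\to\tilde{\M}^{adm}(W)$. With these choices all the hypotheses of Theorem \ref{GLcobordismcompact} are met.

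Applying that theorem produces a continuous map $\mathcal{B}\times\mathcal{C}\to\Riem^{+}(W)$, $(g_0,f_t)\mapsto\bar{g}(g_0,f_t)$. Precomposing with the obvious continuous inclusion $I\hookrightarrow\mathcal{B}\times\mathcal{C}$, $t\mapsto(g_0,f_t)$, gives a continuous family $\bar{g}_t:=\bar{g}(g_0,f_t)$, $t\in I$, of psc-metrics on $W$, and for each $t$ the metric $\bar{g}_t$ is precisely the Gromov-Lawson cobordism of Theorem \ref{GLcob} associated to the pair $(g_0,f_t)$. For the final assertion I would then compose $t\mapsto\bar{g}_t\in\Riem^{+}(W)$ with the restriction map $\Riem^{+}(W)\to\Riem(X_1)$, $g\mapsto g|_{X_1}$, which is continuous for the $C^{k}$ topologies; since each $\bar{g}_t$ has a product structure near $\p W$ (again by Theorem \ref{GLcob}), the restriction $\bar{g}_t|_{X_1}$ is a genuine psc-metric on $X_1$, so $t\mapsto\bar{g}_t|_{X_1}$ is an isotopy in $\Riem^{+}(X_1)$.

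There is essentially no obstacle here: the corollary is a formal specialisation of Theorem \ref{GLcobordismcompact}, and all of the genuine work --- namely that the Gromov-Lawson construction can be carried out continuously over a compact family of admissible Morse functions --- has already been done in the proof of that theorem, which in turn rests on Theorem \ref{GLcompact}. The only routine checks are that $t\mapsto(g_0,f_t)$ is continuous into $\mathcal{B}\times\mathcal{C}$ and that the restriction-to-$X_1$ map is continuous in the relevant topologies, both of which are immediate.
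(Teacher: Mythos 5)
Your proposal is correct and is exactly the paper's route: Corollary \ref{GLmetricisotopy} is stated as an immediate consequence of Theorem \ref{GLcobordismcompact}, obtained by taking $\mathcal{B}$ to be the single metric $g_0$ and $C=I$ with the family $\{f_t\}$, then restricting to $X_1$ using the product structure near the boundary. No further comment is needed.
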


\begin{Definition}
\rm{
A Morse triple $(f,m,V)$ is {\em well indexed} if the Morse function $f$ is well indexed.
}
\end{Definition}

\begin{Theorem}\cite{Smale}\label{rearrangement}
Let $f\in\tilde{\M}$. Then there is a well-indexed Morse triple $\bar{f}$ which lies in the same path component of $\tilde{\M}$.
\end{Theorem}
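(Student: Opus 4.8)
The plan is to run the classical handle–rearrangement argument of \cite{Smale}, keeping track at every step of the background metric $m$ and the gradient‑like vector field $V$, so that the whole procedure is realised by a single path in $\tilde{\M}(W)$. Write the critical points of $f$ as $w_0,\dots,w_k$, with Morse indices $\lambda_0,\dots,\lambda_k$ and critical values $f(w_0)\le\cdots\le f(w_k)$ in $(0,1)$. The goal is to reach a triple whose critical values are \emph{distinct} and whose indices are \emph{nondecreasing} along that ordering; such a triple is in particular well‑indexed. One useful remark throughout: since $\tilde{\M}$ deformation retracts onto $\M$ (as recalled just above the statement), any two triples lying over the same Morse function $f$ automatically lie in the same path component of $\tilde{\M}$, so whenever a step only changes $m$ or $V$ we get the required isotopy for free.

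\emph{Separating the critical values.} Around each $w_i$ choose a Morse coordinate ball small enough to contain no other critical point, and a bump function $\beta_i$ that is $\equiv 1$ near $w_i$ and supported in that ball. For small reals $\varepsilon_i$ the function $f_\varepsilon=f+\sum_i\varepsilon_i\beta_i$ has exactly the same critical points as $f$: near $w_i$ one has $df_\varepsilon=df$ (a constant shift), while on the annular region where $\beta_i$ is nonconstant the perturbation $\varepsilon_i\,d\beta_i$ of the nonvanishing form $df$ is harmless once $|\varepsilon_i|$ is small compared with the (positive) infimum of $df(V)$ there. For the same reason $V$ stays gradient‑like for $f_\varepsilon$, and ${grad}_{m}f_\varepsilon=V$ still holds near each $w_i$, so $(f_\varepsilon,m,V)\in\tilde{\M}$. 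Choosing the $\varepsilon_i$ generically makes the values $f(w_i)+\varepsilon_i$ pairwise distinct, and $t\mapsto(f+t\sum_i\varepsilon_i\beta_i,m,V)$ is an isotopy in $\tilde{\M}$. Thus we may assume $f(w_0)<\cdots<f(w_k)$.

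\emph{Reordering a bad adjacent pair.} If $\lambda_0\le\cdots\le\lambda_k$ we are done. Otherwise choose $i$ with $\lambda_i>\lambda_{i+1}$; since $w_i,w_{i+1}$ are consecutive in the level order, $f^{-1}\big([f(w_i)-\eta,\,f(w_{i+1})+\eta]\big)$ contains no other critical point for small $\eta$. Fix a regular value $a$ with $f(w_i)<a<f(w_{i+1})$ and set $V^{n}=f^{-1}(a)$. The trajectory sphere $S_{+}^{\,n-\lambda_i}(w_i)$ emanating upward from $w_i$ and the trajectory sphere $S_{-}^{\,\lambda_{i+1}-1}(w_{i+1})$ flowing up into $w_{i+1}$ both lie in $V$, and $(n-\lambda_i)+(\lambda_{i+1}-1)=n-1-(\lambda_i-\lambda_{i+1})<n$ precisely because $\lambda_{i+1}<\lambda_i$. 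By general position we may choose a gradient‑like vector field $V'$ for $f$ (keeping $f$, and $m$ if convenient) for which these two spheres are disjoint in $V$; by the remark above $(f,m,V')$ is isotopic in $\tilde{\M}$ to $(f,m,V)$. With no gradient trajectory running from $w_i$ up to $w_{i+1}$, the reordering lemma of \cite{Smale} supplies a one‑parameter family of Morse functions, all with critical points $w_0,\dots,w_k$ and gradient‑like vector field $V'$, carrying $f$ to a Morse function $f'$ with $f'(w_{i+1})<f'(w_i)$, all other critical values essentially unchanged and still pairwise distinct; this is again an isotopy in $\tilde{\M}$, transposing $w_i$ and $w_{i+1}$ in the level ordering.

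\emph{Termination.} Each such transposition decreases by exactly one the finite number of pairs $j<\ell$ with $f(w_j)<f(w_\ell)$ but $\lambda_j>\lambda_\ell$ (only the relative order of $w_i$ and $w_{i+1}$ changes, and that pair ceases to be an inversion). Hence after finitely many repetitions of the previous paragraph the indices are nondecreasing along the ordering of the critical values; the resulting triple $\bar f$ is well‑indexed, and the concatenation of all the isotopies built along the way is a path in $\tilde{\M}$ from $f$ to $\bar f$. The one genuinely nontrivial input, imported from \cite{Smale}, is the combination in the reordering step of the dimension count forcing general‑position disjointness of the two trajectory spheres with the reordering lemma itself; everything else is bookkeeping, and the main thing to be careful about is that each manipulation is performed so as to leave $(f,m,V)$ inside $\tilde{\M}$ at every moment.
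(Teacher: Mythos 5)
Your proposal is correct and follows essentially the same route as the paper: both arguments rest on Milnor's rearrangement theorem (Theorem 4.1 of \cite{Smale}) applied continuously to produce a path of Morse functions, combined with the dimension count $\bigl(n-\lambda_i\bigr)+\bigl(\lambda_{i+1}-1\bigr)<n$ and the general-position lemma (Lemma 4.6 / Theorem 4.4 of \cite{Smale}) to alter the gradient-like field so the relevant trajectory spheres become disjoint, while keeping the triple $(f,m,V)$ inside $\tilde{\M}$ throughout. Your extra bookkeeping (bump-function separation of critical values, induction on inversions, and the deformation-retract remark for changes of $m$ and $V$) only makes explicit steps the paper treats implicitly by reducing to the two-critical-point case.
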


\noindent This is basically theorem 4.8 of \cite{Smale}, which proves this fact for Morse functions. We only add that it holds for Morse triples also. 

\begin{proof}
It is sufficient to consider the case where $f$ has exactly two critical points $w$ and $w'$ with $f(w)=c<\frac{1}{2}<f(w')=c'$. The proof of the more general case is exactly the same. Now suppose that $w$ has index $p+1$, $w'$ has index $p'+1$ and $p\geq p'$. Denote by $K_{w}$, the union of trajectories $K_{-}^{p+1}(w)$ and $K_{+}^{q+1}(w)$ associated with $w$. As always, $p+q+1=n$. Similarly $K_{w'}$ will denote the union of trajectories $K_{-}^{p'+1}(w')$ and $D_{+}^{q'+1}(w')$ associated with $w'$ where $p'+q'+1=n$.

\begin{figure}[htbp]

\begin{picture}(0,0)%
\includegraphics{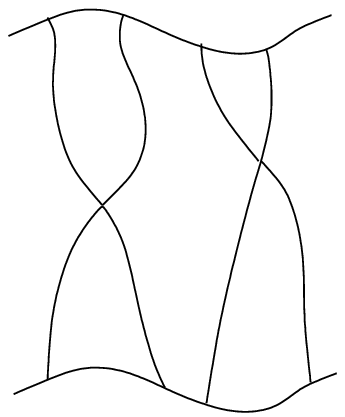}%
\end{picture}%
\setlength{\unitlength}{3947sp}%
\begingroup\makeatletter\ifx\SetFigFont\undefined%
\gdef\SetFigFont#1#2#3#4#5{%
  \reset@font\fontsize{#1}{#2pt}%
  \fontfamily{#3}\fontseries{#4}\fontshape{#5}%
  \selectfont}%
\fi\endgroup%
\begin{picture}(2065,1967)(1636,-2902)
\put(1664,-2836){\makebox(0,0)[lb]{\smash{{\SetFigFont{10}{8}{\rmdefault}{\mddefault}{\updefault}{\color[rgb]{0,0,0}$X_0$}%
}}}}
\put(1651,-1099){\makebox(0,0)[lb]{\smash{{\SetFigFont{10}{8}{\rmdefault}{\mddefault}{\updefault}{\color[rgb]{0,0,0}$X_1$}%
}}}}
\put(3389,-1749){\makebox(0,0)[lb]{\smash{{\SetFigFont{10}{8}{\rmdefault}{\mddefault}{\updefault}{\color[rgb]{0,0,0}$w'$}%
}}}}
\put(2351,-1961){\makebox(0,0)[lb]{\smash{{\SetFigFont{10}{8}{\rmdefault}{\mddefault}{\updefault}{\color[rgb]{0,0,0}$w$}%
}}}}
\end{picture}%

\caption{Non-intersecting trajectories $K_{w}$ and $K_{w'}$}
\label{fig:nonintersectingtrajectories}
\end{figure}

We begin with simpler case when $K_{w}$ and $K_{w'}$ do not intersect. For any $0<a'<a<1$, Theorem 4.1 of \cite{Smale} provides a construction for a well-indexed function $\bar{f}$ with critical points $w$ and $w'$ but with $f(w')=a'$ and $f(w)=a$. The construction can be applied continuously and so replacing $0<a'<a<1$ with a pair of continuous functions $0<a_t'<a_t<1$, with $a_0'=c$, $a_0=c'$, $a_1'=a'$, $a_1=a$ and $t\in I$ results in the desired isotopy.

\begin{figure}[htbp]

\begin{picture}(0,0)%
\includegraphics{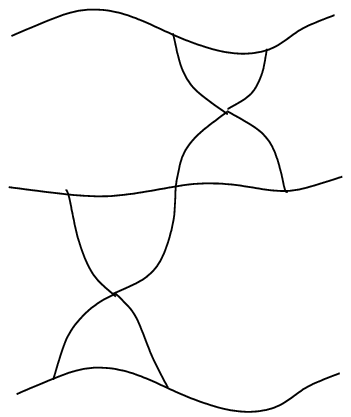}%
\end{picture}%
\setlength{\unitlength}{3947sp}%
\begingroup\makeatletter\ifx\SetFigFont\undefined%
\gdef\SetFigFont#1#2#3#4#5{%
  \reset@font\fontsize{#1}{#2pt}%
  \fontfamily{#3}\fontseries{#4}\fontshape{#5}%
  \selectfont}%
\fi\endgroup%
\begin{picture}(2077,1967)(1636,-2902)
\put(1564,-1836){\makebox(0,0)[lb]{\smash{{\SetFigFont{10}{8}{\rmdefault}{\mddefault}{\updefault}{\color[rgb]{0,0,0}$f^{-1}(\frac{1}{2})$}%
}}}}
\put(1874,-2836){\makebox(0,0)[lb]{\smash{{\SetFigFont{10}{8}{\rmdefault}{\mddefault}{\updefault}{\color[rgb]{0,0,0}$X_0$}%
}}}}
\put(1874,-1099){\makebox(0,0)[lb]{\smash{{\SetFigFont{10}{8}{\rmdefault}{\mddefault}{\updefault}{\color[rgb]{0,0,0}$X_1$}%
}}}}
\put(2864,-1524){\makebox(0,0)[lb]{\smash{{\SetFigFont{10}{8}{\rmdefault}{\mddefault}{\updefault}{\color[rgb]{0,0,0}$w'$}%
}}}}
\put(2326,-2386){\makebox(0,0)[lb]{\smash{{\SetFigFont{10}{8}{\rmdefault}{\mddefault}{\updefault}{\color[rgb]{0,0,0}$w$}%
}}}}
\put(2501,-1764){\makebox(0,0)[lb]{\smash{{\SetFigFont{10}{8}{\rmdefault}{\mddefault}{\updefault}{\color[rgb]{0,0,0}$S^{q}_{+}$}%
}}}}
\put(3114,-2034){\makebox(0,0)[lb]{\smash{{\SetFigFont{12}{14.4}{\rmdefault}{\mddefault}{\updefault}{\color[rgb]{0,0,0}$S^{p'}_{-}$}%
}}}}
\end{picture}%
\caption{Intersecting trajectories}
\label{trajectoryintersectory}
\end{figure}

In general, the trajectory spheres of two distinct Morse critical points may well intersect, see Fig. \ref{trajectoryintersectory}. 
However, provided certain dimension restrictions are satisfied, it is possible to continuously move one trajectory sphere out of the way of the other trajectory sphere. This is theorem 4.4 of \cite{Smale}. We will not reprove it here, except to say that the main technical tool required in the proof is lemma 4.6 of \cite{Smale}, which we state below. 
\begin{Lemma}\cite{Smale}
Suppose $M$ and $N$ are two submanifolds of dimension $m$ and $n$ in a manifold $V$ of dimension $v$. If $M$ has a product neighbourhood in $V$, and $m+n<v$, then there exists a diffeomorphism $h$ of $V$ onto iteslf smoothly isotopic to the identity, such that $h(M)$ is disjoint from $N$.
\end{Lemma}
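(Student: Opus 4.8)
The plan is to run a standard general-position argument, using the product neighbourhood to upgrade a transversality statement into an ambient isotopy. First I would fix a product neighbourhood $U$ of $M$ in $V$, i.e. a diffeomorphism $U\cong M\times\mathbb{R}^{v-m}$ carrying $M$ onto $M\times\{0\}$, and let $\pi:U\to\mathbb{R}^{v-m}$ be the projection onto the second factor. Since $U$ is open, $N\cap U$ is an open subset of $N$, hence a manifold of dimension at most $n$. The hypothesis $m+n<v$ is exactly the statement $n<v-m=\dim\mathbb{R}^{v-m}$, so $\pi|_{N\cap U}$ is a smooth map from a manifold of dimension strictly less than that of its target; by Sard's theorem (equivalently, the elementary fact that the image of a $C^1$ map from a lower-dimensional manifold has measure zero) the set $\pi(N\cap U)\subset\mathbb{R}^{v-m}$ has Lebesgue measure zero.

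Next I would choose a point $z\in\mathbb{R}^{v-m}$, as close to the origin as we like, with $z\notin\pi(N\cap U)$; this is possible because $\pi(N\cap U)$ has measure zero, so its complement is dense. Then $M\times\{z\}$ is disjoint from $N$: a common point would lie in $N\cap U$ and be sent by $\pi$ to $z$, contradicting the choice of $z$. It remains to carry $M=M\times\{0\}$ to $M\times\{z\}$ by an ambient isotopy of $V$. I would pick a smooth path from $0$ to $z$ contained in a ball $B_R\subset\mathbb{R}^{v-m}$ and a compactly supported diffeotopy $\phi_t$ of $\mathbb{R}^{v-m}$, equal to the identity outside $B_R$, with $\phi_0=\mathrm{id}$ and $\phi_t(0)$ tracing that path (obtained by integrating a time-dependent vector field supported in $B_R$ whose value at $\gamma(t)$ is $\gamma'(t)$). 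Setting $h_t=\mathrm{id}$ on $V\setminus U$ and $h_t=\mathrm{id}_M\times\phi_t$ on $U$ gives a well-defined smooth diffeomorphism of $V$ for each $t$, since $\phi_t$ is the identity near $\partial B_R$; moreover $h_0=\mathrm{id}$. Then $h:=h_1$ is smoothly isotopic to the identity and $h(M)=M\times\{z\}$ is disjoint from $N$, as required.

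This argument is routine; the two points that need care are (a) that the product-neighbourhood hypothesis supplies precisely the $v-m$ normal directions the dimension count consumes --- without it one would only obtain $M$ and $N$ disjoint after perturbing $M$ itself, not via an isotopy of the ambient $V$; and (b) arranging the diffeotopy of the $\mathbb{R}^{v-m}$-factor to be compactly supported so that it extends by the identity over all of $V$. When $M$ is non-compact one additionally shrinks the product neighbourhood so that $M\times\overline{B_R}$ is closed in $V$, which is automatic in the intended application, where $M$ and $N$ are the (compact) trajectory spheres of distinct Morse critical points.
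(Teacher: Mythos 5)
Your argument is correct, and it is essentially the proof given in the source the paper cites for this lemma (Lemma 4.6 of Milnor's h-cobordism lectures): project $N\cap U$ to the $\mathbb{R}^{v-m}$-factor, use the dimension count $n<v-m$ to find a value $z$ missed by the projection, and carry $M\times\{0\}$ to $M\times\{z\}$ by a compactly supported diffeotopy of the fibre extended by the identity. The paper itself only states the lemma with a citation, so there is nothing further to compare; your attention to the compact-support/closedness point needed to glue the isotopy to the identity is exactly the care the standard proof requires.
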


The following observation then makes theorem 4.4 of \cite{Smale} possible. 
Let $S_{+}^{q}$ and $S_{-}^{p'}$ denote the respective intersections of $f^{-1}(\frac{1}{2})$ with $K_{+}^{q+1}(w)$ and $K_{-}^{p'+1}(w')$. Adding up dimensions, we see that 

\begin{equation*}
\begin{split}
q+p'&=n-p-1+p'\\
&\leq n-p'+1+p'\\
&\leq n-1. 
\end{split}
\end{equation*}

\noindent We can now isotopy $f$ to have disjoint $K_{w}$ and $K_{w'}$ and then proceed as before.

\end{proof}

\begin{Corollary}\label{rearrangementmetric}
Any Gromov-Lawson cobordism $\bar{g}(g_0,f)$ can be isotopied to a Gromov-Lawson cobordism $\bar{g}(g_0,\bar{f})$ which is obtained from a well-indexed admissible Morse function $\bar{f}$.
\end{Corollary}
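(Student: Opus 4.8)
The plan is to apply the rearrangement result for Morse triples, Theorem \ref{rearrangement}, and then transport the resulting isotopy of Morse functions to an isotopy of Gromov-Lawson cobordisms via the parameterised construction of Theorem \ref{GLcobordismcompact} (equivalently Corollary \ref{GLmetricisotopy}).

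First I would invoke Theorem \ref{rearrangement} to obtain a well-indexed Morse triple $\bar{f}$ lying in the same path component of $\tilde{\M}(W)$ as $f$, and fix an isotopy of Morse triples $f_t$, $t\in I$, with $f_0=f$ and $f_1=\bar{f}$. The key observation is that Morse triples in the same path component of $\tilde{\M}(W)$ must have, for each $p$, the same number of critical points of index $p$, so the list of critical indices is constant along the path $f_t$. Since $f=f_0$ is admissible, i.e. all of its critical points have index $\leq n-2$, the same holds for every $f_t$; hence $f_t$ is in fact an isotopy in $\tilde{\M}^{adm}(W)$, and in particular $\bar{f}$ is an admissible Morse function.

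Next I would apply Corollary \ref{GLmetricisotopy} (equivalently Theorem \ref{GLcobordismcompact} with $B$ a point and $C=I$) to the family $f_t$ together with the fixed psc-metric $g_0$. This yields a continuous path $\bar{g}_t=\bar{g}(g_0,f_t)$, $t\in I$, in $\Riem^{+}(W)$, where each $\bar{g}_t$ is a Gromov-Lawson cobordism extending $g_0$ with product structure near $\partial W$, and with endpoints $\bar{g}_0=\bar{g}(g_0,f)$ and $\bar{g}_1=\bar{g}(g_0,\bar{f})$. This path is precisely the desired isotopy between the two Gromov-Lawson cobordisms.

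There is no substantial analytic content remaining at this stage: the entire geometric difficulty has already been absorbed into the parameterised Gromov-Lawson construction of Theorems \ref{GLcompact} and \ref{GLcobordismcompact}. The only point that genuinely needs to be verified — and it is immediate — is that the rearrangement path $f_t$ never leaves $\tilde{\M}^{adm}(W)$, which follows because admissibility depends only on the (locally constant) collection of critical indices.
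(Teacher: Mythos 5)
Your proposal is correct and follows essentially the same route as the paper, whose proof simply cites Theorem \ref{rearrangement} together with Corollary \ref{GLmetricisotopy}; you have merely spelled out the (correct) observation that admissibility is preserved along the rearrangement path because critical-point indices are locally constant in $\tilde{\M}(W)$, which the paper records in its remark preceding Theorem \ref{GLcobordismcompact}.
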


\begin{proof}
This follows immediately from Theorem \ref{rearrangement} above and Corollary \ref{GLmetricisotopy}. 
\end{proof}

\section{Constructing Gromov-Lawson concordances}\label{GLconcordsection}

Replacing $X_0$ with $X$ and the metric $g_0$ with $g$, we now turn our attention to the case when the cobordism $\{W;X_0,X_1\}$ is the cylinder $X\times I$. By equipping $X\times I$ with an admissible Morse function $f$, we can use Theorem \ref{GLcob} to extend the psc-metric $g$ over $X\times I$ as a Gromov-Lawson cobordism $\bar{g}=\bar{g}(g,f)$. The resulting metric is known as a {\it Gromov-Lawson concordance} or more specifically, a {\it Gromov-Lawson concordance of $g$ with respect to $f$} and the metrics $g_0=\bar{g}|_{X\times\{0\}}$ and $g_1=\bar{g}|_{X\times\{1\}}$ are said to be {\it Gromov-Lawson concordant}.
 
\subsection{Applying the Gromov-Lawson technique over a pair of cancelling surgeries}

In this section, we will construct a Gromov-Lawson concordance on the cylinder $S^{n}\times I$. It is possible to decompose this cylinder into the union of two elementary cobordisms, one the trace of a $p$-surgery, the other the trace of a $(p+1)$-surgery. The second surgery therefore undoes the topological effects of the first surgery. Later in the section, we will show how such a decomposition of the cylinder can be realised by a Morse function with two ``cancelling" critical points. Assuming that $n-p\geq 4$, the standard round metric $ds_{n}^{2}$ can be extended over the union of these cobordisms by the technique of Theorem \ref{GLcob}, resulting in a Gromov-Lawson concordance. To understand this concordance we need to analyse the geometric effects of applying the Gromov-Lawson construction over the two cancelling surgeries.

\begin{Example}\label{sphereeg}
\rm{
Let $S^{n}$ represent the standard smooth $n-$sphere equipped with the round metric $g=ds_{n}^{2}$. We will perform two surgeries, the first a $p-$surgery on $S^{n}$ and the second, a $(p+1)-$surgery on the resulting manifold. The second surgery will have the effect of undoing the topological change made by the first surgery and restoring the original topology of $S^{n}$. Later we will see that the union of the resulting traces will in fact form a cylinder $S^{n}\times I$. 

In section \ref{prelim} we saw that $S^{n}$ can be decomposed as a union of sphere-disk products. Assuming that $p+q+1=n$ we obtain,
\begin{equation*}
\begin{split}
S^{n} &= \p D^{n+1},\\
&=\p (D^{p+1}\times D^{q+1}),\\
&=S^{p}\times D^{q+1}\cup_{S^{p}\times S^{q}} D^{p+1}\times S^{q}.
\end{split}
\end{equation*}
\noindent Here we are are assuming that $q\geq 3$. Let $S^{p}\times {\oD}\hookrightarrow S^{n}$ be the embedding obtained by the inclusion
\begin{equation*}
S^{p}\times {\oD}\hookrightarrow S^{p}\times D^{q+1}\cup_{S^{p}\times S^{q}} D^{p+1}\times S^{q}.
\end{equation*}
\noindent We will now perform a surgery on this embedded $p-$sphere. This involves first removing the embedded $S^{p}\times {\oD}$ to obtain $S^{n}\setminus(S^{p}\times {\oD})=D_{-}^{p+1}\times S^{q}$, and then attaching $(D_{+}^{p+1}\times S^{q})$ along the common boundary $S^{p}\times S^{q}$. The attaching map here is given by restriction of the orginal embedding to the boundary. The resulting manifold is of course the sphere product $S^{p+1}\times S^{q}$ where the disks $D_{-}^{p+1}$ and ${D_{+}}^{p+1}$ are hemi-spheres of the $S^{p+1}$ factor.

By performing a surgery on an embedded $p-$sphere in $S^{n}$ we have obtained a manifold which is diffeomorphic to $S^{p+1}\times S^{q}$. By applying the Gromov-Lawson construction to the metric $g$ we obtain a positive scalar curvature metric $g'$ on $S^{p+1}\times S^{q}$, see Fig. \ref{GLsphere1}. This metric is the original round metric on an $S^{n}\setminus(S^{p}\times D^{q+1})$ piece and is $g_{tor}^{p+1}(\epsilon)\times \delta^{2}ds_{q}^{2}$ on a $D^{p+1}\times S^{q}$ piece for some small $\delta>0$. There is also a piece diffeomorphic to $S^{p}\times S^{q}\times I$ where the metric smoothly transitions between the two forms, the construction of which took up much section \ref{surgerysection}.

\begin{figure}[htbp]

\begin{picture}(0,0)%
\includegraphics{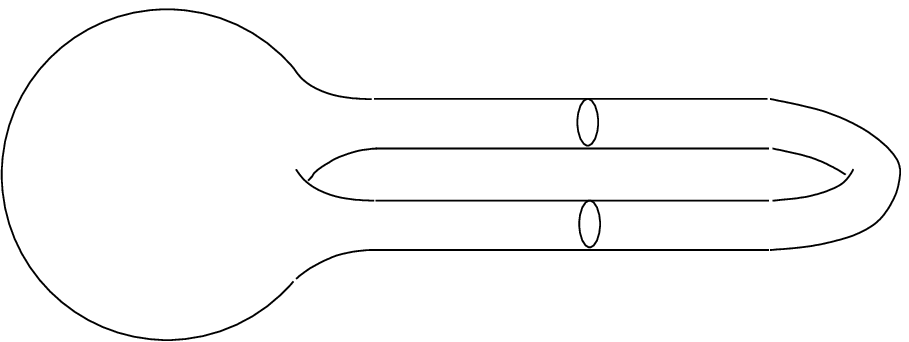}%
\end{picture}%
\setlength{\unitlength}{3947sp}%
\begingroup\makeatletter\ifx\SetFigFont\undefined%
\gdef\SetFigFont#1#2#3#4#5{%
  \reset@font\fontsize{#1}{#2pt}%
  \fontfamily{#3}\fontseries{#4}\fontshape{#5}%
  \selectfont}%
\fi\endgroup%
\begin{picture}(5706,1606)(217,-2927)
\put(1851,-2824){\makebox(0,0)[lb]{\smash{{\SetFigFont{10}{8}{\rmdefault}{\mddefault}{\updefault}{\color[rgb]{0,0,0}Transition metric}%
}}}}
\put(4626,-2199){\makebox(0,0)[lb]{\smash{{\SetFigFont{10}{8}{\rmdefault}{\mddefault}{\updefault}{\color[rgb]{0,0,0}Standard metric}%
}}}}
\put(4626,-2511){\makebox(0,0)[lb]{\smash{{\SetFigFont{10}{8}{\rmdefault}{\mddefault}{\updefault}{\color[rgb]{0,0,0}$g_{tor}^{p+1}(\epsilon)+{\delta}^{2}ds_{q}^{2}$}%
}}}}
\put(414,-1999){\makebox(0,0)[lb]{\smash{{\SetFigFont{10}{8}{\rmdefault}{\mddefault}{\updefault}{\color[rgb]{0,0,0}Original metric}%
}}}}
\put(501,-2224){\makebox(0,0)[lb]{\smash{{\SetFigFont{10}{8}{\rmdefault}{\mddefault}{\updefault}{\color[rgb]{0,0,0}$ds_{n}^{2}$}%
}}}}
\end{picture}%
\caption{The geometric effect of the first surgery}
\label{GLsphere1}
\end{figure}

We will now perform a second surgery, this time on the manifold $(S^{p+1}\times S^{q}, g')$. We wish to obtain a manifold which is diffeomorphic to the orginal $S^{n}$, that is, we wish to undo the $p-$surgery we have just performed. Consider the following decomposition of $S^{p+1}\times S^{q}$.

\begin{equation*}
\begin{split}
S^{p+1}\times S^{q} &= S^{p+1}\times (D_{-}^{q}\cup D_{+}^{q})\\
&=(S^{p+1}\times D_{-}^{q})\cup (S^{p+1}\times D_{+}^{q}).
\end{split}
\end{equation*}

\noindent Again, the inclusion map gives us an embedding of $S^{p+1}\times {{D}_{-}^{q}}$. Removing $S^{p+1}\times {\oDqm}$ and attaching $D^{p+2}\times S^{q-1}$ along the boundary gives

\begin{equation*}
\begin{split}
 D^{p+2}\times S^{q-1}\cup S^{p+1}\times D_{+}^{q}
 &=\p (D^{p+2}\times D^{q})\\
 &=\p (D^{n+1})\\
 &=S^{n}.
\end{split}
\end{equation*}

\begin{figure}[htbp]

\begin{picture}(0,0)%
\includegraphics{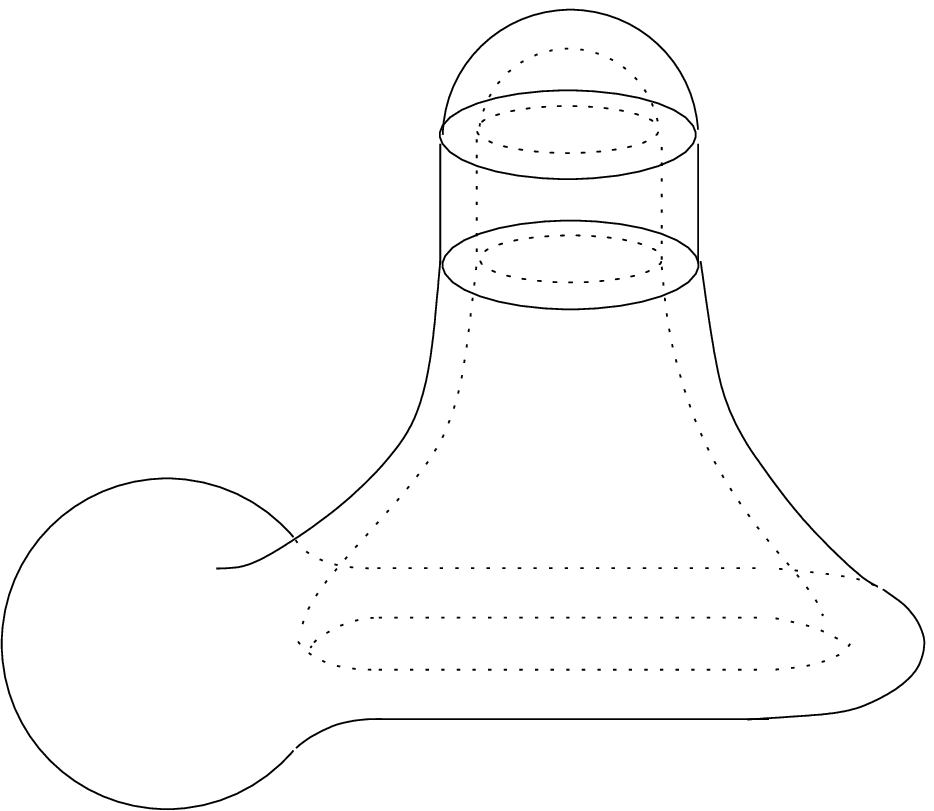}%
\end{picture}%
\setlength{\unitlength}{3947sp}%
\begingroup\makeatletter\ifx\SetFigFont\undefined%
\gdef\SetFigFont#1#2#3#4#5{%
  \reset@font\fontsize{#1}{#2pt}%
  \fontfamily{#3}\fontseries{#4}\fontshape{#5}%
  \selectfont}%
\fi\endgroup%
\begin{picture}(5354,3856)(3287,-6901)
\put(3401,-6074){\makebox(0,0)[lb]{\smash{{\SetFigFont{10}{8}{\rmdefault}{\mddefault}{\updefault}{\color[rgb]{0,0,0}Original metric}%
}}}}
\put(3438,-6349){\makebox(0,0)[lb]{\smash{{\SetFigFont{10}{8}{\rmdefault}{\mddefault}{\updefault}{\color[rgb]{0,0,0}$ds_{n}^{2}$}%
}}}}
\put(5226,-5649){\makebox(0,0)[lb]{\smash{{\SetFigFont{10}{8}{\rmdefault}{\mddefault}{\updefault}{\color[rgb]{0,0,0}Transition metric}%
}}}}
\put(6601,-6699){\makebox(0,0)[lb]{\smash{{\SetFigFont{10}{8}{\rmdefault}{\mddefault}{\updefault}{\color[rgb]{0,0,0}Formerly standard metric}%
}}}}
\put(4014,-3649){\makebox(0,0)[lb]{\smash{{\SetFigFont{10}{8}{\rmdefault}{\mddefault}{\updefault}{\color[rgb]{0,0,0}Standard metric}%
}}}}
\put(4014,-3961){\makebox(0,0)[lb]{\smash{{\SetFigFont{10}{8}{\rmdefault}{\mddefault}{\updefault}{\color[rgb]{0,0,0}$g_{tor}^{p+2}(\epsilon')+{\delta'}^{2}ds_{q-1}^{2}$}%
}}}}
\end{picture}%

\caption{The geometric effect of the second surgery: a different metric on $S^{n}$}
\label{fig:GLsphere}
\end{figure}

Applying the Gromov-Lawson construction to the metric $g'$ with respect to this second surgery produces a metric which looks very different to the orginal round metric on $S^{n}$, see Fig. \ref{fig:GLsphere}. Roughly speaking, $g''$ can be thought of as consisting of four pieces: the original piece where $g''=g$ and which is diffeomorphic to a disk $D^{n}$, the new standard piece where $g''=g_{tor}^{p+2}(\epsilon)+{\delta'}^{2}ds_{q-1}^{2}$ and which is diffeomorphic to $D^{p+2}\times S^{q-1}$, the old standard piece where $g''= g_{tor}^{p+1}(\epsilon')+\delta^{2}ds_{q}^{2}$, this time only on a region diffeomorphic to $D^{p+1}\times D^{q}$ and finally, a transition metric which connects these pieces. Later on we will need to describe this metric in more detail. 
}
\end{Example}

\subsection {Cancelling Morse critical points: The weak and strong cancellation theorems}
We will now show that the cylinder $S^{n}\times I$ can be decomposed into a union of two elementary cobordisms which are the traces of the surgeries described above. This decomposition is obtained from a Morse function $f:S^{n}\times I\rightarrow I$ which satisfies certain properties. The following theorem, known as the {\it weak cancellation theorem} is proved in chapter 5 of \cite{Smale}. It is also discussed, in much greater generality, in chapter 5, section 1 of \cite{Hatcher}.

\begin{Theorem}\cite{Smale}\label{cancelthm}
Let $\{W^{n+1};X_0,X_1\}$ be a smooth compact cobordism and $f:W\rightarrow I$ be a Morse triple on $W$. Letting $p+q+1=n$, suppose that $f$ satisfies the following conditions.
 
\noindent (a) The function $f$ has exactly $2$ critical points $w$ and $z$ and $0<f(w)<c<f(z)<1$. 

\noindent(b) The points $w$ and $z$ have Morse index $p+1$ and $p+2$ respectively. 

\noindent(c) On $f^{-1}(c)$, the trajectory spheres $S_{+}^{p}(w)$ and $S_{-}^{q}(z)$, respectively emerging from the critical point $w$ and converging toward the critical point $z$, intersect transversely as a single point. 

\noindent Then the critical points $w$ and $z$ cancel and $W$ is diffeomorphic to $X_0\times I$.
\end{Theorem}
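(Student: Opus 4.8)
The plan is to prove this via the standard Morse-theoretic cancellation argument, following Milnor's treatment (as in \cite{Smale}). The essential idea is that under the transversality hypothesis (c), there is exactly one gradient trajectory of $f$ running from $w$ to $z$, and we can deform the gradient-like vector field $V$ in a neighborhood of this trajectory so that the deformed vector field has no zeros there, effectively ``pushing'' the lower critical point $w$ up past $z$ and annihilating them both.

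First I would set up the first cancellation lemma: if $f:W\to I$ is a Morse function with a single gradient trajectory $T$ joining a critical point $w$ of index $p+1$ to a critical point $z$ of index $p+2$ with $f(w)<f(z)$, and if along $T$ the descending sphere of $z$ and the ascending sphere of $w$ meet transversely in $f^{-1}(c)$ at a single point, then one can construct a new vector field $V'$ on $W$ which agrees with $V$ outside a prescribed neighborhood of $T$, which is gradient-like for some Morse function agreeing with $f$ near $\p W$, and which is nowhere zero on that neighborhood. This is the content of the ``first cancellation theorem'' (Theorem 5.4 of \cite{Smale}); I would state it and indicate that the construction is local, taking place in a coordinate chart where $f$ and $V$ have standard quadratic/linear form near each of $w$ and $z$, with the trajectory $T$ realized as a coordinate axis. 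The transversality condition is precisely what guarantees the relevant ascending and descending manifolds intersect cleanly so that such a chart exists.

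Next, I would invoke condition (c) to verify the single-trajectory hypothesis: since $S^{p}_{+}(w)$ and $S^{q}_{-}(z)$ meet transversely in exactly one point in $f^{-1}(c)$, and gradient trajectories from $w$ up to level $c$ sweep out $S^{p}_{+}(w)$ while trajectories from level $c$ down to $z$ sweep out (via the backward flow) the cone on $S^{q}_{-}(z)$, a point of $f^{-1}(c)$ lies on a trajectory from $w$ to $z$ iff it lies in $S^{p}_{+}(w)\cap S^{q}_{-}(z)$. Hence there is exactly one such trajectory $T$. Applying the cancellation lemma produces $V'$ nowhere zero on a neighborhood of $T$; combined with the absence of other critical points, we can then modify $f$ to a function $f'$ (agreeing with $f$ near $\p W$) which is regular everywhere on $W$, i.e.\ has no critical points. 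By Theorem 3.4 of \cite{Smale} (a cobordism admitting a Morse function with no critical points is a product), $W$ is diffeomorphic to $X_0\times I$, which is the desired conclusion.

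I expect the main obstacle to be the careful execution of the local vector-field modification near $T$ — arranging the standard coordinates simultaneously compatible with the quadratic forms at $w$ and $z$ and with the chosen gradient-like field, and checking that the interpolated field remains gradient-like (condition $df_x(V_x)>0$) away from the cancellation region while becoming nonvanishing inside it. Since this is exactly Milnor's first cancellation theorem, I would not reprove it in full but rather cite Theorem 5.4 of \cite{Smale}, noting as in the statement of Theorem \ref{rearrangement} that the argument applies verbatim to Morse triples $(f,m,V)$ and not merely to Morse functions. The remaining steps (deducing the single trajectory from transversality, and applying the no-critical-points theorem) are then short and formal.
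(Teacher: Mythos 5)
Your proposal is correct and follows essentially the same route as the paper, which likewise cites Milnor's first cancellation theorem, notes that transversality at a single point yields a unique trajectory arc from $w$ to $z$, modifies the gradient-like vector field to be nowhere zero near that arc, and concludes $W\cong X_0\times I$ from the resulting critical-point-free function. No gaps to report.
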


The proof of \ref{cancelthm} in \cite{Smale} is attributed to Morse. The fact that $S_{+}^{p}(w)$ and $S_{-}^{q}(z)$ intersect transversely as a point means that there is a single {\it trajectory arc} connecting $w$ and $z$. It is possible to alter the vector field $V$ on an arbitrarily small neighbourhood of this arc to obtain a nowhere zero gradient-like vector field $V'$ which agrees with $V$ outside of this neighbourhood. This in turn gives rise to a Morse function $f'$ with gradient-like vector field $V'$, which agrees with $f$ outside this neighbourhood and has {\bf no} critical points, see Fig. \ref{trajectoryarc}.

\begin{figure}[htbp]
\begin{picture}(0,0)%
\includegraphics{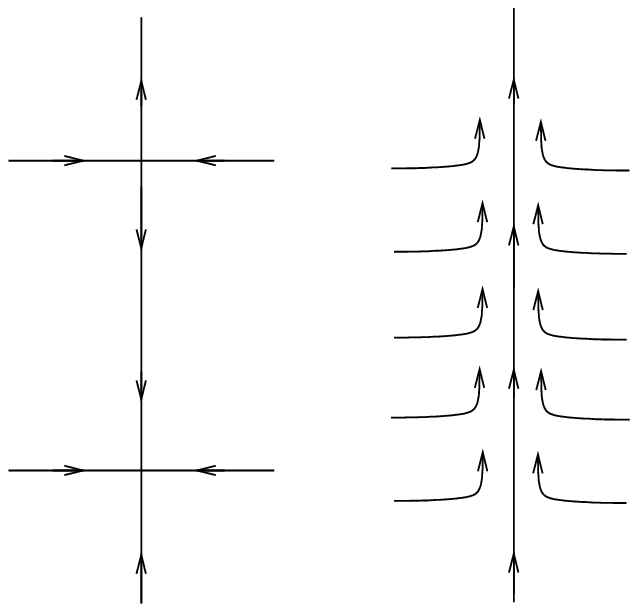}%
\end{picture}%
\setlength{\unitlength}{3947sp}%
\begingroup\makeatletter\ifx\SetFigFont\undefined%
\gdef\SetFigFont#1#2#3#4#5{%
  \reset@font\fontsize{#1}{#2pt}%
  \fontfamily{#3}\fontseries{#4}\fontshape{#5}%
  \selectfont}%
\fi\endgroup%
\begin{picture}(3471,3117)(1411,-5749)
\put(1426,-3661){\makebox(0,0)[lb]{\smash{{\SetFigFont{10}{8}{\rmdefault}{\mddefault}{\updefault}{\color[rgb]{0,0,0}$\mathbb{R}^{p+1}$}%
}}}}
\put(2489,-2786){\makebox(0,0)[lb]{\smash{{\SetFigFont{10}{8}{\rmdefault}{\mddefault}{\updefault}{\color[rgb]{0,0,0}$\mathbb{R}^{q+1}$}%
}}}}
\end{picture}%

\caption{Altering the gradient-like vector field along the trajectory arc}
\label{trajectoryarc}
\end{figure}

\noindent The desired decomposition of $S^{n}\times I$ can now be realised by a Morse function $f:S^{n}\times I\rightarrow I$ which satisfies (a), (b) and (c) above as well as the condition that $n-p\geq 4$. Application of Theorem \ref{GLcob} with respect to an admissible Morse function $f$ which satisfies (a), (b) and (c) will result in a Gromov-Lawson concordance on $S^{n}\times I$ between $g=ds_{n}^{2}$ and the metric $g''$ described above. Equivalently, one can think of this as obtained by two applications of Theorem \ref{ImprovedsurgeryTheorem}, one for each of the elementary cobordisms specified by $f$.  

\subsection{A strengthening of Theorem \ref{cancelthm}}
There is a strengthening of theorem \ref{cancelthm} in the case where $W,X_0$ and $X_1$ are simply connected. Before stating it, we should recall what is meant by the intersection number of two manifolds. Let $M$ and $M'$ be two smooth submanifolds of dimensions $r$ and $s$ in a smooth manifold $N$ of dimension $r+s$ and suppose that $M$ and $M'$ intersect transversely as the set of points $\{n_1,n_2,\cdots,n_l\}$ in $N$. Suppose also that $M$ is oriented and that the normal bundle $\N(M')$ of $M'$ in $N$ is oriented. At $n_i$, choose a positively oriented $r-$frame $v_1,\cdots ,v_r$ of linearly independent vectors which span the tangent space $T_{n_i}M$. Since the intersection at $n_i$ is transverse, this frame is a basis for the normal fibre $\N_{n_i}(M')$.

\begin{Definition}
{\rm The {\em intersection number of $M$ and $M'$ at $n_i$} is defined to be $+1$ or $-1$ according as the vectors $v_1,\cdots,v_r$ represent a positively or negatively oriented basis for the fibre $\N_{n_i}(M')$. The {\em intersection number $M'.M$ of $M$ and $M'$} is the sum of intersection numbers over all points $n_i$.} 
\end{Definition}

\begin{Remark}
In the expression $M'.M$, we adopt the convention that the manifold with oriented normal bundle is written first. 
\end{Remark}

We now state the {\it strong cancellation theorem}. This is theorem 6.4 of \cite{Smale}.

\begin{Theorem}\cite{Smale}\label{cancelthm2}
Let $\{W^{n+1};X_0,X_1\}$ be a smooth compact cobordism where $W,X_0$ and $X_1$ are simply connected manifolds. Let $f:W\rightarrow I$ be a Morse triple on $W$. Letting $p+q+1=n$, suppose that $f$ satisfies the following conditions.
 
\noindent (a') The function $f$ has exactly $2$ critical points $w$ and $z$ and $0<f(w)<c<f(z)<1$. 

\noindent(b') The points $w$ and $z$ have Morse index $p+1$ and $p+2$ respectively and $1\leq p\leq n-4$. 

\noindent(c') On $f^{-1}(c)$, the trajectory spheres $S_{+}^{p}(w)$ and $S_{-}^{q}(z)$ have intersection number $S_{+}^{p}(w).S_{-}^{q}(z)=1$ or $-1$.

\noindent Then the critical points $w$ and $z$ cancel and $W$ is diffeomorphic to $X_0\times I$. In fact, $f$ can be altered near $f^{-1}(c)$ so that the trajectory spheres intersect transversely at a single point and the conclusions of theorem \ref{cancelthm} then apply.
\end{Theorem}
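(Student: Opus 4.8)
The plan is to reduce hypothesis (c') to hypothesis (c) of Theorem~\ref{cancelthm} and then quote that theorem; this is the route of \cite{Smale}, and its engine is the Whitney trick. Set $V=f^{-1}(c)$, a closed manifold with $\dim V=n\geq p+4\geq 5$ by (b'). Because $f$ has exactly one critical point below $c$, of index $p+1$, the level set $V$ is homotopy equivalent to $X_0$ with a cell of dimension $p+1\geq 2$ attached, so $\pi_1(V)=1$ since $X_0$ is simply connected. The two trajectory spheres of (c') --- the sphere $S_w$ emerging upward from $w$ and the sphere $S_z$ approaching $z$ from below --- sit in $V$ as embedded submanifolds with trivial normal bundle, of dimensions $q$ and $p+1$, which add up to $n=\dim V$. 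First I would isotope them into transverse position; as in the proof of Theorem~\ref{cancelthm}, such an isotopy of $V$ is realised by a compactly supported change of the gradient-like vector field in a collar of $f^{-1}(c)$. After this the spheres meet in finitely many points, whose signed count with respect to the oriented normal data of (c') equals $\pm1$; in particular their number is odd, and the numbers of positive and negative points differ by exactly one.

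If there is a single intersection point, condition (c) holds and we are finished. Otherwise I would invoke the Whitney trick: pick a pair $a,b$ of intersection points of opposite local sign, join them by an embedded arc in $S_w$ and an embedded arc in $S_z$, use $\pi_1(V)=1$ to null-homotope the resulting loop and $\dim V\geq 5$ to make a spanning $2$-disk embedded, and thereby obtain an embedded Whitney disk $D$. The dimension hypothesis $1\leq p\leq n-4$, together with $n\geq 5$, is what allows $\mathrm{int}\,D$ to be put into general position with, and ultimately made disjoint from, both $S_w$ and $S_z$ (the upper bound $p\leq n-4$ being exactly the clearance from $S_z$), while the opposite signs of $a,b$ make them cancellable. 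Whitney's lemma then yields an ambient isotopy of $V$, supported near $D$, that removes the pair $a,b$ and leaves every other intersection point untouched. Iterating, and using that the intersection number $\pm1$ forces the process to terminate with a single surviving point, we reach a gradient-like vector field for which $S_w$ and $S_z$ meet transversely in one point. Now hypothesis (c) is met, and Theorem~\ref{cancelthm} gives that $w$ and $z$ cancel, that $f$ may be modified near $f^{-1}(c)$ to have no critical points, and that $W\cong X_0\times I$.

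The main obstacle is the Whitney trick itself: producing the embedded Whitney disk with interior disjoint from both spheres, and verifying that the associated isotopy cancels the chosen pair without introducing new intersections or disturbing the rest. This is where the simple connectivity of $V$ and the precise dimension bounds are used essentially, and the borderline case $p=1$, in which $S_w$ has codimension $2$ in $V$, requires the careful transversality analysis of \cite{Smale}. Since all of this is classical, I would not reproduce it but simply cite Theorem~6.4 of \cite{Smale} for the details, pointing out only that the simple-connectedness of $W,X_0,X_1$ and the range $1\leq p\leq n-4$ in the hypotheses are exactly the inputs that the Whitney trick demands.
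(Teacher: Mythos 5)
Your proposal is correct and takes essentially the same route as the paper, which likewise defers to Theorem 6.4 of \cite{Smale} and sketches exactly this Whitney-trick argument: join two oppositely signed intersection points by arcs in the two trajectory spheres, contract the resulting loop in the simply connected middle level, and isotope to remove the pair without creating new intersections, thereby reducing (c') to (c) so that Theorem \ref{cancelthm} applies. (One small point: your justification that $V=f^{-1}(c)$ is simply connected actually describes the sublevel set $f^{-1}([0,c])\simeq X_0\cup e^{p+1}$; for the level set itself one argues via Van Kampen on the surgered manifold, using that the surgery has codimension $q+1\geq 3$ and $q\geq 3$, as in the remark on page 70 of \cite{Smale}.)
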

   
Simple connectivity plays an important role in the proof. It of course guarantees the orientability conditions we need but more importantly it is used to simplify the intersection of the trajectory spheres. Roughly speaking, if $n_1$ and $n_2$ are two intersection points with opposite intersection, there are arcs connecting these points in each of the trajectory spheres, whose union forms a loop contractible in $f^{-1}(c)$ which misses all other intersection points. An isotopy can be constructed (which involves contracting this loop) to move the trajectory spheres to a position where the intersection set contains no new elements but now excludes $n_1$ and $n_2$. 

\begin{Remark}
The hypothesis that critical points of $f$ have index at least 2 is necessary, as the presence of index $1$ critical points would spoil the assumption of simple connectivity.
\end{Remark}

\subsection{Standardising the embedding of the second surgery sphere}\label{stanemb}
In Example \ref{sphereeg}, the second surgery sphere $S^{p+1}$ was regarded as the union of two hemi-spheres $D_{-}^{p+1}$ and $D_{+}^{p+1}$, the latter hemi-sphere coming from the handle attachment. It was assumed in the construction of the metric $g''$, that the disk $D_{+}^{p+1}$ was embedded so that the metric induced by $g'$ was precisely the $g_{tor}^{p+1}(\epsilon)$ factor of the handle metric. Now let $f$ be an admissible Morse function on $X\times I$ which satisfies conditions (a), (b) and (c) above. This specifies two trajectory spheres $S_{-}^{p}$ and $S_{-}^{p+1}$ corresponding to the critical points $w$ and $z$ respectively. On the level set $f^{-1}(c)$, the spheres $S_{+}^{q}$ and $S_{-}^{p+1}$ intersect transversely at a single point $\alpha$. Now suppose we extend a psc-metric $g$ on $X$ over $f^{-1}[0,c]$ in the manner of Theorem \ref{GLcob}, denoting by $g'$ the induced metric on $f^{-1}(c)$. In general, the metric induced by $g'$ on $S_{-}^{p+1}$ near $\alpha$ will not be $g_{tor}^{p+1}(\epsilon)$. We will now show that such a metric can be obtained with a minor adjustment of the Morse function $f$.

Let $\mathbb{R}^{n+1}=\mathbb{R}^{p+1}\times \mathbb{R}^{q+1}$ denote the Morse coordinate neighbourhood near $w$. Here $\mathbb{R}^{p+1}$ and $\mathbb{R}^{q+1}$ denote the respective inward and outward trajectories at $w$. Let $\mathbb{R}$ denote the $1$-dimensional subspace of $\mathbb{R}^{q+1}$ spanned by the vector based at zero and ending at $\alpha$. Finally, let $D^{p+1}$ denote the intersection of $f^{-1}(c)$ with the plane $\mathbb{R}\times\mathbb{R}^{p+1}$. The metric induced by $g'$ on $D^{p+1}$ is precisely the $g_{tor}^{p+1}(\epsilon)$ factor of the handle metric.   

\begin{figure}[htbp]

\begin{picture}(0,0)%
\includegraphics{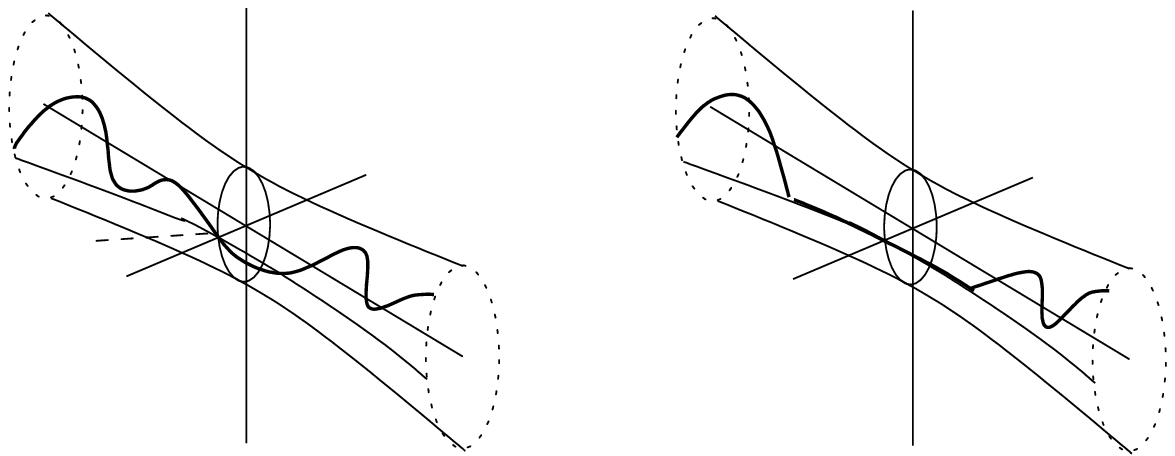}%
\end{picture}%
\setlength{\unitlength}{3947sp}%
\begingroup\makeatletter\ifx\SetFigFont\undefined%
\gdef\SetFigFont#1#2#3#4#5{%
  \reset@font\fontsize{#1}{#2pt}%
  \fontfamily{#3}\fontseries{#4}\fontshape{#5}%
  \selectfont}%
\fi\endgroup%
\begin{picture}(5820,2460)(1561,-3605)
\put(2239,-1556){\makebox(0,0)[lb]{\smash{{\SetFigFont{10}{8}{\rmdefault}{\mddefault}{\updefault}{\color[rgb]{0,0,0}$f^{-1}(c)$}%
}}}}
\put(2039,-2611){\makebox(0,0)[lb]{\smash{{\SetFigFont{10}{8}{\rmdefault}{\mddefault}{\updefault}{\color[rgb]{0,0,0}$\alpha$}%
}}}}
\put(4509,-3141){\makebox(0,0)[lb]{\smash{{\SetFigFont{10}{8}{\rmdefault}{\mddefault}{\updefault}{\color[rgb]{0,0,0}$\mathbb{R}^{p+1}$}%
}}}}
\put(2164,-2911){\makebox(0,0)[lb]{\smash{{\SetFigFont{10}{8}{\rmdefault}{\mddefault}{\updefault}{\color[rgb]{0,0,0}$\mathbb{R}$}%
}}}}
\put(2976,-1299){\makebox(0,0)[lb]{\smash{{\SetFigFont{10}{8}{\rmdefault}{\mddefault}{\updefault}{\color[rgb]{0,0,0}$\mathbb{R}^{q}$}%
}}}}
\put(3939,-3361){\makebox(0,0)[lb]{\smash{{\SetFigFont{12}{14.4}{\rmdefault}{\mddefault}{\updefault}{\color[rgb]{0,0,0}$D^{p+1}$}%
}}}}
\put(1351,-2211){\makebox(0,0)[lb]{\smash{{\SetFigFont{12}{14.4}{\rmdefault}{\mddefault}{\updefault}{\color[rgb]{0,0,0}$S_{-}^{p+1}(z)$}%
}}}}
\end{picture}%

\caption{Isotopying $S_{-}^{p+1}$ near $\alpha$ to coincide with $D^{p+1}$.}
\label{adjustsphere}
\end{figure}

\begin{Lemma}\label{neatembedding}
It is possible to isotopy the trajectory sphere $S_{-}^{p+1}(z)$, so that on $f^{-1}(c)$ it agrees, near $\alpha$, with the disk $D^{p+1}$. 
\end{Lemma}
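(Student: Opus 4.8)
The plan is to exploit the transversality of $S_{+}^{q}(z)$ and $S_{-}^{p+1}(z)$ at the single point $\alpha\in f^{-1}(c)$, together with the fact that both $D^{p+1}$ and $S_{-}^{p+1}(z)$ are submanifolds of $f^{-1}(c)$ passing through $\alpha$ with the same tangent space there. Recall that $D^{p+1}$ is defined as the intersection of $f^{-1}(c)$ with the plane $\mathbb{R}\times\mathbb{R}^{p+1}$ inside the Morse coordinate neighbourhood of $w$, where $\mathbb{R}$ is the line through $\alpha$ in $\mathbb{R}^{q+1}$. Since $\alpha$ lies on the outward trajectory of $w$, the tangent space $T_{\alpha}D^{p+1}$ is spanned by $\mathbb{R}$ (the flow direction projected to $f^{-1}(c)$) together with the $\mathbb{R}^{p+1}$ directions; this is exactly the tangent space to $S_{-}^{p+1}(z)$ at $\alpha$, because $S_{-}^{p+1}(z)$ is the set of points on $f^{-1}(c)$ flowing into $z$ and near $\alpha$ these are precisely the points whose outward-$w$ component lies along $\mathbb{R}$ (the directions transverse to this — the rest of $\mathbb{R}^{q}\subset\mathbb{R}^{q+1}$ — flow away from $z$). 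So the two $(p+1)$-disks are tangent at $\alpha$.

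First I would work entirely inside a small coordinate ball around $\alpha$ in $f^{-1}(c)$, choosing coordinates adapted to the Morse chart of $w$ so that $D^{p+1}$ is a linear subspace $\{0\}\times\mathbb{R}^{p+1}$ (in suitable local coordinates on $f^{-1}(c)$) and $S_{-}^{p+1}(z)$ is the graph of a smooth map $\psi$ from a neighbourhood of $0$ in $\mathbb{R}^{p+1}$ into the complementary $\mathbb{R}^{q}$-directions, with $\psi(0)=0$ and $D\psi(0)=0$ (the latter because the disks are tangent). Then the linear homotopy $\psi_s = (1-s)\psi$, cut off by a bump function supported near $\alpha$, gives an isotopy of $f^{-1}(c)$ (generated by an ambient vector field, which one then integrates) carrying $S_{-}^{p+1}(z)$ to a submanifold agreeing with $D^{p+1}$ on a smaller neighbourhood of $\alpha$ and unchanged outside the support of the bump function. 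Crucially, one checks that for a sufficiently small support the isotopy keeps $S_{-}^{p+1}(z)$ transverse to $S_{+}^{q}(z)$ with the single intersection point still at $\alpha$, so the hypotheses of Theorem \ref{cancelthm} (and Theorem \ref{cancelthm2}) are preserved.

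The final step is to promote the ambient isotopy of $f^{-1}(c)$ to a modification of the Morse triple $f=(f,m,V)$ itself: since the isotopy is supported in a small ball in $f^{-1}(c)$ away from the critical points $w$ and $z$, one extends it to an isotopy of $W$ supported in $f^{-1}(c-\eta,c+\eta)$ for small $\eta$, and pushes forward $f$, $m$, $V$ by this ambient diffeomorphism; this leaves $f$ admissible, does not move the critical points or their indices, and replaces the trajectory sphere $S_{-}^{p+1}(z)$ by the desired one. The main obstacle I anticipate is \emph{not} the local graph-straightening (which is routine) but keeping careful track of the fact that altering the trajectory sphere means altering the gradient-like vector field $V$ globally along the full trajectory disk $K_{-}^{p+1}(z)$, not just on $f^{-1}(c)$ — one must verify that the modified $V$ is still gradient-like for (a possibly slightly modified) $f$, that the product-neighbourhood structure near $\p W$ is untouched, and that the new trajectory disk still limits correctly onto $z$ in its Morse chart. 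This is a standard but slightly delicate compatibility argument of the kind carried out in \cite{Smale}, and I would handle it by first arranging the isotopy to be the identity outside a thin slab around $f^{-1}(c)$ and then invoking the flow of $V$ to transport the modification up to $z$ and down to the level $f^{-1}(c-\tau)$ where the metric construction of Theorem \ref{GLcob} begins.
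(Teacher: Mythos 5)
Your construction is essentially the paper's: work in a coordinate chart around $\alpha$ in $f^{-1}(c)$ splitting as $\mathbb{R}^{p+1}\times\mathbb{R}^{q}$, write $S_{-}^{p+1}(z)$ locally as a graph, and isotopy it to the plane near $\alpha$ by a cut-off homotopy, leaving it unchanged outside a small neighbourhood; the extension of this level-set isotopy to the Morse triple via a diffeomorphism supported in a thin slab $f^{-1}(c-\eta,c+\eta)$ is also the standard mechanism (as in \cite{Smale}) and is fine.

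One assertion in your write-up is wrong, although harmlessly so. You claim $T_{\alpha}S_{-}^{p+1}(z)=T_{\alpha}D^{p+1}$, i.e. $D\psi(0)=0$, on the grounds that near $\alpha$ the points flowing into $z$ are "precisely the points whose outward-$w$ component lies along $\mathbb{R}$". That reasoning is circular: it would say $S_{-}^{p+1}(z)$ already coincides with $D^{p+1}$ near $\alpha$, which is exactly what the lemma is trying to arrange and is not automatic. The stable manifold of $z$ is governed by the dynamics near $z$, and at $\alpha$ its tangent space is only constrained to be a $(p+1)$-dimensional complement of $T_{\alpha}S_{+}^{q}(w)$ (this is hypothesis (c) of Theorem \ref{cancelthm}); it need not equal $\mathbb{R}^{p+1}$. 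Fortunately your argument never really uses $D\psi(0)=0$: transversality of $S_{-}^{p+1}(z)$ to $S_{+}^{q}(w)$ at $\alpha$ alone gives the graph description $\psi$ with $\psi(0)=0$, the cut-off homotopy $(1-s\rho)\psi$ is an isotopy of embeddings regardless of $D\psi(0)$, and each intermediate graph still meets the $\mathbb{R}^{q}$-factor transversely in the single point $\alpha$ (a graph over $\mathbb{R}^{p+1}$ meets $\{0\}\times\mathbb{R}^{q}$ exactly once, and transversely). So simply delete the tangency claim and the proof stands, matching the paper's.
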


\begin{proof}
Choose a coordinate chart $\mathbb{R}^{n}$ in $f^{-1}(c)$ around $\alpha$, where $\alpha$ is identified with $0$ and $\mathbb{R}^{n}$ decomposes as $\mathbb{R}^{p+1}\times \mathbb{R}^{q}$. The intersection of $S_{-}^{p+1}(z)$ with this chart is a $(p+1)$-dimensional disk in $\mathbb{R}^{n}$ which intersects with $\mathbb{R}^{q}$ transversely at the orgin. Thus, near the orgin, $S_{-}^{p+1}(z)$ is the graph of a function over $\mathbb{R}^{p+1}$ and so we can isotopy it to an embedding which is the plane $\mathbb{R}^{p+1}$ on some neighbourhood of $0$, and the original $S_{-}^{p+1}(z)$ away from this neighbourhood.
\end{proof}

\noindent Thus, the Morse function $f$ can be isotopied to a Morse function where the standard part of the metric $g'$ induces the $g_{tor}^{p+1}(\epsilon)$ factor of the handle metric on $S_{-}^{p+1}(\alpha)$, see Fig. \ref{geomissue}.

\begin{figure}[htbp]

\begin{picture}(0,0)%
\includegraphics{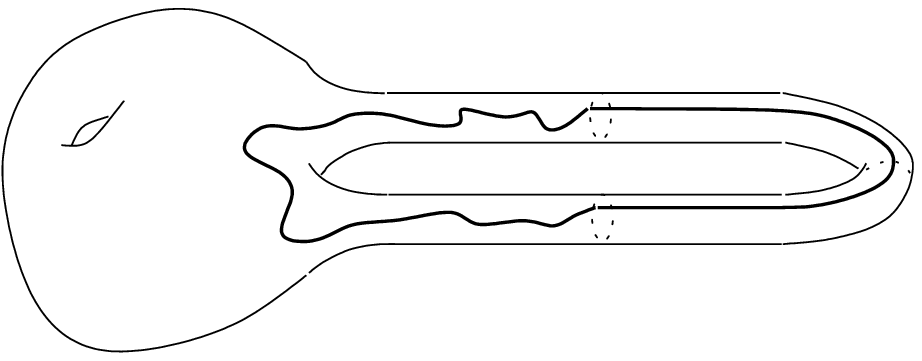}%
\end{picture}%
\setlength{\unitlength}{3947sp}%
\begingroup\makeatletter\ifx\SetFigFont\undefined%
\gdef\SetFigFont#1#2#3#4#5{%
  \reset@font\fontsize{#1}{#2pt}%
  \fontfamily{#3}\fontseries{#4}\fontshape{#5}%
  \selectfont}%
\fi\endgroup%
\begin{picture}(5767,1895)(156,-3240)
\put(1851,-2824){\makebox(0,0)[lb]{\smash{{\SetFigFont{10}{8}{\rmdefault}{\mddefault}{\updefault}{\color[rgb]{0,0,0}Transition metric}%
}}}}
\put(4626,-2511){\makebox(0,0)[lb]{\smash{{\SetFigFont{10}{8}{\rmdefault}{\mddefault}{\updefault}{\color[rgb]{0,0,0}$g_{tor}^{p+1}(\epsilon)+\delta^{2}ds_{q}^{2}$}%
}}}}
\put(4626,-2199){\makebox(0,0)[lb]{\smash{{\SetFigFont{10}{8}{\rmdefault}{\mddefault}{\updefault}{\color[rgb]{0,0,0}Standard metric}%
}}}}
\put(401,-3174){\makebox(0,0)[lb]{\smash{{\SetFigFont{10}{8}{\rmdefault}{\mddefault}{\updefault}{\color[rgb]{0,0,0}Original metric $g$}%
}}}}
\put(1076,-1749){\makebox(0,0)[lb]{\smash{{\SetFigFont{10}{8}{\rmdefault}{\mddefault}{\updefault}{\color[rgb]{0,0,0}$S_{-}^{p+1}(z)$}%
}}}}
\end{picture}%

\caption{The embedded sphere $S_{-}^{p+1}(z)$ after adjustment}
\label{geomissue}
\end{figure}

\section{Gromov-Lawson concordance implies isotopy in the case of two cancelling surgeries}\label{doublesurgsection}

We continue to employ the notation of the previous section in stating the following theorem.

\begin{Theorem}\label{concisodoublesurgery}
Let $f:X\times I\rightarrow I$ be an admissible Morse function on $X$ which satisfies conditions (a),(b) and (c) of Theorem \ref{cancelthm} above. Let $g$ be a metric of positive scalar curvature on $X$ and $\bar{g}=\bar{g}(g,f)$, a Gromov-Lawson concordance with respect to $f$ and $g$ on $X\times I$. Then the metric $g''=\bar{g}|_{X\times\{1\}}$ on $X$ is isotopic to the original metric $g$. 
\end{Theorem}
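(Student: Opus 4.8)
The plan is to follow the geometric story laid out in Example \ref{sphereeg} and Section \ref{stanemb}, reducing the isotopy question to the behaviour of explicit warped- and doubly-warped-product metrics on the sphere, all of which were shown in Section \ref{prelim} to be isotopic to round metrics. First I would reduce to a model situation: since $f$ satisfies (a), (b), (c), the cobordism $X\times I$ is diffeomorphic to a cylinder, the surgeries take place inside disjoint coordinate balls, and away from those balls $\bar g$ is simply $g + dt^2$; so the metric $g''$ differs from $g$ only on a disk $D^n \subset X$, and it suffices to show that $g''|_{D^n}$ is isotopic (rel a collar of $\partial D^n$) to the round-metric-restriction $g|_{D^n}$. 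Thus the whole problem localises to understanding the metric produced on $S^n$ by performing the two Gromov-Lawson constructions, exactly as in Example \ref{sphereeg}; I would state this reduction carefully and then work entirely on $S^n$ with $g = ds_n^2$.

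Next, using Lemma \ref{neatembedding} I would arrange (after isotopy of $f$, which by Corollary \ref{GLmetricisotopy} only isotopies $\bar g$ and hence changes $g''$ by an isotopy) that the second surgery sphere $S_{-}^{p+1}(z)$ meets the level set $f^{-1}(c)$ in the standard disk $D^{p+1}$, on which $g'$ already induces the torpedo factor $g_{tor}^{p+1}(\epsilon)$ of the first handle metric. This is the key preparatory step: it guarantees that the second Gromov-Lawson construction is performed on a region where the ambient metric is exactly the product handle metric $g_{tor}^{p+1}(\epsilon) + \delta^2 ds_q^2$, so the output of the second surgery can be computed explicitly rather than merely estimated. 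I would then identify the resulting metric $g''$ as the piecewise description given at the end of Example \ref{sphereeg}: an original round region diffeomorphic to $D^n$, a new standard region $g_{tor}^{p+2}(\epsilon') + \delta'^2 ds_{q-1}^2$ on $D^{p+2}\times S^{q-1}$, an old standard region $g_{tor}^{p+1}(\epsilon') + \delta^2 ds_q^2$ on $D^{p+1}\times D^q$, and transition metrics glueing them. The goal is to recognise this assembled metric, up to isotopy, as a \emph{mixed torpedo metric} $g_{Mtor}^{p+1,q-1}$ (or a doubly-warped product in $\hat\W$) on $S^n$.

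The heart of the argument is then: (i) use the parameterised isotopy theorems — Theorem \ref{GLcompact}, Theorem \ref{GLcobordismcompact}, Corollary \ref{GLmetricisotopy} — together with Lemma \ref{C2convergence} and the submersion/canonical-variation computations from Part 3 of Theorem \ref{IsotopyTheorem}, to isotopy all the transition pieces so that $g''$ becomes literally a doubly-warped product metric $dt^2 + u(t)^2 ds_{p+1}^2 + v(t)^2 ds_{q-1}^2$ on $(0,b)\times S^{p+1}\times S^{q-1}$ with $u \in \U(0,b)$, $v \in \V(0,b)$, exploiting that all the curvature errors introduced in the transitions can be killed by shrinking $\delta,\delta'$; (ii) invoke Lemma \ref{lem:markslemm1.5} and Proposition \ref{mixedtorpedopsc} (or directly Lemma \ref{toriso}) to conclude that $g'' \in \hat\W^{p+1,q-1}$ is isotopic to $ds_n^2 = g$ through positive scalar curvature metrics, since $\hat\W^{p+1,q-1}$ is path-connected and contains the round metric. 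The main obstacle I anticipate is step (i): carefully checking that the transition regions $M[t_\infty,t_L]$ and the corner-smoothing hypersurfaces, after both surgeries are stacked, can be simultaneously deformed into the globally-defined $u,v$ warping functions while staying positive-scalar-curvature — this requires bookkeeping the two nested submersion structures (the $S^q$-fibre over $S^p$ from the first surgery, then the $S^{q-1}$-fibre over $S^{p+1}$ from the second) and verifying that the $q\geq 3$ hypothesis leaves enough positive curvature on every fibre factor at every stage; the condition $q \geq 3$ (so that after the second surgery the fibre sphere $S^{q-1}$ still has $q-1\geq 2$ and its torpedo metric has large scalar curvature) is exactly what makes this possible and I would flag where it is used.
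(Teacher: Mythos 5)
There is a genuine gap, and it sits at the very first step. Your reduction ``work entirely on $S^{n}$ with $g=ds_{n}^{2}$'' is not valid: the theorem concerns an arbitrary psc-metric $g$ on an arbitrary $X$, and although $g''$ does agree with $g$ outside an $n$-disk $D$, the metric $g''|_{D}$ is not the output of the two surgeries performed on a round metric. Near $\partial D$ the metric is the unknown $g$ itself, and just inside $D$ sits the old transition region of the first Gromov--Lawson bending, which interpolates between $g$ and the standard form and therefore depends on $g$ (it is the induced metric on the bent hypersurface $M_\gamma$, not any warped or doubly warped product). For the same reason your step (i) --- isotopying $g''$ until it is \emph{literally} a metric in $\hat{\W}^{p+1,q-1}$ and then quoting path-connectedness of $\hat{\W}$ --- cannot be carried out for general $(X,g)$: no amount of shrinking $\delta,\delta'$ standardises the $g$-dependent transition pieces, and a rel-collar isotopy on $D$ cannot be extracted from a non-relative statement about round metrics on $S^{n}$. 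Even in the model case $X=S^{n}$, $g=ds_n^2$, you have given no mechanism for absorbing the bent transition regions into global warping functions $u,v$.

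The missing idea is that the comparison must be two-sided: one deforms $g$ as well as $g''$. The paper's proof applies Theorem \ref{IsotopyTheorem} to $g$ twice --- first near the surgery sphere $S^{p}$ (giving $g_1=g_{std}$) and then, crucially, near the embedded disk $D_{-}^{p+1}$, mimicking the metric effect of the second surgery without changing the topology (giving $g_2$, then $g_3$ after the same cylinder adjustment that is performed on $g''$). After these moves, $g_3$ and the adjusted $g_2''$ agree outside a smaller disk $D'$ and are standard round cylinders near $\partial D'$, so all the $g$-dependent transition data lies in the region where the two metrics already coincide. On $D'$ both metrics are obtained by the Gromov--Lawson connected-sum (tube) construction applied to explicit sphere metrics $\bar{g}_2''$ and $\bar{g}_3$, which isotopy easily to the mixed torpedo metrics $g_{Mtor}^{p+1,q-1}$ and $g_{Mtor}^{p,q}$ and hence to each other by Lemma \ref{toriso}; the parameterised construction of Theorem \ref{GLcompact} then converts this isotopy of sphere metrics into the required isotopy of $g_2''$ and $g_3$ fixing the metric near $\partial D'$. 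Your proposal correctly identifies the supporting cast (Lemma \ref{neatembedding}, the mixed torpedo spaces, Theorem \ref{GLcompact}), but without the step of standardising $g$ itself along $D_{-}^{p+1}$ the localisation you want is unavailable, and the argument as proposed does not go through.
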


\noindent We will postpone the proof of Theorem \ref{concisodoublesurgery} for now. Later we will show that this theorem contains the main geometric argument needed to prove that any metrics which are Gromov-Lawson concordant are actually isotopic. Before doing this, we need to introduce some more terminology.

\subsection{Connected sums of psc-metrics}
 Suppose $(X,g_X)$ and $(Y, g_Y)$ are Riemannian manifolds of positive scalar curvature with dim X=dim Y$\geq 3$. A {\it psc-metric connected sum} of $g_X$ and $g_Y$ is a positive scalar curvature metric on the connected sum $X \# Y$, obtained using the Gromov-Lawson technique for connected sums on $g_X$ and $g_Y$. Recall that on $X$, the Gromov-Lawson technique involves modifying the metric on a disk $D=D^{n}$ around some point $w\in X$, by pushing out geodesic spheres around $w$ to form a tube. It is possible to construct this tube so that the metric on it has positive scalar curvature and so that it ends as a Riemannian cylinder $S^{n-1}\times I$. Furthermore the metric induced on the $S^{n-1}$ factor can be chosen to be arbitrarily close to the standard round metric and so we can isotopy this metric to the round one. By Lemma \ref{isotopyimpliesconc}, we obtain a metric on $X\setminus D^{n}$ which has positive scalar curvature and which near the boundary is the standard product $\delta^{2}ds_{n-1}^{2}+dt^{2}$ for some (possibly very small) $\delta$. Repeating this procedure on $Y$ allows us to form a psc-metric connected sum of $(X,g_X)$ and $(Y, g_{Y})$ which we denote

\begin{equation*}
(X,g_X)\#(Y, g_{Y}).
\end{equation*}

\subsection{An analysis of the metric $g''$, obtained from the second surgery}

Recall that $f$ specifies a pair of cancelling surgeries. The first surgery is on an embedded sphere $S^{p}$ and we denote the resulting surgery manifold by $X'$. Applying the Gromov-Lawson construction results in a metric $g'$ on $X'$, which is the orginal metric $g$ away from $S^{p}$ and transitions on a region diffeomorphic to $S^{p}\times S^{q}\times I$ to a metric which is the standard product $g_{tor}^{p+1}(\epsilon)\times \delta^{2}ds_{q}^{2}$ on the handle $D^{p+1}\times S^{q}$. The second surgery sphere, embedded in $X'$, is denoted $S^{p+1}$. In section \ref{stanemb}, we showed that it is reasonable to assume that on the standard region, the restriction of $g'$ to the sphere $S^{p+1}$ is precisely the $g_{tor}^{p+1}(\epsilon)$ factor of the standard metric $g_{tor}^{p+1}(\epsilon)+\delta^{2}ds_q^{2}$, see Fig. \ref{geomissue}. Applying the Gromov-Lawson construction to $g'$ with respect to this second surgery, results in a metric $g''$ on $X$ which is concordant to $g$. 
\\

\begin{figure}[htbp]

\begin{picture}(0,0)%
\includegraphics{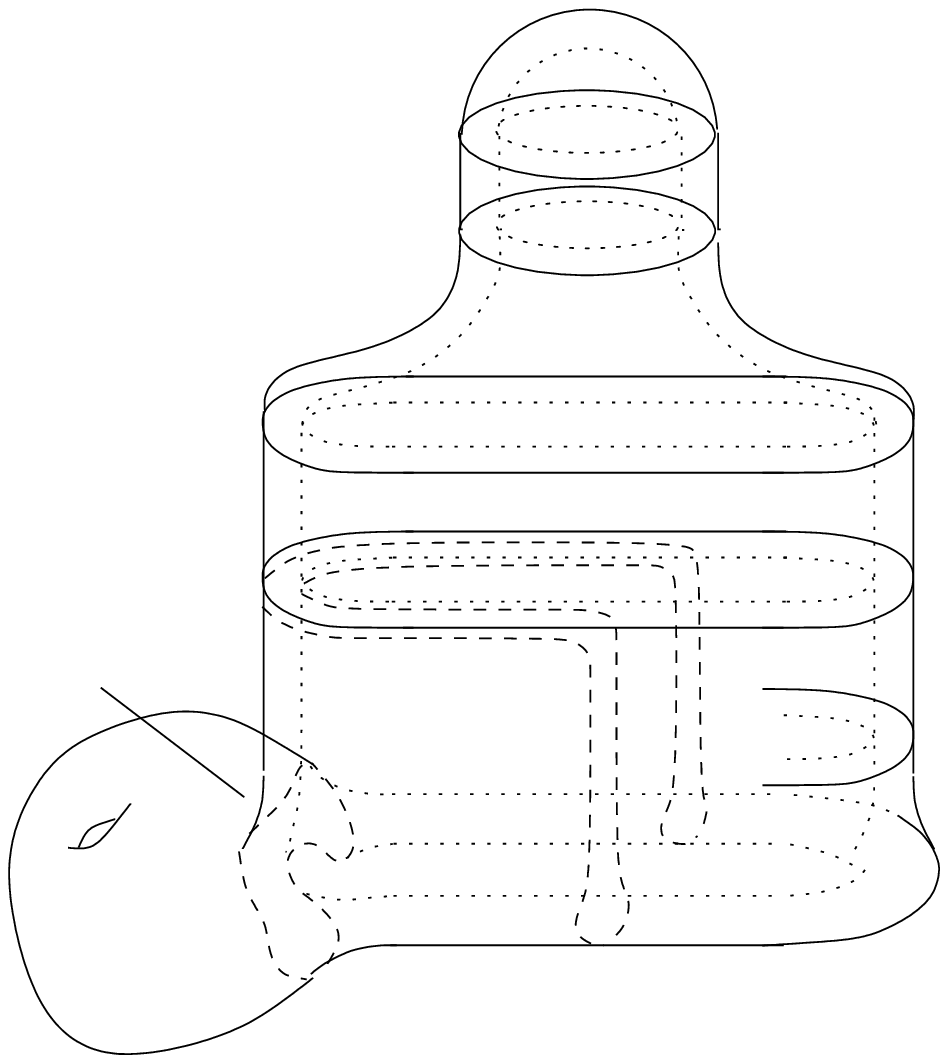}%
\end{picture}%
\setlength{\unitlength}{3947sp}%
\begingroup\makeatletter\ifx\SetFigFont\undefined%
\gdef\SetFigFont#1#2#3#4#5{%
  \reset@font\fontsize{#1}{#2pt}%
  \fontfamily{#3}\fontseries{#4}\fontshape{#5}%
  \selectfont}%
\fi\endgroup%
\begin{picture}(6861,5036)(361,-6870)
\put(1114,-6111){\makebox(0,0)[lb]{\smash{{\SetFigFont{10}{8}{\rmdefault}{\mddefault}{\updefault}{\color[rgb]{0,0,0}Original metric}%
}}}}
\put(4639,-6211){\makebox(0,0)[lb]{\smash{{\SetFigFont{10}{8}{\rmdefault}{\mddefault}{\updefault}{\color[rgb]{0,0,0}Old standard metric}%
}}}}
\put(5514,-4974){\makebox(0,0)[lb]{\smash{{\SetFigFont{10}{8}{\rmdefault}{\mddefault}{\updefault}{\color[rgb]{0,0,0}$ds^{2}+g_{tor}^{p+1}(\epsilon)+\delta^{2}ds_{q-1}^{2}$}%
}}}}
\put(5514,-4261){\makebox(0,0)[lb]{\smash{{\SetFigFont{10}{8}{\rmdefault}{\mddefault}{\updefault}{\color[rgb]{0,0,0}$ds^{2}+g_{Dtor}^{p+1}(\epsilon)+\delta^{2}ds_{q-1}^{2}$}%
}}}}
\put(1851,-2274){\makebox(0,0)[lb]{\smash{{\SetFigFont{10}{8}{\rmdefault}{\mddefault}{\updefault}{\color[rgb]{0,0,0}Standard metric}%
}}}}
\put(2526,-6211){\makebox(0,0)[lb]{\smash{{\SetFigFont{10}{8}{\rmdefault}{\mddefault}{\updefault}{\color[rgb]{0,0,0}Old transition metric}%
}}}}
\put(5476,-5724){\makebox(0,0)[lb]{\smash{{\SetFigFont{10}{8}{\rmdefault}{\mddefault}{\updefault}{\color[rgb]{0,0,0}Easy transition metric}%
}}}}
\put(1639,-2649){\makebox(0,0)[lb]{\smash{{\SetFigFont{10}{8}{\rmdefault}{\mddefault}{\updefault}{\color[rgb]{0,0,0}$g_{tor}^{p+2}(\epsilon)+\delta^{2}ds_{q-1}^{2}$}%
}}}}
\put(376,-4800){\makebox(0,0)[lb]{\smash{{\SetFigFont{10}{8}{\rmdefault}{\mddefault}{\updefault}{\color[rgb]{0,0,0}New transition metric}%
}}}}
\put(1489,-6999){\makebox(0,0)[lb]{\smash{{\SetFigFont{10}{8}{\rmdefault}{\mddefault}{\updefault}{\color[rgb]{0,0,0}$X\setminus D$}%
}}}}
\put(3889,-6899){\makebox(0,0)[lb]{\smash{{\SetFigFont{10}{8}{\rmdefault}{\mddefault}{\updefault}{\color[rgb]{0,0,0}$D$}%
}}}}

\end{picture}%

\caption{The metric $g''$}
\label{adjg''}
\end{figure}

In Fig. \ref{adjg''}, we describe a metric which is obtained from $g''$ by a only a very minor adjustment. We will discuss the actual adjustment a little later but, as it can be obtained through an isotopy, to ease the burden of notation we will still denote the metric by $g''$. This metric agrees with $g$ outside of an $n$-dimensional disk $D$, see Fig. \ref{adjg''}. The restriction of $g''$ to this disk can be thought of as consisting of several regions. Near the boundary of the disk, and represented schematically by two dashed curves, is a cylindrical region which is diffeomorphic to $S^{n-1}\times I$. This cylindrical region will be known as the {\em connecting cylinder}, see Fig. \ref{transgg}. We will identify the sphere which bounds $X\setminus D$ with $S^{n-1}\times \{1\}$. This sphere is contained in a region where $g''=g$ and so we know very little about the induced metric on this sphere. 

\begin{figure}[htbp]

\begin{picture}(0,0)%
\includegraphics{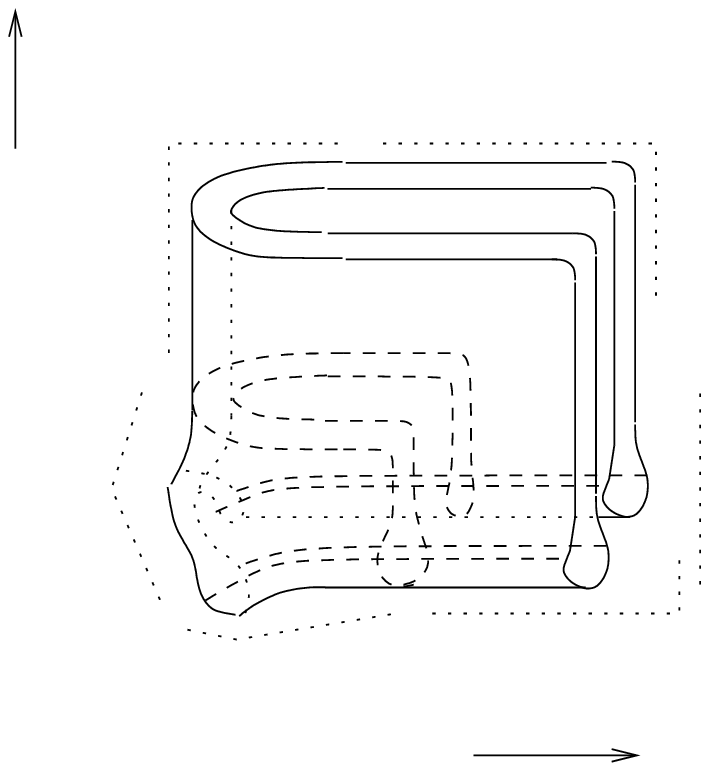}%
\end{picture}%
\setlength{\unitlength}{3947sp}%
\begingroup\makeatletter\ifx\SetFigFont\undefined%
\gdef\SetFigFont#1#2#3#4#5{%
  \reset@font\fontsize{#1}{#2pt}%
  \fontfamily{#3}\fontseries{#4}\fontshape{#5}%
  \selectfont}%
\fi\endgroup%
\begin{picture}(4617,3641)(349,-3490)
\put(4151,-3004){\makebox(0,0)[lb]{\smash{{\SetFigFont{10}{8}{\rmdefault}{\mddefault}{\updefault}{\color[rgb]{0,0,0}$\epsilon^{2}ds_{p}^{2}+\delta^{2}ds_{q}^{2}|_D^{q}+dt^{2}$}%
}}}}
\put(4951,-2217){\makebox(0,0)[lb]{\smash{{\SetFigFont{10}{8}{\rmdefault}{\mddefault}{\updefault}{\color[rgb]{0,0,0}$\epsilon^{2}ds_{p}^{2}+g_q+dt^{2}$}%
}}}}
\put(2313,-3167){\makebox(0,0)[lb]{\smash{{\SetFigFont{10}{8}{\rmdefault}{\mddefault}{\updefault}{\color[rgb]{0,0,0}Old transition metric}%
}}}}
\put(551,-898){\makebox(0,0)[lb]{\smash{{\SetFigFont{10}{8}{\rmdefault}{\mddefault}{\updefault}{\color[rgb]{0,0,0}$ds^{2}+g_{tor}^{p+1}(\epsilon)+{\delta'}^{2}ds_{q-1}^{2}$}%
}}}}
\put(2751,-1461){\makebox(0,0)[lb]{\smash{{\SetFigFont{10}{8}{\rmdefault}{\mddefault}{\updefault}{\color[rgb]{0,0,0}$S^{n-1}\times\{\frac{1}{2}\}$}%
}}}}
\put(1601,-298){\makebox(0,0)[lb]{\smash{{\SetFigFont{10}{8}{\rmdefault}{\mddefault}{\updefault}{\color[rgb]{0,0,0}$s$}%
}}}}
\put(4101,-3398){\makebox(0,0)[lb]{\smash{{\SetFigFont{10}{8}{\rmdefault}{\mddefault}{\updefault}{\color[rgb]{0,0,0}$t$}%
}}}}
\put(3764,-461){\makebox(0,0)[lb]{\smash{{\SetFigFont{10}{8}{\rmdefault}{\mddefault}{\updefault}{\color[rgb]{0,0,0}$dt^{2}+ds^{2}+\epsilon^{2}ds_{p}^{2}+{\delta'}^{2}ds_{q-1}^{2}$}%
}}}}
\put(364,-2399){\makebox(0,0)[lb]{\smash{{\SetFigFont{10}{14.4}{\rmdefault}{\mddefault}{\updefault}{\color[rgb]{0,0,0}New transition metric}%
}}}}
\end{picture}%

\caption{The connecting cylinder $S^{n-1}\times I$}
\label{transgg}
\end{figure} 

The region $S^{n-1}\times[\frac{1}{2}, 1]$ is where most of the transitioning happens from the old metric $g$ to the standard form. This transition metric consists in part of the {\em old transition metric} from the first surgery and the {\em new transition metric} from the second surgery. The old transition metric is on a region which is diffeomorphic to $S^{p}\times D^{q}\times [\frac{1}{2}, 1]$ (schematically this is the region below the horizontal dashed lines near the bottom of Fig. \ref{transgg}) while the new transition metric is on a region which is diffeomorphic to $D^{p+1}\times S^{q-1}\times [\frac{1}{2}, 1]$. On the second cylindrical piece $S^{n-1}\times [0, \frac{1}{2}]$, the metric $g''$ is much closer to being standard.  

Turning our attention away from the connecting cylinder for a moment, it is clear that the metric $g''$ agrees with the standard part of the metric $g'$ on a region which is diffeomorphic to $D^{p+1}\times D^{q}$, see Fig. \ref{adjg''}. Here $g''$ is the metric $g_{tor}^{p+1}(\epsilon)+\delta^{2}ds_{q}^{2}|_{D^{q}}$ and we call this piece the {\em old standard metric}. The old standard metric transitions through an {\em easy transition metric} on a region diffeomorphic to $I\times D^{p+1}\times S^{q}$ to take the form $ds^{2}+g_{tor}^{p+1}(\epsilon')+{\delta'}^{2}ds_{q-1}^{2}$. This particular transition is known as the {\em easy transition metric} as it is far simpler than the previous transitions. 

Returning now to the second cylindrical piece of the connecting cylinder, we see that there is a neighbourhood of $S^{n-1}\times [0,\frac{1}{2}]$, containing both the old standard and easy transition regions where the metric $g''$ takes the form of a product $\epsilon^{2}ds_{p}^{2}+dt^{2}+g_q$, where the metric $g_q$ is a metric on the disk $D^{q}$, see Fig. \ref{firstalteration}. Shortly we will write represent $g_q$ more explicitly. 

\begin{figure}[htbp]
\hspace*{-40mm}
\begin{picture}(0,0)%
\includegraphics{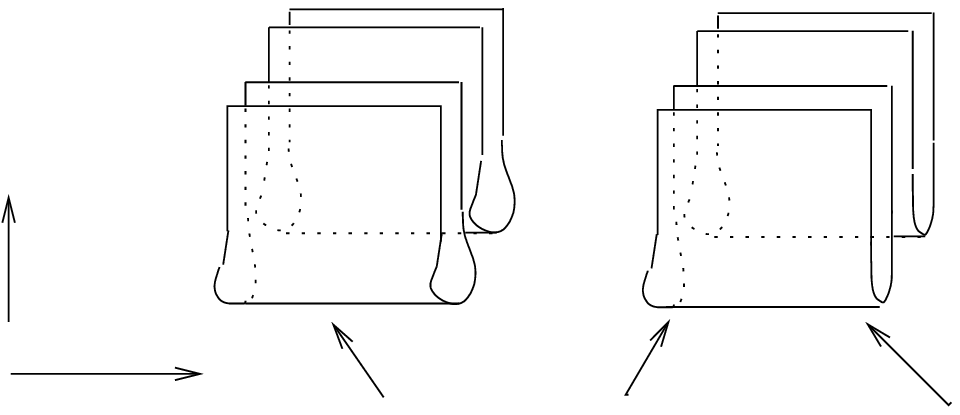}%
\end{picture}%
\setlength{\unitlength}{3947sp}%
\begingroup\makeatletter\ifx\SetFigFont\undefined%
\gdef\SetFigFont#1#2#3#4#5{%
  \reset@font\fontsize{#1}{#2pt}%
  \fontfamily{#3}\fontseries{#4}\fontshape{#5}%
  \selectfont}%
\fi\endgroup%
\begin{picture}(4769,2171)(772,-4102)
\put(5526,-2274){\makebox(0,0)[lb]{\smash{{\SetFigFont{10}{8}{\rmdefault}{\mddefault}{\updefault}{\color[rgb]{0,0,0}$dt^{2}+ds^{2}+\epsilon^{2}ds_{p}^{2}+\delta'^{2}ds_{q-1}^{2}$}%
}}}}
\put(2586,-4011){\makebox(0,0)[lb]{\smash{{\SetFigFont{10}{8}{\rmdefault}{\mddefault}{\updefault}{\color[rgb]{0,0,0}$\epsilon^{2}ds_{p}^{2}+g_q+dt^{2}$}%
}}}}
\put(5189,-4036){\makebox(0,0)[lb]{\smash{{\SetFigFont{10}{8}{\rmdefault}{\mddefault}{\updefault}{\color[rgb]{0,0,0}$\epsilon^{2}ds_{p}^{2}+g_{tor}^{q}(\delta')+dt^{2}$}%
}}}}
\put(901,-3124){\makebox(0,0)[lb]{\smash{{\SetFigFont{10}{8}{\rmdefault}{\mddefault}{\updefault}{\color[rgb]{0,0,0}$s$}%
}}}}
\put(1201,-3624){\makebox(0,0)[lb]{\smash{{\SetFigFont{10}{8}{\rmdefault}{\mddefault}{\updefault}{\color[rgb]{0,0,0}$t$}%
}}}}
\end{picture}%

\caption{A neighbourhood of $S^{n-1}\times [0,\frac{1}{2}]$ on which
  $g''$ has a product structure (left) and the metric resulting from
  an isotopy on this neighbourhood (right)}
\label{firstalteration}
\end{figure}

Returning once more to the metric $g''$ on $D$, we observe that outside of $S^{n-1}\times I$ and away from the old standard and easy transition regions, the metric is almost completely standard. The only difference between this metric and the metric $g''$ constructed in Example \ref{sphereeg} is the fact that the metric on the second surgery sphere $S^{p+1}$ is first isotopied to the double torpedo metric $g_{Dtor}^{p+1}(\epsilon)$ before finally transitioning to the round metric $\epsilon^{2}ds_{p+1}^{2}$. This gives a concordance between the metric $g_{Dtor}^{p+1}+\delta'^{2}ds_{q-1}^{2}$ and $\epsilon^{2}ds_{p+1}^{2}+\delta'^{2}ds_{q-1}^{2}$ which is capped off on the remaining $D^{p+2}\times S^{q-1}$ by the {\em new standard metric} $g_{tor}^{p+2}(\epsilon)+\delta'^{2}ds_{q-1}^{2}$. This completes our initial analysis of $g''$.

\subsection{The proof of Theorem \ref{concisodoublesurgery}}
\begin{proof}
We will perform a sequence of adjustments on each of the metrics $g$ and $g''$. Beginning with the metric $g''$, we will construct $g_1''$ and $g_2''$ each of which is isotopic to the previous one. Similarly, we will construct isotopic metrics $g_1$, $g_2$ and $g_3$. The metrics $g_3$ and $g_2''$ will then be demonstrably isotopic.
\\
 
\noindent {\bf The initial adjustment of $g''$.} We will begin by making some minor adjustments to the metric $g''$ to obtain the metric $g_1''$.  Recall again that on the part of the connecting cylinder identified with $S^{n-1}\times [0,\frac{1}{2}]$, the metric $g''$ is somewhat standard. We observed that on a particular region of $S^{n-1}\times [0,\frac{1}{2}]$, $g''$ takes the form $\epsilon^{2}ds_{p}^{2}+g_q+dt^{2}$. Here $g_q$ can be written more explicitly as

\begin{equation*}
g_q=dr^{2}+F(r)^{2}ds_{q-1}^{2},
\end{equation*}

\noindent where $r$ is the radial distance cordinate and $F$ is a function of the type shown in Fig. \ref{adjfunc}.

\begin{figure}[htbp]
\begin{picture}(0,0)%
\includegraphics{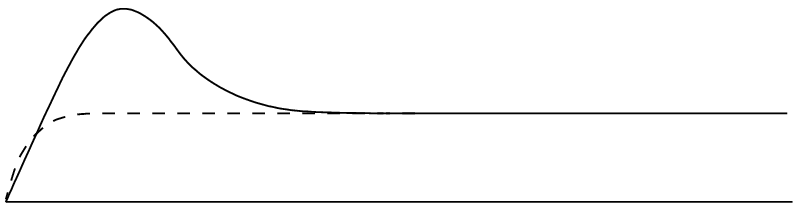}%
\end{picture}%
\setlength{\unitlength}{3947sp}%
\begingroup\makeatletter\ifx\SetFigFont\undefined%
\gdef\SetFigFont#1#2#3#4#5{%
  \reset@font\fontsize{#1}{#2pt}%
  \fontfamily{#3}\fontseries{#4}\fontshape{#5}%
  \selectfont}%
\fi\endgroup%
\begin{picture}(3815,1657)(2661,-3602)
\put(2689,-3536){\makebox(0,0)[lb]{\smash{{\SetFigFont{10}{8}{\rmdefault}{\mddefault}{\updefault}{\color[rgb]{0,0,0}$0$}%
}}}}
\put(2676,-2199){\makebox(0,0)[lb]{\smash{{\SetFigFont{10}{8}{\rmdefault}{\mddefault}{\updefault}{\color[rgb]{0,0,0}$\delta \sin{\frac{r}{\delta}}$}%
}}}}
\put(5901,-2861){\makebox(0,0)[lb]{\smash{{\SetFigFont{10}{8}{\rmdefault}{\mddefault}{\updefault}{\color[rgb]{0,0,0}$\delta'$}%
}}}}
\put(3214,-3549){\makebox(0,0)[lb]{\smash{{\SetFigFont{10}{8}{\rmdefault}{\mddefault}{\updefault}{\color[rgb]{0,0,0}$\delta\frac{\pi}{2}$}%
}}}}
\end{picture}

\caption{The function $F$, with $f_{\delta'}$ shown as the dashed curve}
\label{adjfunc}
\end{figure}

\noindent A linear homotopy of $F$ to the torpedo function $f_{\delta'}$ induces an isotopy from the metric $g_q$ to the metric $g_{tor}^{q}(\delta')$. With an appropriate rescaling, it is possible to isotopy the metric $\epsilon^{2}ds_{p}^{2}+g_q+dt^{2}$ to one which is unchanged near $S^{n-1}\times\{\frac{1}{2}\}$ but near $S^{n-1}\times\{0\}$, is the standard product $\epsilon^{2}ds_{p}^{2}+g_{tor}^{q}(\delta')+dt^{2}$. This isotopy then easily extends to an isotopy of $g''$ resulting in a metric which, on the old standard and easy transition regions, is now $g_{tor}^{p+1}(\epsilon)+g_{tor}^{q}(\delta')$ away from $S^{n-1}\times \{\frac{1}{2}\}$, see Fig. \ref{generalgg}.
\\

\begin{figure}
\hspace*{-40mm}
\begin{picture}(0,0)%
\includegraphics{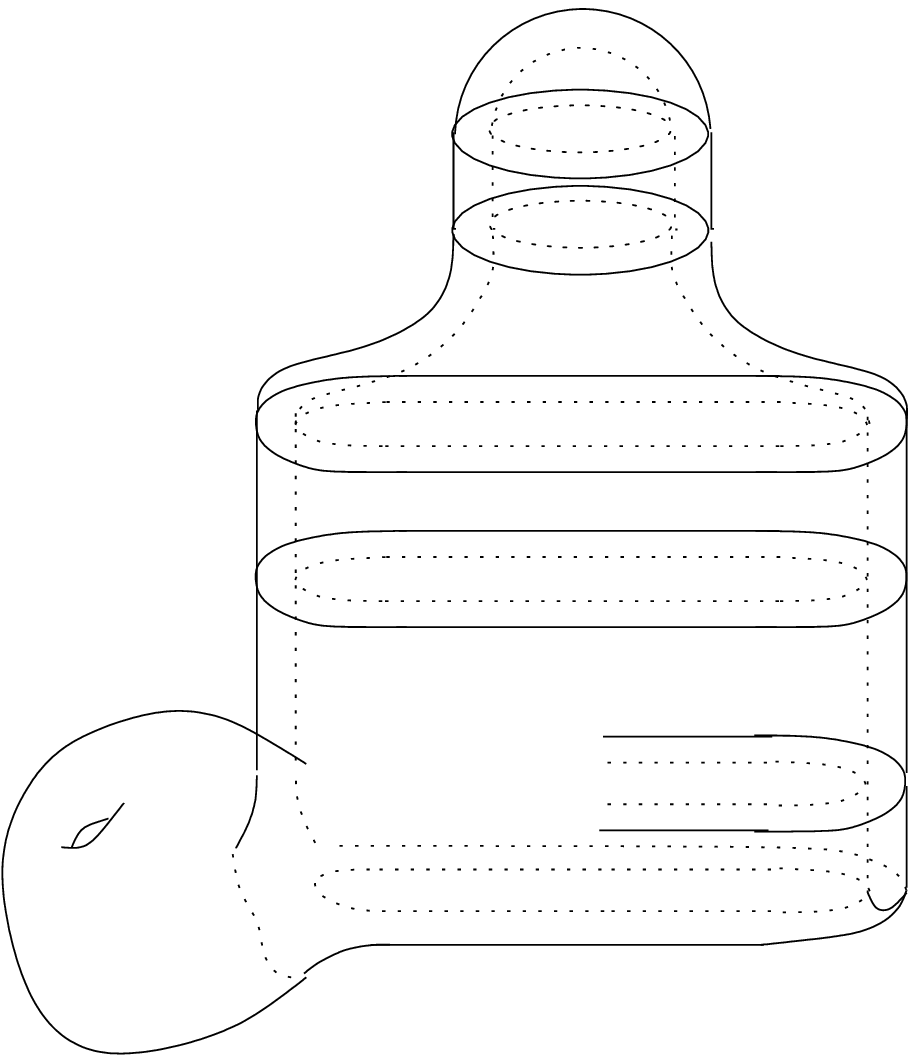}%
\end{picture}%
\setlength{\unitlength}{3947sp}%
\begingroup\makeatletter\ifx\SetFigFont\undefined%
\gdef\SetFigFont#1#2#3#4#5{%
  \reset@font\fontsize{#1}{#2pt}%
  \fontfamily{#3}\fontseries{#4}\fontshape{#5}%
  \selectfont}%
\fi\endgroup%
\begin{picture}(4555,5036)(974,-6870)
\put(1070,-6311){\makebox(0,0)[lb]{\smash{{\SetFigFont{10}{8}{\rmdefault}{\mddefault}{\updefault}{\color[rgb]{0,0,0}Original metric}%
}}}}
\put(2401,-6749){\makebox(0,0)[lb]{\smash{{\SetFigFont{10}{8}{\rmdefault}{\mddefault}{\updefault}{\color[rgb]{0,0,0}Old transition metric}%
}}}}
\put(5514,-4974){\makebox(0,0)[lb]{\smash{{\SetFigFont{10}{8}{\rmdefault}{\mddefault}{\updefault}{\color[rgb]{0,0,0}$ds^{2}+g_{tor}^{p+1}(\epsilon)+{\delta'}^{2}ds_{q-1}^{2}$}%
}}}}
\put(5514,-4261){\makebox(0,0)[lb]{\smash{{\SetFigFont{10}{8}{\rmdefault}{\mddefault}{\updefault}{\color[rgb]{0,0,0}$ds^{2}+g_{Dtor}^{p+1}(\epsilon)+{\delta'}^{2}ds_{q-1}^{2}$}%
}}}}
\put(2251,-5124){\makebox(0,0)[lb]{\smash{{\SetFigFont{10}{8}{\rmdefault}{\mddefault}{\updefault}{\color[rgb]{0,0,0}New transition metric}%
}}}}
\put(1731,-2649){\makebox(0,0)[lb]{\smash{{\SetFigFont{10}{8}{\rmdefault}{\mddefault}{\updefault}{\color[rgb]{0,0,0}$g_{tor}^{p+2}(\epsilon)+{\delta'}^{2}ds_{q-1}^{2}$}%
}}}}
\put(1651,-2274){\makebox(0,0)[lb]{\smash{{\SetFigFont{10}{8}{\rmdefault}{\mddefault}{\updefault}{\color[rgb]{0,0,0}New standard metric}%
}}}}

\put(5514,-6086){\makebox(0,0)[lb]{\smash{{\SetFigFont{10}{8}{\rmdefault}{\mddefault}{\updefault}{\color[rgb]{0,0,0}$g_{tor}^{p+1}(\epsilon)+g_{tor}^{q}(\delta')$}%
}}}}
\end{picture}%

\caption{The metric $g_1''$ resulting from the initial adjustment}
\label{generalgg}
\end{figure}

\begin{figure}[htbp]
\begin{picture}(0,0)%
\includegraphics{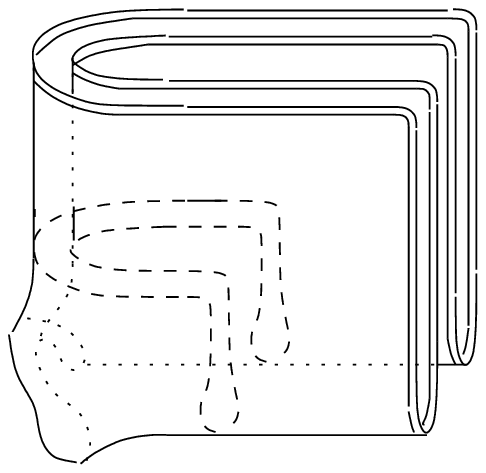}%
\end{picture}%
\setlength{\unitlength}{3947sp}%
\begingroup\makeatletter\ifx\SetFigFont\undefined%
\gdef\SetFigFont#1#2#3#4#5{%
  \reset@font\fontsize{#1}{#2pt}%
  \fontfamily{#3}\fontseries{#4}\fontshape{#5}%
  \selectfont}%
\fi\endgroup%
\begin{picture}(3552,2397)(1874,-3440)
\put(5151,-3374){\makebox(0,0)[lb]{\smash{{\SetFigFont{10}{8}{\rmdefault}{\mddefault}{\updefault}{\color[rgb]{0,0,0}$S^{n-1}\times[0,\lambda]$}%
}}}}
\put(2389,-2236){\makebox(0,0)[lb]{\smash{{\SetFigFont{10}{8}{\rmdefault}{\mddefault}{\updefault}{\color[rgb]{0,0,0}$S^{n-1}\times\frac{1}{2}$}%
}}}}
\end{picture}%

\caption{The collar neighbourhood $S^{n-1}\times [0,\lambda]$}
\label{alteredg''} 
\end{figure} 

\noindent {\bf The embedding $\bar{J}$.} For sufficiently small $\lambda$, the cylindrical portion $S^{n-1}\times[0,\lambda]$ of the connecting cylinder $S^{n-1}\times I$ is contained entirely in a region where $g''=g_{tor}^{p+1}(\epsilon)+g_{tor}^{q}(\delta')$. Recall that in section \ref{embedmixedtorp}, we equipped the plane $\mathbb{R}^{n}=\mathbb{R}^{p+1}\times\mathbb{R}^{q}$ with this metric, then denoted by $h=g_{tor}^{p+1}(\epsilon)+g_{tor}^{q}(\delta')$. In standard spherical coordinates $(\rho, \phi), (r, \theta)$ for $\mathbb{R}^{p+1}$ and $\mathbb{R}^{q}$ respectively, we can represent this metric with the explicit formula

\begin{equation}
h=d\rho^{2}+f_{\epsilon}(\rho)^{2}ds_{p}^{2}+dr^{2}+f_{\delta'}(r)^{2}ds_{q}^{2},
\end{equation}

\noindent where $f_{\epsilon}, f_{\delta'}$ are the standard $\epsilon$ and $\delta$ torpedo functions defined on $(0,\infty)$. The restriction of $g''$ to the region $S^{n-1}\times [0,\lambda]$ is now isometric to an annular region of $(\mathbb{R}^{n},h)$ shown in Fig. \ref{imageJ}. For a more geometrically accurate schematic of $(\mathbb{R}^{n},h)$, see Fig. \ref{h} in section \ref{prelim}.

\begin{figure}[htbp]
\begin{picture}(0,0)%
\includegraphics{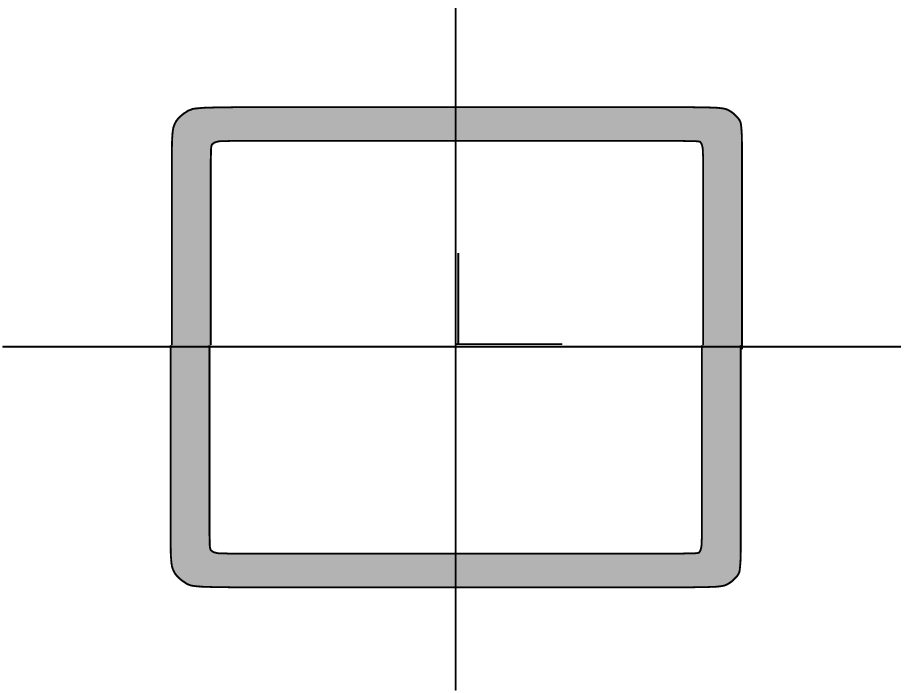}%
\end{picture}%
\setlength{\unitlength}{3947sp}%
\begingroup\makeatletter\ifx\SetFigFont\undefined%
\gdef\SetFigFont#1#2#3#4#5{%
  \reset@font\fontsize{#1}{#2pt}%
  \fontfamily{#3}\fontseries{#4}\fontshape{#5}%
  \selectfont}%
\fi\endgroup%
\begin{picture}(4336,3299)(2240,-6861)
\put(4514,-4911){\makebox(0,0)[lb]{\smash{{\SetFigFont{10}{8}{\rmdefault}{\mddefault}{\updefault}{\color[rgb]{0,0,0}$\frac{\pi}{2}\delta'$}%
}}}}
\put(4764,-5461){\makebox(0,0)[lb]{\smash{{\SetFigFont{10}{8}{\rmdefault}{\mddefault}{\updefault}{\color[rgb]{0,0,0}$\frac{\pi}{2}\epsilon$}%
}}}}
\put(6464,-5474){\makebox(0,0)[lb]{\smash{{\SetFigFont{10}{8}{\rmdefault}{\mddefault}{\updefault}{\color[rgb]{0,0,0}$\mathbb{R}^{p+1}$}%
}}}}
\put(4539,-3761){\makebox(0,0)[lb]{\smash{{\SetFigFont{10}{8}{\rmdefault}{\mddefault}{\updefault}{\color[rgb]{0,0,0}$\mathbb{R}^{q}$}%
}}}}
\end{picture}%

\caption{The image of $\bar{J}$} 
\label{imageJ}
\end{figure} 

There is an isometric embedding $\bar{J}$ of the cylindrical portion $S^{n-1}\times[0,\lambda]$ of the connecting cylinder $S^{n-1}\times I$ into $(\mathbb{R}^{n}, h)$. Let $\bar{a}$ denote an embedding

\begin{equation*}
\begin{split}
\bar{a}:[0,\lambda]\times[0,b]&\rightarrow\mathbb{R}\times\mathbb{R} \\
(t_1, t_2)&\mapsto(a_1(t_1,t_2),\phi,a_2(t_1,t_2),\theta)
\end{split}
\end{equation*}

\noindent which satisfies the following properties. 

\noindent (1) For each $t_1\in[0,\lambda]$, the restriction of $\bar{a}$ to $\{t_1\}\times [0,b]$ is a smooth curve in the first quadrant of $\mathbb{R}^{2}$ which begins at $(c_1+t_1,0)$, follows a vertical trajectory, bends by an angle of $\frac{\pi}{2}$ towards the vertical axis in the form of a circular arc and continues as a horizontal line to end at $(0,c_1+t_1)$. We will assume that $c_1>\max\{\frac{\pi}{2}\epsilon,\frac{\pi}{2}\delta'\}$ and that the bending takes place above the horizontal line through line $(0,\delta\frac{\pi}{2})$, see Fig. \ref{fig:xandy}.

\noindent (2) At each point $(t_1, t_2)$, the products $\frac{\p{a_1}}{\p{t_1}}.\frac{\p{a_1}}{\p{t_2}}$ and $\frac{\p{a_1}}{\p{t_1}}.\frac{\p{a_1}}{\p{t_2}}$ are both zero.

\noindent For some such $\bar{a}$, there is a map $\bar{J}$ defined

\begin{equation*}
\begin{split}
\bar{J}:[0,\lambda]\times[0,b]\times{S^{p}}\times S^{q-1}&\rightarrow\mathbb{R}^{p+1}\times\mathbb{R}^{q}\\
(t_1,t_2,\phi,\theta)&\mapsto(a_1(t_1,t_2),\phi,a_2(t_1,t_2),\theta)
\end{split}
\end{equation*}

\noindent which isometrically embeds the cylindrical piece $(S^{n-1}\times [0,\lambda], g''|_{S^{n-1}\times[0,\lambda]})$ into $(\mathbb{R}^{n}, h)$, see Fig. \ref{imageJ}. Furthermore, assumption (2) above means that the metric $g''|_{S^{n-1}\times[0,\lambda]}$ can be foliated as $dt_1^{2}+g_{t_1}''$ where $g_{t_1}''$ is the metric induced on the restriction of $\bar{J}$ to $\{t_1\}\times[0,\lambda]\times S^{p}\times S^{q}$. For each $t_1\in [0,\lambda]$, the metric $g_{t_1}''$ is a mixed torpedo metric $g_{Mtor}^{p,q-1}$. These metrics are of course not isometric, but differ only in that the tube lengths of the various torpedo parts vary continuously. 
\\

\noindent {\bf Isotopying the metric on $S^{n-1}\times [0,\lambda]$ to the ``connected sum" metric $g_2''$.} Given two copies of the plane $\mathbb{R}^{n}$, each equipped with the metric $h$, we can apply the Gromov-Lawson technique to construct a connected sum $(\mathbb{R}^{n}, h)\#(\mathbb{R}^{n}, h)$. This technique determines a psc-metric by removing a disk around each origin and gluing the resulting manifolds together with an appropriate psc-metric on the cylinder $S^{n-1}\times I$. In this section, we will isotopy the metric $g''|_{S^{n-1}\times [0,\lambda]}$ to obtain precisely this cylinder metric, see Fig. \ref{connsum}. Importantly, this isotopy will fix the metric near the ends of the cylinder and so will extend easily to an isotopy of $g_1''$ on all of $X$ to result in the metric $g_2''$.

\begin{figure}[htbp]  
\begin{picture}(0,0)%
\includegraphics{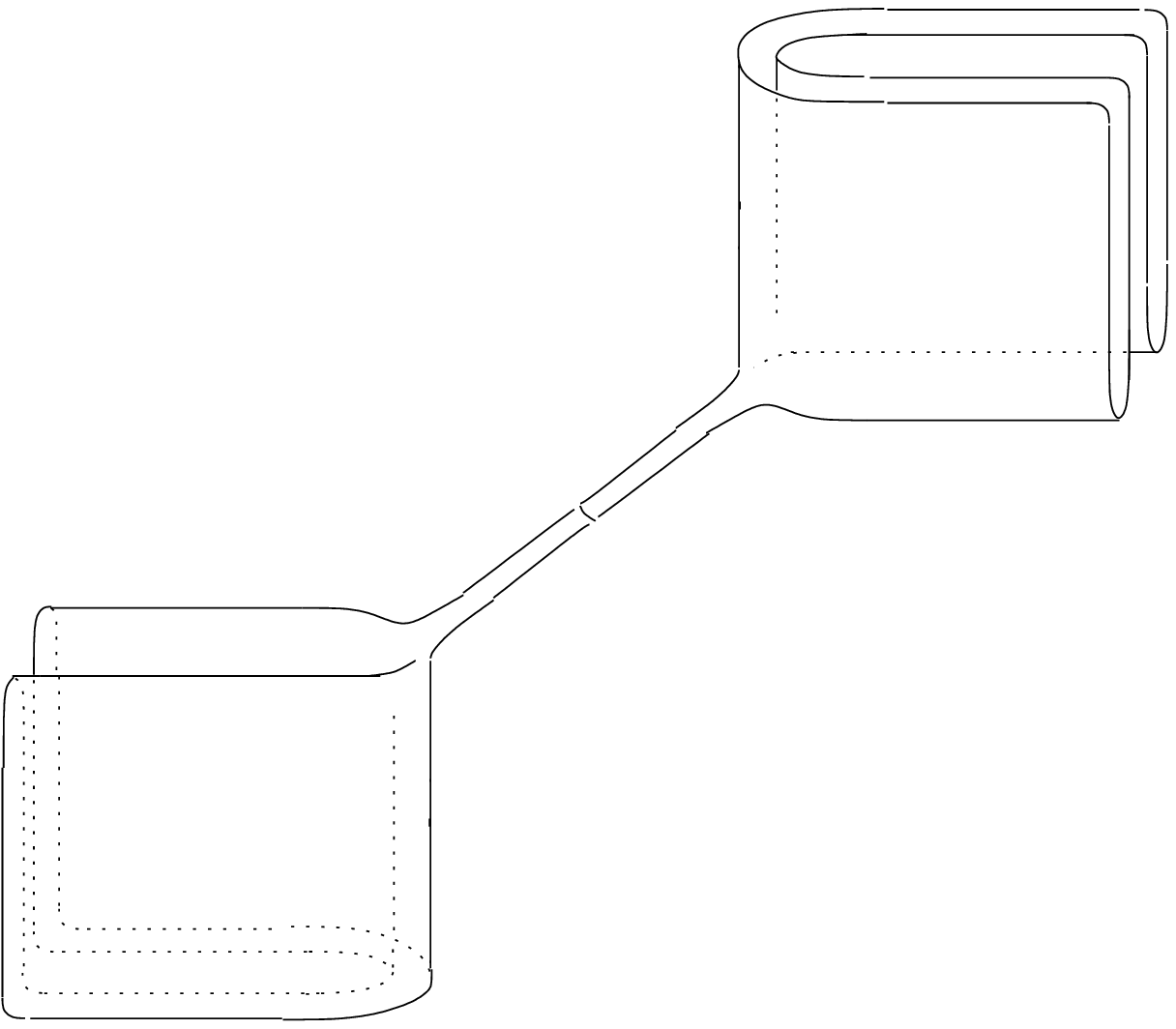}%
\end{picture}%
\setlength{\unitlength}{3947sp}%
\begingroup\makeatletter\ifx\SetFigFont\undefined%
\gdef\SetFigFont#1#2#3#4#5{%
  \reset@font\fontsize{#1}{#2pt}%
  \fontfamily{#3}\fontseries{#4}\fontshape{#5}%
  \selectfont}%
\fi\endgroup%
\begin{picture}(5805,5658)(2846,-6115)
\put(5976,-3424){\makebox(0,0)[lb]{\smash{{\SetFigFont{10}{8}{\rmdefault}{\mddefault}{\updefault}{\color[rgb]{0,0,0}$S^{n-1}\times\{\frac{\lambda}{2}\}$}%
}}}}
\put(2914,-6049){\makebox(0,0)[lb]{\smash{{\SetFigFont{10}{8}{\rmdefault}{\mddefault}{\updefault}{\color[rgb]{0,0,0}$S^{n-1}\times\{\lambda\}$}%
}}}}
\put(7514,-611){\makebox(0,0)[lb]{\smash{{\SetFigFont{10}{8}{\rmdefault}{\mddefault}{\updefault}{\color[rgb]{0,0,0}$S^{n-1}\times\{0\}$}%
}}}}
\end{picture}%

\caption{The metric obtained by isotopying $g''|_{{S^{n-1}\times[0,\lambda]}}$ to the cylinder metric of Gromov-Lawson ``connected sum" construction}
\label{connsum}
\end{figure}

Let $\bar{a}^{t_1}$ denote the curve which is the image of the map $\bar{a}$ restricted to $\{t_1\}\times[0,b]$ and $\bar{J}^{t_1}$ will denote the embedded sphere in $\mathbb{R}^{n}$ which is the image of the map $\bar{J}$ on $\{t_1\}\times[0,b]\times S^{p}\times S^{q-1}$. We define the map $K^{\tau}$ as

\begin{equation*}
\begin{array}{cl}
{K^{\tau}}:[0,\frac{\pi}{2}\tau]\times{S^{p}}\times S^{q-1}&\longrightarrow\mathbb{R}^{p+1}\times\mathbb{R}^{q}\\
(t,\phi,\theta)&\longmapsto(\tau\cos(\frac{t}{\tau}),\phi,\tau\sin(\frac{t}{\tau}),\theta)
\end{array}
\end{equation*}

\noindent For each $\tau>0$, the image of $K^{\tau}$ in $(\mathbb{R}^{n}, h)$ is a geodesic sphere of radius $\tau$. Now consider the region, shown in Fig. \ref{foliatingh}, bounded by the embedded spheres $\bar{J}^{\frac{\lambda}{2}}$ and $K^{\tau}$ where $\tau$ is assumed to be very small. Let $c^{\tau}$ denote the circular arc given by $c^{\tau}(t)=(\tau\cos(\frac{t}{\tau}),\tau\sin(\frac{t}{\tau})$, for $t\in[0,\frac{\pi}{2}\tau]$. It is easy to construct a smooth homotopy between $\bar{a}^{\frac{\lambda}{2}}$ and $c^{\tau}$ through curves $(x_{\nu}, y_{\nu}), \nu\in I$ where $c^{\tau}=(x_{0}, y_0)$ and $\bar{a}^{\frac{\lambda}{2}}=(x_1, y_1)$. For example, this can be done by smoothly shrinking the straight edge pieces of $\bar{a}^{\frac{\lambda}{2}}$ to obtain a piece which is within arbitrarily small smoothing adjustments of being a circular arc, the radius of which can then be smoothly shrunk as required, see Fig. \ref{homotopyfoliate}. 

\begin{figure}[htbp]
\begin{picture}(0,0)%
\includegraphics{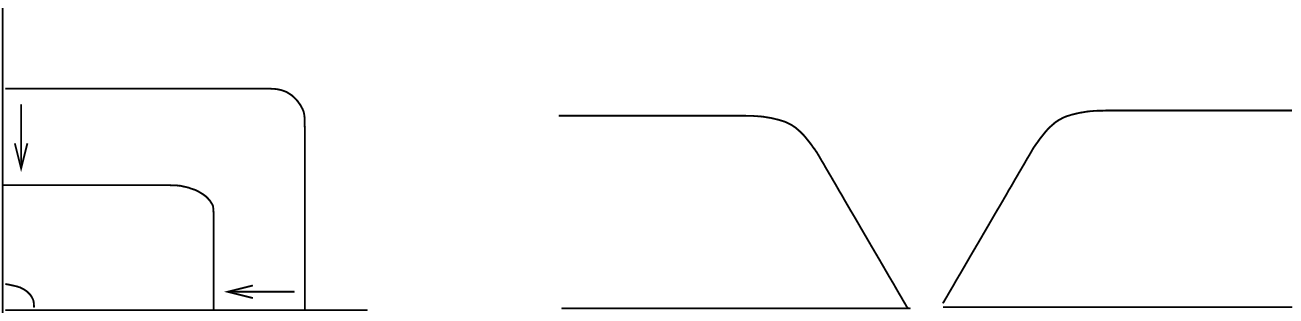}%
\end{picture}%
\setlength{\unitlength}{3947sp}%
\begingroup\makeatletter\ifx\SetFigFont\undefined%
\gdef\SetFigFont#1#2#3#4#5{%
  \reset@font\fontsize{#1}{#2pt}%
  \fontfamily{#3}\fontseries{#4}\fontshape{#5}%
  \selectfont}%
\fi\endgroup%
\begin{picture}(6214,1778)(563,-3054)
\put(739,-2963){\makebox(0,0)[lb]{\smash{{\SetFigFont{10}{8}{\rmdefault}{\mddefault}{\updefault}{\color[rgb]{0,0,0}$\tau$}%
}}}}
\put(2039,-2988){\makebox(0,0)[lb]{\smash{{\SetFigFont{10}{8}{\rmdefault}{\mddefault}{\updefault}{\color[rgb]{0,0,0}$c_1+\frac{\lambda}{2}$}%
}}}}
\put(664,-2588){\makebox(0,0)[lb]{\smash{{\SetFigFont{10}{8}{\rmdefault}{\mddefault}{\updefault}{\color[rgb]{0,0,0}$c^{\tau}$}%
}}}}
\put(2026,-1650){\makebox(0,0)[lb]{\smash{{\SetFigFont{10}{8}{\rmdefault}{\mddefault}{\updefault}{\color[rgb]{0,0,0}$\bar{a}^{\frac{\lambda}{2}}$}%
}}}}
\put(3720,-1542){\makebox(0,0)[lb]{\smash{{\SetFigFont{10}{8}{\rmdefault}{\mddefault}{\updefault}{\color[rgb]{0,0,0}$x_\nu$}%
}}}}
\put(5952,-1536){\makebox(0,0)[lb]{\smash{{\SetFigFont{10}{8}{\rmdefault}{\mddefault}{\updefault}{\color[rgb]{0,0,0}$y_\nu$}%
}}}}
\end{picture}%

\caption{Homotopying the curve $\bar{a}^{\frac{\lambda}{2}}$ to $c^{\tau}$}
\label{homotopyfoliate}
\end{figure}

By smoothly varying the length of domain intervals of $x_\nu$ and $y_\nu$ with respect to $\nu$ we can ensure that the curve $(x_\nu, y_\nu)$ is unit speed for all $\nu$. The above homotopy gives rise to a foliation of the region contained between $\bar{J}^{\frac{\lambda}{2}}$ and $K^{\tau}$, see Fig. \ref{foliatingh}, and a corresponding foliation of the metric $h$ on this region. Letting $l\in[0,1]$ denote the coordinate running orthogonal to the curves given by the above homotopy, we can write the metric $h=dl^{2}+h_l$. Moreover, the metric $h_l$ can be computed explicitly as

\begin{equation*}
h_l=dt^{2}+f_\epsilon(x(t))^{2}ds_{p}^{2}+f_{\delta'}(y(t))^{2}ds_{q}^{2}
\end{equation*}

\noindent where $x=x_{\nu}$ and $y=y_\nu$ for some $\nu$. An elementary calculation shows that $-1\leq\dot{x_\nu}\leq 0$, $0\leq \dot{y_\nu}\leq 1$, $\ddot{x_\nu}\leq 0$ and $\ddot{y_\nu}\leq 0$. A further elementary calculation now shows that the functions $f_\epsilon(x(t))$ and  $f_{\delta'}(y(t))$ belong to the spaces $\U$ and $\V$ defined in section \ref{prelim}. Thus, by Lemma \ref{lem:markslemm1.5}, the metric $h_l$ has positive scalar curvature and so the decomposition of $h$ into $dl^{2}+h_l$ induces an isotopy between the metric $h_1=g_{\frac{\lambda}{2}}''$ and the metric $h_0$ induced by $h$ on the geodesic sphere of radius $\tau$.

\begin{figure} 
\begin{picture}(0,0)%
\includegraphics{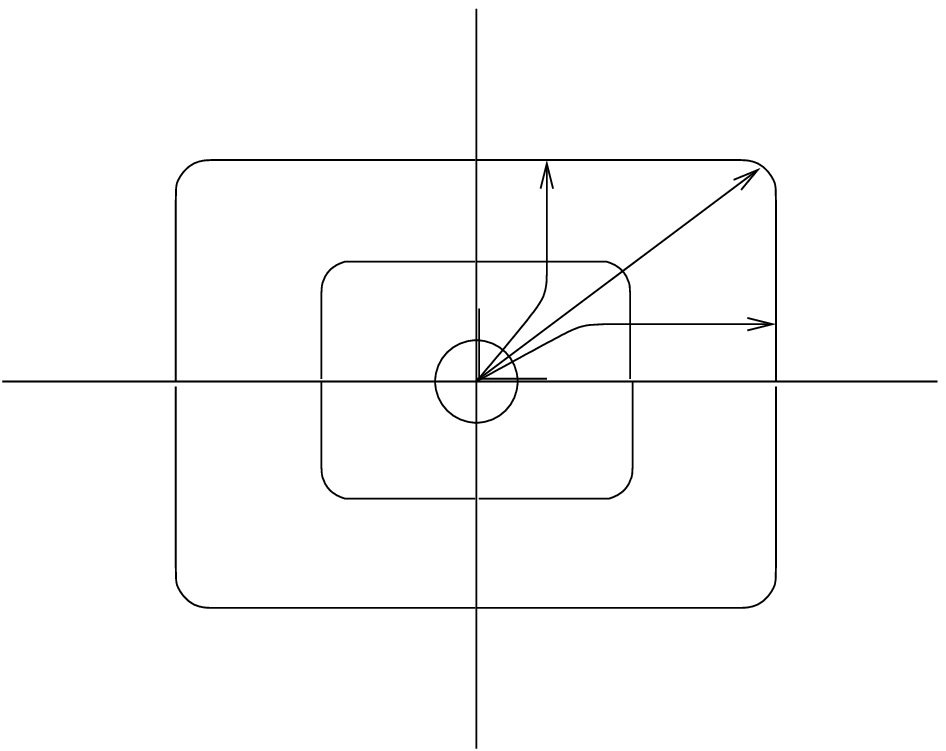}%
\end{picture}%
\setlength{\unitlength}{3947sp}%
\begingroup\makeatletter\ifx\SetFigFont\undefined%
\gdef\SetFigFont#1#2#3#4#5{%
  \reset@font\fontsize{#1}{#2pt}%
  \fontfamily{#3}\fontseries{#4}\fontshape{#5}%
  \selectfont}%
\fi\endgroup%
\begin{picture}(4512,3574)(2864,-5198)
\put(6601,-2374){\makebox(0,0)[lb]{\smash{{\SetFigFont{10}{8}{\rmdefault}{\mddefault}{\updefault}{\color[rgb]{0,0,0}$\bar{J}^{\frac{\lambda}{2}}$}%
}}}}
\put(4764,-3724){\makebox(0,0)[lb]{\smash{{\SetFigFont{10}{8}{\rmdefault}{\mddefault}{\updefault}{\color[rgb]{0,0,0}$K^{\tau}$}%
}}}}
\put(6176,-2849){\makebox(0,0)[lb]{\smash{{\SetFigFont{10}{8}{\rmdefault}{\mddefault}{\updefault}{\color[rgb]{0,0,0}$l$}%
}}}}
\end{picture}%

\caption{The region bounded by $\bar{J}^{\frac{\lambda}{2}}$ and $K^{\tau}$}
\label{foliatingh}
\end{figure}

Recall that the restriction of the metric $g''$ to $S^{n-1}\times [0, \frac{\lambda}{2}]$, isometrically embeds into $(\mathbb{R}^{n}, h)$ as the region between the curves $\bar{J}^{0}$ and $\bar{J}^{\frac{\lambda}{2}}$. Using the foliation $h=dl^{2}+h_l$, this metric can now be continuously extended as the metric $h$ over the rest of the region between $\bar{J}^{0}$ and $K^{\tau}$, see Fig. \ref{extendh}. As the curve $K^{\tau}$ is a geodesic sphere with respect to $h$, this metric can then be continuously extended as the metric obtained by the Gromov-Lawson construction, to finish as a round cylinder metric. The metric $g''|_{S^{n-1}\times [0,\frac{\lambda}{2}]}$ has now been isotopied to one half of the metric depicted in Fig. \ref{connsum} without making any adjustment near $S^{n-1}\times \{0\}$.

An analogous construction can be performed on $g''|_{S^{n-1}\times [\frac{\lambda}{2}, \lambda]}$, this time making no alteration to the metric near ${S^{n-1}\times \{\lambda\}}$. Both constructions can be combined to form the desired isotopy by making a minor modifcation to ensure that at each stage, the metric near $S^{n-1}\times\{\frac{\lambda}{2}\}$ is a psc-Riemannian cylinder. Such a modification is possible because of the fact that the above foliation decomposes $h$ into an isotopy of psc-metrics.   
\\

\begin{figure}[htbp]
\begin{picture}(0,0)%
\includegraphics{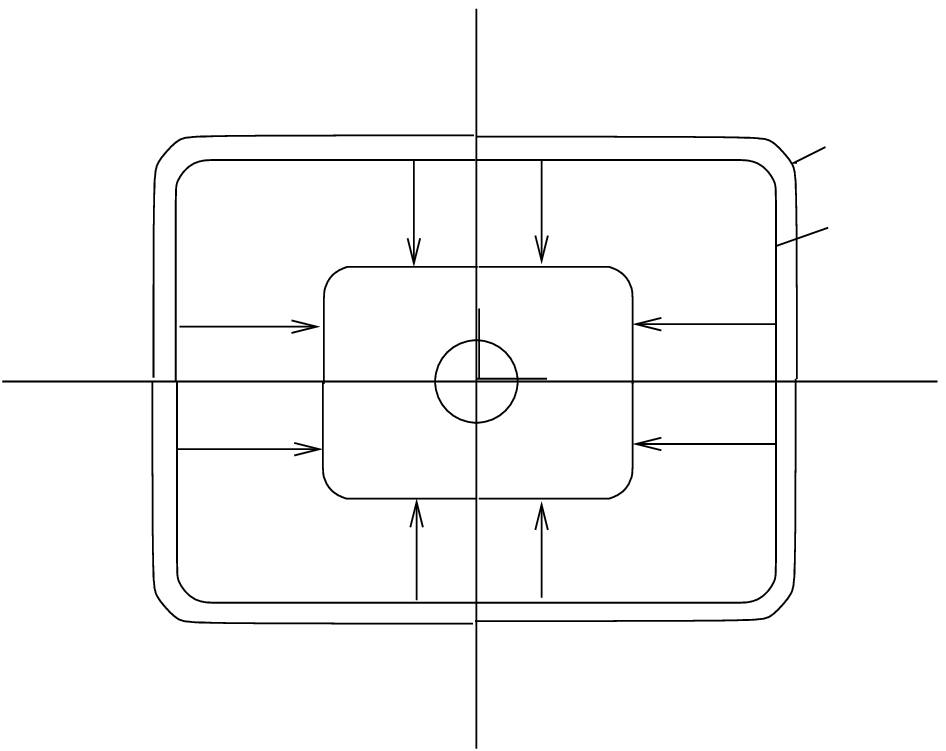}%
\end{picture}%
\setlength{\unitlength}{3947sp}%
\begingroup\makeatletter\ifx\SetFigFont\undefined%
\gdef\SetFigFont#1#2#3#4#5{%
  \reset@font\fontsize{#1}{#2pt}%
  \fontfamily{#3}\fontseries{#4}\fontshape{#5}%
  \selectfont}%
\fi\endgroup%
\begin{picture}(4512,3574)(2864,-5198)
\put(4726,-3686){\makebox(0,0)[lb]{\smash{{\SetFigFont{10}{8}{\rmdefault}{\mddefault}{\updefault}{\color[rgb]{0,0,0}$K^{\tau}$}%
}}}}
\put(6864,-2699){\makebox(0,0)[lb]{\smash{{\SetFigFont{10}{8}{\rmdefault}{\mddefault}{\updefault}{\color[rgb]{0,0,0}$\bar{J}^{\frac{\lambda}{2}}$}%
}}}}
\put(6864,-2299){\makebox(0,0)[lb]{\smash{{\SetFigFont{10}{8}{\rmdefault}{\mddefault}{\updefault}{\color[rgb]{0,0,0}$\bar{J}^{0}$}%
}}}}
\end{picture}%

\caption{Isotopying the metric $g''|_{S^{n-1}\times[0,\frac{\lambda}{2}]}$ to the metric $h$ on the region bounded by $\bar{J}^{0}$ and $K^{\tau}$}
\label{extendh}
\end{figure}

\noindent {\bf Isotopying the metric $g$.}
In this step we will perform three successive adjustments on the metric $g$, resulting in succesive positive scalar curvature metrics $g_1, g_2$ and $g_3$. Each adjustment will result in a metric which is isotopic to the previous one and thus to $g$. 

In adjusting the metric $g$, we wish to mimic as closely as possible, the Gromov-Lawson technique applied in the construction of $g''$. The main difficulty of course is that we are prevented from making any topological change to the manifold $X$. Thus, the first adjustment is one we have seen before. The metric $g_1$ is precisely the metric $g_{std}$ constructed in Theorem \ref{IsotopyTheorem}, this being the closest we can get to the original Gromov-Lawson construction without changing the topology of $X$, see Fig. \ref{g_1}. The metric $g_1$ is the original metric $g$ outside of a tubular neighbourhood of the embedded $S^{p}$. It then transitions to a standard form so that near $S^{p}$ it is $\epsilon^{2}ds_{p}^{2}+g_{tor}^{q+1}(\delta)$ for some suitably small $\delta>0$. We will refer to this region as the {\it standard region} throughout this proof, see Fig. \ref{g_1}. From Theorem \ref{IsotopyTheorem}, we know that $g_1$ is isotopic to the original $g$. We make two important observations. 

(i) All of the data regarding the effects of the Gromov-Lawson construction on $(X,g)$, is contained in the metric $g_1$. 

(ii) The embedded disk $D_{-}^{p+1}$ agrees entirely with the non-standard part of the embedded sphere $S^{p+1}$. 

\begin{figure}[htbp]
\begin{picture}(0,0)%
\includegraphics{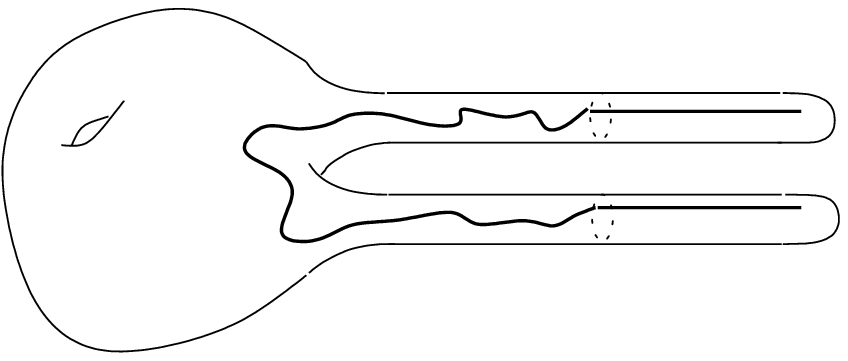}%
\end{picture}%
\setlength{\unitlength}{3947sp}%
\begingroup\makeatletter\ifx\SetFigFont\undefined%
\gdef\SetFigFont#1#2#3#4#5{%
  \reset@font\fontsize{#1}{#2pt}%
  \fontfamily{#3}\fontseries{#4}\fontshape{#5}%
  \selectfont}%
\fi\endgroup%
\begin{picture}(4705,1895)(156,-3240)
\put(1851,-2824){\makebox(0,0)[lb]{\smash{{\SetFigFont{10}{8}{\rmdefault}{\mddefault}{\updefault}{\color[rgb]{0,0,0}Transition metric}%
}}}}
\put(401,-3174){\makebox(0,0)[lb]{\smash{{\SetFigFont{10}{8}{\rmdefault}{\mddefault}{\updefault}{\color[rgb]{0,0,0}Original metric $g$}%
}}}}
\put(1164,-1924){\makebox(0,0)[lb]{\smash{{\SetFigFont{10}{8}{\rmdefault}{\mddefault}{\updefault}{\color[rgb]{0,0,0}$D_{-}^{p+1}$}%
}}}}
\put(3564,-2849){\makebox(0,0)[lb]{\smash{{\SetFigFont{10}{8}{\rmdefault}{\mddefault}{\updefault}{\color[rgb]{0,0,0}Standard metric}%
}}}}
\put(3589,-3111){\makebox(0,0)[lb]{\smash{{\SetFigFont{10}{8}{\rmdefault}{\mddefault}{\updefault}{\color[rgb]{0,0,0}$\epsilon^{2}ds_{p}^{2}+g_{tor}^{q}(\delta)$}%
}}}}
\end{picture}%

\caption{The metric $g_1$ on $X$, made standard near the embedded $S^{p}$}
\label{g_1}
\end{figure}

The aim of the next adjustment is to mimic as closely as possible the metric effects of the second surgery. The boundary of $D_{-}^{p+1}$ lies at the end of the standard region of $(X,g_1)$. Application of Theorem \ref{IsotopyTheorem} allows us adjust the metric near $D_{-}^{p+1}$ exactly as in the construction of $g''$. Near the boundary of $D_{-}^{p+1}$, the induced metric is standard and so we can transition (possibly very slowly) back to the metric $g_1$, see Fig. \ref{g_2}. The connecting cylinder $S^{n-1}\times I$ can be specified exactly as before and it is immediately obvious that the metric $g_2$ agrees with $g''$ on this region. The metric $g_3$ is now obtained by making precisely the adjustments made to the metric $g''$ in the region of $S^{n-1}\times [0,\frac{1}{2}]$.  
\\

\begin{figure}[htbp]
\begin{picture}(0,0)%
\includegraphics{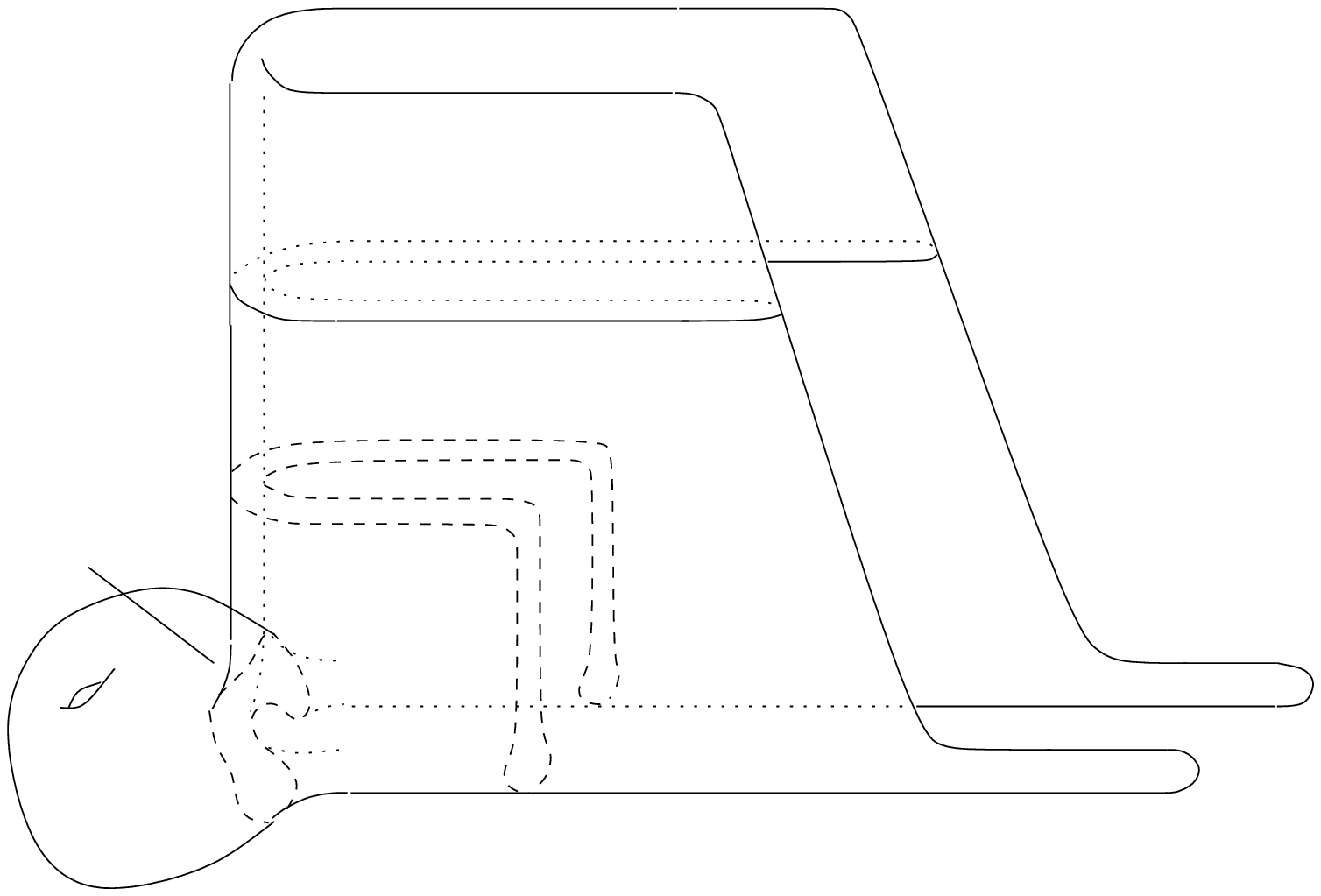}%
\end{picture}%
\setlength{\unitlength}{3947sp}%
\begingroup\makeatletter\ifx\SetFigFont\undefined%
\gdef\SetFigFont#1#2#3#4#5{%
  \reset@font\fontsize{#1}{#2pt}%
  \fontfamily{#3}\fontseries{#4}\fontshape{#5}%
  \selectfont}%
\fi\endgroup%
\begin{picture}(7788,5429)(361,-7165)
\put(1489,-7099){\makebox(0,0)[lb]{\smash{{\SetFigFont{10}{8}{\rmdefault}{\mddefault}{\updefault}{\color[rgb]{0,0,0}$X\setminus D$}%
}}}}
\put(3889,-7074){\makebox(0,0)[lb]{\smash{{\SetFigFont{10}{8}{\rmdefault}{\mddefault}{\updefault}{\color[rgb]{0,0,0}$D$}%
}}}}
\put(376,-5049){\makebox(0,0)[lb]{\smash{{\SetFigFont{10}{8}{\rmdefault}{\mddefault}{\updefault}{\color[rgb]{0,0,0}New transition metric}%
}}}}
\put(1014,-6311){\makebox(0,0)[lb]{\smash{{\SetFigFont{10}{8}{\rmdefault}{\mddefault}{\updefault}{\color[rgb]{0,0,0}Original metric}%
}}}}
\put(2426,-6711){\makebox(0,0)[lb]{\smash{{\SetFigFont{10}{8}{\rmdefault}{\mddefault}{\updefault}{\color[rgb]{0,0,0}Old transition metric}%
}}}}
\put(5551,-6761){\makebox(0,0)[lb]{\smash{{\SetFigFont{10}{8}{\rmdefault}{\mddefault}{\updefault}{\color[rgb]{0,0,0}Old standard metric}%
}}}}
\put(676,-1924){\makebox(0,0)[lb]{\smash{{\SetFigFont{10}{8}{\rmdefault}{\mddefault}{\updefault}{\color[rgb]{0,0,0}New standard metric}%
}}}}
\put(664,-2211){\makebox(0,0)[lb]{\smash{{\SetFigFont{10}{8}{\rmdefault}{\mddefault}{\updefault}{\color[rgb]{0,0,0}$g_{tor}^{p+1}(\epsilon)+g_{tor}^{q}(\delta')$}%
}}}}
\put(6301,-3799){\makebox(0,0)[lb]{\smash{{\SetFigFont{10}{8}{\rmdefault}{\mddefault}{\updefault}{\color[rgb]{0,0,0}Easy transition metric}%
}}}}
\end{picture}%

\caption{Adjusting the metric $g_1$ on a neighbourhood of the embedded disk $D_{-}^{p+1}$: Notice how no change is made near the boundary of this disk.}
\label{g_2}
\end{figure}

\noindent {\bf Comparing the metrics $g_2''$ and $g_3$.} At this stage we have constructed two metrics $g_2''$ and $g_3$ on $X$ which agree on $(X\setminus D)\cup (S^{n-1}\times[\frac{\lambda}{2}, \lambda])$. Near $S^{n-1}\times\{\frac{\lambda}{2}\}$, both metrics have the form of a standard round cylinder. The remaining region of $X$ is an $n$-dimensional disk which we denote $D'$. Here the metrics $g_2''$ and $g_3$ are quite different. Henceforth $g_2''$ and $g_3$ will denote the restriction of these metrics to the disk $D'$. As $g_2''$ and $g_3$ agree near the boundary of $D'$, to complete the proof it is enough to show that there is an isotopy from $g_2''$ to $g_3$ which fixes the metric near the boundary.

Both $g_2''$ and $g_3$ are obtained from metrics on the sphere $S^{n}$ by removing a point and pushing out a tube in the manner of the Gromov-Lawson connected sum construction. In both cases the point itself is the origin of a region which is isometrically identified with a neighbourhood of the origin in $(\mathbb{R}^{n}, h)$. We will denote by $\bar{g}_2''$ and $\bar{g}_3$, the respective sphere metrics which give rise to $g_2''$ and $g_3$ in this way, see Fig. \ref{ggggsphere} and Fig. \ref{g3sphere}.

\begin{figure}[htbp]
\begin{picture}(0,0)%
\includegraphics{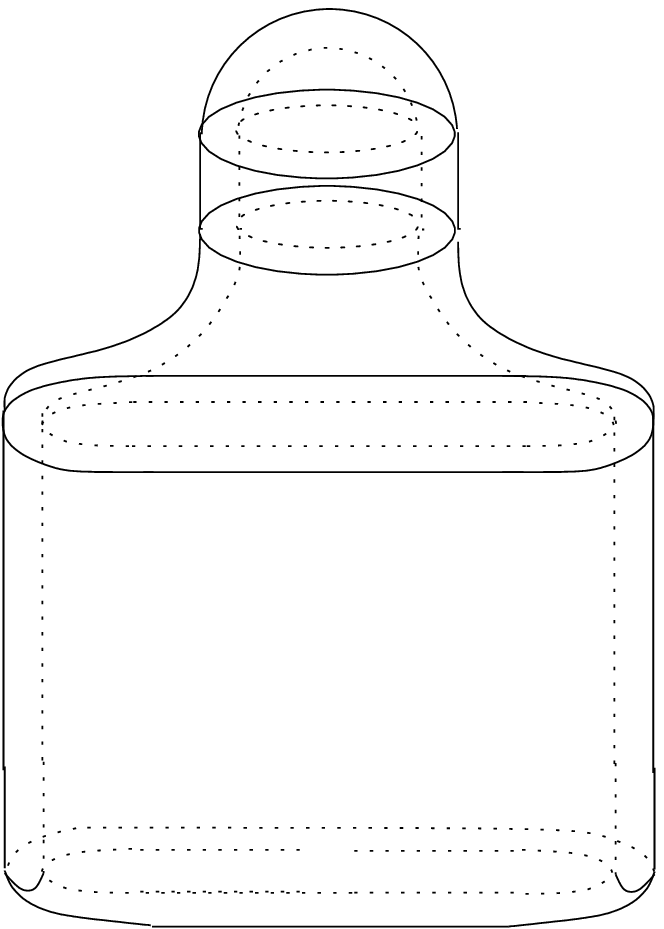}%
\end{picture}%
\setlength{\unitlength}{3947sp}%
\begingroup\makeatletter\ifx\SetFigFont\undefined%
\gdef\SetFigFont#1#2#3#4#5{%
  \reset@font\fontsize{#1}{#2pt}%
  \fontfamily{#3}\fontseries{#4}\fontshape{#5}%
  \selectfont}%
\fi\endgroup%
\begin{picture}(3394,4426)(1160,-4448)
\put(4539,-4249){\makebox(0,0)[lb]{\smash{{\SetFigFont{10}{8}{\rmdefault}{\mddefault}{\updefault}{\color[rgb]{0,0,0}$g_{Dtor}^{p+1}(\epsilon)+g_{tor}^{q}(\delta')$}%
}}}}
\put(3639,-837){\makebox(0,0)[lb]{\smash{{\SetFigFont{10}{8}{\rmdefault}{\mddefault}{\updefault}{\color[rgb]{0,0,0}$g_{tor}^{p+2}(\epsilon)+\delta^{2}ds_{q-1}^{2}$}%
}}}}
\end{picture}%

\caption{The metric $\bar{g}_2''$}
\label{ggggsphere}
\end{figure}

\begin{figure}[htbp]
\begin{picture}(0,0)%
\includegraphics{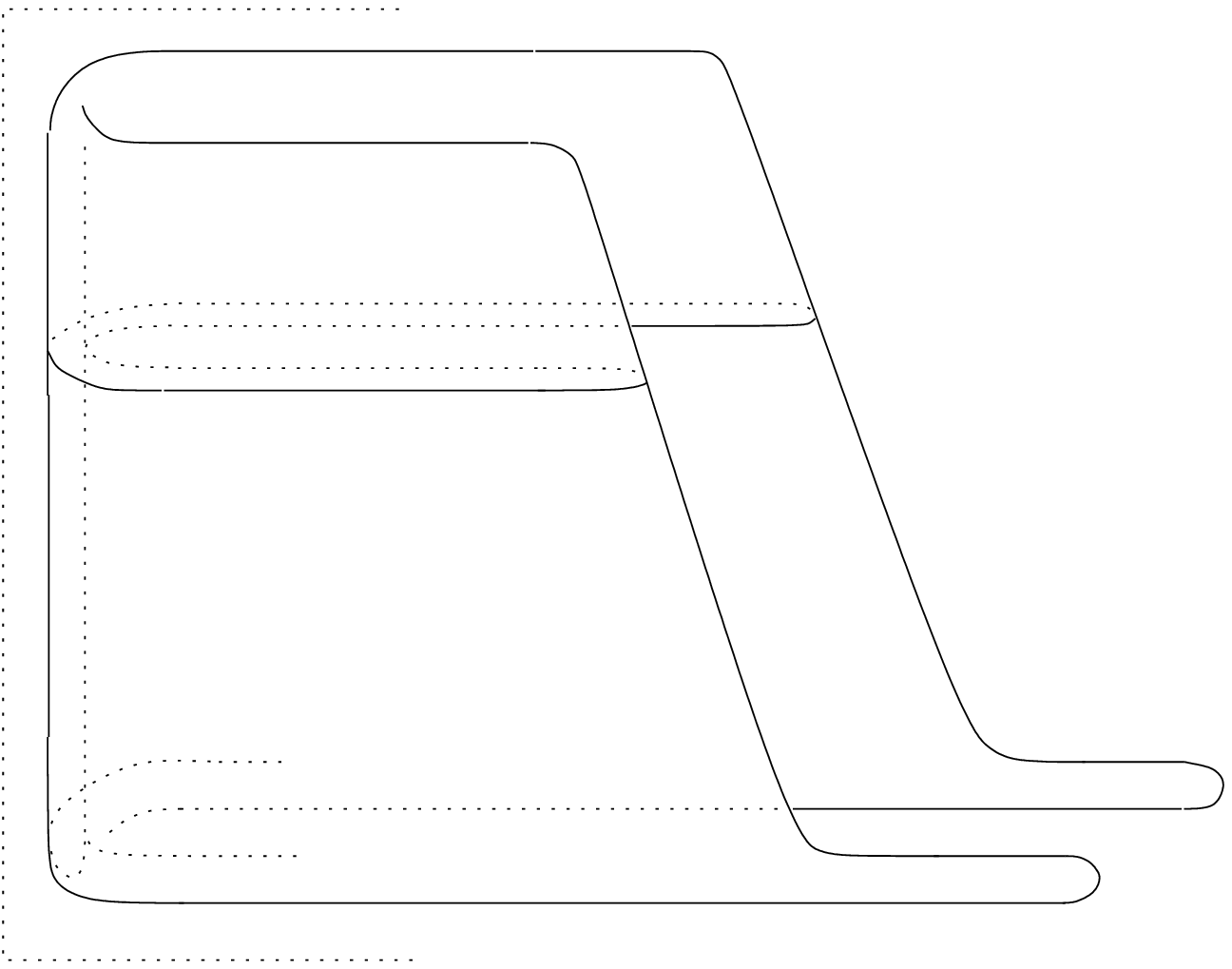}%
\end{picture}%
\setlength{\unitlength}{3947sp}%
\begingroup\makeatletter\ifx\SetFigFont\undefined%
\gdef\SetFigFont#1#2#3#4#5{%
  \reset@font\fontsize{#1}{#2pt}%
  \fontfamily{#3}\fontseries{#4}\fontshape{#5}%
  \selectfont}%
\fi\endgroup%
\begin{picture}(6188,5165)(430,-5520)
\put(5295,-5267){\makebox(0,0)[lb]{\smash{{\SetFigFont{10}{8}{\rmdefault}{\mddefault}{\updefault}{\color[rgb]{0,0,0}$\epsilon^{2}ds_p^{2}+g_{tor}^{q+1}(\delta')$}%
}}}}
\put(445,-5454){\makebox(0,0)[lb]{\smash{{\SetFigFont{10}{8}{\rmdefault}{\mddefault}{\updefault}{\color[rgb]{0,0,0}$g_{tor}^{p+1}(\epsilon)+g_{Dtor}^{q}(\delta')$}%
}}}}
\end{picture}%
\caption{The metric $\bar{g}_3$}
\label{g3sphere}
\end{figure}

The metrics $\bar{g}_2''$ and $\bar{g}_3$ isotopy easily to the respective mixed torpedo metrics $g_{tor}^{p+1,q-1}$ and $g_{tor}^{p, q}$ on $S^{n}$, see Fig. \ref{fig:mixed torpedoagain}, and are thus isotopic by the results of section \ref{prelim}, in particular Lemma \ref{toriso}. The proof of Theorem \ref{concisodoublesurgery} then follows from Theorem \ref{GLcompact}, where we showed that the Gromov-Lawson construction goes through for a compact family of psc-metrics.

\begin{figure}[htbp]
\vspace{1cm}

\includegraphics[height=30mm]{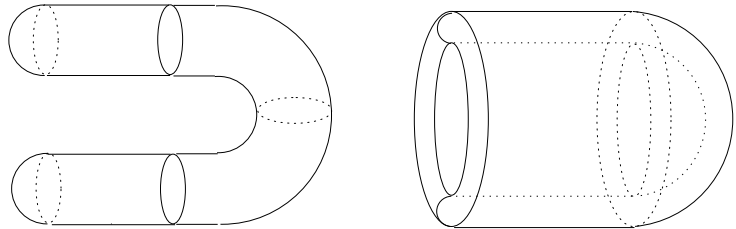}
\caption{The mixed torpedo metrics $g_{Mtor}^{p,q}$ and $g_{Mtor}^{p+1,q-1}$}
\label{fig:mixed torpedoagain}
\end{figure}

\end{proof}

\section{Gromov-Lawson concordance implies isotopy in the general case}\label{glconcisosection}

Theorem \ref{concisodoublesurgery} is the main tool needed in the proof of Theorem \ref{conciso}. The rest of the proof follows from Morse-Smale theory and all of the results needed to complete it are to be found in \cite{Smale}. Before we proceed with the proof of Theorem \ref{conciso}, it is worth discussing some of these results.

\subsection{A weaker version of Theorem \ref{conciso}}
Throughout, $\{W^{n+1},X_0,X_1\}$ is a smooth compact cobordism where $X_0$ and $X_1$ are closed manifolds of dimension $n$. Later on we will also need to assume that $X_0, X_1$ and $W$ are simply connected and that $n\geq 5$, although that is not necessary yet. Let $f$ denote a Morse triple on $W$, as defined in section \ref{GLcobordsection}. Recall this means that $f:W\rightarrow I$ is a Morse function which comes with extra data, a Riemannian metric $m$ on $W$ and a gradient-like vector field $V$ with respect to $f$ and $m$. Now by Theorem \ref{rearrangement}, $f$ can be isotopied to a Morse triple which is well-indexed. We will retain the name $f$ for this well-indexed Morse triple. As discussed in section \ref{GLcobordsection}, $f$ decomposes $W$ into a union of cobordisms $C_{0}\cup C_{1}\cup \cdots \cup{C_{n+1}}$ where each $C_{k}$ contains at most one critical level (contained in its interior) and all critical points of $f$ on this level have index $k$. For each $0\leq k\leq n+1$, we denote by $W_k$, the union $C_{0}\cup C_{1}\cup \cdots \cup{C_{k}}$. By setting $W_{-1}=X_0$, we obtain the following sequence of inclusions

\begin{equation*}
X_0=W_{-1}\subset W_{0}\subset W_{1}\subset \cdots \subset W_{n+1}=W,
\end{equation*} 

\noindent describing this decomposition. 

Suppose that $f$ has $l$ critical points of index $k$. Then for some $a<c<b$, the cobordism $C_k=f^{-1}[a,b]$, where $c$ is the only critical value between $a$ and $b$. The level set $f^{-1}(c)$ has $l$ critical points $w_1, \cdots, w_l$, each of index $k$. Associated to these critical points are trajectory disks $K_{-}^{k}(w_1), \cdots K_{-}^{k}(w_l)$ where each $K_{-}^{k}(w_i)$ has its boundary sphere $S_{-}^{k-1}(w_i)$ embedded in $f^{-1}(a)$. These trajectory disks determine a basis, by theorem 3.15 of \cite{Smale}, for the relative integral homology group $H_{k}(W_k,W_{k-1})$ which is isomorphic to $\mathbb{Z}\oplus \mathbb{Z}\oplus \cdots \oplus \mathbb{Z}$ ($l$ summands). 

We can now construct a chain complex $\C_{*}=\{\C_{k},\p\}$, where $\C_{k}=H_{k}(W_k,W_{k-1})$ and $\p:\C_{k}\rightarrow \C_{k-1}$ is the boundary homomorphism of the long exact sequence of the triple $W_{k-2}\subset W_{k-1}\subset W_{k}$. The fact that $\p^{2}=0$ is proved in theorem 7.4 of \cite{Smale}. Furthermore, theorem 7.4 gives that $H_{k}(\C_{*})=H_{k}(W,X_0)$.

\begin{Theorem}\label{concisoeasy} Let $X$ be a closed simply connected manifold with $dimX=n\geq5$ and let $g_0$ be a positive scalar curvature metric on $X$. Let $f$ be an admissible Morse function on $X\times I$ with no critical points of index $0$ or $1$.  Then the metrics $g_0$ and $g_1=\bar{g}|_{X\times\{1\}}$ are isotopic, where $\bar{g}=\bar{g}(g_0,f)$ is a Gromov-Lawson concordance on $X\times I$.
\end{Theorem}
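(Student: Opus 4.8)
\textbf{Proof proposal for Theorem \ref{concisoeasy}.}

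The plan is to reduce the general case to Theorem \ref{concisodoublesurgery} by using the algebraic handle-cancellation machinery of \cite{Smale}. First I would invoke Theorem \ref{rearrangement} (together with Corollary \ref{rearrangementmetric}) to replace $f$ by a well-indexed admissible Morse function without changing the isotopy class of the Gromov-Lawson concordance $\bar g(g_0,f)$; since $f$ has no critical points of index $0$ or $1$, all critical points have index between $2$ and $n-1$, and in particular the chain complex $\C_*=\{\C_k,\p\}$ described above is concentrated in degrees $2,\dots,n-1$. Because $W=X\times I$, we have $H_k(W,X_0)=H_k(X\times I, X\times\{0\})=0$ for all $k$, so the complex $\C_*$ is acyclic. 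Over $\mathbb Z$ an acyclic finitely generated free chain complex is \emph{elementary}: it splits as a direct sum of complexes of the form $0\to\mathbb Z\xrightarrow{\ \mathrm{id}\ }\mathbb Z\to 0$ sitting in consecutive degrees $k+1,k$. The aim is to realise this algebraic splitting geometrically, i.e. to deform $f$ so that its critical points are arranged in cancelling pairs $(w_i,z_i)$ of consecutive index, each pair satisfying conditions (a$'$), (b$'$), (c$'$) of the strong cancellation theorem, Theorem \ref{cancelthm2}.

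The technical heart is the second step: upgrading the algebraic statement ``$\partial$ has a basis in which it is a direct sum of identity blocks'' to the geometric statement ``the trajectory spheres can be isotoped so that $S^{k}_+(w_i)$ meets $S^{n-k-1}_-(z_j)$ transversally in a single point when $i=j$ and not at all when $i\neq j$''. This is precisely what the handle-rearrangement and base-change results of \cite{Smale} (the analogues of Theorems 7.6 and 8.1 there, built on the Basis Theorem and Lemma 4.6 quoted above) provide, using simple connectivity of $X$ and $W$ and the dimension hypothesis $n\geq 5$ (which guarantees $2\leq p\leq n-4$ for every critical point, so that intermediate level sets remain simply connected and the Whitney-trick/intersection-number arguments apply). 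All of these moves are isotopies of the Morse triple $f$ through admissible Morse triples, hence by Corollary \ref{GLmetricisotopy} they induce isotopies of the associated Gromov-Lawson concordances and, in particular, isotopies of the restricted metric on $X\times\{1\}$. So after these deformations we may assume $f$ is a well-indexed admissible Morse function whose critical points occur in cancelling pairs of consecutive index, the $j$-th pair living on a subcobordism $V_j\subset X\times I$ diffeomorphic to $X\times I_j$ on which $f|_{V_j}$ satisfies hypotheses (a), (b), (c) of Theorem \ref{cancelthm}.

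Finally I would run an induction on the number of cancelling pairs. Let $g_0=h_0$ be the starting metric and let $h_j$ denote the restriction of $\bar g(g_0,f)$ to the top of $V_1\cup\cdots\cup V_j$. By construction of the Gromov-Lawson cobordism in Theorem \ref{GLcob}, the metric $\bar g$ restricted to $V_{j+1}$ is exactly the Gromov-Lawson concordance $\bar g(h_j, f|_{V_{j+1}})$ over a pair of cancelling surgeries (after the minor standardisation of the second surgery sphere from Lemma \ref{neatembedding}, which Theorem \ref{concisodoublesurgery} already incorporates). Hence Theorem \ref{concisodoublesurgery} applies to give $h_{j+1}$ isotopic to $h_j$; composing these isotopies yields $g_1=h_N$ isotopic to $g_0$, completing the proof. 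The main obstacle is the second step — producing, purely from the acyclicity of $\C_*$ and simple connectivity, a geometric rearrangement of the handles into cancelling pairs of consecutive index — but this is exactly the content of the Morse-Smale handle-cancellation theory of \cite{Smale}, which we are entitled to quote; the contribution of Theorem \ref{concisodoublesurgery} is that the \emph{metric} survives each elementary cancellation up to isotopy.
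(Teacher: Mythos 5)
Your proposal is correct and follows essentially the same route as the paper: well-index $f$ via Theorem \ref{rearrangement}/Corollary \ref{rearrangementmetric}, use exactness of the Morse chain complex of $(X\times I, X)$ to choose bases with $\p b_i^{k+1}=z_i^{k}$, realise them geometrically by Smale's basis theorem, use simple connectivity and the strong cancellation theorem to put each matched pair of trajectory spheres in single transverse intersection position, and then conclude with Theorem \ref{concisodoublesurgery}, all while tracking the metric through Corollary \ref{GLmetricisotopy}. The only (inessential) difference is organisational: you separate the cancelling pairs into individual sub-cylinders $V_j$ and induct pair by pair, whereas the paper keeps all pairs at a given consecutive-index interface on two critical levels and applies Theorem \ref{concisodoublesurgery} finitely many times independently, using the mutual disjointness of the connecting trajectory arcs.
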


\begin{proof}
By Corollary \ref{rearrangementmetric}, we may assume that $f$ is well indexed. Using the notation above, $f$ gives rise to a decomposition $X\times I=C_2\cup C_3\cup \cdots \cup C_{n-2}$ which in turn gives rise to a chain complex
\begin{equation*}
\C_{n-2}\rightarrow \C_{n-3}\rightarrow\cdots\rightarrow\C_{k+1}\rightarrow\C_{k}\rightarrow\cdots\rightarrow\C_2
\end{equation*}
 
\noindent where each $\C_{k}$ is a free abelian group. (Recall that all critical points of an admissible Morse function have index which is less than or equal to $n-2$.) Since $H_{*}(X\times I, X)=0$, it follows that the above sequence is exact. Thus, for each $\C_{k+1}$ we may choose elements $z_1^{k+1},\cdots, z_{l_{k+1}}^{k+1}\in \C_{k+1}$ and $b_1^{k+1},\cdots,b_{l_{k}}^{k+1}\in \C_{k+1}$ so that $\p(b_i^{k+1})=z_{i}^{k}$ for $i=1,\cdots,l_k$. Then $z_1^{k+1},\cdots, z_{l_{k+1}}^{k+1}, b_1^{k+1},\cdots,b_{l_{k}}^{k+1}$ is a basis for $\C_{k+1}$.

We will now restrict our attention to the cobordism $C_k\cup C_{k+1}$. Now let $w_1^{k+1},w_2^{k+1},\cdots, w_{l_{k+1}+l_{k}}^{k+1}$ denote the critical points of $f$ inside of $C_{k+1}$ and $w_1^{k},w_2^{k},\cdots, w_{l_{k}+l_{k-1}}^{k}$ denote the critical points of $f$ inside of $C_{k}$. As $2\leq k<k+1\leq n-2$, it follows from theorem 7.6 of \cite{Smale}, that $f$ can be perturbed so that the trajectory disks $K_{-}(w_1^{k+1}), \cdots K_{-}(w_{l_{k+1}+l_k}^{k+1})$ and $K_{-}(w_1^{k}), \cdots K_{-}(w_{l_{k}+l_{k-1}}^{k})$ represent the chosen bases for $\C_{k+1}$ and $\C_{k}$ respectively. 

Denote by $w_1^{k},w_2^{k},\cdots, w_{l_{k}}^{k}$, those critical points on $C_k$ which correspond to the elements $z_1^k,z_2^{k}\cdots, z_{l_{k}}^k$ of $\C_k$, i.e. the kernel of $\p:\C_k\rightarrow \C_{k-1}$. Denote by $w_1^{k+1},w_2^{k+2},\cdots,w_{l_k}^{k+1}$, those critical points in $C_{k+1}$ which correspond to the elements $b_1^{k+1},\cdots,b_{l_{k}}^{k+1}\in \C_{k+1}$. A slight perturbation of $f$, replaces $C_k\cup C_{k+1}$ with the decomposition $C_{k}'\cup C_{k}''\cup C_{k+1}''\cup C_{k+1}'$, see Fig. \ref{cancellingcobordisms}. Here $C_{k}'\cup C_{k}''$ is diffeomorphic to $C_k$, however, the critical points $w_1^{k},w_2^{k},\cdots, w_{l_{k}}^{k}$ have been moved to a level set above their orginal level, resulting in a pair of cobordisms each with one critical level. Similarly, we can move the critical points $w_1^{k+1},w_2^{k+2},\cdots,w_{l_k}^{k+1}$ down to a level set below their original level to replace $C_{k+1}$ with $C_{k+1}''\cup C_{k+1}'$. 
 
\begin{figure}[htbp]
\begin{picture}(0,0)%
\includegraphics{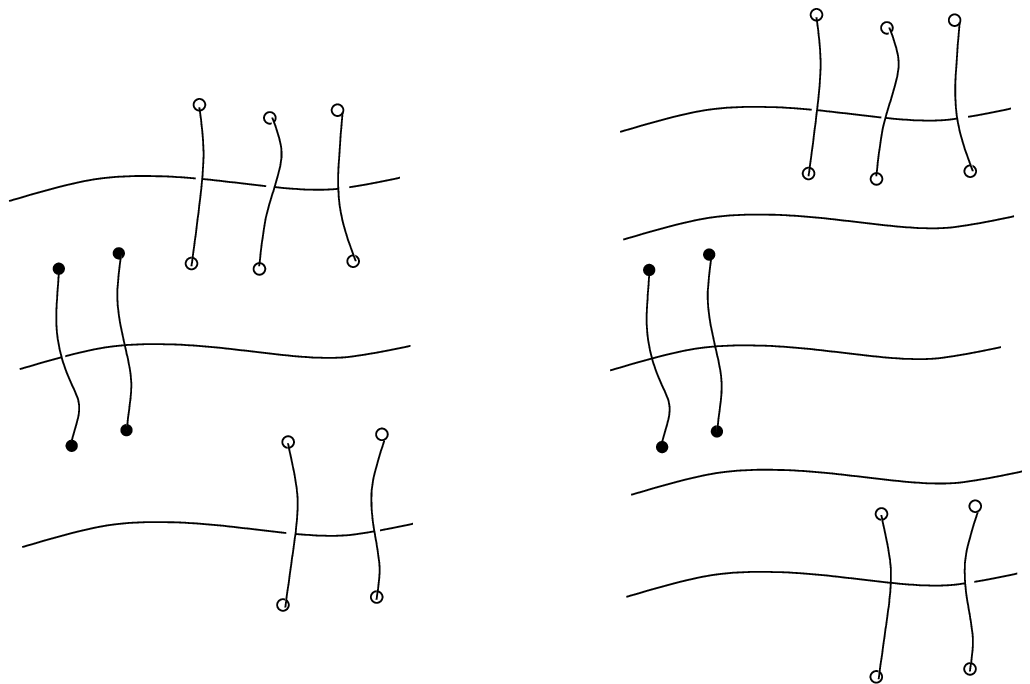}%
\end{picture}%
\setlength{\unitlength}{3947sp}%
\begingroup\makeatletter\ifx\SetFigFont\undefined%
\gdef\SetFigFont#1#2#3#4#5{%
  \reset@font\fontsize{#1}{#2pt}%
  \fontfamily{#3}\fontseries{#4}\fontshape{#5}%
  \selectfont}%
\fi\endgroup%
\begin{picture}(5755,3248)(1161,-4605)
\put(1176,-2761){\makebox(0,0)[lb]{\smash{{\SetFigFont{10}{8}{\rmdefault}{\mddefault}{\updefault}{\color[rgb]{0,0,0}$C_{k+1}$}%
}}}}
\put(1476,-3599){\makebox(0,0)[lb]{\smash{{\SetFigFont{10}{8}{\rmdefault}{\mddefault}{\updefault}{\color[rgb]{0,0,0}$C_k$}%
}}}}
\put(6864,-2161){\makebox(0,0)[lb]{\smash{{\SetFigFont{10}{8}{\rmdefault}{\mddefault}{\updefault}{\color[rgb]{0,0,0}$C_{k+1}'$}%
}}}}
\put(6814,-2761){\makebox(0,0)[lb]{\smash{{\SetFigFont{10}{8}{\rmdefault}{\mddefault}{\updefault}{\color[rgb]{0,0,0}$C_{k+1}''$}%
}}}}
\put(6801,-3374){\makebox(0,0)[lb]{\smash{{\SetFigFont{10}{8}{\rmdefault}{\mddefault}{\updefault}{\color[rgb]{0,0,0}$C_k''$}%
}}}}
\put(6901,-3936){\makebox(0,0)[lb]{\smash{{\SetFigFont{10}{8}{\rmdefault}{\mddefault}{\updefault}{\color[rgb]{0,0,0}$C_k'$}%
}}}}
\end{picture}%
\caption{Replacing $C_k\cup C_{k+1}$ with $C_{k}'\cup C_{k}''\cup C_{k+1}''\cup C_{k+1}'$}
\label{cancellingcobordisms}
\end{figure}

We now consider the the cobordism $C_{k}''\cup C_{k+1}''$. For some $a<c_k<c<c_{k+1}<b$, $C_{k}''\cup C_{k+1}''=f^{-1}[a,b]$, where $f^{-1}(c_k)$ contains all of the critical points of index $k$ and $f^{-1}(c_{k+1})$ contains all of the critical points of index $k+1$. Each critical point $w_i^{k}$ of index $k$ is associated with a critical point $w_i^{k+1}$ of index $k+1$. Using Van Kampen's theorem, we can show that $f^{-1}[a,b],f^{-1}(a)$ and $f^{-1}(b)$ are all simply connected, see remark 1 on page 70 of \cite{Smale}. 

Since $\p(b_i^{k+1})=z_{i}^{k}$, each pair of trajectory spheres has intersection $1$ or $-1$. The strong cancellation theorem, Theorem \ref{cancelthm2}, now gives that $f$ can be perturbed so that each pair of trajectory spheres intersects transversely on $f^{-1}(c)$ at a single point and that $f^{-1}[a,b]$ is diffeomorphic to $f^{-1}(a)\times[a,b]$.

Now consider the restriction of the metric $\bar{g}=\bar{g}(g_0,f)$ to $f^{-1}[a,b]$. Let $g_a$ and $g_b$ denote the induced metrics on  $f^{-1}(a)$ and $f^{-1}(b)$ respectively. The trajectories connecting the critical points of the first critical level with trajectory spheres in $f^{-1}(a)$ are mutually disjoint, as are those connecting critical points on the second critical level with the trajectory spheres on $f^{-1}(b)$. In turn, pairs of cancelling critical points can be connected by mutually disjoint arcs where each arc is the union of intersection points of the corresponding trajectory spheres. The metric $g_b$ is therefore obtained from $g_a$ by finitely many independent applications of the construction in Theorem \ref{concisodoublesurgery} and so $g_a$ and $g_b$ are isotopic. By repeating this argument as often as necessary we show that $g_0$ is isotopic to $g_1$. 
\end{proof}

\subsection{The proof of the main theorem}
\indent We can now complete the proof of Theorem \ref{conciso}. To do this, we must extend Theorem \ref{concisoeasy} to deal with the case of index $0$ and index $1$ critical points.
\\

\noindent {\bf Theorem \ref{conciso}.} {\em Let $X$ be a closed simply connected manifold with $dimX=n\geq5$ and let $g_0$ be a positive scalar curvature metric on $X$. Suppose $\bar{g}=\bar{g}(g_0,f)$ is a Gromov-Lawson concordance with respect to $g_0$ and an admissible Morse function $f:X\times I\rightarrow I$. Then the metrics $g_0$ and $g_1=\bar{g}|_{X\times\{1\}}$ are isotopic.}

\begin{proof} We will assume that $f$ is a well-indexed admissible Morse function on $W$. Using the notation of the previous theorem, $f$ decomposes $W$ into a union of cobordisms $C_{0}\cup C_{1}\cup \cdots \cup{C_{n-2}}$. In the case where $f$ has index $0$ critical points, $f$ can be perturbed so that for some $\epsilon>0$, $f^{-1}[0,\epsilon]$ contains all index $0$ critical points along with an equal number of index $1$ critical points. These critical points are arranged so that all index $0$ critical points are on the level $f^{-1}(c_0)$ and all index $1$ critical points are on the level set $f^{-1}(c_1 )$ where $0<c_0<c<c_1<\epsilon$. In theorem 8.1 of \cite{Smale}, it is proved that these critical points can be arranged into pairs of index $0$ and index $1$ critical points where each pair is connected by mutually disjoint arcs and each pair satisfies the conditions of theorem \ref{cancelthm}. Thus, Theorem \ref{concisodoublesurgery} gives that the metric $g_0$ is isotopic to the metric $g_\epsilon=\bar{g}(g_0,f)|_{f^{-1}(\epsilon)}$. 

If $f$ has no other critical points of index $1$, then Theorem \ref{concisoeasy} gives that $g_\epsilon$ is isotopic to $g_{1}$, completing the proof. We thus turn our attention to the case where $f$ has excess index $1$ critical points which do not cancel with critical points of index $0$. Each of these critical points is asscociated with a critical point of index $2$ and the intersection number of the corresponding trajectory spheres is $1$ or $-1$. Unfortunately, theorem \ref{cancelthm2} does not apply here as the presence of index $1$ critical points means the upper boundary component of $W_{1}$ is not simply connected. In turn, this prevents us from applying Theorem \ref{concisodoublesurgery}.

There is however, another way to deal with these excess index $1$ critical points which we will now summarise. It is possible to add in auxiliary pairs of index $2$ and index $3$ critical points. This can be done so that the newly added pairs have trajectory spheres which intersect transversely at a point and so satisfy the conditions of theorem \ref{cancelthm}. Furthermore, for each excess index $1$ critical point, such a pair of auxiliary critical points may be added so that the newly added index $2$ critical point has an incoming trajectory sphere which intersects transversely at a single point with the outgoing trajectory sphere of the index $1$ critical point. This allows us to use theorem \ref{cancelthm} and hence Theorem \ref{concisodoublesurgery} with respect to these index $1$, index $2$ pairs. The old index $2$ critical points now all have index $3$ critical points with which to cancel and so we can apply Theorem \ref{concisoeasy} to complete the proof. In effect, the excess index $1$ critical points are replaced by an equal number of index $3$ critical points. The details of this construction are to be found in the proof of theorem 8.1 of \cite{Smale} and so we will provide only a rough outline. The key result which makes this possible is a theorem by Whitney, which we state below.

\begin{Theorem}\cite{Whitney}\label{Whitney}
If two smooth embeddings of a smooth manifold $M$ of dimension $m$ into a smooth manifold $N$ of dimension $n$ are homotopic, then they are smoothly isotopic provided $n\geq 2m+2$. 
\end{Theorem}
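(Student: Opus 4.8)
The plan is to reduce the statement to the construction of a \emph{level-preserving} embedding of $M\times I$ into $N\times I$, and then to produce such an embedding by a parametric general-position argument in which the hypothesis $n\ge 2m+2$ is exactly what is required.

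First I would set up the reduction. After a routine smoothing (which does not change the homotopy class), let $F\colon M\times I\to N$ be a smooth homotopy with $F_0=f_0$ and $F_1=f_1$, and define
\begin{equation*}
\hat F\colon M\times I\longrightarrow N\times I,\qquad \hat F(x,t)=(F(x,t),t).
\end{equation*}
This map is level-preserving, meaning $\pi_I\circ\hat F=\pi_I$ with $\pi_I$ the projection onto the $I$-factor, and on suitable neighbourhoods of $M\times\{0\}$ and $M\times\{1\}$ it is an embedding, since there its restriction to each slice is the embedding $f_0$ (resp.\ $f_1$) and the $I$-coordinate is recorded faithfully. If I can deform $\hat F$, through level-preserving maps and keeping it fixed near $M\times\partial I$, to a level-preserving \emph{embedding} $G$, then writing $G(x,t)=(g_t(x),t)$ exhibits $\{g_t\}_{t\in I}$ as a smooth isotopy from $f_0$ to $f_1$, which is the assertion.

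Next I would carry out the general-position step in two stages. A level-preserving map is a level-preserving immersion precisely when every slice map is an immersion; the locus in $M\times I$ where a slice map of a generic family fails to be immersive has codimension $n-m+1$ (the codimension of the corank-one fibrewise jets), hence expected dimension $(m+1)-(n-m+1)=2m-n<0$ because $n\ge 2m+2$, so by the parametric transversality theorem a $C^\infty$-small perturbation of $\hat F$, supported in the interior of $I$, makes $\hat F$ a level-preserving immersion that still equals the original near $M\times\partial I$. For double points, consider the triples $(x,y,t)$ with $x\ne y$ and $F_t(x)=F_t(y)$: these are cut out in $(M\times M\setminus\Delta_M)\times I$, of dimension $2m+1$, by the condition that $(F_t(x),F_t(y))$ lie in the diagonal $\Delta_N\subset N\times N$, which has codimension $n$; thus for a generic level-preserving immersion this locus has dimension $2m+1-n<0$, i.e.\ is empty, again exactly because $n\ge 2m+2$. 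A further small perturbation, still supported in the interior so as not to disturb $\hat F$ near the ends where it is already injective, removes all double points. The resulting $G$ is the desired level-preserving embedding, completing the proof.

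The main obstacle is the bookkeeping in the parametric general-position step: one must work with the bundle of fibrewise (vertical) jets over $M\times I$, invoke a transversality theorem valid for that bundle, and check that each relevant degeneracy stratum --- failure of immersivity and coincidence of values --- still has negative expected dimension over the $(m+1)$-dimensional parameter space $M\times I$, with the perturbations taken relative to neighbourhoods of $M\times\partial I$. This is precisely where $n\ge 2m+2$ enters: for a single map $M^m\to N^n$, general position requires only $n\ge 2m+1$ for injectivity, but the extra parameter $t$ raises the dimension of the relevant source by one, so one more dimension of target is needed. (Alternatively one can run Whitney's original argument, putting $F$ in general position so that self-intersections along the homotopy occur only in cancelling pairs of isolated transverse double points and then applying the Whitney trick to eliminate them; I prefer the direct argument above, since it avoids the connectivity hypotheses that the Whitney trick would otherwise demand.)
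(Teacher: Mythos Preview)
The paper does not actually prove this theorem: it is stated with a citation to Whitney's original paper \cite{Whitney} and invoked as a black box in the proof of Theorem~\ref{conciso} (to move the trajectory circle $S_{-}^{1}(y)$ onto the embedded circle $S$ of Lemma~\ref{idealcircle}). So there is no ``paper's own proof'' to compare against.

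Your argument is the standard general-position proof and the dimension counts are correct: for the immersion step, the corank-$1$ singular locus of a generic one-parameter family has codimension $n-m+1$ in $M\times I$, giving expected dimension $2m-n<0$; for injectivity, the slice-wise double-point set in $(M\times M\setminus\Delta_M)\times I$ has expected dimension $2m+1-n<0$. Both inequalities use exactly $n\ge 2m+2$. Two small points worth tightening: first, to conclude that an injective immersion on each slice is an embedding you are implicitly using compactness of $M$ (which holds in the paper's application, $M=S^1$, but is not stated in the theorem); second, the perturbations must preserve the level-preserving structure, which is automatic if you perturb only the $N$-component of $\hat F$ and invoke the relative version of the multijet transversality theorem so as to fix the map near $M\times\partial I$. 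With those caveats the sketch is sound.
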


Choose $\delta>0$ so that the metric $\bar{g}$ is a product metric on $X\times [1-\delta,1]$. Thus, $f$ has no critical points here either. On any open neighbourhood $U$ contained inside $f^{-1}[1-\delta,1]$, it is possible to replace the function $f$ with a new function $f_1$, so that outside $U$, $f_1=f$, but inside $U$, $f$ has a pair of critical points, $y$ and $z$ with respective indices $2$ and $3$ and so that on the cylinder $f^{-1}[1-\delta, 1]$, $f_1$ satisfies the conditions of Theorem \ref{cancelthm}. For a detailed proof of this fact, see lemma 8.2 of \cite{Smale}.  

\begin{Remark}
The Morse functions $f$ and $f_1$ are certainly not isotopic, as they have different numbers of critical points. However, this is not a problem as the following comment makes clear.
\end{Remark}

Recall that the metric $\bar{g}|_{X\times \{1-\delta\}}=g_1$ and that our goal is to show that $g_1$ is isotopic to $g_0$. By Theorem \ref{concisodoublesurgery}, the metric $\bar{g}|_{X\times \{1-\delta\}}$ is isotopic to $\bar{g}(g_0, f_1)|_{X\times\{1\}}$ and so it is enough to show that $g_0$ is isotopic to $\bar{g}(g_0, f_1)|_{X\times\{1\}}$ for some such $f_1$.  

For simplicity, we will assume that $f$ has no index $0$ critical points. We will assume that all of the critical points of index $1$ are on the level $f=c_1$. Choose points $a<c_1<b$ so that $f^{-1}[a,b]$ contains no other critical levels except $f^{-1}(c_1)$. Let $w$ be an index $1$ critical point of $f$. Emerging from $w$ is an outward trajectory whose intersection with the level set $f^{-1}(b)$ is an $n-1$-dimensional sphere $S_{+}^{n-1}(z)$. The following lemma is lemma 8.3 of \cite{Smale}. 

\begin{Lemma}\label{idealcircle}
There exists an embedded $1$-sphere $S=S^{1}$ in $f^{-1}(b)$ which intersects transversely with $S_{+}^{n-1}(z)$ at a single point and meets no other outward trajectory sphere. 
\end{Lemma}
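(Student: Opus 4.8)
The plan is to construct the circle $S=S^1$ directly, using the fact that $n-1$ is large compared to $1$, so that a generically chosen circle will intersect only the trajectory sphere we want. First I would set up the picture in the level set $Y=f^{-1}(b)$, which is an $n$-dimensional closed manifold. Inside $Y$ sit the outgoing trajectory spheres $S_+^{n-1}(w_j)$ of all index-$1$ critical points $w_1,\dots,w_r$ on the level $f^{-1}(c_1)$ (write $z=w$ for the chosen one), each an embedded $(n-1)$-sphere with trivial normal bundle, and these spheres are mutually disjoint. Since $\dim Y = n$ and each $S_+^{n-1}(w_j)$ has codimension $1$, a small loop that crosses $S_+^{n-1}(z)$ once transversely and stays in a tubular neighbourhood of a short arc will do the job, provided we can push it off all the other spheres.

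The key steps, in order, are as follows. Step one: pick a point $x_0 \in Y$ not lying on any of the trajectory spheres, and a point $x_1 \in S_+^{n-1}(z)$, and choose an embedded arc $\alpha$ from $x_0$ to $x_1$ meeting $S_+^{n-1}(z)$ only at its endpoint $x_1$ and transversely there; by general position (arcs are $1$-dimensional, the other spheres are $(n-1)$-dimensional, and $1+(n-1)=n=\dim Y$, but we are free to perturb the arc into the complement of the other spheres since for $n\geq 5$ we have enough room — indeed $1 + (n-1) < 2n$ and the complement $Y \setminus \bigcup_{j\neq z} S_+^{n-1}(w_j)$ is open and, being the complement of finitely many codimension-$1$ submanifolds, path-connected through $x_0$ and a point just across $S_+^{n-1}(z)$ from $x_1$). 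Step two: take a normal pushoff of $S_+^{n-1}(z)$ near $x_1$ to get a point $x_1'$ on the opposite side, and an arc $\alpha'$ from $x_1'$ back to $x_0$, again meeting no trajectory sphere except crossing $S_+^{n-1}(z)$ once more — but here is where the orientation bookkeeping enters: we instead form $S$ as the boundary of a small embedded $2$-disk transverse to $S_+^{n-1}(z)$ at one point. Concretely, take a normal slice: near $x_1$, $Y$ looks like $S_+^{n-1}(z)$-direction $\times$ $\mathbb{R}$, and $S$ is the boundary of a $2$-disk $D^2$ spanning from a point on one side, crossing once. Step three: use transversality and the dimension count to guarantee $D^2$ (hence $S$) can be isotoped off the remaining spheres $S_+^{n-1}(w_j)$, $j\neq z$: here $\dim D^2 + \dim S_+^{n-1}(w_j) = 2 + (n-1) = n+1 > n = \dim Y$, so this does NOT follow from Lemma 4.6 of \cite{Smale} applied to the disk — instead one applies it to $S$ itself, where $\dim S + \dim S_+^{n-1}(w_j) = 1 + (n-1) = n$, which still fails the strict inequality $m+n<v$.

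So the honest approach must be the one in \cite{Smale}: build $S$ inside a tubular neighbourhood $U \cong S_+^{n-1}(z)\times D^2$ of $S_+^{n-1}(z)$ in $Y$, taking $S = \{pt\}\times \partial D^2$ for a point of $S_+^{n-1}(z)$, so that automatically $S$ meets $S_+^{n-1}(z)$ transversely at exactly one point and lies in $U$. Then I would argue that $S$ meets no other outgoing trajectory sphere: the spheres $S_+^{n-1}(w_j)$ for $j\neq z$ are disjoint from $S_+^{n-1}(z)$, hence (after possibly shrinking $U$) disjoint from $U$, hence disjoint from $S$. The transversality/intersection-number claim at the single point is immediate from the product structure. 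Thus the real content is just: (i) $S_+^{n-1}(z)$ has a product neighbourhood in $Y$ (which holds because it is an embedded sphere with trivial normal bundle, the normal bundle being trivial since it bounds a trajectory disk in the cobordism), and (ii) shrinking $U$ to avoid the finitely many other closed disjoint submanifolds. The step I expect to be the genuine (though minor) obstacle is verifying that the normal bundle of $S_+^{n-1}(z)$ in $f^{-1}(b)$ is trivial so that the product neighbourhood $U$ exists; this follows because $S_+^{n-1}(z)$ is the boundary of the outward trajectory disk $K_+^{n}(z)$ and the framing of $w$ as a Morse critical point of index $1$ equips this trajectory sphere with a canonical trivialisation of its normal bundle in the ambient level set, exactly as in the surgery description of section \ref{surgerysection}. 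Everything else is general position in a manifold of dimension $n\geq 5$, and I would cite Lemma 4.6 of \cite{Smale} only if a cleaner disjointness argument for $U$ is needed, noting that here it applies comfortably since a point (dimension $0$) and each $S_+^{n-1}(w_j)$ satisfy $0 + (n-1) < n$.
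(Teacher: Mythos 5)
Your final construction cannot work, and the failure is not cosmetic. The sphere $S_{+}^{n-1}(z)$ has codimension one in the level set $f^{-1}(b)$, which has dimension $n$; its tubular neighbourhood is therefore a trivial \emph{line} bundle $S^{n-1}\times(-1,1)$, not $S^{n-1}\times D^{2}$, and there is no fibre circle $\{pt\}\times\partial D^{2}$ to take. Worse, no repair inside a tubular neighbourhood is possible: any loop contained in $S^{n-1}\times(-1,1)$ has zero mod $2$ intersection number with the core $S^{n-1}\times\{0\}$ (the Poincar\'e dual of the core restricts from the interval factor), so it can never meet $S_{+}^{n-1}(z)$ transversely in exactly one point. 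The same parity obstruction kills your intermediate idea of taking $S=\partial D^{2}$ for a small embedded $2$-disk transverse to $S_{+}^{n-1}(z)$: the intersection of such a disk with a closed codimension-one submanifold is a compact $1$-manifold, so its boundary circle meets the sphere in an \emph{even} number of points. A circle hitting a codimension-one sphere algebraically once cannot be localised near that sphere; it must leave the neighbourhood and return through the rest of the manifold.

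That is why the real content of the lemma is the step you asserted in passing and then abandoned: one takes a short arc crossing $S_{+}^{n-1}(z)$ transversely at one point and must then join its two endpoints by a path missing \emph{all} the outward trajectory spheres, finally smoothing and embedding the loop by general position (possible since $n\geq 5>2\cdot 1$). Your justification for this closing-up step, namely that ``the complement of finitely many codimension-$1$ submanifolds is path-connected,'' is false in general -- an equatorial $S^{n-1}$ separates $S^{n}$ -- and proving it here is exactly the point of Lemma 8.3 of \cite{Smale}, which the paper cites rather than reproves. The argument uses the Morse-theoretic structure: flowing down the gradient-like vector field identifies $f^{-1}(b)$ minus the outward trajectory spheres with $f^{-1}(a)\cong X$ minus finitely many disjoint $n$-balls (the outward spheres are the belt spheres of the $1$-handles attached along the $S^{0}$ inward spheres), and this complement is connected because $X$ is connected and $n\geq 2$; in particular the two endpoints of the short transverse arc, which lie in the two half-tubes of the handle containing $z$, can be joined by a path avoiding every trajectory sphere. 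Without this flow/handle argument (or some substitute for it), your proposal has a genuine gap, and the construction you finally settle on is impossible.
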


Now replace $f$ with the function $f_1$ above. By Theorem \ref{rearrangement}, the function $f_1$ can be isotopied through admissible Morse functions to a well-indexed one $\bar{f_1}$. Consequently, the metric $\bar{g}(g_0, f_1)$ can be isotopied to a Gromov-Lawson concordance $\bar{g}(g_0,\bar{f_1})$. The critical points $y$ and $z$ have now been moved so that $y$ is on the same level as all of the other index $2$ critical points. There is a trajectory sphere $S_{-}^{1}(y)$, which is converging to $y$, embedded in $f^{-1}(b)$. Theorem \ref{Whitney} implies that $\bar{f_1}$ can be isotopied so as to move $S_{-}^{1}(y)$ onto the embedded sphere $S$ of Lemma \ref{idealcircle}. The resulting well-indexed admissible Morse function has the property that the outward trajectory spheres of index $1$ critical points intersect the inward trajectory spheres of their corresponding index $2$ critical points transversely at a point. 

We can make an arbitrarily small adjustment to $\bar{f_1}$ so that the index $2$ critical points which correspond to the kernel of the map $\p:\C_3\rightarrow \C_2$ are on a level set just above the level containing the remaining index $2$ critical points. Let $f^{-1}(c)$ denote a level set between these critical levels. Then $f^{-1}[0,c]$ is diffeomorphic to $X\times [0,c]$ and, by Theorem \ref{concisodoublesurgery}, the metric $g_0$ is isotopic to the metric $\bar{g}(g_0,\bar{f_1})|_{f^{-1}(c)}$. Furthermore, the cobordism $f^{-1}[c,1]$ is diffeomorphic to $X\times [c,1]$ and the restriction of $\bar{f_1}$ satisfies all of the conditions of Theorem \ref{concisoeasy}. This means that $\bar{g}(g_0,\bar{f_1})|_{f^{-1}(c)}$ is isotopic to $\bar{g}(g_0,\bar{f_1})|_{f^{-1}(1)}$, completing the proof.

\end{proof}

\section{Appendix: Curvature calculations from the Surgery Theorem}\label{append}

Below we provide detailed proofs of some Lemmas used in section \ref{surgerysection}, in the proof of Theorem \ref{IsotopyTheorem}. Lemma \ref{GLlemma1app}, in particular is exactly Lemma 1 from \cite{GL1}. We provide a detailed proof below. Lemmas \ref{principalMapp} and \ref{scalarMapp} below are curvature calculations. The resulting formulae arise in Gromov and Lawson's original proof of the Surgery Theorem, see \cite{GL1} or \cite{RS}.  

\begin{Lemma} \label{GLlemma1app} {\rm{(Lemma 1, \cite{GL1})}}

\noindent (a) The principal curvatures of the hypersurfaces $S^{n-1}{(\epsilon})$ in D are each of the form $\frac{-1}{\epsilon}+O(\epsilon)$ for $\epsilon$ small. 

\noindent (b) Furthermore, let $g_\epsilon$ be the induced metric on $S^{n-1}{(\epsilon})$ and let $g_{0,\epsilon}$ be the standard Euclidean metric of curvature $\frac{1}{\epsilon^2}$. Then as $\epsilon\rightarrow 0, \frac{1}{\epsilon^2}{g_\epsilon}\rightarrow{\frac{1}{\epsilon^2}g_{0,\epsilon}}=g_{0,1}$ in the $C^{2}$-topology.

\noindent Below we use the following notation. A function $f(r)$ is $O(r)$ as $r\rightarrow 0$ if $\frac{f(r)}{r}\rightarrow constant$ as $r\rightarrow 0$.\\
\end{Lemma}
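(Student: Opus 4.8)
The statement is a purely local Riemannian-geometry computation in a normal coordinate ball, so the plan is to work in normal coordinates $x=(x^1,\dots,x^n)$ centred at $z$ and use the standard expansion of the metric. First I would recall that in geodesic normal coordinates the metric has the Taylor expansion
\begin{equation*}
g_{ij}(x) = \delta_{ij} - \tfrac{1}{3} R_{ikjl}(z)\, x^k x^l + O(|x|^3),
\end{equation*}
so that on the geodesic sphere $S^{n-1}(\epsilon) = \{|x|=\epsilon\}$ the induced metric $g_\epsilon$ differs from the Euclidean sphere metric $g_{0,\epsilon}$ of radius $\epsilon$ by terms of order $\epsilon^2$ (with first and second derivatives in the angular variables also controlled to the appropriate order, since the error term is a smooth tensor field vanishing to second order at the origin). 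Rescaling by $1/\epsilon^2$ then gives $\frac{1}{\epsilon^2} g_\epsilon = g_{0,1} + O(\epsilon^2)$ in the $C^2$-topology on $S^{n-1}$, which is exactly part (b). This is the routine part.

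For part (a), the plan is to compute the second fundamental form of $S^{n-1}(\epsilon)$ with respect to the inward (or outward) unit normal, which in normal coordinates is $\partial_r$, the radial unit vector field (radial lines are geodesics, so $r(x)=|x|$ is the distance from $z$ and $|\nabla r|=1$). The shape operator is then $-\nabla_{(\cdot)} \partial_r$, i.e. the Hessian of $r$ restricted to the tangent space of the sphere. Using the expansion of the metric, or equivalently the Riccati/Jacobi-field comparison for the Hessian of the distance function, one gets
\begin{equation*}
\mathrm{Hess}\, r = \tfrac{1}{r}\big(g - dr\otimes dr\big) + O(r),
\end{equation*}
so each principal curvature is $-\tfrac{1}{\epsilon} + O(\epsilon)$ as claimed. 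I would phrase this via the known fact that for the Euclidean metric the geodesic sphere of radius $\epsilon$ has all principal curvatures exactly $-1/\epsilon$, and the curvature correction to the metric — being $O(\epsilon^2)$ with two derivatives controlled — perturbs the shape operator by $O(\epsilon)$.

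The only mild subtlety, and the step I would be most careful about, is bookkeeping the orders of vanishing: one needs that the $O(|x|^3)$ error in $g_{ij}$ and its first two coordinate derivatives translate correctly into an $O(\epsilon)$ error in the principal curvatures and an $O(\epsilon^2)$ error in $\frac{1}{\epsilon^2}g_\epsilon$ measured in $C^2$. This is a standard scaling argument — under $x\mapsto \epsilon x$ the sphere $S^{n-1}(\epsilon)$ is identified with $S^{n-1}(1)$ and the rescaled metric depends smoothly on $\epsilon$ down to $\epsilon=0$, where it equals $g_{0,1}$ — so differentiating in $\epsilon$ at $0$ gives the stated convergence rates. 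I would write the argument in this rescaled form to make the $C^2$-convergence transparent, then read off (a) from the shape operator of the rescaled family. No deep input is needed beyond the normal-coordinate expansion of the metric and the elementary geometry of Euclidean spheres.
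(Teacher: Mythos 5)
Your proposal is correct, and for part (b) it is essentially the paper's argument: both pull the metric back under the dilation $x\mapsto\epsilon x$ identifying $S^{n-1}(\epsilon)$ with $S^{n-1}(1)$, and read off $C^{2}$-convergence from the expansion $g_{ij}(\epsilon x)=\delta_{ij}+O(\epsilon^{2})$ (the paper works with generic quadratic coefficients $a_{ij}^{kl}x_{k}x_{l}$ rather than the explicit curvature term $-\tfrac13 R_{ikjl}x^{k}x^{l}$, but nothing in the proof uses that identification). For part (a) you take a genuinely different route. The paper stays entirely elementary and self-contained: it shows the Christoffel symbols are $O(|x|)$, computes $A(\dot\alpha(0),\dot\alpha(0))$ for an explicit great-circle parameterisation $\alpha(s)=(\epsilon\cos\frac{s}{\epsilon},\epsilon\sin\frac{s}{\epsilon},0,\dots,0)$, normalises, and then invokes an orthogonal change of coordinates to conclude the estimate for every unit tangent direction, hence for the principal directions. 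You instead identify the shape operator with $-\mathrm{Hess}\,r$ on the sphere and quote the standard expansion $\mathrm{Hess}\,r=\tfrac1r(g-dr\otimes dr)+O(r)$ (Riccati/Jacobi comparison), or equivalently run the rescaled-family argument: the family $\tfrac{1}{\epsilon^{2}}\phi_\epsilon^{*}g$ is smooth in $\epsilon$ down to the Euclidean metric at $\epsilon=0$, so the shape operator of the unit sphere is $-\mathrm{id}+O(\epsilon^{2})$ and scaling back gives $-\tfrac1\epsilon+O(\epsilon)$. Your version is cleaner and makes the $C^{2}$ bookkeeping transparent, at the cost of importing the Hessian-of-distance expansion as a known fact; the paper's computation is longer but needs nothing beyond the definition of the second fundamental form. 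One small point of care: as written you say the shape operator $-\nabla_{(\cdot)}\partial_r$ \emph{is} the Hessian of $r$; it is minus that Hessian, and it is only because you carry this sign that your answer $-\tfrac1\epsilon+O(\epsilon)$ agrees with the paper's convention (outward normal, $S=-\nabla\xi$), so state the convention explicitly in the write-up.
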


\begin{proof}
We begin with the proof of (a). On $D$, in coordinates $x_1,...,x_n$, the metric $g$ has the form

\begin{equation}\label{3.1}
\begin{array}{c}
g_{ij}(x)=\delta_{ij}+ \sum{a_{ij}^{kl}x_{k}x_{l}+O(|x|^3)}=\delta_{ij}+O(|x|^3). 
\end{array}
\end{equation}

\noindent This follows from the Taylor series expansion of $g_{ij}(x)$ around $0$ and the fact that in a normal coordinate neighbourhood of $p=0$, $g_{ij}(0)=\delta_{ij}$ and $\Gamma_{ij}^{k}(0)=0$. 

Next we will show that the Christoffel symbols of the corresponding Levi-Civita connection have the form

\begin{equation*}
\begin{array}{c}
\Gamma_{ij}^{k}=\sum_{l}{\gamma_{ijk,l}}+O(|x|^2)=O(|x|).
\end{array}
\end{equation*}

\noindent Recall that the Christoffel symbols are given by the formula

\begin{equation*}
\begin{array}{c}
\Gamma_{ij}^{k}=\frac{1}{2}\sum_{l}{g^{kl}(g_{il,j}+g_{jl,i}-g_{ij,l})}.
\end{array}
\end{equation*}

\noindent Differentiating (\ref{3.1}), gives

\begin{equation*}
\begin{array}{c}
\frac{\partial}{\partial x_i}g_{jl}=0+\sum{a_{jl}^{st}(x_s \delta_{ti}+x_t \delta_{si})}+\hdots
\end{array}
\end{equation*}

\noindent Hence

\begin{equation*}
\begin{array}{c}
g_{il,j}+g_{jl,i}-g_{ij,l}=O(|x|).
\end{array}
\end{equation*}

\noindent We must now deal with the  $g^{kl}$ terms. Let $(g_{kl})=(I)+(Y)$ where the $I$ is the identity matrix and $Y$ is the matrix $(a_{kl}^{ij}x_{i}x_{j}+O(|x|^3))$. Recall the following elementary fact.\\
\begin{equation*}
\begin{array}{c}
(1+a)^{-1}=1-a+a^2-a^3+\hdots
\end{array}
\end{equation*}

\noindent Thus we can write

\begin{equation*}
\begin{array}{c}
(g^{kl})=(I)-(Y)+(Y)^2-(Y)^3+\hdots
\end{array}
\end{equation*}

\noindent Each component of this matrix has the form\\
\begin{equation*}
\begin{array}{c}
g^{kl}=\delta_{kl}+O(|x|^2).
\end{array}
\end{equation*}

\noindent Finally we obtain
\begin{equation*}
\begin{array}{c}
\Gamma_{ij}^{k}=\frac{1}{2}\sum_{l}{(\delta_{ij}+O(|x|^2))(O(|x|))}=O(|x|).
\end{array}
\end{equation*}

We will now compute the scalar second fundamental form on tangent vectors to the geodesic sphere $S^{n-1}(\epsilon)$. Consider the smooth curve $\alpha$ on $S^{n-1}(\epsilon)$ given by

\begin{equation*}
\begin{array}{c}
\alpha(s)=(\epsilon\cos{\frac{s}{\epsilon}},\epsilon\sin{\frac{s}{\epsilon}},0,\cdots,0). 
\end{array}
\end{equation*}

\noindent Velocity vectors of this curve are tangent vectors to $S^{n-1}(\epsilon)$ and have the form

\begin{equation*}
\begin{array}{c}
\dot{\alpha}(s)=(-\sin{\frac{s}{\epsilon}},\cos{\frac{s}{\epsilon}},0,\cdots,0). 
\end{array}
\end{equation*}

\noindent  Letting $\xi$ denote the exterior unit normal vector field to $S^{n-1}(\epsilon)$, we have

\begin{equation*}
\begin{array}{c}
\xi(\alpha(0))=\frac{\alpha(0)}{|\alpha(0)|}=(1,0,\cdots,0)=e_1.
\end{array}
\end{equation*}

\noindent and 

\begin{equation*}
\begin{array}{c}
\dot{\alpha}(0)=(0,1,0,\cdots,0)=e_2.
\end{array}
\end{equation*}

\noindent We will now proceed to compute the scalar second fundamental form at $\alpha(0)$. We denote by 

\begin{equation*}
\begin{array}{c}
A:T_{\alpha(0)} S^{n-1}(\epsilon)\times T_{\alpha(0)} S^{n-1}(\epsilon)\rightarrow \mathbb{R}, 
\end{array}
\end{equation*}

\noindent the scalar second fundamental form and by 

\begin{equation*}
\begin{array}{c}
S: T_{\alpha(0)} S^{n-1}(\epsilon)\rightarrow T_{\alpha(0)} S^{n-1}(\epsilon),
\end{array}
\end{equation*}

\noindent the shape operator, for the hypersurface $S^{n-1}(\epsilon)\subset D$. Recall that,

\begin{equation*}
\begin{array}{c}
S(X_{\alpha(0)})=-\nabla_{X}\xi,
\end{array}
\end{equation*}

\begin{equation*}
\begin{array}{c}
A(X_{\alpha(0)},Y_{\alpha(0)})=g(S(X_{\alpha(0)}), Y)
\end{array}
\end{equation*}

\noindent where $X,Y$ are tangent vector fields on $S^{n-1}(\epsilon)$, and that $A$ only depends on $X$ and $Y$ at $p$. We now compute

\begin{equation*}
\begin{array}{cl}
A(\dot{\alpha}(0),\dot{\alpha}(0))&=g(S(\dot{\alpha}(0)),\dot{\alpha})\\
&=g(-\nabla_{\dot{\alpha}}\xi,\dot{\alpha})_{\alpha(0)}\\
&=-\dot{\alpha}[g(\xi,\dot{\alpha})](\alpha(0))+g(\nabla_{\dot{\alpha}}{\dot{\alpha}},\xi)_{\alpha(0)}\\
&=0+g(\nabla_{\dot{\alpha}}{\dot{\alpha}},e_1)\\
&=\ddot{\alpha}^{(1)}(0)+\sum_{i,j}{\alpha_{ij}^{1}\dot{\alpha}^{(i)}\dot{\alpha}^{(j)}}(0).
\end{array}
\end{equation*}

\noindent The components of the velocity vector are

\begin{equation*}
\begin{array}{c}
\dot{\alpha}^{(1)}=-\sin{\frac{s}{\epsilon}}, \qquad
\dot{\alpha}^{(2)}=\cos{\frac{s}{\epsilon}}, \qquad
\dot{\alpha}^{(j)}=0, \hspace{2mm}j\geq 3,
\end{array}
\end{equation*}

\noindent while
\begin{equation*}
\begin{array}{c}
\ddot{\alpha}^{(1)}=-\frac{1}{\epsilon}\cos{\frac{s}{\epsilon}}.
\end{array}
\end{equation*}

\noindent Thus,
\begin{equation*}
\begin{array}{c}
(\ddot{\alpha}^{(1)}+\sum_{i,j}{\alpha_{ij}^{1}\dot{\alpha}^{(i)}\dot{\alpha}^{(j)}})(s)=-\frac{1}{\epsilon}\cos{\frac{s}{\epsilon}}+\alpha_{11}^{1}\sin^{2}{\frac{s}{\epsilon}}-2\alpha_{12}^{1}\sin{\frac{s}{\epsilon}}\cos{\frac{s}{\epsilon}}+\alpha_{22}^{1}\cos^{2}{\frac{s}{\epsilon}}.
\end{array}
\end{equation*}

\noindent Hence,
\begin{equation*}
\begin{array}{c}
(\ddot{\alpha}^{(1)}+\sum_{i,j}{\alpha_{ij}^{1}\dot{\alpha}^{(i)}\dot{\alpha}^{(j)}})(0)=-\frac{1}{\epsilon}+\alpha_{22}^{1}(\epsilon,0,\cdots,0)=-\frac{1}{\epsilon}+O(\epsilon).
\end{array}
\end{equation*}

\noindent We now have that $A(\dot{\alpha}(0),\dot{\alpha}(0))=-\frac{1}{\epsilon}+O(\epsilon)$. Finally we need to normalise the vector $\dot{\alpha}(0)$.

\noindent We can write
\begin{equation*}
\begin{array}{c}
A(\dot{\alpha}(0),\dot{\alpha}(0))=|\dot{\alpha}(0)|^{2}A(v,v)
\end{array}
\end{equation*}

\noindent where $v$ is the unit length vector $\frac{\dot{\alpha}(0)}{|\dot{\alpha}(0)|}$.

\begin{equation*}
\begin{array}{cl}
|\dot{\alpha}(0)|^{2}&=g(\dot{\alpha}(0),\dot{\alpha}(0))\\
&=g(e_2,e_2)_{(\epsilon,0,\cdots,0)}\\
&=g_{22}(\epsilon,0,\cdots,0)\\
&=\delta_{22}+(\sum_{k,l}{a_{22}^{kl}{x_k}{x_l}}+O(|x|^3))(\epsilon,0,\cdots,0)\\
&=1+a_{22}^{11}\epsilon^{2}+O(|x|^3)\\
&=1+O(\epsilon^2).
\end{array}
\end{equation*}

\noindent We now have that
\begin{equation*}
\begin{array}{c}
A(\dot{\alpha}(0),\dot{\alpha}(0))=(1+O(\epsilon^2))A(v,v).
\end{array}
\end{equation*}

\noindent That is 
\begin{equation*}
\begin{array}{c}
-\frac{1}{\epsilon}+O(\epsilon)=(1+O(\epsilon^2))A(v,v).
\end{array}
\end{equation*}

\noindent This means that
\begin{equation*}
\begin{array}{c}
A(v,v)=-\frac{1}{\epsilon}+O(\epsilon)+O(\epsilon^2)=-\frac{1}{\epsilon}+O(\epsilon).
\end{array}
\end{equation*}

\noindent By an orthogonal change of coordinates (another choice of orthonormal basis $\{e_1,...e_n\}$), this computation is valid for any unit vector. In particular, it holds if $v$ is a principal direction. Hence the principal curvatures have the desired form. This proves part (a). 

The second part of the lemma is more straightforward. We can compare the induced metrics $g_{\epsilon}$ on $S^{n-1}(\epsilon)$ for decreasing values of $\epsilon$ by pulling back onto $S^{n-1}(1)$ via the map

\centerline{$f_\epsilon:S^{n-1}(1) \rightarrow  S^{n-1}{(\epsilon)}$}
\centerline{$x\longmapsto \epsilon x$}

\noindent Then at a point $x$ where $|x|=1$, we have

\begin{equation*}
\begin{array}{cl}
\frac{1}{\epsilon^2}f_{\epsilon}^{*}(g_\epsilon)(x)&=\sum_{i,j}{g_{ij}(\epsilon x)}dx_i dx_j\\
&=\sum_{i,j}(\delta_{ij}+\epsilon^{2}{\sum_{i,j}{a_{ij}^{kl}x_k x_l}})dx_i dx_j+\epsilon^{3}(\text{higher order terms}).
\end{array}
\end{equation*}

\noindent In the $C^2$-topology (that is, in the zeroth, first and second order terms of the Taylor series expansion), $\frac{1}{\epsilon^2}f_{\epsilon}^{*}(g_\epsilon)$ converges to the standard Euclidean metric in some neighbourhood of $S^{n-1}(1)$ as $\epsilon\rightarrow 0$. As $f_\epsilon$ is a diffeomorphism, the metric $\frac{1}{\epsilon^2}(g_\epsilon)$ is isometric to $\frac{1}{\epsilon^2}f_{\epsilon}^{*}(g_\epsilon)$ and  converges (in $C^{2}$) to the standard metric in some neighbourhood of $S^{n-1}(\epsilon)$. This proves part (b) and completes the proof of Lemma \ref{GLlemma1}.
\end{proof}

Recall, in the proof of Theorem \ref{IsotopyTheorem}, we deform a psc-metric $g$ on a smooth manifold $X$ inside a tubular neighbourhood $N=S^{p}\times D^{q+1}$ of an embedded sphere $S^{p}$. Here $q\geq 2$. We do this by specifying a hypersurface $M$ inside $N\times \mathbb{R}$, shown in Fig. \ref{hypermappendix} and inducing a metric from the ambient metric $g+dt^{2}$. The hypersurface $M$ is defined as

\begin{equation*}
M_{\gamma}=\{(y,x,t)\in S^{p}\times D^{q+1}(\bar{r})\times\mathbb{R}:(r(x),t)\in{\gamma}\}.
\end{equation*}

where $\gamma$ is the curve shown in Fig. \ref{gammaappendix} and $r$ denotes radial distance from $S^{p}$ on $N$. The induced metric is denoted $g_\gamma$. The fact that $\gamma$ is a vertical line near the point $(0,\bar{r})$ means that $g_\gamma=g$, near $\p{N}$. Thus $\gamma$ specifies a metric on $X$ which is the orginal metric $g$ outside of $N$ and then transitions smoothly to the metric $g_\gamma$. For a more detailed description, see section \ref{surgerysection}. In the following lemmas we compute the scalar curvature of $g_\gamma$.

\begin{figure}[htbp]
\begin{picture}(0,0)%
\includegraphics[height=60mm]{glcurve.eps}%
\end{picture}%
\setlength{\unitlength}{3947sp}%
\begingroup\makeatletter\ifx\SetFigFont\undefined%
\gdef\SetFigFont#1#2#3#4#5{%
  \reset@font\fontsize{#1}{#2pt}%
  \fontfamily{#3}\fontseries{#4}\fontshape{#5}%
  \selectfont}%
\fi\endgroup%
\begin{picture}(3889,2884)(1374,-4954)
\put(1180,-4690){\makebox(0,0)[lb]{\smash{{\SetFigFont{8}{8}{\rmdefault}{\mddefault}{\updefault}{\color[rgb]{0,0,0}$r_\infty$}%
}}}}
\put(1180,-4375){\makebox(0,0)[lb]{\smash{{\SetFigFont{8}{8}{\rmdefault}{\mddefault}{\updefault}{\color[rgb]{0,0,0}$r_0$}%
}}}}
\put(1180,-2825){\makebox(0,0)[lb]{\smash{{\SetFigFont{8}{8}{\rmdefault}{\mddefault}{\updefault}{\color[rgb]{0,0,0}$r_1$}%
}}}}
\put(1180,-3145){\makebox(0,0)[lb]{\smash{{\SetFigFont{8}{8}{\rmdefault}{\mddefault}{\updefault}{\color[rgb]{0,0,0}$r_1'$}%
}}}}
\put(1180,-2553){\makebox(0,0)[lb]{\smash{{\SetFigFont{8}{8}{\rmdefault}{\mddefault}{\updefault}{\color[rgb]{0,0,0}$\bar{r}$}%
}}}}
\put(1700,-5088){\makebox(0,0)[lb]{\smash{{\SetFigFont{8}{8}{\rmdefault}{\mddefault}{\updefault}{\color[rgb]{0,0,0}$t_0$}%
}}}}
\put(2294,-5088){\makebox(0,0)[lb]{\smash{{\SetFigFont{8}{8}{\rmdefault}{\mddefault}{\updefault}{\color[rgb]{0,0,0}$t_\infty$}%
}}}}
\put(2414,-3938){\makebox(0,0)[lb]{\smash{{\SetFigFont{8}{8}{\rmdefault}{\mddefault}{\updefault}{\color[rgb]{0,0,0}$\theta_0$}%
}}}}
\put(1800,-4375){\makebox(0,0)[lb]{\smash{{\SetFigFont{8}{8}{\rmdefault}{\mddefault}{\updefault}{\color[rgb]{0,0,0}$(t_0,r_0)$}%
}}}}
\put(4159,-5088){\makebox(0,0)[lb]{\smash{{\SetFigFont{8}{8}{\rmdefault}{\mddefault}{\updefault}{\color[rgb]{0,0,0}$\bar{t}$}%
}}}}
\put(2294,-4640){\makebox(0,0)[lb]{\smash{{\SetFigFont{8}{8}{\rmdefault}{\mddefault}{\updefault}{\color[rgb]{0,0,0}$(t_\infty, r_\infty)$}%
}}}}
\put(1430,-5088){\makebox(0,0)[lb]{\smash{{\SetFigFont{8}{8}{\rmdefault}{\mddefault}{\updefault}{\color[rgb]{0,0,0}$t_1'$}%
}}}}
\put(1500,-3145){\makebox(0,0)[lb]{\smash{{\SetFigFont{8}{8}{\rmdefault}{\mddefault}{\updefault}{\color[rgb]{0,0,0}$(t_1', r_1')$}%
}}}}
\end{picture}%
\caption{The curve $\gamma$}
\label{gammaappendix}
\end{figure}

\begin{figure}[htbp]
\begin{picture}(0,0)%
\includegraphics{hyperm.eps}%
\end{picture}%
\setlength{\unitlength}{3947sp}%
\begingroup\makeatletter\ifx\SetFigFont\undefined%
\gdef\SetFigFont#1#2#3#4#5{%
  \reset@font\fontsize{#1}{#2pt}%
  \fontfamily{#3}\fontseries{#4}\fontshape{#5}%
  \selectfont}%
\fi\endgroup%
\begin{picture}(4666,3733)(1218,-3861)
\put(5869,-2318){\makebox(0,0)[lb]{\smash{{\SetFigFont{10}{8}{\rmdefault}{\mddefault}{\updefault}{\color[rgb]{0,0,0}$N\times\mathbb{R}$}
}}}}
\put(1716,-282){\makebox(0,0)[lb]{\smash{{\SetFigFont{10}{8}{\rmdefault}{\mddefault}{\updefault}{\color[rgb]{0,0,0}$\eta_N$}
}}}}
\put(2398,-386){\makebox(0,0)[lb]{\smash{{\SetFigFont{10}{8}{\rmdefault}{\mddefault}{\updefault}{\color[rgb]{0,0,0}$\eta$}
}}}}
\put(2512,-933){\makebox(0,0)[lb]{\smash{{\SetFigFont{10}{8}{\rmdefault}{\mddefault}{\updefault}{\color[rgb]{0,0,0}$\eta_{\mathbb{R}}$}
}}}}
\put(1972,-933){\makebox(0,0)[lb]{\smash{{\SetFigFont{10}{8}{\rmdefault}{\mddefault}{\updefault}{\color[rgb]{0,0,0}$\theta$}
}}}}
\put(1365,-1367){\makebox(0,0)[lb]{\smash{{\SetFigFont{10}{8}{\rmdefault}{\mddefault}{\updefault}{\color[rgb]{0,0,0}$x$}
}}}}
\put(1417,-1088){\makebox(0,0)[lb]{\smash{{\SetFigFont{10}{8}{\rmdefault}{\mddefault}{\updefault}{\color[rgb]{0,0,0}$l$}
}}}}
\end{picture}
\caption{The hypersurface $M$ in $N\times\mathbb{R}$, the sphere $S^{p}$ is represented schematically as a pair of points}
\label{hypermappendix}
\end{figure}

\begin{Lemma}\label{principalMapp}
The prinipal curvatures to $M$ with respect to the outward unit normal vector field have the form
\begin{equation}
\begin{array}{cl}
\lambda_j =
\begin{cases}
k & \text{if $j=1$}\\
(-\frac{1}{r}+O(r))\sin{\theta} & \text{if $2\leq j\leq q+1$}\\
O(1)\sin{\theta} & \text{if $q+2\leq j\leq n$}.
\end{cases}
\end{array}
\end{equation} 
Here $k$ is the curvature of $\gamma$, $\theta$ is the angle between the outward normal vector $\eta$ and the horizontal (or the outward normal to the curve $\gamma$ and the $t$-axis) and the corresponding principle directions $e_j$ are tangent to the curve $\gamma$ when $j=1$, the fibre sphere $S^{q}$ when $2\leq j\leq q+1$ and $S^{p}$ when $q+2\leq j\leq n$. 
\end{Lemma}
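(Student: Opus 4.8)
The plan is to compute the second fundamental form of $M_\gamma$ directly in an adapted orthonormal frame, exactly along the lines of Gromov and Lawson's original argument. Fix a point $m=(y,x,t)\in M$, write $r=r(x)$, and introduce the two natural vector fields on $N\times\mathbb{R}$: the field $\partial_t$ on the $\mathbb{R}$-factor, and the unit radial field $\partial_r$ on $N$ pointing away from $S^{p}\times\{0\}$, defined along the rays $\{y\}\times l$ (which, by construction of $N$, are unit-speed geodesics of $N$). Parametrise $\gamma$ by arc length, $s\mapsto(t(s),r(s))$ with $\dot t^{2}+\dot r^{2}=1$, so that $M$ is swept out by $(s,y,\omega)\mapsto(y,r(s)\omega,t(s))$ with $\omega\in S^{q}$. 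Then $T_mM$ is spanned by a unit vector $e_1$ along $\gamma$, namely $\dot t\,\partial_t+\dot r\,\partial_r$; an orthonormal frame $e_2,\dots,e_{q+1}$ for the geodesic sphere $S^{q}(r)$ in the fibre through $x$; and an orthonormal frame $e_{q+2},\dots,e_n$ for $T_mS^{p}$. Since the $e_j$ with $j\geq 2$ lie in $TN$ and are tangent to the distance-$r$ hypersurface $S^{p}\times S^{q}(r)\subset N$, the Gauss lemma gives $\langle e_j,\partial_r\rangle=0$, and they are trivially orthogonal to $\partial_t$ and to $e_1$. Hence the outward unit normal has the form $\eta=\sin\theta\,\partial_r-\cos\theta\,\partial_t$, with $\theta=\theta(s)$ the angle of $\eta$ with the $t$-axis, a function of $s$ alone.

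Next I would compute the shape operator $S(X)=-\nabla^{N\times\mathbb{R}}_X\eta$. The product structure gives $\nabla_{\partial_t}\partial_t=\nabla_{\partial_t}\partial_r=\nabla_{\partial_r}\partial_t=0$ and $\nabla_{\partial_r}\partial_r=0$ (rays are geodesics), so for $X=e_1$ one finds $\nabla_{e_1}\eta=\dot\theta\,(\cos\theta\,\partial_r+\sin\theta\,\partial_t)$, a multiple of $e_1$; since $\dot\theta$ is the turning rate of $\gamma$, this yields $\lambda_1=k$, the curvature of $\gamma$. For $X=e_j$ with $j\geq 2$, because $\theta$ is constant in these directions and $\partial_t$ is parallel, $S(e_j)=-\sin\theta\,\nabla^{N}_{e_j}\partial_r$, i.e.\ the shape operator of $M$ in these directions is precisely $\sin\theta$ times the shape operator of the hypersurface $S^{p}\times S^{q}(r)\subset N$ with respect to $\partial_r$. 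It then remains to estimate this last operator. Restricting to the fibre through $x$ reduces the $e_2,\dots,e_{q+1}$ block to the second fundamental form of a geodesic sphere of radius $r$ about a point, so Lemma \ref{GLlemma1}(a) applies and gives principal curvatures $-\tfrac1r+O(r)$, hence $\lambda_j=(-\tfrac1r+O(r))\sin\theta$. For the $e_{q+2},\dots,e_n$ block, the relevant curvatures of $S^{p}\times S^{q}(r)$ stay bounded as $r\to 0$ — they are controlled by the fixed second fundamental form of $i\colon S^{p}\hookrightarrow X$ and the geometry of the normal bundle $\mathcal N$ — so $\lambda_j=O(1)\sin\theta$. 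The claim that $e_1,\dots,e_n$ are principal directions is a leading-order statement: any off-diagonal entries of $A$ in this frame are $O(1)\sin\theta$ at worst, and for the only application (the Gauss-equation computation in Lemma \ref{scalarMapp}) only the basis-independent symmetric functions $(\operatorname{tr}A)^{2}-|A|^{2}$ enter, so no sharper diagonalisation is needed.

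The main obstacle is the careful justification of the reduction of the distance-sphere shape operators of $S^{p}\times S^{q}(r)$ to Lemma \ref{GLlemma1}: one must check that, in the relevant directions and up to terms of the stated order, these hypersurfaces split near $S^{p}\times\{0\}$ as (geodesic sphere of radius $r$ about a point in the fibre) times ($S^{p}$-tube), using only that the rays $\{y\}\times l$ are unit-speed geodesics of $N$ and that $i$ is a fixed embedding; this is where the Christoffel-symbol estimates of the type carried out in Lemma \ref{GLlemma1app} do the real work. The remaining delicate points are purely bookkeeping — the sign conventions in the definitions of $\theta$ and $\eta$, and making precise the sense in which the frame $e_1,\dots,e_n$ is adapted — and everything else is a routine covariant-derivative computation.
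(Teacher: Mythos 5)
Your proposal is correct and follows essentially the same route as the paper's proof: an adapted frame $e_1$ along $\gamma$, fibre-sphere directions, and $S^{p}$-directions, with the product structure and geodesy of the radial rays giving $\lambda_1=k$ and $S(e_j)=\sin\theta$ times the shape operator of $S^{p}\times S^{q}(r)\subset N$, whose principal curvatures are then estimated by Lemma \ref{GLlemma1}. The only cosmetic difference is that the paper packages the $\lambda_1=k$ step as ``$l\times\mathbb{R}$ is totally geodesic, so $\gamma_l$ is a principal curve,'' whereas you obtain the same fact by a direct covariant-derivative computation; your extra remark that only trace-type quantities are needed downstream is a harmless (indeed slightly more careful) addition.
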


\begin{proof}
Let $w=(y,x,t)\in S^{p}\times D^{q+1}\times\mathbb{R}$ be a point on $M$. Let $l$ be the geodesic ray emanating from $y\times\{0\}$ in $N$ through the point $(y,x)$. The surface $l\times\mathbb{R}$ in $N\times R$ can be thought of as an embedding of $[0,\bar{r})\times \mathbb{R}$, given by the map $(r,t)\mapsto (l_r,t)$ where $l_r$ is the point on $l$ of length $r$ from $y\times\{0\}$. We will denote by $\gamma_{l}$, the curve $M\cap{l\times{\mathbb{R}}}$. This can be parameterised by composing the parameterisation of $\gamma$ with the above embedding. We will denote by $\dot{\gamma_l}_w$, the velocity vector of this curve at $w$. Finally we denote by $\eta$, the outward pointing unit normal vector field to $M$.

We now make a couple of observations. 

\noindent (a) The surface $l\times\mathbb{R}$ is a totally geodesic surface in $N\times\mathbb{R}$. This can be seen from the fact that any geodesic in $l\times\mathbb{R}$ projects onto geodesics in $l$ and $\mathbb{R}$. But $D\times\mathbb{R}$ is a Riemannian product and so such a curve is therefore a geodesic in $D\times\mathbb{R}$. 

\noindent (b) The vector $\eta$ is tangential to $l\times\mathbb{R}$. This can be seen by decomposing $\eta$ into orthogonal components
\begin{equation*}
\begin{array}{c}
\eta=\eta_N+\eta_\mathbb{R}.
\end{array}
\end{equation*}
\noindent Here $\eta_N$ is tangent to $N$ and $\eta_\mathbb{R}$ is tangent to $\mathbb{R}$. Now $\eta_N$ is orthogonal to the geodesic sphere $S^{q}(r)_y$, centered at $y\times\{0\}$ with radius $r=|x|$. By Gauss's Lemma, we know that $l$ runs orthogonally through $S^{q}(r)_y$ and so $\eta_N$ is tangent to $l$. Hence $\eta$ is tangent to $l\times {\mathbb{R}}$.

We will now show that $\gamma_l$ is a principal curve in $M$. Let $S^M$ denote the shape operator for $M$ in $N\times \mathbb{R}$ and $S^{\gamma_l}$, the shape operator for $\gamma_l$ in $l\times \mathbb{R}$. Both shape operators are defined with respect to $\eta$.

\begin{equation*}
\begin{array}{cl}
S^M(\dot{\gamma_l})&=-\nabla^{N\times{\mathbb{R}}}_{\dot{\gamma_l}}\eta\\
&=(-\nabla^{N\times{\mathbb{R}}}_{\dot{\gamma_l}}\eta)^{T}+(-\nabla^{N\times{\mathbb{R}}}_{\dot{\gamma_l}}\eta)^{\perp}\\
&=-\nabla^{l\times{\mathbb{R}}}_{\dot{\gamma_l}}\eta+0\\
&=S^{\gamma_l}(\dot{\gamma_l}).
\end{array}
\end{equation*} 

The third equality is a direct consequence of the fact that $l\times\mathbb{R}$ is a totally geodesic surface in $N\times\mathbb{R}$. Now as $T{\gamma_l}$, the tangent bundle of the curve $\gamma_l$, is a one-dimensional bundle, $\dot{\gamma_l}_w$ must be an eigenvector of $S^M$. Hence $\gamma_l$ is a principal curve. The corresponding principal curvature is of course the curvature of $\gamma$, which we denote by $k$.

At $w$, we denote the principal direction $\dot{\gamma_l}_w$ by $e_1$. The other principal directions we denote by $e_2,\cdots, e_n$, where $e_2,\cdots,e_{q+1}$ are tangent to the $S^{q}(r)$ factor and $e_{q+2},\cdots,e_{n}$ are tangent to $S^{p}$. Recall that the set $\{e_1,\cdots,e_n\}$ forms an orthonormal basis for $T_w M$. The corresponding principal curvatures will be denoted $\lambda_1=k,\lambda_2,\cdots,\lambda_n$. Our next task is to compute these principal curvatures.

Let $A$ denote the second fundamental form for $M$ in $N\times{\mathbb{R}}$ with respect to the outward normal vector $\eta$. Let $A^N$ denote the second fundamental form for $S^{p}\times S^{q}(r)$ in $N$, again with respect to $\eta$, and $\lambda_j^{N}$ the corresponding principal curvatures. When $2\leq j\leq n$,

\begin{equation*}
\begin{array}{cl}
\lambda_j&=A(e_j,e_j)\\
&=-g(\nabla_{e_j}^{N\times\mathbb{R}}\eta,e_j)\\
&=-g(\nabla_{e_j}^{N\times\mathbb{R}}({\cos{\theta}{\p_t}}+{\sin{\theta}\p_r}),e_j)\\
&=-g(\nabla_{e_j}^{N\times\mathbb{R}}{\cos{\theta}{\p_t}},e_j)-g(\nabla_{e_j}^{N\times\mathbb{R}}{\sin{\theta}\p_r},e_j).
\end{array}
\end{equation*}

\noindent where $\p_t$ and $\p_r$ are the coordinate vector fields for the $t$ and $r$ coordinates respectively. Now, 

\begin{equation*}
\begin{array}{cl}
\nabla_{e_j}^{N\times\mathbb{R}}{\cos{\theta}{\p_t}}&=\cos{\theta}\nabla_{e_j}^{N\times\mathbb{R}}{{\p_t}}+\partial_j(\cos{\theta})\cdot\p_t\\
&=\cos{\theta}\cdot 0+0\cdot\p_t\\
&=0.
\end{array}
\end{equation*}

\noindent However,

\begin{equation*}
\begin{array}{cl}
\nabla_{e_j}^{N\times\mathbb{R}}{\sin{\theta}\p_r}&=\sin{\theta}\nabla_{e_j}^{N\times\mathbb{R}}\p_r+\partial_j(\sin{\theta})\cdot\p_r\\
&=\sin{\theta}\nabla_{e_j}^{N\times\mathbb{R}}\p_r+0\cdot\p_r\\
&=\sin{\theta}\nabla_{e_j}^{N\times\mathbb{R}}\p_r.
\end{array}
\end{equation*}

\noindent Hence,

\begin{equation*}
\begin{array}{cl}
\lambda_j&=-\sin{\theta}.g(\nabla_{e_j}^{N\times\mathbb{R}}\p_r,e_j)\\
&=\sin{\theta}\cdot A^N(e_j,e_j)\\
&=\sin{\theta}\cdot\lambda_j^{N}.
\end{array}
\end{equation*}

\noindent We know from Lemma \ref{GLlemma1} that when $2\leq j\leq q+1$, $\lambda_j^{N}=-\frac{1}{r}+O(r)$. When $q+2\leq j\leq n$, $\lambda_j^{N}=O(1)$ as here the curvature is bounded. Hence the principal curvatures to $M$ are

\begin{equation*}
\begin{array}{c}
\lambda_j =
\begin{cases}
k & \text{if $j=1$}\\
(-\frac{1}{r}+O(r))\sin{\theta} & \text{if $2\leq j\leq q+1$}\\
O(1)\sin{\theta} & \text{if $q+2\leq j\leq n$}.
\end{cases}
\end{array}
\end{equation*} 

\end{proof}

\begin{Lemma}\label{scalarMapp}
The scalar curvature of the metric induced on $M$ is given by
\begin{equation}\label{scalarMeqnapp}
\begin{split}
R^{M}&=R^{N}+\sin^{2}{\theta}\cdot O(1)-2k\cdot q\frac{\sin{\theta}}{r}\\
&\hspace{0.4cm} +2q(q-1)\frac{\sin^{2}{\theta}}{r^2}+k\cdot qO(r)\sin{\theta}.
\end{split}
\end{equation}
\end{Lemma}

\begin{proof}
\noindent The Gauss Curvature Equation gives that

\begin{equation*}
\begin{array}{c}
\frac{1}{2}R^{M}=\sum_{i<j}(K_{ij}^{N\times\mathbb{R}}+\lambda_i\lambda_j)
\end{array}
\end{equation*}
where $K^{N\times\mathbb{R}}$ denotes sectional curvature on $N\times\mathbb{R}$. Before we continue we should examine $K^{N\times\mathbb{R}}$. When $2\leq i,j\leq n$, 

\begin{equation*}
\begin{array}{c}
K_{ij}^{N\times\mathbb{R}}=K_{ij}^{N}.
\end{array}
\end{equation*}

\noindent When $2\leq j\leq n$,

\begin{equation}\label{earlier}
\begin{array}{cl}
K_{ij}^{N\times\mathbb{R}}&= {Rm}^{N\times\mathbb{R}}(e_1,e_j,e_j,e_1)\\
&=Rm^{N\times\mathbb{R}}(-\cos{\theta}\p_r+\sin{\theta}\p_t,e_j,e_j,-\cos{\theta}\p_r+\sin{\theta}\p_t)\\
&=\cos^{2}{\theta}\cdot Rm^{N\times\mathbb{R}}(\p_r,e_j,e_j,\p_r) +\sin^{2}{\theta}\cdot Rm^{N\times\mathbb{R}}(\p_t,e_j,e_j,\p_t)\\
&=\cos^{2}{\theta}\cdot Rm^{N\times\mathbb{R}}(\p_r,e_j,e_j,\p_r)+\sin^{2}{\theta}\cdot 0\\
&=\cos^{2}{\theta}\cdot Rm^{N}(\p_r,e_j,e_j,\p_r)\\
&=\cos^{2}{\theta}\cdot K_{\p_rj}^{N}.
\end{array}
\end{equation}

\noindent Now,
 
\begin{equation*}
\begin{array}{cl}
\frac{1}{2}R^{M}&=\sum_{i<j}(K_{ij}^{N\times\mathbb{R}}+\lambda_i\lambda_j)\\
&=\sum_{j\leq 2}K_{1j}^{N\times\mathbb{R}}+\sum_{1\neq i<j}K_{ij}^{N\times\mathbb{R}}+\sum_{i<j}\lambda_i\lambda_j. 
\end{array}
\end{equation*}

\noindent From (\ref{earlier}),

\begin{equation*}
\begin{array}{c}
\sum_{j\leq 2}K_{1j}^{N\times\mathbb{R}}=(1-\sin^{2}{\theta})\sum_{j\geq 2}K_{\p_rj}^{N}.
\end{array}
\end{equation*}

\noindent Hence,

\begin{equation*}
\begin{array}{c}
\sum_{j\leq 2}K_{1j}^{N\times\mathbb{R}}+\sum_{1\neq i<j}K_{ij}^{N\times\mathbb{R}}=\frac{1}{2}R^{N}-\sin^{2}{\theta}\cdot Ric^N(\p_r,\p_r).
\end{array}
\end{equation*}

\noindent Next we deal with $\sum_{i<j}\lambda_i\lambda_j$.

\begin{equation*}
\begin{array}{cl}
\sum_{i<j}\lambda_i\lambda_j&= k\sum_{j\geq 2}\lambda_j+\sum_{2\leq i<j\leq q+1}\lambda_i\lambda_j+\sum_{2\leq i\leq q+1,q+2\leq j\leq n}\lambda_i\lambda_j+\sum_{q+2\leq i<j\leq n}\lambda_i\lambda_j\\\\
&=k\cdot{q}(-\frac{1}{r}+O(r))\sin{\theta}+kO(1)\sin{\theta}\\
&\hspace{0.4cm} +q(q-1)(-\frac{1}{r}+O(r))^{2}\sin^{2}{\theta}\\
& \hspace{0.4cm}+q(-\frac{1}{r}+O(r))O(1)\sin^{2}{\theta}\\
&\hspace{0.4cm} +O(1)\sin^{2}{\theta}.
\end{array}
\end{equation*}

\noindent Thus,

\begin{equation*}
\begin{array}{cl}
\frac{1}{2}R^{M}&=\frac{1}{2}R^{N}-\sin^{2}{\theta}\cdot Ric^{N}(\p_r,\p_r)\\
&\hspace{0.4cm}-q\frac{k\sin{\theta}}{r}+k\cdot qO(r)\sin{\theta}+k\cdot O(1)\sin{\theta}\\
&\hspace{0.4cm} +q(q-1)(\frac{1}{r^{2}}+O(1))\sin^{2}{\theta}\\
& \hspace{0.4cm}-\frac{q}{r}\sin^{2}{\theta}\cdot O(1)+q\sin^{2}{\theta}\cdot O(r)\\
&\hspace{0.4cm} +\sin^{2}{\theta}\cdot O(1)\\\\
&=\frac{1}{2}R^{N}-\sin^{2}{\theta}\cdot [Ric^{N}(\p_r,\p_r)+q(q-1)O(1)+O(1)+qO(r)]\\
&\hspace{0.4cm} -k\cdot q\frac{\sin{\theta}}{r}+q(q-1)\frac{\sin^{2}{\theta}}{r^2}+k\cdot qO(r)\sin{\theta}\\
&\hspace{0.4cm} +k\cdot O(1)\sin{\theta}-\frac{q}{r}\sin^{2}{\theta}\cdot O(1)\\\\
&=\frac{1}{2}R^{N}+\sin^{2}{\theta}\cdot O(1)-[k\cdot q\frac{\sin{\theta}}{r}-k\cdot O(1)\sin{\theta}]\\
&\hspace{0.4cm} +[q(q-1)\frac{\sin^{2}{\theta}}{r^2}-\frac{q}{r}\sin^{2}{\theta}\cdot O(1)]\\
&\hspace{0.4cm} +k\cdot qO(r)\sin{\theta}.
\end{array}
\end{equation*}

\noindent When $r$ is small, this reduces to (\ref{scalarMeqnapp}), i.e.

\begin{equation*}
\begin{array}{cl}
R^{M}&=R^{N}+\sin^{2}{\theta}\cdot O(1)-2k\cdot q\frac{\sin{\theta}}{r}\\
&\hspace{0.4cm} +2q(q-1)\frac{\sin^{2}{\theta}}{r^2}+k\cdot qO(r)\sin{\theta}.
\end{array}
\end{equation*}
\end{proof}

\newpage

\vspace{1cm}

\noindent {\em E-mail address:} mwalsh3@uoregon.edu

\end{document}